\definecolor{navie}{RGB}{0, 70, 140}
\newcommand*{\enuma}[1]{%
	\expandafter\@enuma\csname c@#1\endcsname%
}
\newcommand*{\@enuma}[1]{%
	$\ifcase#1\or(a)\or(b_-)\or(b_+)\or(c)%
	\else\@ctrerr\fi$%
}
\AddEnumerateCounter{\enuma}{\@enuma}{53.13}
\newcommand*{\enumb}[1]{%
	\expandafter\@enumb\csname c@#1\endcsname%
}
\newcommand*{\@enumb}[1]{%
	$\ifcase#1\or(b_-)\or(b_+)%
	\else\@ctrerr\fi$%
}
\AddEnumerateCounter{\enumb}{\@enumb}{53.13}
\newlist{steps}{enumerate}{1}
\setlist[steps, 1]{itemsep=8pt,leftmargin=0cm,itemindent=.5cm,labelwidth=\itemindent,labelsep=0cm,align=left,label = \textbf{\emph{Step \arabic*}:\,}}
\newtheorem*{rep@theorem}{\rep@title}
\newcommand{\newreptheorem}[2]{%
\newenvironment{rep#1}[1]{%
 \def\rep@title{#2 \ref{##1}}%
 \begin{rep@theorem}}%
 {\end{rep@theorem}}}
\newtheorem*{rep@cor}{\rep@title}
\newcommand{\newrepcor}[2]{%
\newenvironment{rep#1}[1]{%
 \def\rep@title{#2 \ref{##1}}%
 \begin{rep@cor}}%
 {\end{rep@cor}}}
\newtheorem*{rep@prop}{\rep@title}
\newcommand{\newrepprop}[2]{%
\newenvironment{rep#1}[1]{%
 \def\rep@title{#2 \ref{##1}}%
 \begin{rep@prop}}%
 {\end{rep@prop}}}
\newtheorem{corollary}{Corollary}[section]
\newtheorem{corx}{Corollary}
\newtheorem{theorem}[corollary]{Theorem}
\newtheorem{thmx}[corx]{Theorem}
\newtheorem{proposition}[corollary]{Proposition}
\newtheorem{propx}[corx]{Proposition}
\newtheorem*{theorem*}{Theorem}
\newtheorem{lemma}[corollary]{Lemma}
\theoremstyle{definition} 
\newtheorem{definition}[corollary]{Definition}
\newtheorem{example}[corollary]{Example}
\theoremstyle{remark} \newtheorem{remark}[corollary]{Remark} \numberwithin{equation}{section}
\newtheorem*{remark*}{Remark}
\numberwithin{figure}{section}
\renewcommand{\phi}{\varphi}
\newcommand{\R}{\mathbb{R}}
\newcommand{\epi}[1]{\mathrm{ep}({#1})}
\newcommand{\gra}[1]{\mathrm{gr}({#1})}
\newcommand{\sepi}[1]{\mathrm{ep\!}^\circ({#1})}
\newcommand{\dom}[1]{\mathsf{dom}({#1})}
\newcommand{\env}[1]{\overline{#1}}
\newcommand{\interior}{\mathsf{int}\,}
\newcommand{\C}{\mathsf{C}}
\newcommand{\hsp}[1]{\mathcal{S}_{#1}}
\newcommand{\hnull}[1]{\mathcal{N}_{#1}}
\renewcommand{\matrix}[1]{\begin{pmatrix}#1\end{pmatrix}}
\newcommand{\SC}{\mathsf{S}}
\renewcommand{\P}{\mathbb{P}}
\newcommand{\T}{\mathsf{T}}
\newcommand{\transp}[1]{\leftidx{^\mathrm{t}}{#1}}
\newcommand{\id}{\mathrm{id}}
\newcommand{\Axis}{\mathsf{Axis}}
\newcommand{\pa}{\partial}
\newcommand{\GL}{\mathrm{GL}}
\newcommand{\SL}{\mathrm{SL}}
\newcommand{\PGL}{\mathrm{PGL}}
\newcommand{\RP}{\mathbb{RP}}
\newcommand{\Hom}{\mathrm{Hom}}
\newcommand{\Aut}{\mathsf{Aut}}
\newcommand{\eg}{\textit{e.g.\@ }}
\newcommand{\cf}{\textit{c.f.\@ }}
\newcommand{\ie}{\textit{i.e.\@ }}
\DeclareMathOperator{\End}{End}
\DeclareMathOperator{\SO}{SO}
\newcommand{\D}{\mathsf{D}}
\newcommand{\Isom}{\mathsf{Isom}}
\newcommand{\LC}{\mathsf{LC}}
\newcommand{\A}{\mathbb{A}}
\begin{document}

\title{Affine deformations of quasi-divisible convex cones} 
\author{Xin Nie}
\address{Xin Nie: Tsinghua University, Beijing 100086, China} \email{nie.hsin@gmail.com}
\author{Andrea Seppi}
\address{Andrea Seppi: CNRS and Universit\'e Grenoble Alpes, 100 Rue des Math\'ematiques, 38610 Gi\`eres, France.} \email{andrea.seppi@univ-grenoble-alpes.fr}
\maketitle

\begin{abstract}
For any subgroup of $\SL(3,\R)\ltimes\R^3$ obtained by adding a translation part to a subgroup of $\SL(3,\R)$ which is the fundamental group of a finite-volume convex projective surface, we first show that under a natural condition on the translation parts of parabolic elements, the affine action of the group on $\R^3$ has convex domains of discontinuity that are regular in a certain sense, generalizing a result of Mess for globally hyperbolic flat spacetimes. We then classify all these domains and show that the quotient of each of them is an affine manifold foliated by convex surfaces with constant affine Gaussian curvature. The proof is based on a correspondence between the geometry of an affine space endowed with a convex cone and the geometry of a convex tube domain. As an independent result, we show that the moduli space of such groups is a vector bundle over the moduli space of finite-volume convex projective structures, with rank equaling the dimension of the Teichm\"uller space.
\end{abstract}

\setcounter{tocdepth}{1}
\tableofcontents

\section{Introduction}
Given a Lie group $G$ containing $\SO_0(2,1)=\Isom^+(\mathbb{H}^2)$ and a closed hyperbolic surface $S$ with fundamental group $\pi_1(S)$ identified as a Fuchsian group in $\SO_0(2,1)$, representations $\pi_1(S)\to G$ that are deformations of the inclusion are objects of study in \emph{higher Teichm\"uller theory}.
We study in this paper the case where $G$ is the group $\SL(3,\R)\ltimes\R^3$ of special affine transformations of the real affine space $\A^3$. This can be viewed as the combination of two well studied cases:
\begin{itemize}
\item
the isometry group $\SO(2,1)\ltimes\R^{2,1}$ of the Minkowski space $\R^{2,1}$, where the deformations give rise to \emph{maximal globally hyperbolic flat spacetimes} \cite{mess} (see also \cite{beneguad,MR2110829,bonsanteJDG,kraschl,belraouti,bonsanteseppiIMRN,fillastreseppi2});
\item the special linear group $\SL(3,\R)$, where the deformations yield convex real projective structures (see \eg \cite{goldman_convex,benoist_survey,Kim-Papadopoulos,CLM}).
\end{itemize}
We also extend the setting by allowing $S$ to have punctures. 

\subsection*{Affine deformations, regular domains and CAGC surfaces}
Given a proper convex cone $C$ in $\R^3$ (see \S \ref{subsec_cspacelike} for the definition), we let $\Aut(C)<\SL(3,\R)$ denote the group of special linear transformations preserving $C$, which is also the group of orientation-preserving projective automorphisms of the convex domain $\P(C)\subset\RP^2$. 

Following \cite{benoist_survey, marquis_around}, $\P(C)$ is said to be \emph{divisible} (resp.\@ \emph{quasi-divisible}) by a group $\Gamma<\SL(3,\R)$ if $\Gamma$ is discrete, contained in $\Aut(C)$, and the quotient $\P(C)/\Gamma$ is compact (resp.\@ has finite volume with respect to the Hilbert metric). Furthermore, we always assume $\Gamma$ is torsion-free, so that the quotient is a closed (resp.\@  finite-volume) \emph{convex projective surface}. 
Abusing the terminology, we also say that $C$ is (quasi-)divisible by $\Gamma$ if $\P(C)$ is.

Given a map $\tau:\Gamma\to\R^3$, a subgroup in $\SL(3,\R)\ltimes\R^3$ of the form 
$$
\Gamma_\tau:=\big\{(A,\tau(A))\in\SL(3,\R)\ltimes\R^3\,\big|\, A\in\Gamma\big\}
$$
is called an \emph{affine deformation} of $\Gamma$. The group relation forces $\tau$ to be an element in the space $Z^1(\Gamma,\R^3)$ of cocycles. We call $\tau$ \emph{admissible} if for every parabolic element
$A\in\Gamma$,  $\tau(A)$ is contained in the $2$-dimensional subspace of $\R^3$ preserved by $A$. This condition is vacuous if $C$ is divisible by $\Gamma$, in which case there is no parabolic element.

In \cite{nie-seppi}, we generalized standard notions in Minkowski geometry, such as spacelike/null planes and regular domains, to \emph{$C$-spacelike/$C$-null planes} and \emph{$C$-regular domains} in $\A^3$, defined with respect to a proper convex cone $C$ in the underlying vector space $\R^3$ (\cf \S \ref{subsec_cspacelike} and \S \ref{subsec_cregular} below). Our first result is:
\begin{thmx}\label{thm_intro1}
	Let $C\subset\R^3$ be a proper convex cone quasi-divisible by a torsion-free group $\Gamma< \SL(3,\R)$ and let $\tau\in Z^1(\Gamma,\R^3)$. Then 
\begin{enumerate}[label=(\arabic*)]
\item\label{item_intro11} 
 There exists a $C$-regular domain in $\A^3$ preserved by $\Gamma_\tau$ if and only if $\tau$ is admissible.
	In this case, there is a unique continuous map $f$ from $\pa\P(C)$ to the space of $C$-null planes in $\A^3$ which is equivariant in the sense that $f(A.x)=(A,\tau(A)).f(x)$ for all $x\in\pa\P(C)$ and $A\in\Gamma$. The complement of the union of planes $\bigcup_{x\in\pa\P(C)}f(x)$ in $\A^3$ has two connected components $D^+$ and $D^-$, which are $C$-regular and $(-C)$-regular domains preserved by $\Gamma_\tau$, respectively. 
	\item\label{item_intro12} If $C$ is divisible by $\Gamma$, then $D^+$ is the unique $C$-regular domain preserved by $\Gamma_\tau$. Otherwise, assume the surface $S:=\P(C)/\Gamma$ has $n\geq1$ punctures and $\tau$ is admissible, then all the $C$-regular domains preserved by $\Gamma_\tau$ form a family $(D_\mu)$ parameterized by $\mu\in\R_{\geq0}^n$, such that $D_{(0,\cdots,0)}=D^+$ and we have $D_\mu\subset D_{\mu'}$ if and only if $\mu$ is coordinate-wise larger than or equal to $\mu'$. 
	\item\label{item_intro13}
 $\Gamma_\tau$ acts freely and properly discontinuously on every $C$-regular domain preserved by it, with quotient homeomorphic to $S\times\R$.
\end{enumerate}
\end{thmx}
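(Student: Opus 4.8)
The plan is to recast the whole statement as the study of a single equivariant object: a section
$f$ assigning to each $x\in\pa\P(C)$ a $C$-null plane in the ``direction of $x$''. Recall that a $C$-null plane is cut out by a supporting functional $\ell$ of $C$, i.e. a point of $\pa C^*$, together with an affine height; the supporting functionals at $x\in\pa\P(C)$ form a ray, so the $C$-null planes in the direction of $x$ form an $\R$-torsor. Since $\Gamma<\SL(3,\R)$ acts on functionals through the contragredient $A\mapsto A^{-\T}$, the difference of two candidate sections $f_1,f_2$ is measured by a section $g$ of the line bundle over $\pa\P(C)$ whose transition multiplier $\rho(A,x)$ is the ratio relating the chosen normalizations of $\ell_x$ and $\ell_{A.x}=A^{-\T}\ell_x$. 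The arithmetic to establish first is that $\rho(A,x_+)$ at the attracting fixed point of a hyperbolic $A$ is a ratio of distinct eigenvalues, hence $\neq 1$, whereas along a parabolic (cusp) direction the relevant eigenvalues coincide and $\rho=1$. This dichotomy between hyperbolic fixed points and cusps will drive both the existence/uniqueness in the divisible case and the appearance of cusp parameters in the quasi-divisible case.

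For part \ref{item_intro11} I would argue both implications through $f$. For necessity, any $\Gamma_\tau$-invariant $C$-regular domain $D$ has, at each boundary direction $x$, a unique $C$-null support plane, and reading these off defines an equivariant $f$; invariance of $f(x_c)$ under $(A,\tau(A))$ at a parabolic fixed point $x_c$ forces $\pair{\ell_{x_c}}{\tau(A)}=0$, i.e.\ $\tau(A)$ lies in the $2$-plane $\ker\ell_{x_c}$ preserved by $A$, which is exactly admissibility. For sufficiency, assuming $\tau$ admissible, I would build $f$ by first setting $f(x_+)$ at every hyperbolic fixed point to be the unique affine plane in that direction invariant under $(A,\tau(A))$ — unique precisely because $\rho\neq1$ there — and at each cusp to be the invariant plane furnished by admissibility; density of fixed points together with a continuity/consistency check then extends $f$ to all of $\pa\P(C)$. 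Once $f$ exists, $\bigcup_{x}f(x)$ is a locally finite union of planes whose complement has exactly two components, the one on the side into which $C$ opens being $C$-regular ($D^+$) and the other $(-C)$-regular ($D^-$), both $\Gamma_\tau$-invariant by equivariance.

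For part \ref{item_intro12} the classification becomes the description of equivariant sections $f$, equivalently of the difference sections $g$ satisfying $g(A.x)=g(x)/\rho(A,x)$. In the divisible case the hyperbolic fixed points are dense and at each of them $\rho\neq1$ forces $g$ to vanish; by continuity $g\equiv0$, so $f$, hence the $C$-regular domain $D^+$, is unique. In the quasi-divisible case the same vanishing holds at every hyperbolic fixed point but fails at the $n$ cusps, where $\rho=1$ permits $g$ to take an arbitrary value; these $n$ values are mutually independent and parameterize the family, while the constraint that the resulting planes still bound a genuine $C$-regular domain forces each value to have a definite sign, giving the parameter space $\R_{\geq0}^n$ with $\mu=(0,\dots,0)$ recovering $D^+$. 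Monotonicity of the inclusions follows because increasing a coordinate $\mu_i$ translates the corresponding family of cusp null planes strictly inward, lowering the bounding heights, so that $D_\mu\subset D_{\mu'}$ precisely when $\mu$ dominates $\mu'$ coordinatewise.

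For part \ref{item_intro13} I would produce a $\Gamma_\tau$-equivariant continuous retraction $r\colon D_\mu\to\P(C)$ with fibers homeomorphic to $\R$ — via the cosmological-time level sets, or equivalently the normal flow off one equivariant convex surface. Equivariance reduces freeness to the base: if $(A,\tau(A))$ fixes $p$ then $A$ fixes $r(p)\in\P(C)$, so $A=\id$ since torsion-free $\Gamma$ acts freely there, whence $(A,\tau(A))=\id$. Likewise, if $(A,\tau(A))K\cap K\neq\emptyset$ for compact $K\subset D_\mu$ then $A\,r(K)\cap r(K)\neq\emptyset$, and properness of the $\Gamma$-action on $\P(C)$ leaves only finitely many such $A$, hence finitely many such $(A,\tau(A))$; this gives proper discontinuity. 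Finally the fibration $D_\mu/\Gamma_\tau\to S$ has contractible fiber and is an orientable line bundle over the surface $S$, hence trivial, so the quotient is homeomorphic to $S\times\R$. The step I expect to be the main obstacle is the cusp analysis underlying the sufficiency in \ref{item_intro11} and the parameter count in \ref{item_intro12}: controlling the asymptotics of $f$ near each parabolic fixed point where $\pa\P(C)$ degenerates and the Hilbert metric is complete but non-compact, verifying that admissibility yields a continuous $f$ there, and that the inward push by $\mu_i$ stays $C$-regular and proper in the cusp. This is precisely where the finite-volume case departs from the cocompact Mess-type setting, and where I expect the tube-domain correspondence to be indispensable.
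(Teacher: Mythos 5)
Your strategy---define the equivariant plane field at fixed points and propagate by density, classify invariant domains through a ``difference section'' $g$, and fiber the domain over $\P(C)$---is the Mess-style route, and it fails at precisely the points you flag, in ways that are not merely technical. First, your necessity argument opens with the assertion that a $\Gamma_\tau$-invariant $C$-regular domain has a $C$-null support plane in \emph{every} direction $x\in\pa\P(C)$. This is false for $C$-regular domains in general (the interior of a single $C$-null half-space is one, and it has exactly one null support plane), and for invariant domains it is exactly the nontrivial finiteness statement: in the tube picture the domain corresponds to a lower semicontinuous $\widehat{\phi}:\pa\Omega\to\R\cup\{+\infty\}$ with invariant graph, and one must prove $\widehat{\phi}$ is finite at the parabolic fixed points before the invariant plane your argument needs even exists. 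The paper proves this (Prop.~\ref{prop_psimu}) by comparing invariant convex functions against $\pm Mw_\Omega$, with $w_\Omega$ the Cheng--Yau support function, on a compact piece of a fundamental domain, plus density of orbits; nothing in your proposal substitutes for that comparison. The same omission breaks your part \ref{item_intro12}: the boundary data of an arbitrary invariant domain is only lower semicontinuous (the $D_\mu$ with $\mu\neq 0$ have genuinely discontinuous data), so ``$g$ vanishes on the dense set of hyperbolic fixed points, hence by continuity $g\equiv 0$'' is invalid---a lower semicontinuous invariant function can vanish on a dense set and still be negative elsewhere, and ruling that out (equivalently, showing $g$ is determined by its $n$ values at the cusps) is again the Cheng--Yau comparison, not a continuity argument.

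Second, on the existence side, ``density of fixed points together with a continuity/consistency check then extends $f$'' names, but does not perform, the analytic heart of the theorem. In the divisible case this extension can be extracted from Anosov-type contraction (Barbot, Danciger--Gu\'eritaud--Kassel, as the paper notes), but near a cusp $\pa\P(C)$ is only $\C^1$, there is no spectral gap (your $\rho=1$), and you offer no mechanism for uniform control of the plane heights along parabolic orbits. The paper avoids extension-by-density entirely: it builds a smooth function with $\Lambda'$-invariant graph via a partition of unity on the quotient surface, using the invariant planes over cusp neighborhoods supplied by admissibility (Lemma~\ref{lemma_smoothinvariant}), then adds $\pm Mw_\Omega$ to obtain a convex and a concave invariant graph with common---hence continuous---boundary value (Prop.~\ref{prop_ab}). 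Finally, your part \ref{item_intro13} presupposes an equivariant retraction given by ``cosmological time'' or ``the normal flow off one equivariant convex surface''; for a general cone $C$ there is no Lorentzian structure, and the existence of even one invariant convex surface, let alone a foliation, is what the Monge--Amp\`ere machinery behind Theorem~\ref{thm_intro3}~\ref{item_d2} provides, so as a proof of Theorem~\ref{thm_intro1}~\ref{item_intro13} this step is circular. Your reduction of freeness and proper discontinuity to the base action, and the triviality of orientable $\R$-bundles over $S$, are sound once such an equivariant fibration exists; the paper instead obtains all of part \ref{item_intro13} from the equivariant homeomorphism $F:T^-\to D$ of Prop.~\ref{prop_homeo}.
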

When $C$ is the future light cone $C_0\subset\R^{2,1}$, the divisible case of this theorem is part of the seminal work of Mess \cite{mess}. Brunswic \cite{Brunswic_1, Brunswic_2} has obtained results in the quasi-divisible case for $C_0$ as well. For general $C$, in the divisible case, the equivariant continuous map given by Part \ref{item_intro11} is related to the Anosov property of $\Gamma_\tau$, studied by Barbot \cite{Barbot_Anosov} and Danciger-Gu\'eritaud-Kassel \cite{DGK} in different but related settings.
\begin{remark*}
For the trivial deformation $\tau=0$, we have $D^+=C$, and any other $C$-regular domain preserved by $\Gamma_\tau=\Gamma$ is obtained by first choosing a $\Gamma$-invariant family of $C$-null planes intersecting $C$, such that each plane in the family is preserved by some parabolic element, and then trimming $C$ along these planes. Each puncture of $S$ corresponds to a conjugacy class of parabolic elements and gives rise to an $\R_{\geq0}$-worth of choices. For a general admissible $\tau$,  $D_\mu$ is obtained from the maximal $C$-regular domain $D^+$ preserved by $\Gamma_\tau$ by the same construction, which explains Part \ref{item_intro12} of Theorem \ref{thm_intro1}.
Note that although $D^\pm$ is the maximal $(\pm C)$-regular domain preserved by $\Gamma_\tau$, it is observed in \cite{mess} that $D^+\cup D^-$ might not be the maximal domain of discontinuity for the $\Gamma_\tau$-action on $\A^3$, even in the divisible and $C=C_0$ case.
\end{remark*}

Our second result establishes a canonical ``time function'' on each $C$-regular domain in $\A^3$ preserved by $\Gamma_\tau$, whose level surfaces have Constant Affine Gaussian Curvature (CAGC):
\begin{thmx}\label{thm_intro2}
Let $C\subset\R^3$ be a proper convex cone quasi-divisible by a torsion-free group $\Gamma< \SL(3,\R)$, $\tau\in Z^1(\Gamma,\R^3)$ be an admissible cocycle and $D$ be a $C$-regular domain preserved by $\Gamma_\tau$. 
Then for any $k>0$, $D$ contains a unique complete affine $(C,k)$-surface $\Sigma_k$ generating $\pa D$. This surface is preserved by $\Gamma_\tau$ and is asymptotic to the boundary of $D$. Moreover, $(\Sigma_k)_{k>0}$ is a foliation of $D$, and the function 
$K:D\to\R$ given by $K|_{\Sigma_k}=\log k$ is convex.
\end{thmx}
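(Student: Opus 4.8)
The plan is to express the constant-affine-Gaussian-curvature (CAGC) condition as a Monge--Amp\`ere equation and then to transport the entire problem, through the convex-tube-domain correspondence developed in the earlier sections, into a Dirichlet-type problem with prescribed asymptotic boundary data on a convex domain. A $C$-spacelike convex surface can be written as a graph of a convex function $u$ over the cone direction, and the requirement that its affine Gaussian curvature equal $k$ becomes an equation of the form $\det D^2 u = F(u, Du; k)$, exactly as the constant-Gaussian-curvature equation for spacelike graphs in the flat case $C = C_0$. Under the tube-domain dictionary this translates into solving the associated real Monge--Amp\`ere equation on the base convex domain, where the boundary trace at infinity is fixed by the generating data of $\pa D$, namely the $C$-null support planes produced by the equivariant map $f$ of Theorem \ref{thm_intro1}. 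The $C$-regularity of $D$ is precisely what makes this asymptotic datum well-defined and trapped between admissible barriers.

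For a fixed $k > 0$ I would first prove existence and uniqueness of $\Sigma_k$ by a Cheng--Yau type exhaustion argument: solve the Monge--Amp\`ere equation on an increasing sequence of compact convex subdomains carrying the correct boundary values, derive interior Pogorelov-type second-derivative and gradient estimates that are uniform on compact subsets of the interior, and extract a limit. The $C$-null planes foliating $\pa D$ furnish the natural upper and lower barriers that force the limit surface to be complete in the affine metric and asymptotic to $\pa D$. Uniqueness then follows from the comparison principle for the Monge--Amp\`ere operator applied to two solutions sharing the same asymptotic boundary behaviour. Because the whole construction is canonical and the datum $f$ is $\Gamma_\tau$-equivariant by Theorem \ref{thm_intro1}, the uniqueness of $\Sigma_k$ forces its $\Gamma_\tau$-invariance, and passing to the quotient $D/\Gamma_\tau \cong S \times \R$ yields the stated complete $\Gamma_\tau$-invariant surface.

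To produce the foliation I would study the dependence on $k$. A comparison argument using the monotone dependence of the right-hand side $F$ on $k$ shows that the leaves $\Sigma_k$ are pairwise disjoint and nested. Continuity in $k$, together with a degree or implicit-function argument, then shows that the leaves vary continuously and that as $k$ ranges over $(0, \infty)$ they sweep out all of $D$; this makes $K$ a well-defined function whose level sets are exactly the $\Sigma_k$. For the convexity of $K$ I would transfer the statement back through the tube-domain correspondence into a convexity property of the associated potential, and verify it by a differential inequality along the foliation: the essential point is that the affine time coordinate is convex because the leaves are convex surfaces whose normal spreading rate is controlled by the determinant appearing in the Monge--Amp\`ere operator.

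The step I expect to be the main obstacle is the existence argument, specifically obtaining a priori estimates strong enough both to pass to the limit and to pin down the asymptotic behaviour at $\pa D$. In the quasi-divisible case this is compounded by the cusp (parabolic) ends, where the cone geometry degenerates and the interior Monge--Amp\`ere estimates must be supplemented by barriers adapted to the invariant directions of the parabolic elements; this is exactly the point where admissibility of $\tau$ enters decisively. A secondary difficulty is the convexity of $K$: whereas the nestedness of the leaves is a soft maximum-principle consequence, convexity is a quantitative statement that requires the precise structure of the Monge--Amp\`ere operator, and I expect it to be the most calculation-intensive part of the proof.
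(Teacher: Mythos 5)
Your overall skeleton coincides with the paper's: rewrite the CAGC condition as the Monge--Amp\`ere equation $\det\D^2u=k^{-\frac23}w_\Omega^{-4}$ (Prop.\@ \ref{prop_mongeampere}), transport the problem through the tube-domain correspondence to a Dirichlet problem on $\Omega$ with boundary datum $\phi_\mu$ determined by $D$, get $\Gamma_\tau$-invariance of $\Sigma_k$ from uniqueness of the solution, and obtain the foliation and the convexity of $K$ from the dependence on the parameter. (Where you re-derive existence by exhaustion and Pogorelov estimates, the paper simply imports solvability, monotonicity and concavity in $t$ from \cite{nie-seppi}, and gets both the foliation and the convexity of $K$ at once from the equivalence in Prop.\@ \ref{prop_foliation}, rather than from a degree argument and a separate differential inequality.)

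However, there is a genuine gap in your completeness step, and it sits exactly at the point that makes this theorem not a corollary of \cite{nie-seppi}. You claim that ``the $C$-null planes foliating $\pa D$ furnish the natural upper and lower barriers that force the limit surface to be complete.'' Null support planes are affine barriers: they confine the surface to $D$, but they cannot force every supporting plane of the limit surface to be $C$-spacelike, equivalently cannot force $\|\D u\|\to+\infty$ at $\pa\Omega$ (the incomplete surfaces in Example \ref{example_legendre} are trapped inside the light cone and still fail completeness). What is needed is a barrier with infinite inner derivatives, concretely the inequality $u\leq\env{\phi}_\mu+\eps\, w_\Omega$, and in the quasi-divisible setting this inequality \emph{cannot} be produced by local comparison at the boundary: $\pa\P(C)$, hence $\pa\Omega$, is only $\C^{1,\alpha}$, so the interior/exterior circle conditions underlying all such local barrier constructions (and underlying the main theorem of \cite{nie-seppi}) fail at generic boundary points. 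The paper's substitute is group-theoretic, not analytic: the inequality is established on a compact set meeting every $\Lambda$-orbit of $\Omega\setminus\widetilde{U}$ and then propagated to all of $\Omega\setminus\widetilde{U}$ by $\Lambda'$-invariance of both graphs (Lemma \ref{lemma_covariance}). You also misplace the difficulty: the cusps are in fact the \emph{good} points, since by Benoist--Hulin (Lemma \ref{lemma_benoisthulin}) $\Omega$ satisfies the exterior circle condition at parabolic fixed points, so the PDE result Thm.\@ \ref{thm_NS} \ref{item_thmmain2} applies directly there; it is at the remaining, generic boundary points that purely analytic barriers break down and invariance must take over. Admissibility of $\tau$ enters earlier---to guarantee the invariant boundary datum exists at all (Thm.\@ \ref{thm_intro3} \ref{item_d1})---not in these estimates. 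Without this invariance mechanism, your exhaustion argument produces a solution, but you cannot conclude it is complete, nor asymptotic to $\pa D$.
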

Here, \emph{affine $(C,k)$-surfaces} are a particular class of convex surfaces with CAGC $k$, whose supporting planes are $C$-spacelike (see \S \ref{subsec_affineck} for details). The main result of \cite{nie-seppi} is a  statement similar to Theorem \ref{thm_intro2}, for $C$-regular domains without any group action assumed, but instead assuming that the planar convex domain $\P(C)$ satisfies the \emph{interior circle condition} at every boundary point. When $C$ is quasi-divisible, $\pa \P(C)$ is known to have at most $\C^{1,\alpha}$-regularity \cite{Benoist_I, Guichard}, and the condition is not satisfied. 

Our main motivation for establishing Theorems \ref{thm_intro1} and \ref{thm_intro2} is to produce affine $3$-manifolds which generalize maximal globally hyperbolic flat spacetimes:
\begin{corx}\label{coro_intro}
Let $C\subset\R^3$ be a proper convex cone quasi-divisible by a torsion-free group $\Gamma< \SL(3,\R)$ and denote the surface $\P(C)/\Gamma$ by $S$. Let $\tau\in Z^1(\Gamma,\R^3)$ be an admissible cocycle and $D$ be a $C$-regular domain preserved by $\Gamma_\tau$. Then there is a homeomorphism from the affine manifold $M:=D/\Gamma_\tau$ to $S\times\R_+$, such that each slice $S\times\{k\}$ is a locally strongly convex surface with CAGC $k$ with respect to the affine structure on $M$, and the projection to the $\R_+$-factor is a locally $\log$-convex function on $M$ with respect to the affine structure.
\end{corx}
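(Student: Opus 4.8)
The plan is to assemble Theorems~\ref{thm_intro1} and~\ref{thm_intro2}, the only genuinely new content being that the relevant structures descend correctly to the quotient. First I would observe that $M:=D/\Gamma_\tau$ is naturally an affine manifold: since $\Gamma_\tau<\SL(3,\R)\ltimes\R^3$ acts on $D$ by affine transformations and, by Part~\ref{item_intro13} of Theorem~\ref{thm_intro1}, this action is free and properly discontinuous, the affine structure of $\A^3$ restricts to $D$ and descends to $M$; the same part gives a homeomorphism $M\cong S\times\R$. Thus $M$ already has the expected topological type, and the task is to upgrade this into a homeomorphism $M\cong S\times\R_+$ that is adapted to the CAGC foliation and records the curvature value $k$ in the $\R_+$-factor.

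Next I would push the output of Theorem~\ref{thm_intro2} down to $M$. Each $\Sigma_k$ is preserved by $\Gamma_\tau$ and $(\Sigma_k)_{k>0}$ foliates $D$, so the leaves descend to embedded surfaces $\bar\Sigma_k:=\Sigma_k/\Gamma_\tau\subset M$ foliating $M$, and the convex function $K\colon D\to\R$ descends to $\bar K\colon M\to\R$ with $\bar K|_{\bar\Sigma_k}=\log k$. Because every element of $\Gamma_\tau$ is a \emph{special} affine transformation, it preserves affine volume, hence preserves both local strong convexity and the affine Gaussian curvature; therefore each $\bar\Sigma_k$ is a locally strongly convex surface with CAGC $k$ with respect to the affine structure of $M$. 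For the same reason convexity along affine segments is preserved, so $\bar K$ is locally convex on $M$.

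I would then construct the homeomorphism $M\cong S\times\R_+$ in the required form. Fixing a $\Gamma_\tau$-invariant auxiliary Riemannian metric on $D$ and flowing along the suitably normalized gradient of $K$ (transverse to the foliation, since $K$ is strictly monotone across leaves) yields a $\Gamma_\tau$-equivariant homeomorphism $D\cong\Sigma_{k_0}\times\R_+$ for a fixed $k_0$, which descends to $M\cong\bar\Sigma_{k_0}\times\R_+$. Comparing with $M\cong S\times\R$ from Theorem~\ref{thm_intro1}\ref{item_intro13} gives $\bar\Sigma_{k_0}\times\R_+\cong S\times\R$; since these are surfaces times a line and each leaf $\bar\Sigma_{k_0}$ is aspherical with $\pi_1(\bar\Sigma_{k_0})\cong\Gamma_\tau\cong\Gamma=\pi_1(S)$, the topological rigidity of surfaces upgrades this to $\bar\Sigma_{k_0}\cong S$. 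Composing yields a homeomorphism $M\cong S\times\R_+$ sending each slice $S\times\{k\}$ to the leaf $\bar\Sigma_k$, so each slice is locally strongly convex with CAGC $k$ by the previous paragraph. Finally, under this homeomorphism the projection to the $\R_+$-factor is the function $k=\exp(\bar K)$, whose logarithm $\bar K$ is locally convex; hence the projection is locally log-convex, as claimed. I expect the main obstacle to be this third step: Theorems~\ref{thm_intro1} and~\ref{thm_intro2} separately furnish the $S\times\R$ topology and the CAGC foliation, but realizing both simultaneously by a single homeomorphism—trivializing the foliation $\Gamma_\tau$-equivariantly and verifying that each leaf is genuinely homeomorphic to $S$—requires care, after which the curvature parameterization and the log-convexity follow formally.
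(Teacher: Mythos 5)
Your first two steps are fine: descending the affine structure, the leaves $\Sigma_k$, and the convex function $K$ to $M=D/\Gamma_\tau$ is unproblematic given Theorems \ref{thm_intro1} and \ref{thm_intro2}. The genuine gap is in your third step, and it is twofold. First, the flow: Theorem \ref{thm_intro2} only asserts that $K$ is \emph{convex}, hence continuous and locally Lipschitz; it is not asserted to be $\C^1$, and the foliation $(\Sigma_k)$ is not asserted to be transversally smooth (each leaf is smooth, but nothing is claimed about regularity in the transverse direction). So ``flowing along the suitably normalized gradient of $K$'' is not well defined: a merely continuous transverse field has integral curves by Peano existence, but without uniqueness there is no flow and hence no homeomorphism $D\cong\Sigma_{k_0}\times\R_+$. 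Second, and more fatally, the identification $\bar\Sigma_{k_0}\cong S$: asphericity together with $\pi_1(\bar\Sigma_{k_0})\cong\pi_1(S)$ only gives a homotopy equivalence, and for \emph{open} surfaces --- which is exactly the quasi-divisible case that is the point of this corollary --- homotopy equivalent surfaces need not be homeomorphic: the once-punctured torus and the thrice-punctured sphere both have fundamental group free of rank two. Nor does $\bar\Sigma_{k_0}\times\R\cong S\times\R$ rescue the argument, since both of those products are homeomorphic to the interior of a genus-two handlebody; the $\R$-factor destroys precisely the end data you need. So ``topological rigidity of surfaces'' is available only when $S$ is closed, i.e.\@ only in the divisible case.

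The paper avoids both problems by never descending and then re-trivializing: the product structure is built upstairs in the dual tube domain, where it is manifest and automatically equivariant. In the proof of Theorem \ref{thm_main} \ref{item_main3}, the region $T^-\subset\Omega\times\R$ below $\gra{\env{\phi}_\mu}$ is foliated by the graphs $\gra{u_t}$, and $(x,t)\mapsto(x,u_t(x))$ is a homeomorphism $\Omega\times\R\cong T^-$ (Claim 1 in the proof of Prop.\@ \ref{prop_homeo}) which is equivariant, with $\Gamma^*_\tau$ acting as $\Gamma^*$ on $\Omega$ and trivially on the $t$-coordinate, because automorphisms of $\Omega\times\R$ preserve vertical lines and preserve each leaf. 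Transporting this by the equivariant homeomorphism $F:T^-\to D$ of Prop.\@ \ref{prop_homeo} yields simultaneously: proper discontinuity of the action on $D$, the quotient homeomorphism $D/\Gamma_\tau\cong(\Omega/\Gamma^*)\times\R\cong S\times\R$, the fact that each slice is the quotient of the leaf $\gra{u_t^*}=\Sigma_{e^t}$ (each leaf is an equivariant graph over $\Omega$, so its quotient is $S$ with no $\pi_1$ argument needed), and compatibility with $K$. No transverse regularity of $K$ beyond continuity is ever used. If you want to keep your ``black box'' structure, you would have to replace the gradient flow and the $\pi_1$ argument by this dual vertical trivialization --- which means re-entering the proof of Theorem \ref{thm_intro2} rather than quoting only its statement.
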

For the future light cone $C_0$, an affine $(C_0,k)$-surface is just a spacelike, future-convex surface in $\R^{2,1}$ with classical Gaussian curvature $k^\frac{2}{3}$ (or intrinsic curvature $-k^\frac{2}{3}$; \cf \cite[Prop.\@ 3.7]{nie-seppi}). When $S$ is closed (\ie when $C$ is divisible by $\Gamma$), some of the statements in Corollary \ref{coro_intro} are contained in the works Barbot-B\'eguin-Zeghib \cite{bbz} for $C_0$ and Labourie \cite[\S 8]{labourie} for general $C$.

\subsection*{Moduli space of admissible deformations}
Two natural questions that one might ask while looking at the above results are: what are all the quasi-divisible proper convex cones in $\R^3$, and what are all their admissible affine deformations? 

It follows from results of Marquis \cite{marquis_fourier} that in the above setting, the orientable surface $S=\P(C)/\Gamma$ is homeomorphic to either the torus or the surface $S_{g,n}$ of negative Euler characteristic with genus $g$ and $n$ punctures. Since the case of torus is simple 
(see Remark \ref{remark_torus}), we will only look into the above questions for $S_{g,n}$.

 
The first question essentially asks for a description of the moduli space $\mathcal{P}_{g,n}$ of finite-volume convex projective structures on $S_{g,n}$. For $n=0$, Goldman \cite{goldman_convex} first provided a Fenchel-Nielsen type description, then Labourie \cite{labourie} and Loftin \cite{loftin_amer} obtained a holomorphic one. The two descriptions are generalized by Marquis \cite{marquis} and Benoist-Hulin \cite{benoist-hulin}, respectively, to $n\geq1$. These results imply that $\mathcal{P}_{g,n}$ is homeomorphic to a ball of dimension $16g-16+6n$. 

The second question is concerned with the moduli space $\widehat{\mathcal{P}}_{g,n}$ of representations $\rho:\pi_1(S_{g,n})\to\SL(3,\R)\ltimes\R^3$ such that the $\SL(3,\R)$-component of $\rho$ defines a finite-volume convex projective structure and the $\R^3$-component is given by an admissible cocycle. With elementary arguments, we show: 
\begin{propx}\label{prop_intro1}
For any $g,n\geq0$ with $2-2g-n<0$, $\widehat{\mathcal{P}}_{g,n}$ is a topological vector bundle over $\mathcal{P}_{g,n}$ of rank $6g-6+2n$.
\end{propx}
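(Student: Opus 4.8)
The plan is to exhibit $\widehat{\mathcal{P}}_{g,n}$ as the total space of a continuously varying family of twisted cohomology groups over $\mathcal{P}_{g,n}$ and to pin down the fibre dimension. Let $\pi\colon\widehat{\mathcal{P}}_{g,n}\to\mathcal{P}_{g,n}$ be the map forgetting the $\R^3$-component, sending a representation with holonomy $\rho_0\colon\pi_1(S_{g,n})\to\SL(3,\R)$ and translation cocycle $\tau$ to the convex projective structure defined by $\rho_0$. Fix a point of $\mathcal{P}_{g,n}$ represented by some $\rho_0$ with image a quasi-divisible group $\Gamma$, and write $V=\R^3$ as a $\pi_1$-module via $\rho_0$. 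Conjugating $\Gamma_\tau$ by a translation replaces $\tau$ by $\tau$ plus a coboundary, while conjugating by $\SL(3,\R)$ moves the base point; since $\rho_0$ acts (strongly) irreducibly, its centralizer in $\SL(3,\R)$ is trivial, so the fibre $\pi^{-1}([\rho_0])$ is exactly the space of admissible classes in $H^1(\pi_1,V)=Z^1(\Gamma,\R^3)/B^1$. The first step is to recognize admissibility as a cohomological condition: each puncture gives a peripheral subgroup $P_i=\langle A_i\rangle\cong\mathbb{Z}$ with $A_i$ parabolic, and by Marquis's classification of finite-volume ends \cite{marquis} each $A_i$ is unipotent with a single Jordan block, so the $A_i$-invariant plane $W_i$ equals $\operatorname{im}(A_i-\Id)=B^1(P_i,V)$. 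As $W_i$ is $A_i$-invariant, a cocycle satisfies $\tau(A_i)\in W_i$ iff $\tau(A_i^k)\in W_i$ for all $k$; hence admissibility is equivalent to the vanishing of the restriction of $[\tau]$ to each $H^1(P_i,V)$, and the fibre is
$$
H^1_{\adm}(\pi_1,V):=\ker\Big(H^1(\pi_1,V)\xrightarrow{\,r\,}\bigoplus_{i=1}^n H^1(P_i,V)\Big).
$$

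The heart of the argument is the dimension count. Using irreducibility I would first record that $H^0(\pi_1,V)=V^{\pi_1}=0$ and that the coinvariants $V_{\pi_1}=0$, since a nontrivial irreducible module has neither invariant vectors nor trivial quotients. The Euler characteristic of $S_{g,n}$ with coefficients in $V$, combined with $H^0=0$ and $H^2=0$ (for $n\geq1$ the surface retracts to a graph; for $n=0$ Poincar\'e duality gives $H^2\cong V_{\pi_1}=0$), yields $\dim H^1(\pi_1,V)=-\chi(S_{g,n})\dim V=3(2g-2+n)$. Each peripheral term has $\dim H^1(P_i,V)=\dim V-\rank(A_i-\Id)=3-2=1$, so $\bigoplus_i H^1(P_i,V)\cong\R^n$. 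The key point is that $r$ is surjective: passing to the compact oriented surface with boundary $\bar S$ homotopy equivalent to $S_{g,n}$, the long exact sequence of the pair $(\bar S,\partial\bar S)$ ends in $H^1(\bar S;V)\xrightarrow{\,r\,}H^1(\partial\bar S;V)\to H^2(\bar S,\partial\bar S;V)$, and Lefschetz duality identifies $H^2(\bar S,\partial\bar S;V)\cong H_0(\bar S;V)=V_{\pi_1}=0$. Therefore $r$ is onto, its image has dimension $n$, and
$$
\dim H^1_{\adm}(\pi_1,V)=\dim H^1(\pi_1,V)-n=3(2g-2+n)-n=6g-6+2n,
$$
which specializes to $6g-6$ in the closed case $n=0$, where there is no admissibility condition.

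Finally I would upgrade this fibrewise computation to a topological vector bundle. The zero cocycle is always admissible, so $\pi$ is surjective and admits a zero section. Over local charts of $\mathcal{P}_{g,n}$ the holonomies $\rho_0$ can be chosen to vary continuously, hence so do the cochain complexes $C^\bullet(\pi_1,V_{\rho_0})$, their cocycle and coboundary subspaces, the peripheral invariant planes $W_i$, and the restriction maps $r$. Since $\dim H^1_{\adm}$ is constant (depending only on $g$ and $n$), all relevant linear maps have locally constant rank, so the admissible classes assemble into the fibres of a topological vector bundle of rank $6g-6+2n$, which is precisely $\widehat{\mathcal{P}}_{g,n}\to\mathcal{P}_{g,n}$. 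I expect the genuine obstacles to be the two points where irreducibility and cusp geometry are essential: identifying admissibility with vanishing peripheral cohomology requires that the cusp holonomies be \emph{single} Jordan blocks, so that $W_i$ coincides with $B^1(P_i,V)$ rather than strictly containing it, and the surjectivity of $r$ rests on $H^2(\bar S,\partial\bar S;V)\cong V_{\pi_1}=0$; both follow from strong irreducibility of quasi-divisible holonomy together with Marquis's description of the ends, after which establishing local triviality is a routine continuity argument.
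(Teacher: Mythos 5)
Your fibrewise computation is correct, and it takes a genuinely different route from the paper's. The paper never passes through twisted cohomology of the surface: it works upstairs on $\Hom_0(\pi_1(S_{g,n}),\SL(3,\R))$, writes an admissible cocycle in terms of its values on a standard generating set, encodes the defining relator as an explicit linear map
$$
\mathcal{L}_\rho:(\R^3)^{2g}\oplus V_1(\rho)\oplus\cdots\oplus V_n(\rho)\to\R^3,
$$
and proves surjectivity of $\mathcal{L}_\rho$ (and injectivity of the coboundary map) by a case analysis on the middle eigenvalues of the hyperbolic elements $\rho(\alpha_1),\rho(\beta_1)$, using that their axes are distinct; this exhibits the admissible deformations as a rank-$(6g-6+2n)$ quotient bundle over $\Hom_0$, which is then pushed down to $\mathcal{P}_{g,n}$. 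Your replacement of that case analysis --- identifying admissibility with vanishing of the restriction to peripheral cohomology (legitimate because the cusp holonomies are single Jordan blocks, so the preserved plane equals $\operatorname{im}(A_i-\Id)$, i.e.\@ the peripheral coboundaries), then computing $\dim H^1$ by an Euler-characteristic count and getting surjectivity of $r$ from $H^2(\bar S,\partial\bar S;V)\cong V_{\pi_1}=0$ --- is cleaner and explains conceptually why the answer is $\dim H^1-n=6g-6+2n$. The irreducibility facts you invoke do hold for quasi-divisible holonomies, and the reduction of admissibility to the chosen peripheral generators (every parabolic of $\Gamma$ is conjugate to a power of one of them, and the condition propagates under powers and conjugation via the cocycle identity) is the same reduction the paper uses implicitly.

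The genuine gap is in your last paragraph. The assertion that ``over local charts of $\mathcal{P}_{g,n}$ the holonomies $\rho_0$ can be chosen to vary continuously'' is precisely the claim that the quotient map $\Hom_0(\pi_1(S_{g,n}),\SL(3,\R))\to\mathcal{P}_{g,n}$ admits local continuous sections, and this is not a routine continuity statement: $\mathcal{P}_{g,n}$ is a quotient by the conjugation action of the \emph{non-compact} group $\SL(3,\R)$, and for a non-proper action such a quotient can fail to be Hausdorff, let alone locally trivial. This is exactly where the paper does its real topological work: Lemma \ref{lemma_proper} proves that the conjugation action on $\Hom_0$ is free and proper --- the proof is not formal, using strict convexity of $\pa\P(C)$ to put the six fixed points of $\rho(\alpha_1),\rho(\beta_1)$ in general position and then a Cartan-decomposition argument --- and Lemma \ref{lemma_bundle} (a descent statement resting on H.\@ Cartan's criterion for principal bundles) then converts an equivariant vector bundle over $\Hom_0$ into a vector bundle over the quotient. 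Your construction needs the same input twice: once to get local sections at all, and once more on chart overlaps, where gluing your trivializations requires the conjugating element relating two local sections to depend continuously on the point --- again the content of ``free and proper $\Rightarrow$ principal bundle.'' So either you prove properness as the paper does, or you cite a properness/slice theorem for the conjugation action on irreducible representations; without one of these, the passage from the (correct) fibre computation to a topological vector bundle does not go through.
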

Note that the rank equals the dimension of the Teichm\"uller space $\mathcal{T}_{g,n}$. In fact, $\mathcal{T}_{g,n}$ is naturally contained in  $\mathcal{P}_{g,n}$, and the part of $\widehat{\mathcal{P}}_{g,n}$ over $\mathcal{T}_{g,n}$ can be identified with the tangent bundle of $\mathcal{T}_{g,n}$.
While Mess \cite{mess} has introduced several new ideas to study this part, generalization of his methods to $\widehat{\mathcal{P}}_{g,n}$ is an interesting task not yet undertaken.

\subsection*{Affine space with a cone vs.\@ convex tube domain}
The main tool in the proof of Theorems \ref{thm_intro1} and \ref{thm_intro2}, also used implicitly in \cite{nie-seppi}, is a correspondence between the following two  geometries:
\begin{itemize}
	\item The geometry of $\A^3$ with respect to the group $\Aut(C)\ltimes \R^3$ of special affine transformations whose linear parts preserve a given proper convex cone $C\subset\R^3$.
	\item The geometry of a \emph{convex tube domain} in $\R^3$, \ie an open set of the form $T=\Omega\times\R$ with $\Omega$ a bounded planar convex domain, with respect a group $\Aut(T)$ of certain projective transformations, which we call the \emph{automorphisms} of $T$. 
\end{itemize}
We refer to \S \ref{subsec_auto} below for the precise definition of automorphisms of $T$, only mentioning here that they can be roughly understood as the projective transformations $\Phi\in\PGL(4,\R)$ preserving $T$ which are given by matrices of the form
$$
\matrix{B&\\[3pt]\transp{Y}&1},\quad B\in\SL(3,\R),Y\in\R^3
$$
(multiplying $B$ and the lower-right $1$ by different constants yields a projective transformation preserving $T$ but not in $\Aut(T)$), and the groups in the two geometries are isomorphic to each other through
$$
\matrix{A&X\\[3pt]&1}\longleftrightarrow \matrix{	\transp{\!A}^{-1}&\\[3pt]\transp{(A^{-1}X)}&1},
$$
where the first matrix represents the element $(A,X)$ in $\Aut(C)\ltimes\R^3$. As one might guess from the appearance of the inverse transpose $\transp{\!A}^{-1}$, the convex domain $\Omega$ in the second geometry can be identified with a section of the cone $C^*\subset\R^{3*}$ dual to the cone $C$ from the first geometry.

When $C$ is the future light cone $C_0\subset\R^{2,1}$, the first geometry is just that of the Minkowski space $\R^{2,1}$, whereas the second is the \emph{co-Minkowski geometry} of the round tube (see \eg \cite{struvestruve,dancigerGT,barbot-fillastre,fillastreseppisurvey,DMS}). We proceed to give more details for general $C$.

A \emph{polarization} of the affine space $\A^3$ is a choice of a point $p_\infty$ on the plane at infinity $P_\infty:=\mathbb{RP}^3\setminus\A^3\cong\RP^2$. Given $p_\infty$, we define the \emph{dual polarized affine space} $\mathbb{A}^{3*}$ as the space of affine planes in $\A^{3}$ not containing $p_\infty$ at infinity, which is an affine chart in the dual projective space $\mathbb{RP}^{3*}=\{\text{affine planes in $\A^3$}\}\cup\{P_\infty\}$, with polarization just given by $P_\infty$.

Given a proper convex cone $C\subset\R^3$ whose projectivization $\P(C)\subset P_\infty$ contains $p_\infty$, every $C$-spacelike plane in $\A^3$ can be viewed as a point in $\A^{3*}$. We will show:
\begin{propx}\label{prop_intro2}
	The set $\hsp{C}:=\big\{\text{$C$-spacelike planes in $\A^3$}\big\}$ is a convex tube domain in $\A^{3*}$, whose underlying planar convex domain is a section of the dual cone $C^*$. The natural action of $\Aut(C)\ltimes\R^3$ on $\hsp{C}$ induces an isomorphism between $\Aut(C)\ltimes\R^3$ and the automorphism group of $\hsp{C}$ as a convex tube domain.
\end{propx}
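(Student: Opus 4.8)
The plan is to prove Proposition \ref{prop_intro2} by making the linear-algebraic correspondence sketched before the statement fully explicit, and then checking the two assertions---that $\hsp{C}$ is a convex tube domain with the claimed base, and that $\Aut(C)\ltimes\R^3$ maps isomorphically onto $\Aut(\hsp{C})$---separately. First I would set up coordinates adapted to the polarization: choose an affine chart on $\A^3$ in which $p_\infty$ is the vertical direction at infinity, so that a plane in $\A^3$ not passing through $p_\infty$ is a graph $\{x_3 = \ell(x_1,x_2)\}$ for an affine function $\ell$, giving the identification of $\A^{3*}$ with the space of such $\ell$, i.e.\@ with $\R^3$ via the coefficients of $\ell$. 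A plane with covector $\xi\in\R^{3*}$ is $C$-spacelike precisely when its defining covector evaluates positively (or lies in the appropriate open half of $C^*$) on the cone $C$; I would translate the definition of $C$-spacelike from \S \ref{subsec_cspacelike} into the condition that the linear part of $\xi$ lies in the interior of the dual cone $C^*$.

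Once this dictionary is in place, the first assertion becomes almost tautological: the linear part of $\ell$ is constrained to lie in $\interior C^*$, a cone that meets the hyperplane ``$p_\infty$-component $=1$'' in a bounded convex set $\Omega$ (bounded precisely because $C$ is a \emph{proper} cone, so $C^*$ has nonempty interior and contains no line), while the constant term of $\ell$ is a free real parameter. Thus $\hsp{C}\cong\Omega\times\R$ as claimed, with $\Omega$ a section of $C^*$. For the second assertion I would take the natural action of $(A,X)\in\Aut(C)\ltimes\R^3$ on planes in $\A^3$ (push-forward of the plane), compute its effect on the graphing covector $\xi=(\text{linear part},\text{constant})$, and verify that in the $(\xi_{\mathrm{lin}},\xi_0)$-coordinates it is given exactly by the block matrix $\left(\begin{smallmatrix}\transp{\!A}^{-1}&0\\\transp{(A^{-1}X)}&1\end{smallmatrix}\right)$ displayed in the text; this is the routine calculation that realizes $\Aut(C)\ltimes\R^3$ inside $\PGL(4,\R)$ and shows it preserves $\hsp{C}$, hence lands in $\Aut(\hsp{C})$. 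Injectivity of this homomorphism is immediate since the block matrix determines $A$ and $X$.

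The substantive step is surjectivity onto $\Aut(\hsp{C})$: I must show every automorphism of the tube domain $\hsp{C}$ arises this way. Using the definition of $\Aut(T)$ from \S \ref{subsec_auto}, an automorphism is represented by a matrix $\left(\begin{smallmatrix}B&0\\\transp{Y}&1\end{smallmatrix}\right)$ with $B\in\SL(3,\R)$ preserving the tube. The key geometric constraint is that preserving the tube $T=\Omega\times\R$ forces $B$ to preserve the vertical $\R$-foliation and to preserve $\Omega$ up to the projective structure on the base; concretely, $B$ must preserve the base cone $C^*$, so that its inverse-transpose preserves $C$, and this is what lets me read off an $A\in\Aut(C)$ with $\transp{\!A}^{-1}$ equal to the block of $B$ governing $\Omega$, while the lower row $\transp{Y}$ recovers the translation $X$. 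I would argue that the tube structure rigidifies the projective transformation enough to eliminate any extra freedom: the only projective maps of the form above preserving $\Omega\times\R$ are the ones compatible with the product structure, so the ambiguity noted parenthetically in the text (rescaling $B$ and the lower-right entry by different constants) is exactly the discrepancy between ``preserves $T$'' and ``lies in $\Aut(T)$,'' and is removed by the normalization built into the definition of $\Aut(T)$.

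The main obstacle I anticipate is precisely this surjectivity/rigidity step: it requires understanding exactly which projective transformations of $\RP^{3*}$ preserve the convex tube $\hsp{C}$, and showing that the tube's bounded-base-times-line structure leaves no room for transformations outside the block-triangular normal form. This hinges on the proper-convexity of $C$ (equivalently, boundedness of $\Omega$) to pin down the vertical direction $p_\infty$ as the unique direction along which $\hsp{C}$ is a complete affine line, so any automorphism must fix $p_\infty$ and hence be block-triangular; the remaining verification that the base action lands in $\Aut(C^*)\cong\Aut(C)$ then follows from the fact that automorphisms of the tube restrict to projective automorphisms of its base $\Omega$. I would isolate the precise definition of $\Aut(T)$ and the structure of $\hsp{C}$ as the two inputs, and expect the proof to reduce to a clean statement that automorphisms of a convex tube domain are exactly the projective maps respecting its canonical product decomposition.
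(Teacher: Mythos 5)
Your proposal is correct, and in outline it follows the same three-stage plan as the paper: the coordinate identification of $\hsp{C}$ with $\Omega\times\R$ (Prop.\@ \ref{prop_hspace} together with \S \ref{subsec_omega}), the explicit computation of the $(A,X)$-action producing the block matrix of \eqref{eqn_prop_iso} (Prop.\@ \ref{prop_action}), and a rigidity-plus-normalization argument identifying $\Aut(\Omega\times\R)$ (Prop.\@ \ref{prop_iso}). Where you genuinely differ is the rigidity step. The paper linearizes: it regards $\Omega\times\R$ as a section of the four-dimensional cone $(C^*\cup(-C^*))\times\R\subset\R^4$, so that projective maps preserving the tube correspond to linear maps preserving this cone, and the block-triangular normal form with $B(C^*)=\pm C^*$ becomes an elementary matrix computation; this linearization also produces, as a by-product, the explicit coordinate formula \eqref{eqn_prop_iso_proof2} reused later (e.g.\@ in Lemma \ref{lemma_covariance}). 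You instead argue projectively: boundedness of $\Omega$ forces every complete affine line contained in the tube to be vertical, hence any projective transformation preserving the tube permutes the vertical lines and fixes their common ideal point $p_T$, which is exactly the block-triangular condition; the induced map on the space of vertical lines then lies in $\Aut^\pm(\Omega)$, and conditions \ref{item_auto1}--\ref{item_auto3} of Definition \ref{def_auto} kill the residual freedom in $\lambda$ and in the sign and orientation of $B$, just as in the paper. Your route buys something the paper glosses over: the claim that a projective transformation preserving $T$ fixes $p_T$ is only asserted after the definition of $p_T$ in \S \ref{subsec_auto}, and it does require an argument of your kind, since the naive reasoning that $\Phi$ preserves $\overline{T}\cap P_\infty$ is invalid ($\Phi$ need not preserve the plane at infinity); your complete-affine-lines argument, applied to closures of lines inside $T$, is a correct proof. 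Two small repairs are needed in the write-up. First, boundedness of the section $\Omega$ of $C^*$ does not follow from properness of $C$ alone (a section of a proper cone can be unbounded); it uses the standing hypothesis that $p_\infty\in\P(C)$, i.e.\@ that the vertical direction lies in $C$ --- the same hypothesis that guarantees $C$-spacelike planes are non-vertical in the first place. Second, the sentence where you say an automorphism ``is represented by a matrix $\left(\begin{smallmatrix}B&0\\ \transp{Y}&1\end{smallmatrix}\right)$'' presupposes the normal form you are trying to establish; your final paragraph has the correct logical order (fix $p_T$ first, then read off the form), so the proof should be organized that way.
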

\begin{table}[ht]
	\begin{TAB}(c)[4pt]{|c|c|}{|c|c|c|c|c|c|c|c|c|c|}
		\textbf{objects in $\A^3$}& \textbf{objects in $\A^{3*}$}\\
		point& plane not containing $P_\infty$ at infinity\\
		plane not containing $p_\infty$ at infinity & point \\
		$C$-spacelike plane & point in $\Omega\times\R$\\
		$C$-null plane & point in $\pa\Omega\times\R$  \\
		\parbox[l]{5.3cm}{affine transformation in $\Aut(C)\ltimes\R^3$}&
		\parbox[l]{4cm}{automorphism of $\Omega\times\R$}
		\\
		\parbox[l]{6.6cm}{subgroup of $\Aut(C)\ltimes\R^3$ of the form $\Gamma_\tau$, where $\Gamma$ quasi-divides $C$ and $\tau$ is admissible}&
		\parbox[l]{7.3cm}{subgroup of $\Aut(\Omega\times\R)$ projecting bijectively to a group quasi-dividing $\Omega$ s.t.\@ every element with parabolic projection has fixed point in $\pa\Omega\times\R$}
		\\
		\parbox[l]{6.4cm}{$C$-regular (resp.\@ $(-C)$-regular) domain $D$}&
		\parbox[l]{6.2cm}{graph in $\pa\Omega\times\R$ of a lower (resp.\@ upper) semicontinuous function $\phi$ on $\pa\Omega$}
		\\
		\parbox[l]{6.7cm}{smooth, strongly convex, complete $C$-convex surface generating a $C$-regular domain $D$ (\cf \S \ref{subsec_cregular})}
		&
		\parbox[l]{6.7cm}{
			graph in $\Omega\times\R$ of a function $u\in\SC_0(\Omega)$ (\cf \S \ref{subsec_cregular}) whose boundary value $\phi$ corresponds to $D$ (\cf the last row)}\\
		affine $(C,k)$-surface (\cf \S \ref{subsec_affineck})&
		\parbox[l]{6.6cm}{
			graph in $\Omega\times\R$ of some $u\in\SC_0(\Omega)$ satisfying  $\det\D^2u=k^{-\frac{2}{3}}w_\Omega^{-4}$ (Prop.\@ \ref{prop_mongeampere})   
		}
	\end{TAB}
	\caption{Dictionary between the two geometries.}
	\label{table_dic}
\end{table}
In summary, we have obtained the first few rows of  Table \ref{table_dic} (where we write the convex tube domain $\hsp{C}$ as $\Omega\times\R$).
The rest of the table will be explained in \S \ref{sec:cregular}. This dictionary enables us to deduce Theorems \ref{thm_intro1} and \ref{thm_intro2} from the following dual results about convex tube domains:
\begin{thmx}\label{thm_intro3}
	Let $\Omega$ be a bounded convex domain in $\R^2\subset\mathbb{RP}^2$ quasi-divisible by a torsion-free group $\Lambda$ of projective transformations, and  $\Lambda'<\Aut(\Omega\times\R)$ be a group of automorphisms of the convex tube domain $\Omega\times\R$
	which projects to $\Lambda$ bijectively. Then 
	\begin{enumerate}[label=(\arabic*)]
		\item\label{item_d1} The following conditions are equivalent to each other:
		\begin{enumerate}[label=(\alph*)]
		\item\label{item_d11} every element of $\Lambda'$ with parabolic projection in $\Lambda$ has a fixed point in $\pa\Omega\times\R$; 
		 \item\label{item_d12} there exists a continuous function $\phi\in\C^0(\pa\Omega)$ with graph $\gra{\phi}\subset\pa\Omega\times\R$  preserved by $\Lambda'$;
		 \item\label{item_d13} there exists a lower semicontinuous function $\widehat{\phi}:\Omega\to\R\cup\{+\infty\}$, which is not constantly $+\infty$, with graph preserved by $\Lambda'$;
		 \end{enumerate}
	\item\label{item_d1.5} Suppose these conditions are fulfilled. Then the function $\phi$ in \ref{item_d12} is unique. On the other hand, the function $\widehat{\phi}$ in \ref{item_d13} is unique and equals $\phi$ only if $\Omega$ is divisible by $\Lambda$. Otherwise, all the $\widehat{\phi}$'s can be described as follows. Let $\mathcal{F}\subset\pa\Omega$ be the set of fixed points of parabolic elements in $\Lambda$ and pick $p_1,\cdots,p_n\in\mathcal{F}$ such that $\mathcal{F}$ is the disjoint union of the orbits $\Lambda.p_j$, $j=1,\cdots,n$. For each $\mu=(\mu_1,\cdots,\mu_n)\in\R_{\geq0}^n$, let $\phi_\mu:\pa\Omega\to\R$ be the function with graph preserved by $\Lambda'$ such that 
	$$\phi_\mu(p_j)=\phi(p_j)-\mu_j\ \text{ for all $j$;}\ \ \phi_\mu=\phi\ \text{ on $\pa\Omega\setminus\mathcal{F}$}
	$$ 
	(with $\phi$ from \ref{item_d12}). Then $\phi_\mu$ is lower semicontinuous, and every $\widehat{\phi}$  in \ref{item_d13} equals some $\phi_\mu$.
    \item\label{item_d2} Let $w_\Omega\in\C^0(\overline{\Omega})\cap\C^\infty(\Omega)$ be the unique convex solution (established by Cheng-Yau \cite{chengyau1}, see Thm.\@ \ref{thm_chengyau} below) to the Dirichlet problem of Monge-Amp\`ere equation
    $$
    \begin{cases}
    \det\D^2w=w^{-4}\\
    w|_{\pa\Omega}=0
    \end{cases}
    $$
 Then for any $\phi_\mu$ from Part \ref{item_d1.5} and any $t>0$, the Dirichlet problem
		$$
		\begin{cases}
		\det\D^2u=e^{-t}w_\Omega^{-4}\\
		u|_{\pa\Omega}=\phi_\mu
		\end{cases}
		$$
	  has a unique convex solution $u_t\in \C^\infty(\Omega)$. It has the following properties:
	  \begin{itemize}
	   \item $\|\D u_t\|$ tends to $+\infty$ on the boundary of $\Omega$;
	  	\item the graph $\gra{u_t}\subset\Omega\times\R$ is  preserved by $\Lambda'$;
        \item for every fixed $x\in\Omega$, $t\mapsto u_t(x)$ is a strictly increasing concave function, with value tending to $-\infty$ and $\overline{\phi}_\mu(x)$ as $t$ tends to $-\infty$ and $+\infty$, respectively. 
	\end{itemize}	
	\end{enumerate}
\end{thmx}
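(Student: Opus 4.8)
The plan is to reduce the whole statement to an equivariance equation for heights over $\Omega$ and $\pa\Omega$, and then to feed the resulting boundary data into Cheng--Yau Monge--Amp\`ere theory. First I would record the precise form of the $\Lambda'$-action: since an automorphism in $\Aut(\Omega\times\R)$ fixes the vertical direction at infinity (forced by the matrix shape recalled in \S\ref{subsec_auto}), each $\gamma'\in\Lambda'$ preserves the fibration $\Omega\times\R\to\Omega$ and acts on every fiber by an orientation-preserving affine map, so $\gamma'.(x,t)=(\gamma x,\,a_\gamma(x)\,t+b_\gamma(x))$ with $a_\gamma>0$; here $(a_\gamma)$ is a multiplicative cocycle and $(b_\gamma)$ an affine cocycle over the $\Lambda$-action on $\Omega$. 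The graph of a height function $v$ is then $\Lambda'$-invariant precisely when $v(\gamma x)=a_\gamma(x)v(x)+b_\gamma(x)$, and the three parts of the theorem become, respectively, the solvability of this equation on $\pa\Omega$ and on $\Omega$, its essential uniqueness, and its solvability under an additional Monge--Amp\`ere constraint.

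For Part \ref{item_d1} the decisive point is the behaviour of $a_\gamma$ at boundary fixed points: an eigenvalue computation shows $a_\gamma(p)\ne1$ when $p$ is the fixed point of a hyperbolic $\gamma$, while $a_\gamma(p)=1$ when $\gamma$ is parabolic (hence unipotent). Thus over a hyperbolic fixed point the fiber map has a unique fixed height, whereas over a parabolic fixed point it is the translation $t\mapsto t+b_\gamma(p)$, which has a fixed point exactly when $b_\gamma(p)=0$; this identifies \ref{item_d11} with the vanishing of the parabolic translation parts. I would then prove \ref{item_d11}$\Rightarrow$\ref{item_d12} by setting $\phi$ equal to the forced height at each hyperbolic fixed point, checking consistency and equivariance, and extending continuously to $\pa\Omega$; \ref{item_d12}$\Rightarrow$\ref{item_d13} by extending $\phi$ inward as an lsc $\Lambda'$-invariant function (e.g.\@ via its convex envelope over $\Omega$); and \ref{item_d13}$\Rightarrow$\ref{item_d11} by pushing a point where $\widehat\phi$ is finite by powers of a parabolic $\gamma'$ and using lower semicontinuity on the resulting $\Lambda'$-invariant closed graph to force $b_\gamma(p)=0$. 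The main obstacle here is the continuous extension of $\phi$ across the parabolic fixed points: on the thick part of $\Omega/\Lambda$ one has compactness and uniform hyperbolic dynamics, but near a cusp one must combine admissibility ($b_\gamma(p)=0$) with the finite-volume Hilbert geometry to show that the forced heights converge to a finite limit.

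Part \ref{item_d1.5} is then mostly bookkeeping on top of this dynamical picture. The continuous $\phi$ is unique because its values on the dense set of hyperbolic fixed points are forced and $\phi$ is continuous. For an lsc invariant $\widehat\phi$, the heights over hyperbolic fixed points are again forced, so $\widehat\phi$ agrees with $\phi$ off the parabolic locus $\mathcal{F}$; over a parabolic fixed point $p_j$ the fiber map is the identity, so the height there is free, constrained only by lower semicontinuity, which together with continuity of $\phi$ forces $\widehat\phi(p_j)\le\phi(p_j)$. Writing $\mu_j=\phi(p_j)-\widehat\phi(p_j)\ge0$ and propagating the chosen value along the $\Lambda$-orbit by the cocycle reproduces exactly the functions $\phi_\mu$; single-valuedness holds because the stabilizer of $p_j$ is parabolic and acts trivially on the fiber, and in the divisible case $\mathcal{F}=\varnothing$ forces $\widehat\phi=\phi$.

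Finally, for Part \ref{item_d2} I would invoke the Cheng--Yau existence and uniqueness theory (Theorem \ref{thm_chengyau}) for $\det\D^2u=e^{-t}w_\Omega^{-4}$ on the bounded convex domain $\Omega$ with boundary data $\phi_\mu$. The density blows up like $w_\Omega^{-4}$ at $\pa\Omega$, which both forces completeness of the graph, giving $\|\D u_t\|\to\infty$ at the boundary, and makes the problem $\Lambda'$-equivariant, since $w_\Omega^{-4}$ transforms under $\Aut(\Omega)$ exactly so as to make the equation invariant (it is the Cheng--Yau affine-sphere density); uniqueness of the convex solution then yields the $\Lambda'$-invariance of $\gra{u_t}$. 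Differentiating the equation once and twice in $t$ gives linearized elliptic equations for $\partial_t u_t$ and $\partial_t^2 u_t$ with zero boundary values, and the maximum principle yields that $t\mapsto u_t(x)$ is strictly increasing and concave; comparison with $t$-independent barriers gives $u_t\to-\infty$ as $t\to-\infty$, while as $t\to+\infty$ the density degenerates to $0$ and the increasing family converges to the largest convex function with boundary data $\phi_\mu$, namely $\overline\phi_\mu$. I expect the genuine analytic difficulty to lie here, in establishing solvability and the sharp boundary asymptotics when $\phi_\mu$ is merely lsc and possibly unbounded at the parabolic points; I would handle this by approximating $\phi_\mu$ by continuous data and passing to the limit using the a priori interior estimates of Cheng--Yau together with the comparison principle.
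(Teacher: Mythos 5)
Your reduction of $\Lambda'$-invariance to the fiber cocycle $\gamma'.(x,t)=(\gamma x,\,a_\gamma(x)t+b_\gamma(x))$, $a_\gamma>0$, and the eigenvalue dichotomy ($a_\gamma(p)\neq1$ at boundary fixed points of hyperbolic elements, $a_\gamma(p)=1$ at parabolic ones) are correct, and attacking Parts \ref{item_d1}--\ref{item_d1.5} through boundary dynamics is genuinely different from the paper, which never considers forced heights at fixed points. But your route has gaps exactly at its load-bearing steps. For \ref{item_d11}$\Rightarrow$\ref{item_d12} you define $\phi$ on the hyperbolic fixed points and then would ``extend continuously to $\pa\Omega$''; that extension (even its boundedness) \emph{is} the entire difficulty, and you name the obstacle without giving an argument --- this is the analogue of Mess's theorem and of the Anosov property, and it is not a soft compactness statement even on the thick part. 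The paper bypasses it: Lemma \ref{lemma_smoothinvariant} builds a smooth $\Lambda'$-invariant function $v$ on $\Omega$ by a partition of unity on $\Omega/\Lambda$ (using the invariant affine plane of each parabolic element near the cusps, Cor.\@ \ref{coro_fixedaffinefunction}), and Prop.\@ \ref{prop_ab} observes that $v+Mw_\Omega$ is convex and $v-Mw_\Omega$ is concave with the \emph{same} boundary value, which is therefore simultaneously lower and upper semicontinuous, hence continuous. In \ref{item_d1.5}, the inference ``heights over hyperbolic fixed points are forced, so $\widehat\phi$ agrees with $\phi$ off $\mathcal{F}$'' is a non sequitur: graph-invariance permits $\widehat\phi=+\infty$ at a hyperbolic fixed point (such points simply do not lie on the graph), and even granting equality on that dense set, lower semicontinuity yields only $\widehat\phi\le\phi$ everywhere; it does not exclude $\widehat\phi<\phi$ at points fixed by no group element. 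Closing this requires either the paper's comparison of the convex envelope of $\widehat\phi-\phi$ with $\pm Mw_\Omega$ on the thick part, propagated by invariance (Prop.\@ \ref{prop_psimu}), or a dynamical substitute (unboundedness of $\sup_\gamma a_\gamma(x)$ at every point of $\pa\Omega\setminus\mathcal{F}$), which you never establish. Likewise, the lower semicontinuity of $\phi_\mu$ is an assertion of the theorem, not a given: the paper proves it by constructing, via Benoist--Hulin's invariant disks (Lemmas \ref{lemma_benoisthulin} and \ref{lemma_parabolicfunction}), a smooth convex invariant function whose boundary value is exactly $\psi_\mu$.

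Part \ref{item_d2} contains the most serious gap. Thm.\@ \ref{thm_chengyau} is the wrong tool: it produces only $w_\Omega$ itself, not solutions with boundary data $\phi_\mu$ and density $e^{-t}w_\Omega^{-4}$; what is needed is Thm.\@ \ref{thm_NS} (Thm.\@ A$'$ of \cite{nie-seppi}), a substantial analytic result that your ``approximate $\phi_\mu$ by continuous data and pass to the limit'' would have to reconstruct (note also that $\phi_\mu$ is bounded, being lower semicontinuous on a compact set and $\le\phi$, so there is no unboundedness issue of the kind you anticipate). Worse, the first bullet point does \emph{not} follow from blow-up of the density: $\int_\Omega e^{-t}w_\Omega^{-4}=+\infty$ only says the gradient image $\D u_t(\Omega)$ has infinite area, whereas $\|\D u_t\|\to+\infty$ at every boundary point is equivalent to $\D u_t(\Omega)=\R^2$ (Lemma \ref{lemma_gradient}); in \cite{nie-seppi} this conclusion is obtained only under the exterior circle condition, which fails at the non-cusp points of $\pa\Omega$ here, the boundary being merely $\C^{1,\alpha}$. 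This is precisely the obstruction the paper flags in the introduction and overcomes using the group action: one first shows $u_t<\env{\phi}_\mu$ strictly, then obtains $u_t\le\env{\phi}_\mu+\eps w_\Omega$ on a compact fundamental region and propagates the inequality over the thick part by $\Lambda'$-invariance (Lemma \ref{lemma_covariance}), forcing infinite inner derivatives off $\mathcal{F}$, while at the parabolic fixed points the exterior circle condition does hold by Lemma \ref{lemma_benoisthulin} and Thm.\@ \ref{thm_NS} applies. Your proposal never invokes the group action in this step, so it cannot close this gap. (Your uniqueness argument for the $\Lambda'$-invariance of $\gra{u_t}$ is sound and essentially the paper's, which verifies equivariance of the equation geometrically via Prop.\@ \ref{prop_mongeampere} and Lemma \ref{lemma_actiongraph}.)
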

In the last part, the function $\env{\phi}_\mu$ is the \emph{convex envelope} of $\phi_\mu$. We refer to \S \ref{subsec_convexfunction} for its definition and for the precise meaning of the boundary value $u|_{\pa\Omega}$ when $u$ is a convex function on $\Omega$.

Theorem \ref{thm_intro3} gives a picture similar to the familiar ones from quasi-Fuchsian hyperbolic manifolds and globally hyperbolic anti-de Sitter spacetimes, see Figure \ref{figure_quasifuchsian}.
\begin{figure}[h]
	\includegraphics[width=9cm]{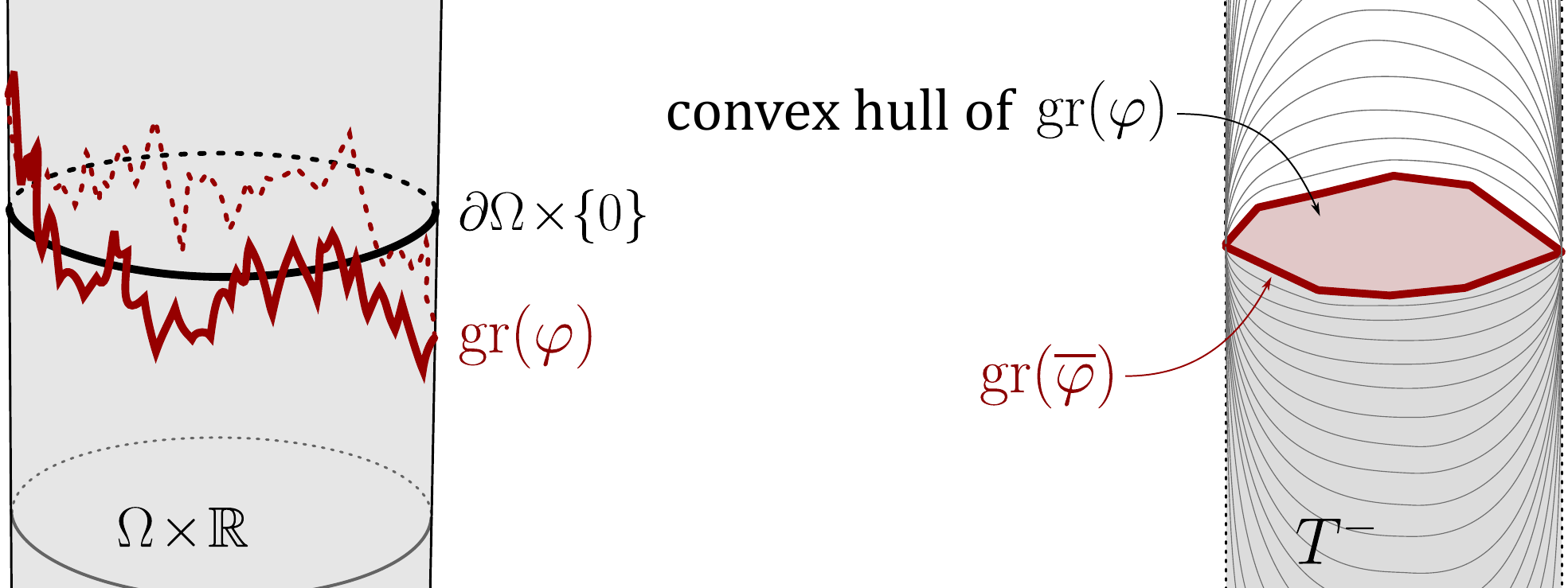}
	\caption{Invariant Jordan curve on the boundary of the convex tube domain (left) and a section view of its convex hull (right).}
	\label{figure_quasifuchsian}
\end{figure}
Namely, the group $\Lambda<\Aut(\Omega)$, viewed as automorphisms of $\Omega\times\R$, is analogous to a Fuchsian group acting on $\mathbb{H}^3$ or $\mathbf{AdS}_3$, which preserves a slice in the $3$-space as well as the boundary circle of the slice. After a perturbation, we obtain the group $\Lambda'$ which preserves the Jordan curve $\gra{\phi}$ on the boundary. The graphs of the family of convex functions $(u_t)$ produced by Part \ref{item_d2} then gives a canonical foliation for the lower part $T^-$ of $\Omega\times\R$ outside of the convex hull of the curve, while the upper part is foliated in the same way by graphs of concave functions. A new phenomenon here is that when the surface $\Omega/\Lambda$ has $n$ punctures, for each $\mu\in\R_{\geq0}^n$, there is a subdomain $T_\mu^-$ of $T^-$ preserved by $\Lambda'$, namely the part of $\Omega\times\R$ underneath $\gra{\env{\phi}_\mu}$, which is obtained by modifying the Jordan curve at parabolic fixed points and making it discontinuous. The theorem asserts that every $T_\mu^-$ is foliated in the same way as $T^-$.

%
 
\begin{remark*}
	The correspondence between the two geometries holds in any dimension $d\geq2$, despite of our restriction to $d=3$ here, and Theorem \ref{thm_intro1} also holds in higher dimensions as long as we restrict to divisible convex cones rather than quasi-divisible ones. We assume $d=3$ in this paper for two reasons. First, the theory of convex projective manifolds of dimension greater than $2$ is less complete than the case of surfaces --  there is no known description of their moduli spaces, and the structure of their ends is more complicated. The second, more irremediable reason is that even in the Minkowski setting, as shown by Bonsante and Fillastre \cite{bonsante-fillastre}, the existence of hypersurfaces with constant Gauss-Kronecker curvature in a regular domain in $\R^{d-1,1}$ is problematic when $d\geq4$, due to the existence of Pogorelov-type non-strictly convex solutions to the underlying Monge-Amp\`ere problem (see also \cite[Remark 8.1]{nie-seppi}). Therefore, Theorem \ref{thm_intro2} cannot be generalized to higher dimensions.
\end{remark*}

\subsection*{Organization of the paper}
We first give more details about the correspondence between the two geometries and prove Prop.\@ \ref{prop_intro2}
in \S \ref{sec:correspondence}, then we explain 
the last three rows of Table \ref{table_dic} in \S \ref{sec:cregular}.
In \S \ref{sec:affine deformation}, we review some backgrounds about quasi-divisible convex cone and affine deformations, then prove Prop.\@ \ref{prop_intro1}. Finally, in \S \ref{sec:last proof}, we prove Thm.\@ \ref{thm_intro3} using results from \cite{nie-seppi} and explain how the other main results of this paper, namely Thm.\@ \ref{thm_intro1}, \ref{thm_intro2} and Cor.\@ \ref{coro_intro}, are deduced from Thm.\@ \ref{thm_intro3}.
\subsection*{Acknowledgments}
We are deeply indebted to Thierry Barbot for the discussions related to the subject of this paper and for his interest and encouragement.
The first author would like to thank Yau Mathematics Sciences Center for the hospitality during the preparation of the paper.

\section{Correspondence between the two geometries}\label{sec:correspondence}
In this section, we prove Proposition \ref{prop_intro2} after giving details about the constructions involved, which correspond to the first few rows of Table \ref{table_dic}. 
\subsection{From $C$-spacelike planes to convex tube domain}\label{subsec_cspacelike}
A convex domain in a vector space or an affine space is said to be \emph{proper} if it does not contain any entire straight line. In a vector space,  a \emph{convex cone} is by definition a convex domain invariant under positive scalings. We henceforth fix a proper convex cone $C\subset\R^3$ and a splitting $\R^3=\R^2\times\R$ such that 
$$
C=\{t(x,1)\mid x\in \Omega^*,\ t>0\}
$$ 
for a bounded convex domain $\Omega^*$ in $\R^2$ containing the origin\footnote{The reason for the asterisk in the notation is that $\Omega^*$ will be the dual of another convex domain $\Omega$ introduced later. Although it seems more natural here to switch the notations for $\Omega^*$ and $\Omega$, this would be inconvenient for our purpose because we mainly work with $\Omega$ in this paper.}. A point in $\R^3$ will often be written in the form $(x,\xi)$, where $x\in\R^2$ and $\xi\in\R$ are the ``horizontal'' and ``vertical'' coordinates, respectively.
Also fix an inner product ``$\,\cdot\,$'' on $\R^2$.

Using the splitting, we endow the affine space $\A^3\cong\R^3$ with the polarization given by the point at infinity corresponding to the vertical lines $\{x\}\times\R$ (see the introduction for the notions of polarization and dual affine space). We then also identify the dual affine space $\A^{3*}$ with $\R^3$ in the following specific way:
\begin{align}
\R^3&\overset\sim\to\A^{3*}:=\big\{\text{non-vertical affine planes in $\A^3$}\big\},\label{eqn_identification_r3}\\
(x,\xi)&\mapsto \text{ graph of the affine function }y\mapsto x\cdot y-\xi.\nonumber
\end{align}

Following \cite{nie-seppi}, we introduce:
\begin{definition}\label{def_cspacelike}
A $2$-dimensional subspace $P_0\subset\R^3$ is said to be \emph{$C$-spacelike} if $P_0$ meets the closure $\overline{C}$ of $C$ only at the origin $0\in\R^3$, and is said to be \emph{$C$-null} if $P_0\cap\overline{C}$ is a subset of $\pa C$ that is not the single point $0$. An affine plane $P\subset \A^3$ is said to be $C$-spacelike/$C$-null if the vector subspace underlying $P$ (\ie the translation of $P$ to $0$) is. We let $\hsp{C}$ and $\hnull{C}$ denote the set of all $C$-spacelike and $C$-null planes in $\A^3$, respectively.
\end{definition}

Since a $C$-spacelike or $C$-null plane does not contain any vertical line, we can view it as a point in $\A^{3*}$, hence view $\hsp{C}$ and $\hnull{C}$ as subset of $\A^{3*}$. In order to describe these subsets, we consider the following set $\Omega\subset\R^2$ derived from the above $\Omega^*$:
$$
\Omega:=\{x\in\R^2\mid x\cdot y<1\ \text{ for all }y\in\overline{\Omega^*}\}.
$$
It can be shown that $\Omega$ is also a bounded convex domain containing the origin. It is in fact the dual of $\Omega^*$ in the sense of Sasaki \cite{sasaki}, which explains the  notation.

Now the 3rd and 4th rows of Table \ref{table_dic} can be stated precisely as:
\begin{proposition}\label{prop_hspace}
The bijection \eqref{eqn_identification_r3} identifies the convex tube domain $\Omega\times\R\subset\R^3$ and its boundary $\pa\Omega\times\R$ with the subsets $\hsp{C}$ and $\hnull{C}$ of $\A^{3*}$, respectively.
\end{proposition}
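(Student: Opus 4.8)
The plan is to reduce the statement to a computation involving the support function of $\Omega^*$, exploiting that the \emph{linear} part of an affine plane is what determines its $C$-spacelike/$C$-null type. Under the identification \eqref{eqn_identification_r3}, the point $(x,\xi)\in\R^3$ corresponds to the affine plane $P_{(x,\xi)}=\{(y,x\cdot y-\xi)\mid y\in\R^2\}$, whose underlying vector subspace, obtained by translating to the origin, is
$$
P_0(x)=\{(y,x\cdot y)\mid y\in\R^2\},
$$
which depends only on $x$ and not on $\xi$. Hence whether $P_{(x,\xi)}$ is $C$-spacelike or $C$-null is governed entirely by the horizontal coordinate $x$, and it suffices to characterize, in terms of $x$, the two conditions $P_0(x)\cap\overline{C}=\{0\}$ and $P_0(x)\cap\overline{C}\subset\pa C$. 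First I would record that, by the description $\overline{C}=\{t(z,1)\mid z\in\overline{\Omega^*},\,t\geq0\}$, a \emph{nonzero} point of $P_0(x)\cap\overline{C}$ is necessarily of the form $t(z,1)$ with $z\in\overline{\Omega^*}$ and $t>0$, and the membership condition $(tz,t)\in P_0(x)$ reads $t(x\cdot z)=t$, i.e.\@ $x\cdot z=1$. Thus $P_0(x)$ meets $\overline{C}$ away from the origin precisely when the value $1$ is attained by the linear functional $z\mapsto x\cdot z$ on $\overline{\Omega^*}$.

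Introducing the support function $h(x):=\max_{z\in\overline{\Omega^*}}x\cdot z$, which is finite and continuous because $\overline{\Omega^*}$ is compact, I can restate the definition of $\Omega$ as $\Omega=\{x:h(x)<1\}$; by continuity and positive homogeneity of $h$ one then gets $\overline{\Omega}=\{h\leq1\}$ and $\pa\Omega=\{h=1\}$. For the spacelike case, since $0\in\Omega^*$ forces the range of $z\mapsto x\cdot z$ on the connected set $\overline{\Omega^*}$ to be an interval containing $0$ with maximum $h(x)\geq0$, the value $1$ is attained if and only if $h(x)\geq1$. Therefore $P_0(x)\cap\overline{C}=\{0\}$ if and only if $h(x)<1$, that is $x\in\Omega$; this identifies $\hsp{C}$ with $\Omega\times\R$.

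The delicate point, and the one I expect to be the main obstacle, is the $C$-null case, where I must show not merely that $P_0(x)$ meets $\overline{C}$ beyond the origin but that every such intersection point lies in $\pa C$ rather than in the open cone $C$. A nonzero intersection point $t(z,1)$ lies in $C$ exactly when $z\in\Omega^*$ (the interior), so the $C$-null condition amounts to requiring that $x\cdot z=1$ hold for some $z\in\pa\Omega^*$ but for \emph{no} $z\in\Omega^*$. I would settle this by analyzing the two regimes separately: if $h(x)>1$, then choosing a boundary maximizer $z^*$ with $x\cdot z^*=h(x)>1$ and applying the intermediate value theorem along the half-open segment $[0,z^*)\subset\Omega^*$ (on which $x\cdot z$ runs from $0$ to values exceeding $1$) produces an \emph{interior} point with $x\cdot z=1$, so $P_0(x)$ meets the open cone and is not $C$-null; whereas if $h(x)=1$, then $x\neq0$ and the nonconstant linear functional $z\mapsto x\cdot z$ attains its maximum over the open convex set $\Omega^*$ only on $\pa\Omega^*$, so $x\cdot z<1$ for all $z\in\Omega^*$ while $x\cdot z=1$ for some boundary point, which is exactly the $C$-null condition. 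Combining the two regimes, $P_0(x)$ is $C$-null if and only if $h(x)=1$, i.e.\@ $x\in\pa\Omega$, which identifies $\hnull{C}$ with $\pa\Omega\times\R$ and completes the proof.
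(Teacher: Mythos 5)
Your proof is correct: reducing to the underlying linear subspace $P_0(x)$, parametrizing $\overline{C}\setminus\{0\}$ as $\{t(z,1)\mid z\in\overline{\Omega^*},\,t>0\}$, and sorting the three regimes $h(x)<1$, $h(x)=1$, $h(x)>1$ via the support function of $\Omega^*$ (which is exactly how the paper defines $\Omega$) settles both the spacelike and the null cases, including the subtle point that for $h(x)>1$ the plane meets the \emph{open} cone and is therefore neither. The paper omits this proof as ``straightforward,'' and your argument is precisely the intended direct computation, so there is nothing to compare beyond noting that it fills the gap completely.
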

The proof is straightforward and we omit it here.

\subsection{$\Omega$ as a section of $-C^*$}\label{subsec_omega}
The convex domain $\Omega$ can be interpreted geometrically as follows. Recall that the \emph{dual cone}
$C^*$ of $C$ is defined as the convex cone in the dual vector space $\R^{3*}$ (the space of all linear forms on $\R^3$) consisting of linear forms with positive values on $\overline{C}\setminus\{0\}$. We extend the inner product on $\R^2$ to $\R^3$ by setting 
$$
(x,\xi)\cdot(y,\eta):=x\cdot y+\xi\eta,
$$ 
and use it to identify the dual vector space $\R^{3*}$  with $\R^3$ itself. Then it is easy to check that 
$\Omega$ is exactly the section of the opposite dual cone $-C^*\subset\R^{3*}\cong\R^3$ by the horizontal plane $\R^2\times\{-1\}$. In other words, we can write
\begin{equation}\label{eqn_oppositecone}
-C^*=\{t(x,-1)\mid x\in\Omega,\ t>0\}.
\end{equation}

The significance of this interpretation is that it identifies $\Omega$ projectively with the convex domain $\P(C^*)$ in $\mathbb{RP}^{2*}:=\P(\R^{3*})$. We let $\Aut(\Omega)$ denote the group of orientation-preserving projective transformations of $\Omega$, which identifies with the subgroup $\Aut(C^*)$ of $\SL(3,\R)$ that preserves the cone $C^*$. Using \eqref{eqn_oppositecone}, one checks that the image of $x\in\Omega$ by the projective action of $B\in\Aut(\Omega)$ has the expression
$$
B.\,x:=\frac{\left[B\matrix{x\\-1}\right]_{1,2}}{\left[-B\matrix{x\\-1}\right]_3} ~,
$$
where $[X]_{1,2}\in\R^2$ and $[X]_3\in\R$ denote the horizontal and vertical components of $X\in\R^3=\R^2\times\R$, respectively. Note that the natural isomorphism $\Aut(C)\cong\Aut(C^*)$ given by the inverse transpose $A\leftrightarrow\transp{\!A}^{-1}$ induces an $\Aut(C)$-action on $\Omega$, sending $x\in\Omega$ to $\transp{\!A}^{-1}\!.\,x$. 
\subsection{The action of $\Aut(C)\ltimes\R^3$ on $C$-spacelike and $C$-null planes.}\label{subsec_actioncspaclike}
We always view the semi-direct product $\SL(3,\R)\ltimes\R^3$ as the group of special affine transformations of $\A^3\cong\R^3$ in such a way that $(A,X)\in\SL(3,\R)\ltimes\R^3$ represents the transformation $Y\mapsto AY+X$. The components $A$ and $X$ are called the \emph{linear part} and \emph{translation part} of the affine transformation, respectively.

The subgroup $\Aut(C)\ltimes\R^3\subset \SL(3,\R)\ltimes\R^3$ of special affine transformations with linear part preserving $C$
naturally acts on the spaces $\hsp{C}$ and $\hnull{C}$ of $C$-spacelike and $C$-null planes. The identifications in Proposition \ref{prop_hspace} translate these actions to a natural action on $\overline{\Omega}\times\R$, with the following coordinate expression:
\begin{proposition}\label{prop_action}
	The action of $\Aut(C)\ltimes\R^3$ on $\hsp{C}\cup\hnull{C}\cong\overline{\Omega}\times\R$ is given by
	$$
	(A,X).(x,\xi)=\frac{1}{\left[-\transp{\!A}^{-1}\matrix{x\\-1}\right]_3}\left(\left[\transp{\!A}^{-1}\matrix{x\\-1}\right]_{1,2}\,,\ \xi+A^{-1}X\cdot\matrix{x\\-1}\right)
	$$
for any $(A,X)\in\Aut(C)\ltimes\R^3$ and  $(x,\xi)\in\overline{\Omega}\times\R$. 
\end{proposition}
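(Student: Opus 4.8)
The plan is to compute the image of an arbitrary $C$-spacelike or $C$-null plane under the affine map $Y\mapsto AY+X$ directly, and then read off the coordinates of the result under the identification \eqref{eqn_identification_r3}. First I would rewrite the plane attached to $(x,\xi)$ as the level set of a linear form. Using the inner product extended to $\R^3$ by $(x,\xi)\cdot(y,\eta)=x\cdot y+\xi\eta$ (as in \S\ref{subsec_omega}), the graph of $y\mapsto x\cdot y-\xi$ is exactly the locus
\[
\big\{v\in\R^3\ \big|\ (x,-1)\cdot v=\xi\big\},
\]
where $(x,-1)$ denotes the column vector with horizontal part $x$ and vertical part $-1$.

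Next I would apply the transformation: a point $v$ lies on the image plane $(A,X).(x,\xi)$ if and only if $A^{-1}(v-X)$ lies on the original plane, i.e. $(x,-1)\cdot A^{-1}(v-X)=\xi$. Transferring $A^{-1}$ onto the covector via the adjoint identity $(x,-1)\cdot A^{-1}w=\big(\transp{\!A}^{-1}(x,-1)\big)\cdot w$ turns this into
\[
\big(\transp{\!A}^{-1}(x,-1)\big)\cdot v=\xi+A^{-1}X\cdot(x,-1),
\]
using symmetry of the inner product for the translation term. Writing $\transp{\!A}^{-1}(x,-1)=(w,s)$ with $w\in\R^2$, $s\in\R$ and $v=(y',\eta')$, this is $w\cdot y'+s\,\eta'=\xi+A^{-1}X\cdot(x,-1)$. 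To put the image back in graph form $\eta'=x'\cdot y'-\xi'$ I divide by $s$, obtaining $x'=-w/s=[\transp{\!A}^{-1}(x,-1)]_{1,2}/[-\transp{\!A}^{-1}(x,-1)]_3$ and $\xi'=(\xi+A^{-1}X\cdot(x,-1))/[-\transp{\!A}^{-1}(x,-1)]_3$, which is precisely the asserted formula.

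Finally I would address well-definedness. Since $A$ preserves $C$, its inverse transpose $\transp{\!A}^{-1}$ preserves the dual cone $C^*$, hence $-C^*$ and its section $\Omega$; thus the image of a $C$-spacelike (resp.\ $C$-null) plane is again $C$-spacelike (resp.\ $C$-null), so by Proposition \ref{prop_hspace} it is non-vertical and $s=[\transp{\!A}^{-1}(x,-1)]_3\neq0$, legitimizing the division. As a consistency check, the horizontal component $x'=-w/s$ coincides with the projective image $\transp{\!A}^{-1}.\,x$ recorded in \S\ref{subsec_omega}, so the only genuinely new content is the vertical shift $\xi'$. I expect no serious obstacle here: this is a direct linear-algebra computation. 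The only points demanding care are the sign convention in identifying a plane with the covector $(x,-1)$ (the $-1$ in the vertical slot) and the transpose--inverse bookkeeping when moving $A^{-1}$ across the inner product; getting these right is exactly what produces the denominator $[-\transp{\!A}^{-1}(x,-1)]_3$ with the correct sign.
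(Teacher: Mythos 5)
Your proposal is correct and follows essentially the same route as the paper: both write the plane attached to $(x,\xi)$ as the level set $\{v\mid (x,-1)\cdot v=\xi\}$, move $A^{-1}$ across the inner product via the transpose identity to get the covector $\transp{\!A}^{-1}(x,-1)$ and the shift $A^{-1}X\cdot(x,-1)$, and then normalize by $\left[-\transp{\!A}^{-1}\matrix{x\\-1}\right]_3$ to return to graph form. Your additional remark on well-definedness (that $\transp{\!A}^{-1}$ preserves $-C^*$, so the third coordinate is nonzero) is a point the paper leaves implicit, but it is the same argument in substance.
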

Comparing with the last paragraph of the previous subsection, one sees that the horizontal component of $(A.X).(x,\xi)$ is exactly $\transp{\!A}^{-1}\!.\,x$, namely the image of $x$ by the projective action of $A$. The reason for this will be clear in \S \ref{subsec_iso} below.
\begin{proof}
We need to determine $(x',\xi'):=(A,X).(x,\xi)$. Under the identification \eqref {eqn_identification_r3}, $(x,\xi)$ corresponds to the plane $P$ that is the graph of the affine function  $y\mapsto x\cdot y-\xi$. Equivalently, we have
$$P=\left\{\matrix{y\\ \eta}\in\R^3\ \middle|\ \matrix{y\\ \eta}\cdot \matrix{x\\ -1}=\xi\right\}.$$
Let us now compute the image of $P$ by $(A,X)$:
\begin{align*}
(A,X).P&=\left\{A\matrix{y\\ \eta}+X\ \middle|\ \matrix{y\\ \eta}\cdot \matrix{x\\ -1}=\xi\right\} \\
&=\left\{A\matrix{y\\ \eta}+X\ \middle|\ \left(A\matrix{y\\ \eta}+X\right)\cdot \left(\transp{\!A}^{-1}\matrix{x\\ -1}\right)=\xi+X\cdot \transp{\!A}^{-1}\matrix{x\\ -1}\right\} \\
&=\left\{\matrix{y'\\ \eta'}\ \middle|\ \matrix{y'\\ \eta'}\cdot \left(\transp{\!A}^{-1}\matrix{x\\ -1}\right)=\xi+A^{-1}X\cdot \matrix{x\\ -1} \right\}.
\end{align*}
Dividing by $\left[-\transp{\!A}^{-1}\matrix{x\\-1}\right]_3$, this shows that $(A,X).P$ corresponds to the pair $(x',\xi')\in \overline\Omega\times\R$ with the required expression
$$x'=\frac{\left[\transp{\!A}^{-1}\matrix{x\\-1}\right]_{1,2}}{\left[-\transp{\!A}^{-1}\matrix{x\\-1}\right]_3}~,\quad \xi'=\frac{\xi+A^{-1}X\cdot\matrix{x\\-1}}{\left[-\transp{\!A}^{-1}\matrix{x\\-1}\right]_3}~.
$$ 	
\end{proof}

\subsection{Automorphisms of convex tube domains}\label{subsec_auto}
We refer to a subset of $\R^3$ of the form $T=\Omega\times\R$ as a \emph{convex tube domain} if $\Omega$ is a bounded convex domain in $\R^2$. Viewing $\R^3$ as an affine chart in $\RP^3$, we are interested in projective transformations $\Phi\in\PGL(4,\R)$ of $\RP^3$ preserving $T$. Note that such a $\Phi$ fixes the point at infinity
$$
p_T:=\overline{T}\cap(\RP^3\setminus\R^3)
$$
of $T$ (where $\overline{T}$ denotes the closure of $T$ in $\RP^3$), which is just the common point at infinity of the vertical lines $\{x\}\times\R\subset\Omega\times\R$.

The space of vertical lines in $T$
naturally identifies with $\Omega$. Since $\Phi$ sends one vertical line to another, it induces a self-mapping of this space, which is actually a projective transformation of $\Omega$ because $\Phi$ sends a line in the space (\ie the set of those $\{x\}\times\R$ with $x$ belonging to a line in $\Omega$) to another line. This gives a projection
$$
\pi:\{\Phi\in\PGL(4,\R)\mid \Phi(T)=T\}\to\Aut^\pm(\Omega),
$$
where we let $\Aut^\pm(\Omega)$ denote the group of projective transformations of $\Omega$, and reserve the notation $\Aut(\Omega)$ for orientation-preserving projective transformations.

\begin{definition}\label{def_auto}
A projective transformation $\Phi$ is said to be an \emph{automorphism} of a convex tube domain $T$ if it preserves $T$ and satisfies the following extra conditions:
\begin{enumerate}[label=(\roman*)]
	\item\label{item_auto1} $\Phi$ does not switch the two ends of $T$. In other words, if we endow each vertical line in $T$ with the upward orientation, then $\Phi$ sends one line to another in an orientation-preserving way.
	\item\label{item_auto2} The projection $\pi(\Phi)$ is in $\Aut(\Omega)$ (\ie $\pi(\Phi)$ is orientation-preserving).
	\item\label{item_auto3} The eigenvalue of $\Phi$ at $p_T$ is $\pm1$.
\end{enumerate} 
Denote the group of all automorphism of $T$ by $\Aut(T)$.
\end{definition}

\begin{remark}
In Condition \ref{item_auto3}, by an \emph{eigenvalue} of $\Phi$, we mean an eigenvalue of a representative $\widetilde{\Phi}\in\GL(4,\R)$ of $\Phi\in\PGL(4,\R)$ with $\det(\widetilde{\Phi})=\pm1$. Since there are two such $\widetilde{\Phi}$'s opposite to each other and neither of them is privileged over the other, this eigenvalue is well defined only up to sign. Condition \ref{item_auto3} rules out, for example, dilations of the $\R$-factor. 
\end{remark}

We will study graphs of (extended-real-valued) functions on $\Omega$ or $\pa\Omega$ as geometric objects in the convex tube domain $\Omega\times\R$ or its boundary $\pa\Omega\times\R$, and will freely use the following basic facts, for any  $\Phi\in\Aut(\Omega\times\R)$:
\begin{itemize}
\item for any function $u$ on $\Omega$ or $\pa\Omega$, the image of the graph $\gra{u}$ by $\Phi$ is again the graph of some function $\widetilde{u}$ on $\Omega$ or $\pa\Omega$;
\item  if $u$ is lower/upper semicontinuous, convex or smooth, so is $\widetilde{u}$;
\item  if two functions $u_1$ and $u_2$ satisfy $u_1\leq u_2$, then we also have $\widetilde{u}_1\leq\widetilde{u}_2$.
\end{itemize}
 Here and below, we denote the graph of any extended-real-valued function $f$ on a set $E\subset\R^2$ by
 $$
 \gra{f}:=\{(x,\xi)\in E\times\R\mid f(x)=\xi\}.
 $$ 

\subsection{The isomorphism $\Aut(C)\ltimes\R^3\cong\Aut(\Omega\times\R)$}\label{subsec_iso}
We now return to the setting of \S \ref{subsec_cspacelike}$\sim$\ref{subsec_actioncspaclike}, where $\Omega$ is induced from a proper convex cone $C\subset\R^3$. Precisely, $\Omega$ is a section of $-C^*$, so that $\Aut(\Omega)$ identifies with $\Aut(C^*)$ (see \S \ref{subsec_omega}).

From the expression in Prop.\@ \ref{prop_action}, one can check that $\Aut(C)\ltimes\R^3$ acts on $\hsp{C}\cong\Omega\times\R$ by automorphisms of the convex tube domain $\Omega\times\R$, hence gives a homomorphism 
\begin{equation}\label{eqn_homo}
\Aut(C)\ltimes\R^3\to\Aut(\Omega\times\R).
\end{equation}
We proceed to show that \eqref{eqn_homo} is an isomorphism, which, together with the framework built in the previous subsections, implies Prop.\@ \ref{prop_intro2} in the introduction.
\begin{proposition}\label{prop_iso}
Let $C$ and $\Omega$ be as in \S \ref{subsec_cspacelike}. Then there is a natural isomorphism 
$$
\Aut(\Omega\times\R)\cong\left\{
\matrix{B&\\\transp{Y}&1}\ \Bigg|\ B\in\Aut(C^*),\ Y\in\R^3
\right\},
$$ 	
through which the homomorphism \eqref{eqn_homo} can be written as
\begin{equation}\label{eqn_prop_iso}
(A,X)\longmapsto\matrix{	\transp{\!A}^{-1}&\\[3pt]\transp{(A^{-1}X)}&1}=\transp{\!\matrix{A&-X\\[3pt]&1}}^{-1}.
\end{equation}
As a result, \eqref{eqn_homo} is an isomorphism, and form the following commutative diagram together with the isomorphism $\Aut(C)\overset\sim\to\Aut(C^*)=\Aut(\Omega)$, $A\mapsto \transp{\!A}^{-1}$ and the natural projections:
$$
\begin{tikzcd}
\Aut(C)\ltimes\R^3 \arrow[r, "\sim"] \arrow[d]
& \Aut(\Omega\times\R) \arrow[d] \\
\Aut(C) \arrow[r, "\sim"]
& \Aut(\Omega)
\end{tikzcd}
$$
\end{proposition}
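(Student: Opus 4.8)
The plan is to realize $\Aut(\Omega\times\R)$ explicitly inside $\PGL(4,\R)$, identify it with the matrix group
\begin{equation*}
G:=\left\{\matrix{B&0\\[3pt]\transp Y&1}\ \middle|\ B\in\Aut(C^*),\ Y\in\R^3\right\},
\end{equation*}
and then read off \eqref{eqn_homo} from the formula in Proposition \ref{prop_action}. Fix homogeneous coordinates $[z:w]$ (with $z\in\R^3$, $w\in\R$) on $\RP^{3*}$ adapted to the tube, so that the first three coordinates give a point $[z]$ of the $\RP^2=\P(\R^3)$ in which $\Omega$ sits as $\P(C^*)=\P(-C^*)$ (\S\ref{subsec_omega}), the point $(x,\xi)\in\A^{3*}$ reads as $[(x,-1):\xi]$, and by Proposition \ref{prop_hspace} the tube $\hsp C=\Omega\times\R$ becomes $\{[z:w]\mid[z]\in\Omega\}$. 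In these coordinates the projection to the space of vertical lines is $[z:w]\mapsto[z]$ and the point at infinity of the tube is $p_T=[0:0:0:1]$.

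The heart of the argument is to show that every $\Phi\in\Aut(\Omega\times\R)$ lies in $G$. Represent $\Phi$ by $\widetilde\Phi\in\GL(4,\R)$. Since $\Phi$ preserves $T$ it fixes $p_T$, so the fourth column of $\widetilde\Phi$ is a multiple of $(0,0,0,1)$ and $\widetilde\Phi$ is block lower triangular,
\begin{equation*}
\widetilde\Phi=\matrix{B&0\\[3pt]\transp Y&\lambda},\qquad B\in\GL(3,\R),\ Y\in\R^3,\ \lambda\neq0,
\end{equation*}
with induced map on the space of vertical lines $\pi(\Phi)=[B]$ acting on $\P(\R^3)\supset\Omega$. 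Now the three conditions of Definition \ref{def_auto} pin down this data in turn. By Condition \ref{item_auto3}, after rescaling $\widetilde\Phi$ to determinant $\pm1$ the eigenvalue $\lambda$ at $p_T$ is $\pm1$; replacing $\widetilde\Phi$ by $-\widetilde\Phi$ if needed we may assume $\lambda=1$, so the bottom-right entry equals $1$ (exactly what excludes vertical dilations) and $\det B=\det\widetilde\Phi=\pm1$. By Condition \ref{item_auto1}, the two ends of $T$ are not interchanged; computing the action along a vertical line and using \eqref{eqn_oppositecone} (elements of $-C^*$ have negative third coordinate), one finds that the ends are preserved precisely when $B(-C^*)=-C^*$, i.e.\@ $B(C^*)=C^*$. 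Finally Condition \ref{item_auto2} requires $[B]=\pi(\Phi)\in\Aut(\Omega)=\Aut(C^*)$; since $\Aut(C^*)\subset\SL(3,\R)$ and $B$ already preserves $C^*$, this forces $\det B=1$ (were $\det B=-1$, the determinant-one representative $-B$ of $[B]$ would send $C^*$ to $-C^*$, so $[B]\notin\Aut(C^*)$). Hence $B\in\Aut(C^*)$ and $\widetilde\Phi\in G$. This reconstruction, in which all three defining conditions of an automorphism are consumed, is the main obstacle; everything else is formal.

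For the converse inclusion $G\subseteq\Aut(\Omega\times\R)$, one checks directly that a matrix $\matrix{B&0\\ \transp Y&1}$ with $B\in\Aut(C^*)$ preserves the tube (since $[B]$ preserves $\Omega$), fixes $p_T$ with eigenvalue $1$ and determinant $\det B=1$, does not interchange the ends (again because $B(-C^*)=-C^*$), and projects to $[B]\in\Aut(\Omega)$; so it satisfies Definition \ref{def_auto}. Because $\Phi$ determines $\widetilde\Phi$ only up to a scalar and exactly one scalar multiple has bottom-right entry $1$, the assignment $\Phi\mapsto\widetilde\Phi$ is a well-defined bijection $\Aut(\Omega\times\R)\to G$; it is a group homomorphism because the product of two matrices with bottom-right entry $1$ again has bottom-right entry $1$. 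This establishes the natural isomorphism $\Aut(\Omega\times\R)\cong G$.

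It remains to identify the homomorphism \eqref{eqn_homo}. Given $(A,X)\in\Aut(C)\ltimes\R^3$, apply the matrix $\matrix{\transp{\!A}^{-1}&0\\[3pt]\transp{(A^{-1}X)}&1}$ to the coordinates $[(x,-1):\xi]$ of a point of the tube and renormalize the first three entries to the section $\{z_3=-1\}$: the resulting horizontal and vertical coordinates are exactly those of $(A,X).(x,\xi)$ given in Proposition \ref{prop_action}. Hence, under the identification $\Aut(\Omega\times\R)\cong G$, the homomorphism \eqref{eqn_homo} is the map displayed in \eqref{eqn_prop_iso}, whose second equality is a direct computation. Finally $A\mapsto\transp{\!A}^{-1}$ is a bijection $\Aut(C)\to\Aut(C^*)$ and, for each fixed $A$, $X\mapsto A^{-1}X$ is a bijection of $\R^3$, so $(A,X)\mapsto(\transp{\!A}^{-1},A^{-1}X)$ is a bijection $\Aut(C)\ltimes\R^3\to G$. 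A bijective homomorphism being an isomorphism, \eqref{eqn_homo} is an isomorphism. The asserted diagram commutes because its two horizontal arrows are both the inverse-transpose map, while the vertical arrows are the natural projections $(A,X)\mapsto A$ and $\matrix{B&0\\ \transp Y&1}\mapsto B$, the latter being $\Phi\mapsto\pi(\Phi)$.
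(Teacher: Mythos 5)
Your proof is correct and takes essentially the same route as the paper: both realize an automorphism of the tube as a block-lower-triangular matrix in $\PGL(4,\R)$, use the three conditions of Definition \ref{def_auto} to pin the representative down to the group $\left\{\matrix{B&\\\transp{Y}&1}\,\middle|\,B\in\Aut(C^*),Y\in\R^3\right\}$, and then identify \eqref{eqn_homo} by comparing with Proposition \ref{prop_action}. The only cosmetic difference is the bookkeeping order: the paper normalizes the representative so that $B\in\SL(3,\R)$ and then reads off $B(C^*)=C^*$ and $\lambda=1$ from conditions \ref{item_auto1}--\ref{item_auto3}, whereas you first use condition \ref{item_auto3} to set $\lambda=1$ and then extract $B(C^*)=C^*$ from \ref{item_auto1} and $\det B=1$ from \ref{item_auto2} --- the content is identical.
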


\begin{proof}
Since $\Omega$ is the section of $-C^*$ by $\R^2\times\{-1\}$,
we can view $\Omega\times\R\cong\Omega\times\{-1\}\times\R$ as the section of the cone
$(C^*\cup(-C^*))\times\R$ in $\R^4$
by the affine plane $\R^2\times\{-1\}\times\R$ in $\R^4$. Thus, a projective transformation $\Phi\in\PGL(4,\R)$ preserves $\Omega\times\R$ if and only if the linear transformation $\widetilde{\Phi}\in\GL(4,\R)$ representing it (defined up to multiplication by scalar matrices) preserves this cone. In this case, the image $(x',\xi')=\Phi.(x,\xi)$  of $(x,\xi)\in\Omega\times \R$ is determined by the condition
\begin{equation}\label{eqn_prop_iso_proof}
\widetilde{\Phi}\matrix{x\\-1\\\xi}\parallelslant \matrix{x'\\-1\\\xi'},
\end{equation}
where ``$\parallelslant$'' denotes the colinear relation of vectors.
But it is elementary to check that $\widetilde{\Phi}$ preserves that cone if and only if it has the form 
$$
\widetilde{\Phi}=
\begin{pmatrix}
B&\\[5pt]
\transp{Y}&\lambda
\end{pmatrix},
\ \text{ with } B(C^*)=\pm C^*.
$$
Here $B\in\GL(3,\R)$, $Y\in\R^3$ and $\lambda\neq0$ are arbitrary. We then deduce from \eqref{eqn_prop_iso_proof} that
\begin{equation}\label{eqn_prop_iso_proof2}
\Phi(x,\xi)=\frac{1}{\left[-B\matrix{x\\-1}\right]_3}\left(\left[B\matrix{x\\-1}\right]_{1,2},\,\lambda\xi+\transp{Y}\matrix{x\\-1}\right).
\end{equation}

Each such $\Phi$ has a unique representative $\widetilde{\Phi}$ as above with $B\in\SL(3,\R)$, so we henceforth let $\widetilde{\Phi}$ only denote this representative. From the expression \eqref{eqn_prop_iso_proof2} of $\Phi.(x,\xi)$, we see that the projection $\pi(\Phi)\in\Aut^\pm(\Omega)$ is the projective transformation of $\Omega$ given by $B$ (\cf the last paragraph of \S \ref{subsec_omega}). So the three defining conditions for $\Phi$ to be an automorphisms of $\Omega\times\R$ are reflected in the components $\lambda$ and $B$ of $\widetilde{\Phi}$ as follows:
\begin{itemize}
	\item Condition \ref{item_auto1} is equivalent to $\lambda>0$.
	\item Condition \ref{item_auto2} is equivalent to $B\in\Aut(C^*)=\Aut(\Omega)$ (otherwise, we have $B(C^*)=-C^*$, and $B$ gives an orientation-reversing projective transformation of $\Omega$).
	\item Condition \ref{item_auto3} is equivalent to $\lambda=\pm1$.
\end{itemize}
Therefore, we conclude that the map
$$
\Aut(\Omega\times\R)\to\left\{
\matrix{B&\\\transp{Y}&1}\ \Bigg|\ B\in\Aut(C^*),\ Y\in\R^3
\right\},\quad \Phi\mapsto\widetilde{\Phi}
$$
is an isomorphism. Then, comparing \eqref{eqn_prop_iso_proof2} with the expression of the $\Aut(C)\ltimes\R^3$-action on $\Omega\times\R$ in Prop.\@ \ref{prop_action}, we see that the homomorphism \eqref{eqn_homo} resulting from the action has the required expression \eqref{eqn_prop_iso}. The proof is completed by the elementary fact that \eqref{eqn_prop_iso} does give an isomorphism and fit into the required diagram.
\end{proof}

The discussions till now are based on defining $\A^{3*}$ as the space of non-vertical affine planes in $\A^3$. But the construction of dual polarized affine spaces is involutive in the sense that we can equally identify $\A^3$ with the space of non-vertical affine planes in $\A^{3*}$, which are exactly the affine planes crossing the convex tube domain $\Omega\times\R\cong\hsp{C}\subset \A^{3*}$.  This identification has the following basic property:
\begin{lemma}\label{lemma_plane}
The identification $\A^3\cong\big\{\text{non-vertical affine planes in $\A^{3*}$}\}$ is equivariant with respect to the actions of $\Aut(C)\ltimes\R^3\cong\Aut(\Omega\times\R)$ in the sense that if $p_1,p_2\in\A^3$  correspond to the affine planes $P_1,P_2\subset\A^{3*}$, respectively, 
and $\Phi\in\Aut(\Omega\times\R)$ sends the section $P_1\cap(\Omega\times\R)$ to $P_2\cap(\Omega\times\R)$,
then the element of $\Aut(C)\ltimes\R^3$ corresponding to $\Phi$ sends $p_1$ to $p_2$.
\end{lemma}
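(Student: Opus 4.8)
The plan is to make the involutive duality explicit and then reduce the statement to the functoriality of projective duality, avoiding the matrix algebra entirely. First I would pin down the second identification by unwinding \eqref{eqn_identification_r3}: a point $p=(y,\eta)\in\A^3$ lies on the plane $Q_q\subset\A^3$ corresponding to $q=(x,\xi)\in\A^{3*}$ exactly when $\eta=x\cdot y-\xi$, that is, when $\xi=x\cdot y-\eta$. Reading this the other way, $p$ determines the non-vertical plane
$$
P_p:=\{(x,\xi)\in\A^{3*}\mid \xi=x\cdot y-\eta\}\subset\A^{3*},
$$
the graph of $x\mapsto x\cdot y-\eta$, and $p\mapsto P_p$ is precisely the involutive identification $\A^3\cong\{\text{non-vertical planes in }\A^{3*}\}$. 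The same computation records the symmetric \emph{incidence relation} $p\in Q_q\iff q\in P_p$, which is the only property of the two dualities I will use.

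Next I would describe the action of $\Phi\in\Aut(\Omega\times\R)$ on all of $\A^{3*}$. If $g=(A,X)\in\Aut(C)\ltimes\R^3$ corresponds to $\Phi$, then by Prop.\@ \ref{prop_action} and Prop.\@ \ref{prop_iso} the map $\Phi$, restricted to the open subset $\hsp{C}\cong\Omega\times\R\subset\A^{3*}$, sends a point $q$ (\ie the plane $Q_q$) to the point dual to the image plane $g(Q_q)$. Since $\Phi$ and the contragredient assignment $q\mapsto[\,g(Q_q)\,]$ are both elements of $\PGL(4,\R)$ agreeing on the nonempty open set $\hsp{C}$, they coincide on all of $\A^{3*}$; in particular $Q_{\Phi(q)}=g(Q_q)$ for every $q$ where both sides are finite.

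With these two ingredients the equivariance is immediate. For $q$ ranging over the dense open set where the intermediate points are finite, using $Q_{\Phi^{-1}(q)}=g^{-1}(Q_q)$ and the incidence relation twice gives
\begin{align*}
q\in\Phi(P_p)
&\iff \Phi^{-1}(q)\in P_p
\iff p\in Q_{\Phi^{-1}(q)}=g^{-1}(Q_q)\\
&\iff g(p)\in Q_q
\iff q\in P_{g(p)},
\end{align*}
which already forces $\Phi(P_p)=P_{g(p)}$; this is exactly the double-dual statement that $\Phi$ acting on planes of $\A^{3*}$ is $g$ acting on points of $\A^3$. To close the lemma I then use that $\Phi$ preserves $\Omega\times\R$, so $\Phi(P_1)\cap(\Omega\times\R)=\Phi\big(P_1\cap(\Omega\times\R)\big)=P_2\cap(\Omega\times\R)$; as a plane is determined by its two-dimensional, nonempty trace on the open set $\Omega\times\R$, this gives $\Phi(P_1)=P_2$. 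Combined with $\Phi(P_1)=P_{g(p_1)}$ we obtain $P_{g(p_1)}=P_2=P_{p_2}$, and injectivity of $p\mapsto P_p$ yields $g(p_1)=p_2$, as required.

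The step that deserves the most care is the ``agree on an open set, hence everywhere'' passage for projective transformations, together with verifying that each trace $P_i\cap(\Omega\times\R)$ is genuinely two-dimensional so that it pins down $P_i$; both are clear because every $P_p$ is a graph over all of $\R^2$ and therefore meets the tube $\Omega\times\R$ in the graph over $\Omega$, an open subset of the plane $P_p$. I expect no real obstacle beyond bookkeeping: a purely computational alternative would substitute the matrix \eqref{eqn_prop_iso} directly, but the incidence argument sidesteps the algebra and exhibits the involutive symmetry transparently.
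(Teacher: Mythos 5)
Your argument is correct in substance, but it is organized differently from the paper's proof, and one equation in it needs to be restated. The paper's proof is three sentences and stays entirely inside the tube: it observes that $P_i\cap(\Omega\times\R)$ is precisely the set of $C$-spacelike planes of $\A^3$ passing through $p_i$ (your incidence relation, specialized to spacelike planes), notes that by the very construction of the isomorphism $\Aut(C)\ltimes\R^3\cong\Aut(\Omega\times\R)$ the hypothesis says that $(A,X)$ sends every $C$-spacelike plane through $p_1$ to a $C$-spacelike plane through $p_2$, and concludes at once, since a point of $\A^3$ is determined by the $C$-spacelike planes containing it. You instead extend the agreement between $\Phi$ and the contragredient action of $g$ from the open tube $\hsp{C}$ to the whole dual projective space $\RP^{3*}$ by rigidity of projective transformations, and then deduce the lemma from biduality together with injectivity of $p\mapsto P_p$. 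Your route costs more machinery (the rigidity step and the double-dual computation, neither of which appears in the paper), but it also proves more: it shows that $\Phi$ and $g$ are globally contragredient to one another, so the point--plane duality is equivariant for \emph{all} non-vertical planes, not merely under the hypothesis on one particular plane.

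The one thing you must repair is the equality $\Phi(P_1)=P_2$ (and likewise $\Phi(P_p)=P_{g(p)}$): as equalities of affine planes these are in general false, because $\Phi$ can send points of $P_1$ to infinity. The paper flags exactly this in the sentence following the lemma: the hypothesis is equivalent to $\Phi(\overline{P}_1)=\overline{P}_2$, where $\overline{P}_i$ is the projective closure, but it does \emph{not} imply $\Phi(P_1)=P_2$. Your proof survives verbatim once every equality of planes is read projectively: your incidence computation, carried out as you say on the dense open set where all relevant points are finite, yields $\Phi(\overline{P}_p)=\overline{P}_{g(p)}$ after taking closures; the trace argument yields $\Phi(\overline{P}_1)=\overline{P}_2$; and since $p\mapsto\overline{P}_p$ is injective (it is the restriction of the bijection between points of $\RP^3$ and planes of $\RP^{3*}$), you still conclude $g(p_1)=p_2$. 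Note also that $g(p_1)$ is automatically a finite point, since $g$ is an affine transformation of $\A^3$, so comparing the affine planes $P_{g(p_1)}$ and $P_{p_2}$ makes sense at the last step.
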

Note that if we let $\overline{P}_i$ be the closure of $P_i$ in $\mathbb{RP}^3$ (\ie  the projective plane formed by $P_i$ and its line at infinity), then the assumption $\Phi\big(P_1\cap(\Omega\times\R)\big)=P_2\cap(\Omega\times\R)$ in the lemma is equivalent to $\Phi(\overline{P}_1)=\overline{P}_2$, but does not imply $\Phi(P_1)=P_2$.
\begin{proof}
Since $\A^{3*}$ is the space of non-vertical affine planes in $\A^3$ and  $\Omega\times\R\subset\A^{3*}$ is the subset of $C$-spacelike planes, we have
$$
P_i\cap(\Omega\times\R)=\big\{\text{$C$-spacelike planes in $\A^3$ passing through $p_i$}\big\}
$$
for $i=1,2$. By construction of the isomorphism $\Aut(C)\ltimes\R^3\cong\Aut(\Omega\times\R)$, the condition $\Phi\big(P_1\cap(\Omega\times\R)\big)=P_2\cap(\Omega\times\R)$ means that the affine transformation $(A,X)\in\Aut(C)\ltimes\R^3$ corresponding to $\Phi$ sends each $C$-spacelike plane passing through $p_1$ to one passing through $p_2$. It follows that $(A,X)$ sends $p_1$ to $p_2$.
\end{proof}

\subsection{$\Aut(\Omega)$ as a subgroup of $\Aut(\Omega\times\R)$}\label{subsec_subgroup}
Although the linear transformation group $\Aut(C)$ is a quotient of the affine transformation group $\Aut(C)\ltimes\R^3$ in a canonical way, the inclusion of the former into the latter depends on our \emph{ad hoc} choice of the identification $\A^3\cong\R^3$ in \S \ref{subsec_cspacelike}, or more precisely, choice of an origin point in $\A^3$, so that $\Aut(C)$ is the subgroup fixing the point. Different choices give rise to conjugate copies of $\Aut(C)$ in $\Aut(C)\ltimes\R^3$. 

The same thing can be said about $\Aut(\Omega)$ and $\Aut(\Omega\times\R)$ through the isomorphism in Prop.\@ \ref{prop_iso}. In fact, our coordinates on $\Omega\times\R$ are so-chosen that the image of $\Aut(C)< \Aut(C)\ltimes\R^3$ in $\Aut(\Omega\times\R)$ is the copy of $\Aut(\Omega)$ which consists of the automorphisms of $\Omega\times\R$ preserving the zero horizontal slice $\Omega\times\{0\}$. More explicitly, by the expression \eqref{eqn_prop_iso_proof2} of the $\Aut(\Omega\times\R)$-action on $\Omega\times\R$ given in the proof of Prop.\@ \ref{prop_iso}, the action of $A\in\Aut(\Omega)$ is
\begin{equation}\label{eqn_zeroslice}
A(x,\xi)=\frac{1}{\left[-A\matrix{x\\-1}\right]_3}\left(\left[A\matrix{x\\-1}\right]_{1,2},\,\xi\right).
\end{equation}
Observe that this action commutes with the involution $(x,\xi)\to(x,-\xi)$.

We will need the following lemma about images of vertically aligned points in $\Omega\times\R$ by the action of $\Aut(\Omega\times\R)$ and the subgroup $\Aut(\Omega)$:
\begin{lemma}\label{lemma_covariance}
Let $(x,\xi_1)$ and $(x,\xi_2)$ be points in $\Omega\times\R$ with the same projection $x\in\Omega$, and $\Phi\in\Aut(\Omega\times\R)$ be an automorphism of $\Omega\times\R$ projecting to $A:=\pi(\Phi)\in \Aut(\Omega)$. Suppose $\Phi(x,\xi_i)=(x',\xi_i')$, $i=1,2$, where $x'=Ax$. Then
	\begin{enumerate}[label=(\arabic*)]
		\item\label{item_covariance1} For any $s\in\R$, we have
		$$
		\Phi(x,(1-s)\xi_1+s\xi_2)=(x',(1-s)\xi_1'+s\xi_2').
		$$
		\item\label{item_covariance2}
	    Viewing $\Aut(\Omega)$ as the subgroup of $\Aut(\Omega\times\R)$ preserving $\Omega\times\{0\}$, we have
		$$
		A(x,\xi_1-\xi_2)=(x',\xi_1'-\xi_2').
		$$
		As a consequence, if $u_1,u_2:\Omega\to\R$ are functions such that the graph $\gra{u_1}\subset\Omega\times\R$ is preserved by $\Phi$, then $\gra{u_2}$ is also preserved by $\Phi$ if and only if $\gra{u_1-u_2}$ is preserved by $A$.
	\end{enumerate}
\end{lemma}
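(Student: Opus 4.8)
The plan is to read both statements directly off the coordinate formula \eqref{eqn_prop_iso_proof2} for the action, exploiting the single structural fact that drives the whole lemma: for a fixed horizontal coordinate $x$, an automorphism $\Phi$ acts on the vertical coordinate $\xi$ by an \emph{affine} map whose coefficients depend only on $x$, while the horizontal output $\pi(\Phi)(x)$ is independent of $\xi$. Concretely, I would fix $\Phi\in\Aut(\Omega\times\R)$ with representative $\widetilde{\Phi}=\matrix{B&\\\transp{Y}&1}$, $B\in\Aut(C^*)$, $Y\in\R^3$, as in Prop.\@ \ref{prop_iso}, and abbreviate $d(x):=\left[-B\matrix{x\\-1}\right]_3$ and $h(x):=\transp{Y}\matrix{x\\-1}$. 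Since for automorphisms one has $\lambda=1$ (Conditions \ref{item_auto1} and \ref{item_auto3} in the proof of Prop.\@ \ref{prop_iso}), formula \eqref{eqn_prop_iso_proof2} reads
$$
\Phi(x,\xi)=\left(Ax,\ \frac{\xi+h(x)}{d(x)}\right),\qquad A=\pi(\Phi),
$$
so that $\xi\mapsto\xi'=\tfrac{1}{d(x)}\xi+\tfrac{h(x)}{d(x)}$ is indeed affine in $\xi$ with $x$-dependent coefficients.

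For Part \ref{item_covariance1}, the point is simply that an affine map carries an affine combination of scalars to the same affine combination of the images. Taking $\xi=(1-s)\xi_1+s\xi_2$ and using $(1-s)+s=1$ to distribute the constant term $h(x)/d(x)$ yields $\xi'=(1-s)\xi_1'+s\xi_2'$, while the horizontal component equals $Ax=x'$ for every value of $\xi$; together these give the stated identity at once.

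For Part \ref{item_covariance2}, the essential observation I would make explicit is that the inclusion $\Aut(\Omega)\hookrightarrow\Aut(\Omega\times\R)$ of \S\ref{subsec_subgroup} sends $A=\pi(\Phi)$ to the automorphism with representative $\matrix{B&\\&1}$ — the \emph{same} block $B$, but with $Y=0$. Hence by \eqref{eqn_zeroslice} the element $A$ acts by $A(x,\xi)=\bigl(Ax,\ \xi/d(x)\bigr)$ with exactly the same denominator $d(x)$. Subtracting the two relations $\xi_i'=(\xi_i+h(x))/d(x)$ then cancels $h(x)$ and gives $\xi_1'-\xi_2'=(\xi_1-\xi_2)/d(x)$, which is precisely the vertical component of $A(x,\xi_1-\xi_2)$; combined with the common horizontal image $x'=Ax$, this is the claimed equality. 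For the consequence I would apply Part \ref{item_covariance2} pointwise: for each $x\in\Omega$ put $\xi_1=u_1(x)$, $\xi_2=u_2(x)$, so that preservation of $\gra{u_1}$ by $\Phi$ forces $\xi_1'=u_1(Ax)$, and Part \ref{item_covariance2} becomes $A\bigl(x,(u_1-u_2)(x)\bigr)=\bigl(Ax,\ u_1(Ax)-\xi_2'\bigr)$. Now $\gra{u_2}$ is preserved by $\Phi$ exactly when $\xi_2'=u_2(Ax)$ for all $x$, whereas $\gra{u_1-u_2}$ is preserved by $A$ exactly when the second coordinate above equals $(u_1-u_2)(Ax)$ for all $x$; these two conditions are visibly the same, which is the desired equivalence.

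I expect no genuine obstacle here; the lemma is essentially a bookkeeping consequence of the affine-in-$\xi$ form of the action. The only step demanding care is Part \ref{item_covariance2}: one must confirm that the matrix $B$ representing $\Phi$ is literally the matrix representing its projection $A=\pi(\Phi)$ under the inclusion $\Aut(\Omega)\hookrightarrow\Aut(\Omega\times\R)$, so that the two actions share the identical denominator $d(x)$ and the translation contribution $h(x)$ cancels cleanly in the difference $\xi_1'-\xi_2'$. This coincidence of denominators is exactly what makes the statement hold and is the crux of the argument.
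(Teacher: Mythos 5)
Your proof is correct and takes essentially the same approach as the paper's: both read Part \ref{item_covariance1} off the affineness in $\xi$ of the coordinate formula \eqref{eqn_prop_iso_proof2}, and prove Part \ref{item_covariance2} by direct comparison with the expression \eqref{eqn_zeroslice} for the action of $A$. You merely make explicit some details the paper leaves to the reader, namely the cancellation of the translation term in the difference $\xi_1'-\xi_2'$ and the pointwise derivation of the statement about graphs.
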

\begin{proof}
The map from the vertical line $\{x\}\times\R$ to $\{x'\}\times\R$ induced by $\Phi$ is an affine transformations because it is a projective transformation fixing the point at infinity, or alternatively, because of the expression
$$
\Phi(x,\xi)=\frac{1}{\left[-A\matrix{x\\-1}\right]_3}\left(\left[A\matrix{x\\-1}\right]_{1,2},\,\xi+\transp{Y}\matrix{x\\-1}\right)
$$
(see the proof of Prop.\@ \ref{prop_iso}). Part \ref{item_covariance1} follows as a consequence because for any affine transformation $f:\R\to\R$ we have $f\big((1-s)\xi_1+s\xi_2\big)=(1-s)f(\xi_1)+sf(\xi_2)$. Part \ref{item_covariance2} can be verified directly by comparing this expression of $\Phi(x,\xi)$ and that of $A(x,\xi)$ given above in \eqref{eqn_zeroslice}.
\end{proof}

\section{$C$-regular domains and affine $(C,k)$-surfaces}\label{sec:cregular}
In the section, we first give some background materials on convex analysis, then we review the theory of $C$-regular domains, $C$-convex surfaces and affine $(C,k)$-surfaces developed in \cite{nie-seppi}, and explain the correspondences in the last three rows of Table \ref{table_dic}.
\subsection{Convex functions}\label{subsec_convexfunction}
In this paper, a lower semicontinuous function is assumed to take values in $\R\cup\{+\infty\}$ if not otherwise specified. Let $\LC(\R^2)$ denote the space of lower semicontinuous, convex functions on $\R^2$ that are not constantly $+\infty$. Given $u\in\LC(\R^2)$, if the \emph{effective domain}
$$
\dom{u}:=\{x\mid u(x)<+\infty\}
$$ 
of $u$ has nonempty interior $U:=\interior\dom{u}$, then the values of $u$ on $\pa U$ (hence the values on the whole $\R^2$) are determined by the restriction $u|_U$, because given any $x_0\in\pa U$, it can be shown that
\begin{equation}\label{eqn_boundary value}
u(x_0)=\liminf_{U\ni x\to x_0}u(x)=\lim_{s\to0^+}u((1-s)x_0+sx_1)\in(-\infty,+\infty]
\end{equation}
for any $x_1\in U$ (see \cite[\S 4.1]{nie-seppi}). 

Therefore, given a convex domain $U\subset \R^2$ and a convex function $u:U\to\R$, we define the \emph{boundary value} of $u$ as the function on $\pa U$ whose value at $x_0\in\pa U$ is the liminf or limit in \eqref{eqn_boundary value}, which are equal and independent of $x_1\in U$. 
We slightly abuse the notation for restrictions and denote this function by $u|_{\pa U}$. By \cite[Prop.\@ 4.1]{nie-seppi}, the extension of $u$ to $\R^2$ given by $u|_{\pa U}$ and by setting the value to be $+\infty$ outside of $\overline{U}$ is an element of $\LC(\R^2)$. This gives a canonical way of viewing every convex function on a convex domain as an element of $\LC(\R^2)$, which also explains the notation $u|_{\pa U}$.

In this setting, given $x_0\in\pa U$, we say that $u$ has \emph{infinite inner derivatives} at $x_0$ if either $u(x_0)=+\infty$ or $u(x_0)$ is finite but
\begin{equation}\label{eqn_inner}
\lim_{s\to0^+}\frac{u(x_0+s(x_1-x_0))-u(x_0)}{s}=-\infty
\end{equation}
for any $x_1\in U$. Note that the fraction is an increasing function in $s\in(0,1]$ by convexity of $u$, hence the limit exists in $[-\infty,+\infty)$. We refer to \cite[\S 4]{nie-seppi} for more properties of this definition, especially the fact that if \eqref{eqn_inner} holds for one $x_1\in U$, then it holds for all $x_1\in U$.


Now fix a \emph{bounded} convex domain $\Omega\subset\R^2$. For any function $\psi:\pa\Omega\to\R\cup\{+\infty\}$ that is bounded from below and is not constantly $+\infty$, we define the \emph{convex envelope} $\env{\psi}$ of $\psi$ as the function on $\R^2$ given by
$$
\env{\psi}(x):=\sup\{a(x)\mid \text{ $a:\R^2\to\R$ is an affine function with $a|_{\pa\Omega}\leq \psi$}\}.
$$
The reason for the assumptions on $\psi$ in the definition is that otherwise it would yield constant functions $-\infty$ or $+\infty$, which are not interesting.

The convex envelope $\env{\psi}$ has the following fundamental properties.
\begin{itemize}
	\item $\env{\psi}$ belongs to $\LC(\R^2)$.
	\item $\env{\psi}|_{\pa\Omega}\leq\psi$ on $\pa\Omega$, and the equality holds everywhere if and only if $\psi$ is lower semicontinuous and restricts to a convex function on any line segment in $\pa\Omega$ (see \cite[Lemma 4.6]{nie-seppi}).
	\item $\dom{\env{\psi}}$ is the convex hull of $\{x\in\pa\Omega\mid \env{\psi}(x)<+\infty\}$ in $\R^2$ (see \cite[Prop.\@ 4.8]{nie-seppi}). In particular, if $\psi$ only takes values in $\R$, then $\dom{\env{\psi}}=\overline{\Omega}$.
	\item $\env{\psi}$ is pointwise no less than any function in $\LC(\R^2)$ majorized by $\psi$ on $\pa\Omega$ (see \cite[Cor.\@ 4.5]{nie-seppi}). In particular, for any convex $u:\Omega\to\R$ with boundary value $u|_{\pa\Omega}\leq\psi$, we have $u\leq \env{\psi}$ in $\Omega$.
\end{itemize}

By the first two properties, the subset of $\LC(\R^2)$ formed by all convex envelopes can be understood as follows. Denote
$$
\LC(\pa\Omega):=
\left\{u:\pa\Omega\to\R\cup\{+\infty\}\ \Bigg|\ \parbox[l]{8cm}{$u$ is lower semicontinuous and not constantly $+\infty$; the restriction of $u$ to any line segment in $\pa\Omega$ is convex}\right\}~.
$$ 
Then the assignment $\phi\mapsto\env{\phi}$ is a bijection from $\LC(\pa\Omega)$ to the subset of $\LC(\R^2)$ consisting of all functions of the form $\env{\psi}$, and its inverse is just the restriction map $u\mapsto u|_{\pa\Omega}$.

We will freely use the fact that the constructions of boundary values and convex envelopes are covariant with respect to automorphisms of the convex tube domain $\Omega\times\R$ (see \S \ref{subsec_auto}) in the sense that
	\begin{itemize}
		\item 
		if $u_1,u_2:\Omega\to\R$ are convex functions such that $\Phi\in\Aut(\Omega\times\R)$ brings $\gra{u_1}$ to $\gra{u_2}$, then the action of $\Phi$ on $\pa\Omega\times\R$ brings $\gra{u_1|_{\pa\Omega}}$ to $\gra{u_2|_{\pa\Omega}}$;
		\item 
		if $\psi_1,\psi_2:\pa\Omega\to\R\cup\{+\infty\}$ are bounded from below and not constantly $+\infty$, such that the action of $\Phi\in\Aut(\Omega\times\R)$ on $\pa\Omega\times\R$ brings $\gra{\psi_1}$ to $\gra{\psi_2}$, then $\Phi$ also brings $\gra{\env{\psi}_1}$ to $\gra{\env{\psi}_2}$.
	\end{itemize} 
The first bullet point can be seen from definition \eqref{eqn_boundary value} of boundary values. The second follows from the definition of convex envelopes and the fact that automorphisms of $\Omega\times\R$ send graphs of affine functions to graphs of affine functions.

\subsection{Legendre transform}\label{subsec_legendre}
The \emph{Legendre transform} of $u\in\LC(\R^2)$ is by definition the function $u^*\in\LC(\R^2)$ given by
$$
u^*(y):=\sup_{x\in\R^2}(x\cdot y-u(x)).
$$
It is a fundamental fact that the Legendre transformation $u\mapsto u^*$ is an involution on $\LC(\R^2)$ (see \cite[\S 4.5]{nie-seppi}). 
If $u$ is an $\R$-valued convex function only defined on a convex domain $U\subset\R^2$, we define its Legendre transform $u^*$ by viewing $u$ as an element of $\LC(\R^2)$ via the canonical extension described in 
\S \ref{subsec_convexfunction}. Note that we have $u_1\leq u_2$ if and only if $u_1^*\geq u_2^*$.

We now give a geometric interpretation of $u^*$ using the notion of dual polarized affine space from the introduction. As in \S \ref{subsec_cspacelike}, we first identify $\A^3$ with $\R^3=\R^2\times\R$ and endow it with the polarization given by the point at infinity of vertical lines $\{x\}\times\R$, so that the dual affine space $\A^{3*}$ is the set of non-vertical affine planes in $\A^3$, which can be identified with $\R^3$ as well through the map \eqref{eqn_identification_r3}. We then view the graphs or epigraphs of $u$ and $u^*$ as subsets of $\A^3$ and $\A^{3*}$, respectively, through the two identifications. Under this setup, we have: 
%
\begin{proposition}\label{prop_legendre}
Let $u\in\LC(\R^2)$ be such that $\dom{u}$ is bounded. Then $\dom{u^*}=\R^2$.
In this case, given $(y,\eta)\in\R^3$, we have $u^*(y)=\eta$ if and only if the graph of the affine function $x\mapsto x\cdot y-\eta$ is a supporting plane of the graph $\gra{u}\subset\R^3$ of $u$. 

As a consequence, if we view the graph and epigraph of $u$ (resp.\@ $u^*$) as subsets of $\A^3$ (resp.\@ $\A^{3*}$) in the aforementioned way, then $\gra{u^*}$ is exactly the set of non-vertical supporting planes of $\gra{u}$. Similarly, $\sepi{u^*}\subset\A^{3*}$ is the set of non-vertical affine planes in $\A^3$ disjoint from $\epi{u}$. 
\end{proposition}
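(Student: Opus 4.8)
The plan is to reduce every assertion to the elementary duality between affine minorants of $u$ and the epigraph of $u^*$, and then to read off the geometric statements through the identification \eqref{eqn_identification_r3}. First I would dispose of $\dom{u^*}=\R^2$. Since $u\in\LC(\R^2)$ is proper, convex and lower semicontinuous, it admits an affine minorant, hence is bounded below on the bounded set $\dom{u}$; combined with a bound $|x|\le R$ on $\dom{u}$, the quantity $x\cdot y-u(x)$ is bounded above uniformly in $x\in\dom{u}$ (and is $-\infty$ for $x\notin\dom{u}$), so $u^*(y)=\sup_{x}\big(x\cdot y-u(x)\big)<+\infty$ for every $y$, i.e. $\dom{u^*}=\R^2$.

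The heart of the matter is the second assertion, which I would phrase through the affine functions $a_{y,\eta}(x):=x\cdot y-\eta$. By definition of the Legendre transform, $a_{y,\eta}\le u$ everywhere is equivalent to $x\cdot y-u(x)\le\eta$ for all $x$, i.e. to $u^*(y)\le\eta$. Moreover $a_{y,\eta}$ touches $u$ at some $x_0$ (that is, $a_{y,\eta}(x_0)=u(x_0)$) exactly when the supremum defining $u^*(y)$ is attained with value $\eta$. Hence the graph of $a_{y,\eta}$ is a non-vertical supporting plane of $\gra{u}$ precisely when $u^*(y)=\eta$ \emph{and} the supremum is attained. Since $\dom{u}$ is bounded, $\overline{\dom{u}}$ is compact, and $x\mapsto x\cdot y-u(x)$ is upper semicontinuous (because $u$ is lower semicontinuous) and equals $-\infty$ off $\dom{u}$; therefore its supremum over $\R^2$ equals its supremum over $\overline{\dom{u}}$ and is always attained. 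This attainment upgrades the equivalence to $u^*(y)=\eta\iff a_{y,\eta}$ is a supporting plane of $\gra{u}$: given $u^*(y)=\eta$ a touching point $x_0$ exists with $u(x_0)=x_0\cdot y-\eta<+\infty$, so the plane meets $\gra{u}$; conversely a touching point immediately forces $u^*(y)=\eta$.

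The two consequences are then formal translations. Under \eqref{eqn_identification_r3} the point $(y,\eta)\in\A^{3*}$ is the non-vertical plane $\{(x,x\cdot y-\eta)\}$, i.e. the graph of $a_{y,\eta}$, and this is a bijection onto all non-vertical planes. By the first part $\gra{u^*}$ consists of the points $(y,u^*(y))$, $y\in\R^2$, and by the equivalence just proved these correspond bijectively to the non-vertical supporting planes of $\gra{u}$, giving the first consequence. For the second, $(y,\eta)\in\sepi{u^*}$ means $\eta>u^*(y)=\sup_{x}\big(x\cdot y-u(x)\big)$. The inequality $\eta>u^*(y)$ forces $x\cdot y-\eta<u(x)$ for every $x$, so the plane $\gra{a_{y,\eta}}$ lies strictly below $\gra{u}$ and is disjoint from $\epi{u}$; conversely, if $a_{y,\eta}(x)<u(x)$ for all $x$ then, invoking attainment once more to get strictness, $u^*(y)=x_0\cdot y-u(x_0)<\eta$, so $(y,\eta)\in\sepi{u^*}$. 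Thus $\sepi{u^*}$ corresponds exactly to the non-vertical planes disjoint from $\epi{u}$.

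The main obstacle, and the only place the hypothesis that $\dom{u}$ is bounded is genuinely used, is the attainment of the supremum defining $u^*$: it is what makes the extremal planes actually touch $\gra{u}$ (rather than merely lie weakly below it) and what turns the weak separation $u^*(y)\le\eta$ into the strict disjointness needed for $\sepi{u^*}$. Once attainment is secured via compactness of $\overline{\dom{u}}$ and upper semicontinuity of $x\mapsto x\cdot y-u(x)$, everything else is bookkeeping through the identification \eqref{eqn_identification_r3} and the defining inequality of the Legendre transform.
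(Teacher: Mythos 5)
Your proof is correct and follows essentially the same route as the paper: both hinge on the observation that boundedness of $\dom{u}$ plus upper semicontinuity of $x\mapsto x\cdot y-u(x)$ makes the supremum defining $u^*(y)$ an attained maximum, from which finiteness of $u^*$, the supporting-plane characterization of $\gra{u^*}$, and the disjointness characterization of $\sepi{u^*}$ all follow by direct translation through \eqref{eqn_identification_r3}. Your only additions are an (unneeded but harmless) affine-minorant argument for finiteness and an explicit write-up of the strict-epigraph case, which the paper dismisses as ``similar'' but which indeed uses attainment exactly as you say.
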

Here and below, by a \emph{supporting plane} of a set $E\subset\R^3$, we mean an affine plane $P\subset\R^3$ such that $\overline{E}\cap P\neq\emptyset$ and $E$ is contained in one of the two closed half-spaces of $\R^3$ with boundary $P$. For any extended-real-valued function $f$ on $\R^2$, we let
$$
\epi{f}:=\big\{(x,\xi)\in\R^3\,\big|\, f(x)\leq\xi\big\},\ \ \sepi{f}:=\big\{(x,\xi)\in\R^3\,\big|\, f(x)<\xi\big\}=\epi{f}\setminus\gra{f}
$$
denote the epigraph and strict epigraph of $f$, respectively.
\begin{proof}
The boundedness of $\dom{u}$ implies that the supremum in the definition of $u^*(y)$ is actually a maximum. More precisely,
given $y\in \R^2$, since the upper semicontinuous function $x\mapsto x\cdot y-u(x)$ is $-\infty$ outside of the bounded set $\dom{u}$, it attains its maximum at some $x_0\in\dom{u}$, which means exactly that $u^*(y)=x_0\cdot y-u(x_0)$. In particular, we have $u^*(y)<+\infty$ for any $y\in\R^2$, hence $\dom{u^*}=\R^2$.

As a consequence, the condition  ``$(y,\eta)\in\gra{u^*}$'' is equivalent to ``$\eta=x_0\cdot y-u(x_0)$, where $x_0$ is a maximal point of $x\mapsto x\cdot y-u(x)$''. The latter condition can be written alternatively as ``$u(x)\geq x\cdot y-\eta$ for all $x\in\R^2$, with equality achieved at $x_0$'', which means exactly that $\gra{x\mapsto x\cdot y-\eta}$ is a supporting plane of $\gra{u}$ at $x_0$. It follows that $\gra{u^*}$ is the set of non-vertical supporting planes of $\epi{u}$, because the identification $\R^3\cong\A^{3*}$ is so defined that $(y,\eta)\in\R^3$ corresponds to the non-vertical affine plane $\gra{x\mapsto x\cdot y-\eta}$. The last statement is similar.
\end{proof}

\begin{example}\label{example_legendre}
Figure \ref{figure_examples} shows the graphs of some radially symmetric convex functions $u$ on the unit disk $\mathbb{D}:=\{x\in\R^2\mid |x|<1\}$ and their Legendre transforms $u^*$. Each example of $u$ is pointwise smaller than the next one. In the first and the last examples, where $u=0$ and $u(x)=\sqrt{1-|x|^2}$, respectively, the graph of $u^*$ is the boundary of the future light cone $C_0:=\big\{(x,\xi)\in\R^3\mid |x|<\xi\big\}$ and the hyperboloid $\mathbb{H}^2:=\big\{(x,\xi)\in\R^3\mid |x|^2+1=\xi^2\big\}$ in $C_0$, respectively.
\begin{figure}[h]
	\includegraphics[width=12.5cm]{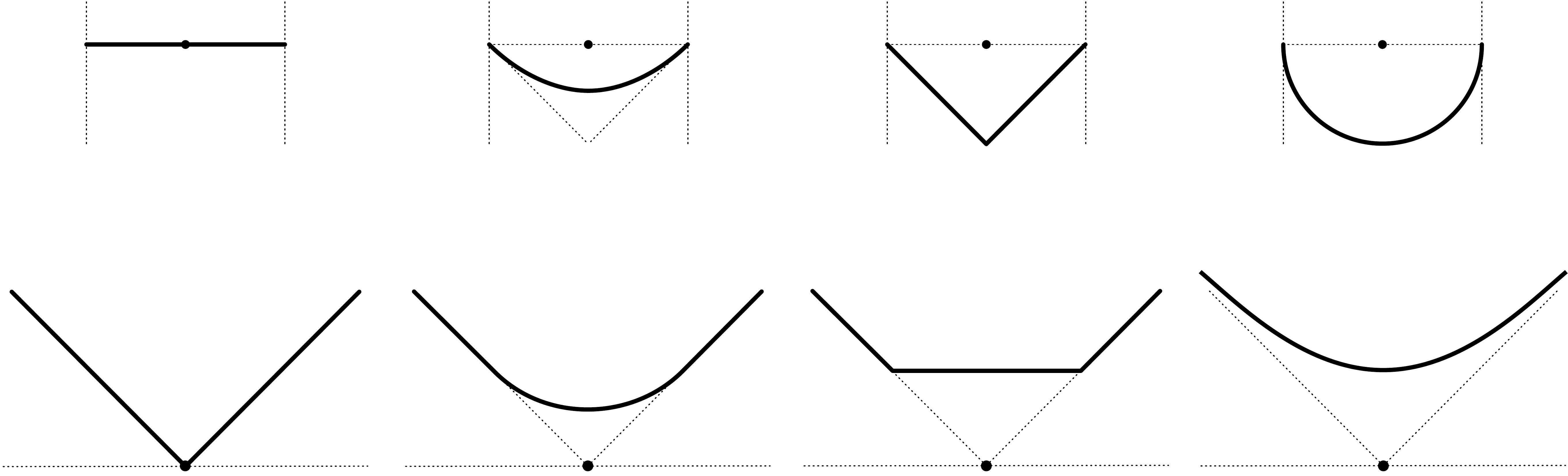}
	\caption{Section view of graphs of some radially symmetric convex functions on the unit disk (first row) and their Legendre transforms (second row). The black dots represent the origin of $\R^3$.}
	\label{figure_examples}
\end{figure}
\end{example}

The following lemma relates the Legendre transform of a convex $\C^1$-function with the gradient, and give useful interpretations of the gradient-blowup property:
\begin{lemma}
	\label{lemma_gradient}
Let $U\subset\R^2$ be a convex domain and $u:U\to\R$ be a strictly convex $\C^1$-function. Then 
\begin{enumerate}[label=(\arabic*)]
	\item\label{item_gradient1} The gradient map $x\mapsto\D u(x)$ is a homeomorphism from $U$ to the convex domain
	$$
	\D u(U):=\big\{\D u(x)| x\in U\big\}\subset\R^2
	$$    
	\item\label{item_gradient2} The restriction of the Legendre transform $u^*\in\LC(\R^2)$ to $\D u(U)$ is given by
    $$
    u^*(\D u (x))=x\cdot \D u(x)-u(x),\ \ \forall x\in U.
    $$
    \item\label{item_gradient3} The following conditions are equivalent to each other:
    \begin{itemize}
    	\item $u$ has infinite inner derivative at every point of $\pa U$ (see \S \ref{subsec_convexfunction});
    	\item $\|\D u(x)\|$ tends to $+\infty$ as $x\in U$ tends to $\pa U$.
    \end{itemize}
Furthermore, if $U$ is bounded, both conditions are equivalent to $\D u(U)=\R^2$.
\end{enumerate}
\end{lemma}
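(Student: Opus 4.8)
The unifying tool throughout will be the Legendre transform $u^*$ together with the geometric description of Proposition \ref{prop_legendre}, and I would establish Part \ref{item_gradient2} first, since it feeds the rest. For $x\in U$, strict convexity and $\C^1$-regularity guarantee that the tangent plane to $\gra{u}$ at $(x,u(x))$ is a non-vertical supporting plane touching $\gra{u}$ only at that point; writing this plane as the graph of $y\mapsto y\cdot\D u(x)-\eta$ forces $\eta=x\cdot\D u(x)-u(x)$, and Proposition \ref{prop_legendre} identifies $\eta$ with $u^*(\D u(x))$, giving the formula of Part \ref{item_gradient2}. Moreover, uniqueness of the contact point shows that the maximizer of $x'\mapsto x'\cdot\D u(x)-u(x')$ is unique, so $u^*$ is differentiable at $\D u(x)$ with $\D u^*(\D u(x))=x$; thus $\D u^*$ is a continuous left inverse of $\D u$ on $\D u(U)$.

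For Part \ref{item_gradient1}, continuity of $\D u$ is immediate from $u\in\C^1$, and injectivity follows from the strict monotonicity $(\D u(x_1)-\D u(x_2))\cdot(x_1-x_2)>0$, obtained by restricting $u$ to the segment $[x_1,x_2]$ and differentiating. A continuous injection between open subsets of $\R^2$ is a homeomorphism onto its image by invariance of domain, so $\D u$ maps $U$ homeomorphically onto the open set $\D u(U)$ (the inverse being the $\D u^*$ of the previous step). The remaining point, which I expect to be the genuinely delicate one, is the \emph{convexity} of $\D u(U)$. The natural plan is to show $\D u(U)=\interior\dom{u^*}$: the inclusion $\subseteq$ is easy, since $\D u(U)$ is open and $u^*\circ\D u$ is finite, whereas the reverse inclusion reduces to showing that each $y\in\interior\dom{u^*}$ is realized as $\D u$ at an \emph{interior} point of $U$, rather than merely as a finite subgradient of $u$ at a point of $\pa U$. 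This is precisely where the boundary behaviour must be controlled, and it is intimately tied to Part \ref{item_gradient3}: a finite subgradient at a boundary point is the common obstruction both to $\D u(U)=\interior\dom{u^*}$ and to the gradient-blowup property. I would therefore handle convexity in tandem with Part \ref{item_gradient3}, concluding that $\D u(U)=\interior\dom{u^*}$ (hence convex) once finite boundary subgradients are ruled out by the essential-smoothness phenomenon below.

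For Part \ref{item_gradient3}, I would prove the equivalence of the two conditions through supporting planes. If $\|\D u(x_n)\|$ stayed bounded along some $x_n\to x_0\in\pa U$, then a subsequential limit of the corresponding tangent planes would be a non-vertical supporting plane of $\gra{u}$ with finite slope touching over $x_0$, i.e.\ a finite subgradient at $x_0$, which by monotonicity of the difference quotients in \eqref{eqn_inner} is exactly equivalent to the inner derivative at $x_0$ being finite; contrapositively, infinite inner derivative at every boundary point is equivalent to $\|\D u(x)\|\to+\infty$ as $x\to\pa U$. Finally, when $U$ is bounded, Proposition \ref{prop_legendre} gives $\dom{u^*}=\R^2$; the gradient-blowup condition then makes $\D u$ proper (a bounded gradient sequence cannot approach $\pa U$, and $U$ is relatively compact), so its image is closed as well as open, whence $\D u(U)=\R^2$ by connectedness. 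Conversely, if $\D u(U)=\R^2$ but the blowup failed, a bounded gradient sequence approaching $\pa U$ would, via the continuous inverse $\D u^*$, converge to an interior point, contradicting $x_n\to x_0\in\pa U$. As noted, the main obstacle is the convexity assertion of Part \ref{item_gradient1}, which hinges entirely on excluding finite subgradients at $\pa U$; everything else is a direct consequence of strict convexity, $\C^1$-regularity, invariance of domain, and the Legendre correspondence of Proposition \ref{prop_legendre}.
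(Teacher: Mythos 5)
Your Parts \ref{item_gradient2}, \ref{item_gradient3} and the homeomorphism assertion of Part \ref{item_gradient1} run essentially along the paper's own lines: Part \ref{item_gradient2} via the tangent plane being the unique non-vertical supporting plane together with Proposition \ref{prop_legendre}; injectivity of $\D u$ from strict convexity plus Brouwer invariance of domain; and limits of tangent planes for Part \ref{item_gradient3}, where the paper instead quotes \cite[Prop.\@ 4.13]{nie-seppi} for the pointwise equivalences and, in the bounded case, argues through finiteness of $u^*$ where you use an open-plus-closed connectedness argument (both are valid, and yours is if anything slicker). One caveat in Part \ref{item_gradient3}: your tangent-plane-limit argument only proves that a bounded gradient sequence at $x_0$ yields a finite inner derivative, i.e.\ the implication ``infinite inner derivatives everywhere $\Rightarrow$ gradient blowup''. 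The converse half --- that a finite inner derivative at some $x_0\in\pa U$ produces a sequence approaching the boundary with bounded gradients --- is the nontrivial part of the cited \cite[Prop.\@ 4.13]{nie-seppi} (it requires working inside a cone with vertex $x_0$ and comparing $u$ with its supporting affine function); your phrase ``is exactly equivalent'' asserts it rather than proves it.

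The genuine gap is your treatment of the convexity of $\D u(U)$ in Part \ref{item_gradient1}. The identity you propose to establish, $\D u(U)=\interior\dom{u^*}$, is false in general: for $u(x)=|x|^2/2$ on the unit disk $U$, which is smooth and strictly convex, one has $\dom{u^*}=\R^2$ while $\D u(U)=U$. The failure is precisely the phenomenon you name --- non-vertical supporting planes of $\gra{u}$ at points of $\pa U$ (finite boundary subgradients) --- but these cannot be ``ruled out by the essential-smoothness phenomenon below'': Part \ref{item_gradient3} is an equivalence, not an unconditional statement, and Part \ref{item_gradient1} is claimed for every strictly convex $\C^1$ function, including ones (like the example above) where boundary supporting planes abound. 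In general $\D u(U)$ equals $\interior\dom{u^*}$ with the slopes of all supporting planes at boundary points removed, and your strategy yields convexity only in the gradient-blowup case, where the removed set is empty. So, as written, the proposal does not prove the convexity statement of Part \ref{item_gradient1}. It is worth knowing that the paper's own proof of Part \ref{item_gradient1} proves only the homeomorphism claim (injectivity plus invariance of domain) and is silent on the convexity of the image --- which, incidentally, is never used elsewhere in the paper --- so your instinct that this is the delicate point is understandable; but if one does want that claim, the route you sketch will not deliver it, and a genuinely different argument (or a restriction to the blowup case) is needed.
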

See Example \ref{example_legendre} again for illustrations of these results. Among the four instances of $u$ in Figure \ref{figure_examples}, the 2nd and the 4th are smooth and strictly convex in $\mathbb{D}$, and only the 4th has infinite inner derivatives at boundary points. In the 2nd one, the graph of $u^*$ over $\D u(\mathbb{D})$ is the exactly the part of $\gra{u^*}$ in $C_0$.

\begin{proof}
\ref{item_gradient1} Since $x\mapsto\D u(x)$ is continuous in $U$, by Brouwer Invariance of Domain, it suffices to show the injectivity. Suppose by contradiction that $\D u(x_1)=\D u(x_2)$ for $x_1,x_2\in U$. Then the graph of the convex function $x\mapsto u(x)-x\cdot \D u(x_1)$ has horizontal supporting planes at both $x_1$ and $x_2$, which implies that the function is a constant on the line segment joining $x_1$ and $x_2$. It follows that $u$ is an affine function on that segment, contradicting the strict convexity. 

\ref{item_gradient2} Since $u$ is $\C^1$ at every $x_0\in U$, the only supporting plane of the graph $\gra{u}$ at $(x_0,u(x_0))$ is the tangent plane, which is the graph of the affine function 
$$
x\mapsto u(x_0)+(x-x_0)\cdot \D u(x_0)=x\cdot \D u(x_0)-\big(x_0\cdot \D u(x_0)-u(x_0)\big).
$$ 
It follows from Prop.\@ \ref{prop_legendre} that $u^*(\D u(x_0))=x_0\cdot \D u(x_0)-u(x_0)$, as required.

\ref{item_gradient3} We view $u$ as an element of $\LC(\R^2)$ by extending it to the whole $\R^2$ in the way described in \S \ref{subsec_convexfunction}. By \cite[Prop.\@ 4.13]{nie-seppi}, for any $x_0\in\pa U$, the following conditions are equivalent to each over (Condition \ref{item_gradient33} below is formulated as (iii) in \cite[Prop.\@ 4.13]{nie-seppi} using the notion of \emph{subgradients}):
\begin{enumerate}[label=(\alph*)]
	\item\label{item_gradient31} $u$ has finite inner derivative at $x_0$;
	\item\label{item_gradient32} there is a sequence of points $(x_i)_{i=1,2,\cdots}$ in $U$ tending to $x_0$ such that $\|\D u(x_i)\|$ does not tend to $+\infty$;
	\item\label{item_gradient33} the graph $\gra{u}$ has a non-vertical supporting plane at $(x_0,u(x_0))$.
\end{enumerate}

The first (resp.\@ second) bullet point in the required statement means exactly that there does not exist $x_0\in\pa U$ satisfying \ref{item_gradient31} (resp.\@ \ref{item_gradient32}), so the two bullet points are equivalent to each other. 

To complete the proof, we now assume $U$ is bounded and only need to show that $\D u(U)\neq\R^2$ if and only if $\gra{u}$ has a non-vertical supporting plane at $(x_0,u(x_0))$ for some $x_0\in\pa U$.

Suppose $\gra{u}$ has a non-vertical supporting plane $P$ at $(x_0,u(x_0))$. We have $P=\gra{x\mapsto x\cdot y-\eta}$ for some $y\in\R^2$, $\eta\in\R$. With the same reasoning as in the proof of Part \ref{item_gradient1}, we see that $\D u(x_1)\neq y$ for any $x_1\in U$, otherwise $u$ would be an affine function on the line segment joining $x_0$ and $x_1$, contradicting the strict convexity. This shows the ``if'' part.

Conversely, suppose $\D u(U)\neq \R^2$ and pick $y\in \R^2\setminus \D u(U)$. By Prop.\@ \ref{prop_legendre}, we have $\eta:=u^*(y)<+\infty$, hence $P:=\gra{x\mapsto x\cdot y-\eta}$ is a supporting plane of $\gra{u}$. But $P$ cannot be the supporting plane of $\gra{u}$ at any point in $U$ because $y\notin\D u(U)$, hence must be a supporting plane at some $x_0\in\pa U$. This shows the ``only if'' part and completes the proof.
\end{proof}

Now fix a proper convex cone $C\subset\R^3$ and let $\Omega\subset\R^2$ be the corresponding convex domain as in \S \ref{sec:correspondence}, so that there is an isomorphism $\Aut(C)\ltimes\R^3\cong\Aut(\Omega\times\R)$ (see Prop.\@ \ref{prop_iso}).
We henceforth only consider convex functions $u\in\LC(\R^2)$ with $\dom{u}\subset\overline{\Omega}$. We shall view the graph of $u$ over $\Omega$ and the graph $\gra{u^*}$ of the Legendre transform of $u$ as geometric objects in the convex tube domain $\Omega\times\R\subset\A^{3*}$ and the affine space $\A^3$, respectively, which are acted upon by $\Aut(\Omega\times\R)$ and $\Aut(C)\ltimes\R^3$, respectively (comparing to Prop. \ref{prop_legendre}, the roles of $\A^3$ and $\A^{3*}$ are switched here). 

It follows from Prop.\@ \ref{prop_legendre} that the two actions are actually intertwined. We now give a formal statement only for convex $\C^1$-functions:
\begin{lemma}\label{lemma_actiongraph}
	Let $C$ and $\Omega$ be as in \S \ref{subsec_cspacelike}, $\Phi$ be an element of $\Aut(\Omega\times\R)$, and $u_1,u_2\in\C^1(\Omega)$ be convex functions such that $\Phi$ brings the graph $\gra{u_1}$ to $\gra{u_2}$. Let $\Sigma_i$  be the graph over $\D u_i(\Omega)$ of the Legendre transform $u_i^*$ ($i=1,2$), \ie 
	$$
	\Sigma_i:
	=\gra{u_i^*|_{\D u_i(\Omega)}}
	=\big\{\big(\D u_i(x),\,x\cdot \D u_i(x)-u_i(x)\big)\,\big|\, x\in U_i\big\}\subset \R^3\cong\A^{3}
	$$
(see Lemma \ref{lemma_gradient} \ref{item_gradient2}). 
	Then $\Sigma_2$ is the image of $\Sigma_1$ by the element of $\Aut(C)\ltimes\R^3$ which corresponds to $\Phi$ under the isomorphism $\Aut(C)\ltimes\R^3\cong\Aut(\Omega\times\R)$.
\end{lemma}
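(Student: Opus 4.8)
The plan is to reduce the statement to the point--plane duality of Lemma \ref{lemma_plane}, using Prop.\@ \ref{prop_legendre} to identify the points of $\Sigma_i$ with the tangent planes of the graphs $\gra{u_i}$.

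First I would record what a point of $\Sigma_i$ means dually. Fix $x\in\Omega$ and consider the point $p:=\big(\D u_1(x),\,x\cdot\D u_1(x)-u_1(x)\big)\in\Sigma_1\subset\A^3$. Under the identification of $\A^3$ with the space of non-vertical affine planes in $\A^{3*}$, the point $p$ corresponds to the plane $P\subset\A^{3*}$ that is the graph of $z\mapsto z\cdot\D u_1(x)-\big(x\cdot\D u_1(x)-u_1(x)\big)$. Because $u_1$ is $\C^1$, this plane is exactly the unique (hence tangent) supporting plane of $\gra{u_1}\subset\Omega\times\R\subset\A^{3*}$ at the point $(x,u_1(x))$; this is the content of Prop.\@ \ref{prop_legendre} together with Lemma \ref{lemma_gradient}\,\ref{item_gradient2}. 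In particular $P$ is non-vertical, so $p$ is a genuine point of $\A^3$ and Lemma \ref{lemma_plane} is applicable to it.

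The heart of the argument is to track this tangent plane under $\Phi$. Since $\Phi$ is the restriction to $\Omega\times\R$ of a projective transformation of $\RP^3$ carrying the convex surface $\gra{u_1}$ onto $\gra{u_2}$, and since the notion of being the supporting (hence, in the $\C^1$ case, tangent) plane of a convex graph is projective, $\Phi$ carries the projective tangent plane of $\gra{u_1}$ at $(x,u_1(x))$ onto the projective tangent plane of $\gra{u_2}$ at the image point. Writing $A:=\pi(\Phi)\in\Aut(\Omega)$ and $x':=Ax$, so that $\Phi(x,u_1(x))=(x',u_2(x'))$, and letting $P'\subset\A^{3*}$ be the tangent plane of $\gra{u_2}$ at $(x',u_2(x'))$ --- which corresponds to the point $p':=\big(\D u_2(x'),\,x'\cdot\D u_2(x')-u_2(x')\big)\in\Sigma_2$ --- we obtain $\Phi(\overline P)=\overline{P'}$ for the projective closures. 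By the observation following Lemma \ref{lemma_plane}, this is equivalent to $\Phi\big(P\cap(\Omega\times\R)\big)=P'\cap(\Omega\times\R)$.

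Lemma \ref{lemma_plane} then yields immediately that the element $g\in\Aut(C)\ltimes\R^3$ corresponding to $\Phi$ sends $p$ to $p'\in\Sigma_2$. As $x$ ranges over $\Omega$, the image point $x'=Ax$ ranges over all of $\Omega$ (because $A$ preserves $\Omega$), so the points $p'$ exhaust $\Sigma_2$; hence $g(\Sigma_1)=\Sigma_2$, which is exactly the assertion. The step I expect to demand the most care is the projective tangency claim: one has to be careful that $\Phi$ does \emph{not} in general preserve the affine planes themselves, only their projective completions, so the correct formulation is $\Phi(\overline P)=\overline{P'}$ rather than $\Phi(P)=P'$ --- this is precisely the subtlety already flagged in the note after Lemma \ref{lemma_plane}, and it is what makes invoking that lemma (rather than a naive affine computation with the formula from Prop.\@ \ref{prop_action}) the natural route.
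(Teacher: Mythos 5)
Your proposal is correct and follows essentially the same route as the paper's proof: identify the points of $\Sigma_i$ with the tangent planes of $\gra{u_i}$ via Prop.\@ \ref{prop_legendre} (with the roles of $\A^3$ and $\A^{3*}$ switched) and then conclude by the equivariance statement of Lemma \ref{lemma_plane}. The paper's proof is a two-sentence version of exactly this argument; you have merely made explicit the tangency-tracking step and the projective-closure subtlety that the paper leaves implicit.
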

Note that by Lemma \ref{lemma_gradient} \ref{item_gradient3}, $\Sigma_i$ is an entire graph (\ie $\D u_i(\Omega)=\R^2$) only when $u_i$ has the gradient-blowup property. 
\begin{proof}
If we identify $\A^3$ as at the set of non-vertical affine planes in $\A^{3*}$ (see  the last part of \S \ref{subsec_iso}), then  by Prop.\@ \ref{prop_legendre}, $\Sigma_i\subset\A^3$  consists exactly of the tangent planes to the graph $\gra{u_i}\subset\Omega\times\R\subset\A^{3*}$. So the required statement follows from Lemma \ref{lemma_plane}. 
\end{proof}

\subsection{$C$-regular domains and $C$-convex surfaces}\label{subsec_cregular}
In view of the notions of $C$-null and $C$-spacelike planes introduced in \S \ref{subsec_cspacelike}, we call a closed half-space $H\subset\A^3$ $C$-null (resp.\@ $C$-spacelike) if $H$ contains a translation of the cone $C$ and the boundary plane $\pa H$ is  $C$-null (resp.\@ $C$-spacelike). In other words, a $C$-null/spacelike half-space is the upper half of $\A^3$ cut out by a $C$-null/spacelike plane. We further define:
\begin{itemize}
	\item A \emph{$C$-regular domain} is a nonempty, open, proper subset $D\subsetneq\A^3$ which is the interior of the intersection of a collection $\mathcal{S}$ of $C$-null half-spaces
	\footnote{The definitions in \cite{nie-seppi} and this paper have two inessential differences: first, the empty set and the whole $\A^3$ are both considered as $C$-regular domains in \cite{nie-seppi}, but not here; second, in \cite{nie-seppi}, the function $\phi\equiv+\infty$ is included as an element of $\LC(\pa\Omega)$ (which corresponds to $D=\A^3$ in the sense of Thm.\@ \ref{thm_graph} \ref{item_thmgraph1} below), but not here.}.
	If $\mathcal{S}$ consists of all the $C$-null half-spaces containing a set $E\subset \A^3$, we call $D$ the $C$-regular domain \emph{generated} by $E$.
	\item A \emph{$C$-convex surface} is an open subset $\Sigma$ of the boundary of some convex domain $U\subset \R^3$ such that the supporting half-space of $U$ at every point of $\Sigma$ is  $C$-spacelike. $\Sigma$ is said to be \emph{complete} if it is properly embedded, or equivalent, it is the entire boundary of $U$.
\end{itemize}
Here, by a \emph{supporting half-space} of a convex domain $U\subset\R^3$ at a boundary point $p\in\pa U$, we mean a closed half-space $H\subset\R^3$ containing $U$ such that $p\in\pa H$. 
\begin{remark}
	For the future light cone $C_0$ of the Minkowski space $\R^{2,1}$, $C_0$-regular domains are classically known as \emph{regular domains} or \emph{domains of dependence}, whereas $C_0$-convex surfaces are just \emph{future-convex}, spacelike surfaces. 
See \cite[\S 3.1]{nie-seppi} for more discussions about these definitions and the backgrounds.
\end{remark}

In order to identify the space of $C$-convex surfaces, we introduced the following subset of $\LC(\R^2)$ (\cf \S \ref{subsec_convexfunction}): let $\SC_0(\Omega)$ denote the set of all $u\in\LC(\R^2)$ satisfying
\begin{itemize}
	\item[-] $U:=\interior\dom{u}$ is nonempty and contained in $\Omega$ (see \S \ref{subsec_convexfunction} for the notation);
	\item[-] $u$ is smooth and locally strongly convex in $U$;
	\item[-] the norm of gradient $\|\D u(x)\|$ tends to $+\infty$ as $x\in U$ tends to $\pa U$. 
\end{itemize}
Here, a $C^2$-function is said to be locally \emph{strongly} convex if its Hessian is positive definite (which is stronger than being locally \emph{strictly} convex). Although a general $u\in\SC_0(\Omega)$ may take the value $+\infty$ in $\Omega$, in applications later on, we mainly consider those $u$'s such that $u|_{\pa\Omega}$ is $\R$-valued. In this case, $u$ is $\R$-valued in $\Omega$ as well and can be viewed as an element of $\C^\infty(\Omega)$. 

Using the notations introduced in the previous subsections, we can formulate the correspondences in the 2nd-to-last and 3rd-to-last rows of Table \ref{table_dic} as: 
\begin{theorem}[{\cite[Thm.\@ 5.2 and 5.13]{nie-seppi}}]\label{thm_graph}
	Let $C$ and $\Omega$ be as in \S \ref{subsec_cspacelike}. Then the following statements hold.
	\begin{enumerate}[label=(\arabic*)]
		\item\label{item_thmgraph1} The assignment 
		$\phi\mapsto D=\sepi{\env{\phi}^*}$
		gives a bijection from $\LC(\pa\Omega)$ to the space of all $C$-regular domains in $\A^3$. Moreover, $D$ is proper (see \S \ref{subsec_cspacelike}) if and only if the convex hull of $\dom{\phi}:=\{x\in\pa\Omega\mid \phi(x)<+\infty\}$ in $\R^2$ has nonempty interior.
		\item\label{item_thmgraph2} 
		The assignment $u\mapsto\Sigma=\gra{u^*}$ gives a bijection from $\SC_0(\Omega)$ to the space of all smooth, strongly convex, complete, $C$-convex surfaces in $\A^3$.
		\item\label{item_thmgraph3} 
		Let $u\in\SC_0(\Omega)$ and suppose $\phi:=u|_{\pa\Omega}$ is not constantly $+\infty$ (hence belongs to $\LC(\pa\Omega)$). Then the $C$-regular domain $D=\sepi{\env{\phi}^*}$ is generated by the $C$-convex surface $\Sigma=\gra{u^*}$. 
		\item\label{item_thmgraph4} Suppose $u\in\SC_0(\Omega)$ and $\phi\in\LC(\pa\Omega)$. Then the $C$-convex surface $\Sigma=\gra{u^*}$ is asymptotic to the boundary of the $C$-regular domain $D=\sepi{\env{\phi}^*}$ if and only if the following conditions are satisfied:
		\begin{itemize}
			\item $\dom{u}$ coincides with $\dom{\env{\phi}}$ (it is shown in \S \ref{subsec_convexfunction} that the latter set equals the convex hull of $\dom{\phi}\subset\pa\Omega$ in $\R^2$);
            \item $\env{\phi}(x)-u(x)$ tends to $0$ as $x\in\interior \dom{u}$ tends to the boundary of $\dom{u}$.			
		\end{itemize}
	 In particular, we have $\phi=u|_{\pa\Omega}$ in this case.
	\end{enumerate}
\end{theorem}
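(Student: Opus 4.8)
The plan is to reduce all four parts to the Legendre-transform duality of Proposition \ref{prop_legendre}, together with the point--plane dictionary between $\A^3$ and $\A^{3*}$ and the identification of $C$-null (resp.\@ $C$-spacelike) planes with points of $\pa\Omega\times\R$ (resp.\@ $\Omega\times\R$) from Proposition \ref{prop_hspace}. The single computation underlying Parts \ref{item_thmgraph1} and \ref{item_thmgraph3} is the following. For $(x_0,\xi_0)\in\pa\Omega\times\R$ the $C$-null half-space bounded by the plane dual to $(x_0,\xi_0)$ is $H_{(x_0,\xi_0)}:=\{(y,\eta)\in\A^3 : \eta\ge x_0\cdot y-\xi_0\}$, and a point $p=(a,b)\in\A^3$ lies in it iff the affine function $\ell_p(x):=a\cdot x-b$ satisfies $\ell_p(x_0)\le\xi_0$. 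Hence for $\phi\in\LC(\pa\Omega)$,
$$
\bigcap_{x_0\in\pa\Omega}H_{(x_0,\phi(x_0))}=\{p : \ell_p\le\phi \text{ on }\pa\Omega\}=\{p : \ell_p\le\env\phi \text{ on }\R^2\}=\epi{\env\phi^*},
$$
where the middle equality is the defining property of the convex envelope (\S \ref{subsec_convexfunction}) and the last is the definition of the Legendre transform; taking interiors yields $\sepi{\env\phi^*}$.

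First I would treat Part \ref{item_thmgraph1}. The display shows $\sepi{\env\phi^*}$ is exactly the $C$-regular domain generated by $\gra\phi$, so the assignment lands in $C$-regular domains; openness is clear, and nonemptiness/properness I would read off from the fact that $\dom{\env\phi}$ is the convex hull of $\dom\phi$ together with the observation that $\sepi{\env\phi^*}$ contains a line iff $\env\phi^*$ is affine in some direction iff $\dom{\env\phi}$ is lower-dimensional. Injectivity is immediate, since $\phi\mapsto\env\phi$ is injective on $\LC(\pa\Omega)$ and the Legendre transform is an involution. For surjectivity, given a $C$-regular domain $D=\interior\bigcap_{\mathcal S}H$ I would set $\phi(x_0):=\inf\{\xi_0 : H_{(x_0,\xi_0)}\in\mathcal S\}$ (with $\inf\emptyset=+\infty$); passing to this infimum leaves the intersection unchanged, so $D=\sepi{\env\phi^*}$, and replacing $\phi$ by $\env\phi|_{\pa\Omega}\in\LC(\pa\Omega)$, which has the same convex envelope, puts the preimage in $\LC(\pa\Omega)$.

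For Part \ref{item_thmgraph2}, the forward direction uses Lemma \ref{lemma_gradient}: for $u\in\SC_0(\Omega)$ the gradient blow-up and the boundedness of $U=\interior\dom u\subset\Omega$ give $\D u(U)=\R^2$, so $u^*$ is finite on $\R^2$ and $\Sigma=\gra{u^*}$ is a smooth, strongly convex entire graph (strong convexity passing through Legendre as $\D^2u^*=(\D^2u)^{-1}$). By Proposition \ref{prop_legendre} the non-vertical supporting planes of $\Sigma$ are the points of $\gra{u^{**}}=\gra u$, which lie over $U\subset\Omega$ and are therefore $C$-spacelike by Proposition \ref{prop_hspace}; this is precisely completeness plus $C$-convexity. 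Conversely, a complete smooth strongly convex $C$-convex surface is $\pa U'$ for a convex body all of whose supporting half-spaces are $C$-spacelike; $C$-spacelikeness (non-vertical, with slopes in $\overline\Omega$) forces $\Sigma$ to be an entire graph $\gra v$ with $v$ smooth strongly convex and $\dom{v^*}\subset\overline\Omega$, so $u:=v^*\in\SC_0(\Omega)$ by Lemma \ref{lemma_gradient} \ref{item_gradient3}, with $\gra{u^*}=\Sigma$; injectivity is again involutivity. Part \ref{item_thmgraph3} then drops out: $H_{(x_0,\xi_0)}\supset\Sigma=\gra{u^*}$ iff $\xi_0\ge u^{**}(x_0)=u(x_0)$, so the tightest such half-spaces are the $H_{(x_0,\phi(x_0))}$ with $\phi=u|_{\pa\Omega}$, and the displayed computation identifies the interior of their intersection with $\sepi{\env\phi^*}=D$.

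I expect Part \ref{item_thmgraph4} to be the main obstacle, as it is the one genuinely analytic statement. Since $\dom u,\dom{\env\phi}\subset\overline\Omega$ are bounded, both $\Sigma=\gra{u^*}$ and $\pa D=\gra{\env\phi^*}$ are entire graphs over $\R^2$, so ``asymptotic'' should mean $u^*-\env\phi^*\to0$ at infinity, and the task is to translate this into the boundary conditions (a) $\dom u=\dom{\env\phi}$ and (b) $\env\phi-u\to0$ on $\pa(\dom u)$. The crux is that the asymptotic behaviour of a Legendre transform at infinity is governed by the values and inner derivatives of the original function near the boundary of its effective domain; I would establish the equivalence using the boundary-value formula \eqref{eqn_boundary value}, the inner-derivative criterion \eqref{eqn_inner}, and the subgradient/supporting-plane results of \cite[\S 4]{nie-seppi} packaged into Lemma \ref{lemma_gradient} \ref{item_gradient3}, and then read off $\phi=u|_{\pa\Omega}$ from (a)--(b). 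Controlling this boundary-to-infinity dictionary uniformly---rather than the essentially formal duality bookkeeping of Parts \ref{item_thmgraph1}--\ref{item_thmgraph3}---is where the real work lies.
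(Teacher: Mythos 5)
This theorem is not proved in the paper at all: it is imported wholesale from \cite[Thm.\@ 5.2 and 5.13]{nie-seppi}, so there is no in-paper argument to compare against, and your proposal must be judged on its own merits. Parts \ref{item_thmgraph1}--\ref{item_thmgraph3} are essentially correct and complete. Your identity $\bigcap_{x_0\in\pa\Omega}H_{(x_0,\phi(x_0))}=\epi{\env{\phi}^*}$ is right; the infimum trick, followed by replacing $\phi$ with $\env{\phi}|_{\pa\Omega}$ (which has the same envelope), settles surjectivity in Part \ref{item_thmgraph1}; and the properness criterion does reduce to the standard facts that $\sepi{\env{\phi}^*}$ contains a line iff $\env{\phi}^*$ is affine along some line iff $\dom{\env{\phi}}$ lies in a line. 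Parts \ref{item_thmgraph2} and \ref{item_thmgraph3} then go through via Lemma \ref{lemma_gradient} as you describe. One caveat: Prop.\@ \ref{prop_legendre} is stated only for functions with \emph{bounded} effective domain, so you cannot literally apply it ``with the roles swapped'' to $u^*$, whose domain is all of $\R^2$; instead argue directly, via Lemma \ref{lemma_gradient} \ref{item_gradient2}, that the tangent plane of $\gra{u^*}$ at $(y_0,u^*(y_0))$ is the point $\big(\D u^*(y_0),\,y_0\cdot\D u^*(y_0)-u^*(y_0)\big)\in\gra{u}$; with unbounded domain the supremum defining $u^{**}$ need not be attained, and supporting planes in the paper's sense must touch the surface.

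The genuine gap is Part \ref{item_thmgraph4}, which you flag but never close. The reduction of ``asymptotic'' to $u^*-\env{\phi}^*\to0$ at infinity is fine, since both are entire graphs whose gradients lie in the bounded set $\overline{\Omega}$, so vertical and Euclidean distances are comparable. But the claimed equivalence of this decay with the two bullet conditions does not follow from the tools you list: formulas \eqref{eqn_boundary value}, \eqref{eqn_inner} and Lemma \ref{lemma_gradient} \ref{item_gradient3} are purely qualitative (infinite inner derivatives, gradient blow-up, $\D u(U)=\R^2$) and give no control on the \emph{size} of $u^*-\env{\phi}^*$ at infinity. Concretely, for the ``if'' direction one must show that the maximizers $z_y$ of $x\mapsto x\cdot y-u(x)$ tend to $\pa\dom{u}$ as $|y|\to\infty$ and then sandwich $u^*(y)-\env{\phi}^*(y)$ between the gaps $\env{\phi}-u$ evaluated at the maximizers for the two functions; for the ``only if'' direction one must, given a sequence $x_n\to\pa\dom{u}$ along which $\env{\phi}-u\geq\delta$, manufacture $y_n\to\infty$ with $u^*(y_n)-\env{\phi}^*(y_n)$ bounded away from $0$, which requires locating subgradients of the generally non-smooth envelope $\env{\phi}$ near $x_n$ and handling the case where they remain bounded near part of the boundary. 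This boundary-to-infinity dictionary is exactly the analytic content of \cite[Thm.\@ 5.13]{nie-seppi}; without carrying it out, Part \ref{item_thmgraph4}, and with it the final claim $\phi=u|_{\pa\Omega}$, remains unproved.
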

In the last part, ``$\Sigma$ is asymptotic to $\pa D$'' means the distance from $p\in\Sigma$ to $\pa D$ tends to $0$ as $p$ goes to infinity in $\Sigma$. The last two parts of the theorem actually imply that this condition is in general strictly stronger than ``$\Sigma$ generates $D$''\footnote{If $\phi\in\C^0(\pa\Omega)$, the two conditions are equivalent (see \cite[p.30]{nie-seppi}). In contrast, an example of a complete $C$-convex surface generating a $C$-regular domain but not asymptotic to the boundary of the domain is given in \cite[Example 5.14]{nie-seppi}, where we have $\phi=+\infty$ on a part of $\pa\Omega$. There also exist examples where $\phi$ is $\R$-valued and bounded.}. 

\begin{example}
The first and the last examples in Example \ref{example_legendre} are the simplest cases of Parts \ref{item_thmgraph1} and \ref{item_thmgraph2} of the above theorem, respectively, which yield the light cone $C_0$ itself as a $C_0$-regular domain and the hyperboloid $\mathbb{H}^2$ as a complete $C_0$-convex surface. In contrast, in the 2nd and 3rd examples, the convex function $u$ is neither a convex envelope nor contained in $\SC_0(\Omega)$, hence $\gra{u^*}$ is neither the boundary of a $C_0$-regular domain nor a complete $C_0$-convex surface. Nevertheless, a part of $\gra{u^*}$ is still an incomplete $C_0$-convex surface in both examples. See Remark \ref{remark_piece} below.
\end{example}

\begin{remark}\label{remark_piece}
	By Lemma \ref{lemma_gradient} \ref{item_gradient2}, the $C$-convex surface in Part \ref{item_thmgraph2} of the theorem can be written as
	$$
	\gra{u^*}=\big\{\big(\D u(x),\,x\cdot \D u(x)-u(x)\big)\,\big|\, x\in \interior\dom{u}\big\},
	$$
	and the completeness of the surface corresponds to the gradient blowup condition in the definition of $\SC_0(\Omega)$ via Lemma \ref{lemma_gradient} \ref{item_gradient3}. Putting aside the completeness condition, one can also show that for any convex $\C^1$-function $u$ on an open set $U\subset\Omega$, the surface $\big\{\big(\D u(x),\,x\cdot \D u(x)-u(x)\big)\,\big|\, x\in U\big\}$, \ie the graph of $u^*$ over $\D u(U)$, is a $C$-convex surface, although the whole graph $\gra{u^*}$ might be not. 
\end{remark}

\begin{remark}\label{remark_sc}
In \cite{nie-seppi}, we also considered a set of functions $\SC(\Omega)\subset\LC(\R^2)$  containing $\SC_0(\Omega)$, whose definition does not assume any differentiability or strict convexity, and showed in \cite[Thm.\@ 5.2]{nie-seppi} that $\SC(\Omega)$ identifies with the space of all complete $C$-convex surfaces (not necessarily smooth or strictly convex) in $\A^3$. This generalizes Thm.\@ \ref{thm_graph} \ref{item_thmgraph2} above. See also \cite[Example 5.14]{nie-seppi} for an example of a function in $\SC(\Omega)\setminus\SC_0(\Omega)$ and the corresponding $C$-convex surface, which is not strictly convex.
\end{remark}

The geometric interpretation of Legendre transformation in Prop.\@ \ref{prop_legendre} provides the following more detailed descriptions of the bijections \ref{item_thmgraph1} and \ref{item_thmgraph2} (the roles of $\A^3$ and $\A^{3*}$ in Prop.\@ \ref{prop_legendre} are switched here in order to be consistent with our convention that $C$-regular domains live in $\A^3$ and convex tube domains in $\A^{3*}$, so we are actually viewing $\A^3$ as the space of non-vertical affine planes in $\A^{3*}$):
\begin{itemize}
	\item Given $\phi\in\LC(\pa\Omega)$, points in the $C$-regular domain $D=\sepi{\env{\phi}^*}\subset\A^3$ are 
	exactly the non-vertical affine planes in $\A^{3*}$ disjoint from the epigraph $\epi{\phi}\subset\pa\Omega\times\R\subset\A^{3*}$, or in other words, the affine planes crossing $\Omega\times\R$ from below of $\gra{\phi}$. 
	\item Given $u\in\SC_0(\Omega)$, points on the $C$-convex surface $\Sigma=\gra{u^*}$ are exactly the non-vertical supporting planes of the surface $\gra{u}\subset\overline{\Omega}\times\R$.
\end{itemize}

\subsection{Hyperbolic affine spheres}
The theory of \emph{Affine Differential Geometry} studies properties of surfaces in the affine space $\A^3$ that are invariant under special affine transformations. A crucial ingredient in the theory is the fact that any smooth locally strongly convex surface $\Sigma\subset \A^3$ carries a canonical transversal vector field $N_\Sigma$, called the \emph{affine normal field}, pointing towards the convex side of $\Sigma$. Using $N_\Sigma$, one can define the \emph{affine shape operator} $S_\Sigma$ of $\Sigma$, which is a smooth section of $\End(\T\Sigma)$, in the same way as in classical surface theory, and then define the \emph{affine Gaussian curvature} $\kappa_\Sigma:\Sigma\to\R$ to be the determinant $\det(S_\Sigma)$ (see \eg \cite{MR1311248} for details).

We are interested in certain $\Sigma$'s with Constant Affine Gaussian Curvature (CAGC). A simple yet crucial sub-class of them are \emph{hyperbolic affine spheres}, which are by definition those $\Sigma$ with $S_\Sigma=\id$. This condition is equivalent to the existence of a \emph{center} $o\in\A^3$ of $\Sigma$ such that at any $p\in\Sigma$, the affine normal $N_\Sigma(p)$ equals the vector $\overrightarrow{op}$. 

The above discussion is local in nature. We henceforth restrict ourselves to ``global'' hyperbolic affine spheres, \ie properly embedded ones, of which the simplest example is the hyperboloid $\mathbb{H}^2$ in the light cone $C_0\subset \R^{2,1}$. These affine spheres are classified by a theorem of Cheng and Yau \cite{chengyau1}, stating that they are in $1$-to-$1$ correspondence with proper convex cones in $\R^3$, just in the way how $\mathbb{H}^2$ corresponds to $C_0$. Namely, each cone $C$ contains a unique properly embedded hyperbolic affine sphere, denoted by $\Sigma_C$, which is asymptotic to $\pa C$, and conversely every properly embedded affine sphere centered at $0$ is some $\Sigma_C$ for a unique $C$. 

In the proof of Cheng-Yau's theorem, one encodes a convex surface $\Sigma$ by a function on a bounded convex domain, and translates the geometrical condition that $\Sigma$ is an affine sphere to a PDE on that function. Using the framework in \S \ref{subsec_cregular}, we can formulate this translation process and the statement of the theorem itself as:
\begin{theorem}[\cite{chengyau1}]\label{thm_chengyau}
For any bounded convex domain $\Omega\subset\R^2$, there exists a unique convex function $w_\Omega\in\C^0(\overline{\Omega})\cap\C^\infty(\Omega)$ solving the Dirichlet problem of Monge-Amp\`ere equation
$$
\begin{cases}
\det\D^2w=w^{-4},\\
w|_{\pa\Omega}=0.
\end{cases}
$$
Furthermore, $w_\Omega$ has the following properties:
\begin{itemize}
	\item the norm of gradient $\|\D w_\Omega(x)\|$ tends to $+\infty$ as $x\in\Omega$ tends to $\pa\Omega$;
	\item the graph $\gra{w_\Omega}\subset\Omega\times\R$ is invariant under the action of $\Aut(\Omega)$, viewed as the subgroup of $\Aut(\Omega\times\R)$ preserving the slice $\Omega\times\{0\}$ (see \S \ref{subsec_subgroup}).
\end{itemize}
Moreover, if $C$ and $\Omega$ are as in \S \ref{subsec_cspacelike}, then the graph $\Sigma_C=\gra{w_\Omega^*}$ of the Legendre transform $w_\Omega^*$ is the unique hyperbolic affine sphere asymptotic to $\pa C$. 
\end{theorem}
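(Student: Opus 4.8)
The plan is to treat the existence and uniqueness of $w_\Omega$ as the genuine analytic core (this is exactly Cheng--Yau's solution of the affine sphere problem) and to devote the bulk of the argument to recasting their statement in the present framework and to checking the listed structural properties. For existence I would first solve the nondegenerate approximate problems obtained by exhausting $\Omega$ with smooth, strictly convex subdomains (or by regularizing the right-hand side so that it stays bounded), where the classical Monge--Amp\`ere theory --- the continuity method together with the interior a priori estimates of Pogorelov, Calabi and Cheng--Yau --- produces smooth, locally strongly convex solutions, and then extract $w_\Omega\in\C^0(\overline{\Omega})\cap\C^\infty(\Omega)$ as a limit. The main obstacle is precisely this degenerate limit: since $w^{-4}$ blows up as $w\to0$, the Dirichlet problem is singular at $\pa\Omega$, and one needs interior $C^2$- and higher-order estimates that remain uniform throughout the approximation, which is the technical heart of \cite{chengyau1}. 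Uniqueness, by contrast, is elementary: if $w_1,w_2$ are two convex solutions, then on the open set $\{w_1<w_2\}$ one has $\det\D^2 w_1\le\det\D^2 w_2$ because $w\mapsto w^{-4}=(-w)^{-4}$ is nondecreasing on $(-\infty,0)$; as $w_1=w_2$ on the boundary of this set, the Alexandrov comparison principle forces $w_2\le w_1$ there, so the set is empty, and by symmetry $w_1=w_2$.

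Next I would identify the Monge--Amp\`ere equation as the analytic form of the affine sphere condition. By Lemma \ref{lemma_gradient} \ref{item_gradient2} and Proposition \ref{prop_legendre}, the graph $\gra{w^*}$ of the Legendre transform of a convex $w\in\SC_0(\Omega)$ is the surface $\Sigma=\{(\D w(x),\,x\cdot\D w(x)-w(x))\mid x\in\Omega\}\subset\A^3$. A standard computation in affine differential geometry shows that $\Sigma$ is a hyperbolic affine sphere with shape operator $\id$ and center $0$ precisely when its generating function obeys the Monge--Amp\`ere equation $\det\D^2 w=(-w)^{-(n+2)}$, which in dimension $n=2$ reads $\det\D^2 w=(-w)^{-4}=w^{-4}$, while the vanishing boundary condition $w|_{\pa\Omega}=0$ encodes that $\Sigma$ is asymptotic to $\pa C$. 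This matches the stated equation and, through the bijection of Theorem \ref{thm_graph} \ref{item_thmgraph2}, identifies the geometric uniqueness of the affine sphere with the analytic uniqueness just proved.

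Finally I would read off the two properties and the concluding statement. For the gradient blow-up, I would use that Cheng--Yau's affine sphere $\Sigma_C$ is properly embedded, hence an entire graph; via the dictionary this says $w_\Omega\in\SC_0(\Omega)$ with $\D w_\Omega(\Omega)=\R^2$, which by Lemma \ref{lemma_gradient} \ref{item_gradient3} is equivalent to $\|\D w_\Omega(x)\|\to+\infty$ as $x\to\pa\Omega$. For the $\Aut(\Omega)$-invariance, I would argue geometrically: every $A\in\Aut(\Omega)\cong\Aut(C)$ fixes $0$ and preserves $C$, and special affine maps preserve the affine normal field and the shape operator, so $A\cdot\Sigma_C$ is again a properly embedded hyperbolic affine sphere with $S=\id$, centered at $0$ and asymptotic to $\pa C$; uniqueness gives $A\cdot\Sigma_C=\Sigma_C$. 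Since $\Sigma_C=\gra{w_\Omega^*}$, Lemma \ref{lemma_actiongraph} together with the identification of $\Aut(\Omega)$ with the subgroup of $\Aut(\Omega\times\R)$ fixing the slice $\Omega\times\{0\}$ (\S\ref{subsec_subgroup}) then translates this into invariance of $\gra{w_\Omega}$ under $\Aut(\Omega)$. The last assertion --- that $\Sigma_C=\gra{w_\Omega^*}$ is the unique hyperbolic affine sphere asymptotic to $\pa C$ --- is exactly the geometric content of Cheng--Yau's theorem transported through Theorem \ref{thm_graph} \ref{item_thmgraph2}.
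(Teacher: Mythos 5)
Your proposal is correct and follows essentially the same route as the paper: the paper cites \cite{chengyau1} for the analytic core (existence, uniqueness, boundary gradient blow-up, and the identification of $\gra{w_\Omega^*}$ with the unique hyperbolic affine sphere asymptotic to $\pa C$), and obtains the $\Aut(\Omega)$-invariance of $\gra{w_\Omega}$ exactly as you do --- from the uniqueness of $\Sigma_C$ (equivalently of $w_\Omega$) transported through Lemma \ref{lemma_actiongraph} and the identification of $\Aut(\Omega)$ with the slice-preserving subgroup of $\Aut(\Omega\times\R)$. Your additional sketch of the Cheng--Yau existence scheme and the comparison-principle uniqueness argument is a correct reconstruction of the cited material rather than a divergence from the paper's proof.
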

The second bullet point is not included in \cite{chengyau1} but is equivalent to the fact that the affine sphere $\Sigma_C=\gra{w_\Omega^*}$ is invariant under automorphisms of $C$, which is in turn a consequence of the uniqueness of $\Sigma_C$, or equivalently, the uniqueness of $w_\Omega$. See Lemma \ref{lemma_actiongraph} for a precise explanation of these equivalences.

We refer to $w_\Omega$ as the \emph{Cheng-Yau support function} of $\Omega$. The simplest example is already given in Example \ref{example_legendre}: we have $w_{\mathbb{D}}(x)=\sqrt{1-|x|^2}$ for the unique disk $\mathbb{D}$, and the corresponding affine sphere is just $\Sigma_{C_0}=\mathbb{H}^2$.
\subsection{Affine $(C,k)$-surfaces}\label{subsec_affineck}
We proceed to consider a wider sub-class of smooth strongly convex CAGC surfaces. By \cite[\S 3.2]{nie-seppi}, a sufficient (but not necessary) condition for $\kappa_\Sigma$ to be a positive constant $k$ is that the surface $N_\Sigma(\Sigma)$ in the vector space $\R^3$ (\ie the surface formed by all affine normal vectors of $\Sigma$) is contained in a scaled affine sphere of the form $k^{\frac{1}{4}}\Sigma'$, where $\Sigma'$ is some hyperbolic affine sphere centered at $0$. If in particular $\Sigma'$ is the Cheng-Yau affine sphere $\Sigma_C$ for a proper convex cone $C$, we call $\Sigma$ an \emph{affine $(C,k)$-surface}. 
\begin{remark}
While most of the properties studied in Affine Differential Geometry, such as the property of having CAGC, are invariant under special affine transformations, the property of being an affine $(C,k)$-surface is only invariant under $\Aut(C)\ltimes\R^3$. In fact, a general special affine transformation $(A,X)\in\SL(3,\R)\ltimes\R^3$ sends an affine $(C,k)$-surface to an affine $(A(C),k)$-surface.
\end{remark}

It is easy to see that an affine $(C,k)$-surface is $C$-convex in the sense of \S \ref{subsec_cregular} (see \cite[\S 3.2]{nie-seppi} for details). 
Therefore, by Theorem \ref{thm_graph} \ref{item_thmgraph2}, every complete affine $(C,k)$-surface can be written as $\gra{u^*}$ for some $u\in\SC_0(\Omega)$. It is essentially shown in \cite{lisimoncrelle} that the exact condition on such a $u$ for $\gra{u^*}$ to be an affine $(C,k)$-surface is a Monge-Amp\`ere equation involving the function $w_\Omega$ from Thm.\@ \ref{thm_chengyau}, which is itself the solution to a Monge-Amp\`ere equation (hence called a ``two-step Monge-Amp\`ere equation'' in \cite{lisimoncrelle}). By \cite[\S 7.2]{nie-seppi}, we can formulate this result as follows, also covering incomplete pieces of $C$-convex surfaces discussed in Remark \ref{remark_piece}:

\begin{proposition}
	\label{prop_mongeampere}
Let $C$ and $\Omega$ be as in \S \ref{subsec_cspacelike},  $U\subset\Omega$ be an open set and $u\in\C^\infty(U)$ be strictly convex. Let $\Sigma$ denote the graph over $\D u(U)$ of the Legendre transform $u^*$, \ie
$$
\Sigma:=\gra{u^*|_{\D u(U)}}=\big\{\big(x,\,x\cdot \D u(x)-u(x)\big)\,\big|\, x\in U\big\}\subset \R^3\cong\A^3
$$
(see Lemma \ref{lemma_gradient} \ref{item_gradient2} and Remark \ref{remark_piece}). Fix $k>0$. Then $\Sigma$ is an affine $(C,k)$-surface if and only if $u$ satisfies the Monge-Amp\`ere equation 
$$
\det\D^2u=k^{-\frac{2}{3}}w_\Omega^{-4}~
$$
in $U$. As a consequence, the $1$-to-$1$ correspondence in Thm.\@ \ref{thm_graph} \ref{item_thmgraph2} restricts to a correspondence between all complete affine $(C,k)$-surfaces and those $u\in\SC_0(\Omega)$ which satisfies the above equation in the interior of $\dom{u}$.
\end{proposition}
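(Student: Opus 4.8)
The statement is local in $U$, so I would argue pointwise and only at the end intersect the resulting equivalence with the bijection of Theorem \ref{thm_graph} \ref{item_thmgraph2} to obtain the ``consequence'' about complete surfaces. The plan is to compute the affine normal field $N_\Sigma$ of $\Sigma=\gra{u^*}$ explicitly in terms of $u$, and then to recognize the image $N_\Sigma(\Sigma)\subset\R^3$ as a rescaled Cheng--Yau affine sphere. First I would parameterize $\Sigma$ by $x\in U$ through the map $X(x)=(\D u(x),\,x\cdot\D u(x)-u(x))$ of Lemma \ref{lemma_gradient} \ref{item_gradient2}. Passing from the base variable $y=\D u(x)$ of the graph $\gra{u^*}$ to $x$, so that $\D^2u^*(y)=(\D^2u(x))^{-1}$ and $dy=\D^2u(x)\,dx$, a direct change of chart shows that the Blaschke (affine) metric of $\Sigma$, which in the $y$-chart is $(\det\D^2u^*)^{-1/4}\,\D^2u^*$, takes the clean form
\[
h=(\det\D^2u)^{1/4}\sum_{ij}u_{ij}\,dx^i\,dx^j
\]
in the $x$-chart, which is positive definite by strict convexity of $u$.

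Next I would invoke the classical Blaschke identity $N_\Sigma=\tfrac12\Delta_h X$ expressing the affine normal as half the affine-Laplacian of the position vector. From the metric $h$ above one gets $\sqrt{\det h}=(\det\D^2u)^{3/4}$, and a short computation yields, for each index $i$, the flux $\sqrt{\det h}\,h^{ij}\partial_jX=(\det\D^2u)^{1/2}(\mathbf e_i,x_i)$; taking its divergence and dividing by $\sqrt{\det h}$, the algebra collapses, upon setting $v:=(\det\D^2u)^{-1/4}$, to
\[
N_\Sigma=\pm\bigl(\D v,\;x\cdot\D v-v\bigr),
\]
the sign being fixed only by the orientation convention that $N_\Sigma$ point to the convex side (into the cone). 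Comparing this once more with Lemma \ref{lemma_gradient} \ref{item_gradient2}, it says precisely that the affine normal carries the point of $\Sigma$ over $x$ to (plus or minus) the point over $\D v(x)$ of the graph $\gra{v^*}$ of the Legendre transform of $v$. In other words, as a surface in $\R^3$ the image $N_\Sigma(\Sigma)$ is (a reflection of) $\gra{v^*}$.

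Finally I would match this against the definition of an affine $(C,k)$-surface. By Theorem \ref{thm_chengyau} the Cheng--Yau affine sphere is $\Sigma_C=\gra{w_\Omega^*}$, and for any $c>0$ one has $c\,\Sigma_C=\gra{(c\,w_\Omega)^*}$, since $(c\,w_\Omega)^*(y)=c\,w_\Omega^*(y/c)$. Hence the requirement that $N_\Sigma(\Sigma)$ lie on the relevant rescaling of $\Sigma_C$ becomes an equality of graphs $\gra{v^*}=\gra{(c\,w_\Omega)^*}$, which by injectivity of the Legendre transform on convex functions is equivalent to $v=c\,w_\Omega$ for a single constant $c=c(k)$, i.e.\ to $(\det\D^2u)^{-1/4}=c\,w_\Omega$. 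To pin down $c(k)$ it suffices to calibrate against the explicit homothetic family $u=c'\,w_\Omega$, for which $\gra{u^*}=c'\,\Sigma_C$ is itself a rescaled affine sphere, hence an affine $(C,k)$-surface whose constant affine Gaussian curvature $k$ is read off from the model $C=C_0$ (equivalently from the two-step Monge--Amp\`ere computation of \cite{lisimoncrelle} and \cite[\S 7.2]{nie-seppi}); comparing $\det\D^2u=(c')^2w_\Omega^{-4}$ with the value of $k$ on $c'\Sigma_C$ gives $(c')^2=k^{-2/3}$, so that $v=c\,w_\Omega$ is exactly $\det\D^2u=k^{-2/3}w_\Omega^{-4}$. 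The hard part is the affine-normal computation of the second paragraph: obtaining $N_\Sigma$ in the form $\pm(\D v,\,x\cdot\D v-v)$ rests on the correct change of chart for the Blaschke metric and on the cancellation in $\Delta_h X$, and in the ``image is a graph'' step one must check that $\D v(U)$ is open and that $v$ is convex where this is used — automatic on the solution side, where $v=c\,w_\Omega$ is convex, but requiring the converse direction to be unwound from the pointwise membership of $N_\Sigma$ on the rescaled sphere without presupposing global convexity of $v$.
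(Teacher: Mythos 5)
Your computation is doing something the paper itself does not: the paper's entire ``proof'' of Proposition \ref{prop_mongeampere} is the remark that it is a more detailed version of \cite[Cor.\@ 7.5]{nie-seppi} (ultimately resting on \cite{lisimoncrelle}) ``and the proof is the same'', so a self-contained argument is welcome. Your computational core checks out: in the chart $x\mapsto\big(\D u(x),\,x\cdot\D u(x)-u(x)\big)$ the Blaschke metric is indeed $(\det\D^2u)^{1/4}u_{ij}\,dx^i dx^j$, the flux is $(\det\D^2u)^{1/2}(\mathbf{e}_i,x_i)$, and $N_\Sigma=\tfrac12\Delta_h X=-\big(\D v,\,x\cdot\D v-v\big)$ with $v=(\det\D^2u)^{-1/4}$; this is exactly the identity behind the two-step Monge--Amp\`ere equation, and it makes the ``if'' direction essentially immediate. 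Two caveats, though. First, a sign: since $v>0$ while $w_\Omega<0$, on the solution side $v=-k^{1/6}w_\Omega$ is \emph{concave}; the convex function whose tangent-plane surface (graph of the Legendre transform) you should invoke is $-v$, so ``$v=c\,w_\Omega$ is convex'' is false as written. Second, with the Blaschke normalization $N_\Sigma=\tfrac12\Delta_hX$ that you use, solutions of $\det\D^2u=k^{-2/3}w_\Omega^{-4}$ have $N_\Sigma(\Sigma)=k^{1/6}\Sigma_C$, not $k^{1/4}\Sigma_C$; matching literally against the constant in \S\ref{subsec_affineck} would produce $\det\D^2u=k^{-1}w_\Omega^{-4}$. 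So your choice to calibrate the constant on the homothetic family $c'\Sigma_C$ through its curvature ($\kappa_{c'\Sigma_C}=c'^{-3}$, by the scaling law of the affine shape operator) is not a convenience but a necessity, and should be stated as such.

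The genuine gap is the ``only if'' direction. Being an affine $(C,k)$-surface gives only a \emph{containment} $N_\Sigma(\Sigma)\subset c\,\Sigma_C$; writing $f:=-v$ and $G:=(c\,w_\Omega)^*$, this says exactly that $f(x)=x\cdot\D f(x)-G(\D f(x))$ on $U$, i.e.\ every tangent plane of $\gra{f}$ is tangent to $\gra{c\,w_\Omega}$. Your appeal to ``injectivity of the Legendre transform on convex functions'' is not available here: you have neither equality of graphs nor convexity of $f$, and the identity genuinely admits solutions other than $f=c\,w_\Omega$ --- for instance any affine $f$ tangent to $c\,w_\Omega$, or any envelope of a one-parameter family of tangent planes of $c\,w_\Omega$. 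For such an $f$ one can take a strictly convex $u$ solving $\det\D^2u=(-f)^{-4}$ on a small $U$, and then $N_\Sigma$ is \emph{constant}, with image a single point lying on $c\,\Sigma_C$: the converse is outright false if the definition is read as containment alone. What rescues it is the requirement, implicit in \S\ref{subsec_affineck}, that $N_\Sigma(\Sigma)$ be a surface. Concretely, differentiating the identity gives $\D^2f(x)\,\big(x-\D G(\D f(x))\big)=0$; since the Jacobian of $N_\Sigma$ has the same rank as $\D^2f$ (its third row is $\tran{x}\D^2f$), two-dimensionality of $N_\Sigma(\Sigma)$ forces $\det\D^2f\neq0$, whence $x=\D G(\D f(x))$, so $\D f=\D(c\,w_\Omega)$ by Legendre duality, and substituting back into the identity kills the additive constant, giving $f=c\,w_\Omega$ and hence the equation. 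You flagged exactly this difficulty in your closing sentence but left it unresolved; since it is the entire content of one implication, the proposal as it stands proves only half of the stated equivalence.
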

This proposition is a more detailed version of \cite[Cor.\@ 7.5]{nie-seppi} (which only deals with complete affine $(C,k)$-surfaces) and the proof is the same.
\subsection{Foliation by $C$-convex surfaces}\label{subsec_foliation} 
In view of Theorem \ref{thm_graph} \ref{item_thmgraph2}, we further ask the following question: Let $(u_t)_{t\in\R}$ be a one-parameter family of functions in $\SC_0(\Omega)$ with common boundary value $\phi\in\LC(\pa\Omega)$, so that $\Sigma_t=\gra{u_t^*}$ ($t\in\R$) are a family of complete $C$-convex surfaces  generating the same $C$-regular domain $D=\sepi{\overline{\phi}^*}$, $\phi\in\LC(\pa\Omega)$. Then when is $(\Sigma_t)_{t\in \R}$ a foliation of $D$?

In \cite[Thm.\@ 5.15]{nie-seppi}, we gave a necessary and sufficient condition on $(u_t)$ for $(\Sigma_t)$ to be a \emph{convex} foliation in the sense that the leaves are the level surfaces of a convex function on $D$. Restricting to the case where $\phi$ only takes values in $\R$, we have:
\begin{proposition}[{\cite[Thm.\@ 5.15]{nie-seppi}}]\label{prop_foliation}
	Let $\phi:\pa\Omega\to\R$ be a lower semicontinuous function and let $(u_t)_{t\in\R}\subset\SC_0(\Omega)$ be such that $u_t|_{\pa\Omega}=\phi$ for every $t$. Then the following conditions are equivalent:
	\begin{itemize}
		\item for every fixed $x\in\Omega$, $t\mapsto u_t(x)$ is a strictly increasing concave function on $\R$, with value tending to $-\infty$ and $\overline{\phi}(x)$ as $t$ tends to $-\infty$ and $+\infty$, respectively;
		\item  there is a convex function $K:\sepi{\overline{\phi}^*}\to\R$ such that $\gra{u_t^*}=K^{-1}(t)$ for every $t\in\R$.
	\end{itemize}
\end{proposition}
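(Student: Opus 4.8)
The plan is to reduce the whole equivalence to a single symmetric inequality relating the family $(u_t)$ to its Legendre transforms $(u_t^*)$, and to exploit that $D:=\sepi{\overline{\phi}^*}$ is a convex subset of $\A^3$ (being a $C$-regular domain). Throughout I use that the Legendre transform is an order-reversing involution on $\LC(\R^2)$ (\S\ref{subsec_legendre}), that $\dom{u_t}\subset\overline{\Omega}$ is bounded so $u_t^*$ is finite on all of $\R^2$ (Prop.\@ \ref{prop_legendre}), and that the supremum defining $u_t^*(y)$ is attained at the gradient, $u_t^*(\D u_t(x))=x\cdot\D u_t(x)-u_t(x)$ (Lemma \ref{lemma_gradient} \ref{item_gradient2}).

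First I would set up the common bookkeeping. For each fixed $y$ the map $t\mapsto u_t^*(y)=\sup_x\big(x\cdot y-u_t(x)\big)$ is a supremum in $t$; once its strict monotonicity and limits are known, the level-set condition $\gra{u_t^*}=K^{-1}(t)$ says exactly that $\eta\mapsto K(y,\eta)$ is the inverse, along each vertical line $\{y\}\times\R\subset\A^3$, of the strictly monotone map $t\mapsto u_t^*(y)$. Note $(y,\eta)\in D$ iff $\eta>\overline{\phi}^*(y)$, and for finite $t$ one has $u_t^*(y)>\overline{\phi}^*(y)$, so each leaf $\gra{u_t^*}$ lies in $D$ and, by the foliation hypothesis, every point of $D$ lies on a unique leaf.

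The heart of the argument is that, for all $y_0,y_1\in\R^2$ and $\lambda\in[0,1]$ (writing $t_\lambda=\lambda t_1+(1-\lambda)t_0$ and $y_\lambda=\lambda y_1+(1-\lambda)y_0$), the inequality
\begin{equation*}
u_{t_\lambda}^*(y_\lambda)\ \le\ \lambda\,u_{t_1}^*(y_1)+(1-\lambda)\,u_{t_0}^*(y_0)\tag{$\star$}
\end{equation*}
is equivalent \emph{both} to concavity of $t\mapsto u_t(x)$ \emph{and} to convexity of $K$. Indeed, if $t\mapsto u_t(x)$ is concave then $x\cdot y_\lambda-u_{t_\lambda}(x)\le\lambda\big(x\cdot y_1-u_{t_1}(x)\big)+(1-\lambda)\big(x\cdot y_0-u_{t_0}(x)\big)\le\lambda u_{t_1}^*(y_1)+(1-\lambda)u_{t_0}^*(y_0)$, and taking $\sup_x$ gives $(\star)$; conversely, feeding $(\star)$ into $u_{t_\lambda}(x_0)\ge x_0\cdot y_\lambda-u_{t_\lambda}^*(y_\lambda)$ and choosing $y_i=\D u_{t_i}(x_0)$ (so $x_0\cdot y_i-u_{t_i}^*(y_i)=u_{t_i}(x_0)$) recovers $u_{t_\lambda}(x_0)\ge\lambda u_{t_1}(x_0)+(1-\lambda)u_{t_0}(x_0)$. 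On the other hand, since $D$ is convex and $\eta\mapsto K(y,\eta)$ is strictly decreasing, $(\star)$ is precisely the convexity of $K$: the midpoint of $(y_i,u_{t_i}^*(y_i))\in K^{-1}(t_i)$ is $\big(y_\lambda,\lambda u_{t_1}^*(y_1)+(1-\lambda)u_{t_0}^*(y_0)\big)$, and $K$ convex means $K$ of it is $\le t_\lambda$, which along $\{y_\lambda\}\times\R$ translates back into $(\star)$. Because the leaves cover $D$, checking this on all such pairs is the same as full convexity of $K$. Thus, modulo the monotonicity-and-limits package, convexity of $K$ and concavity of $t\mapsto u_t(x)$ are literally the same inequality $(\star)$.

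It remains to install that package, which is where I expect the real work. For $(1)\Rightarrow(2)$: strict pointwise monotonicity of $u_t$ forces $t\mapsto u_t^*(y)$ to be strictly decreasing (comparing the two suprema at their maximizers, which lie in $\Omega$ by the gradient surjectivity of Lemma \ref{lemma_gradient} \ref{item_gradient3}, where strict monotonicity holds); it is moreover convex and finite in $t$, hence continuous; and the limits $u_t\to-\infty$, $u_t\uparrow\overline{\phi}$ transfer under Legendre to $u_t^*(y)\to+\infty$ and $u_t^*(y)\downarrow\overline{\phi}^*(y)$, so $\eta\mapsto K(y,\eta)$ is a decreasing bijection onto $\R$ and $K$ is convex by $(\star)$. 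For $(2)\Rightarrow(1)$: the monotonicity is read off $K$ itself. Restricting the convex function $K$ to a vertical line gives a convex function on $(\overline{\phi}^*(y),+\infty)$ that is injective (each leaf meets the line once) and hence strictly monotone; since a convex function cannot tend to $-\infty$ at the finite endpoint $\overline{\phi}^*(y)$, it must be strictly decreasing, with $K\to+\infty$ there and $K\to-\infty$ at $+\infty$. Dualizing gives strict monotonicity of $u_t$ and the prescribed limits. The main obstacle is exactly this transfer of limits across the Legendre transform — showing $u_t(x)\to-\infty$ as $t\to-\infty$ from $u_t^*(y)\to+\infty$, and $u_t\uparrow\overline{\phi}$ from $u_t^*\downarrow\overline{\phi}^*$ — which I would handle using the monotone-convergence behavior of conjugates, the biconjugate identity $(\overline{\phi}^*)^*=\overline{\phi}$, and the boundedness of $\dom{u_t}$; the convexity estimate $(\star)$ is then routine.
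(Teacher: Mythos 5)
The paper never proves Proposition \ref{prop_foliation}: it is imported wholesale from \cite[Thm.\@ 5.15]{nie-seppi} (in the special case of $\R$-valued $\phi$), so there is no internal argument to measure you against; I can only assess your proposal on its own merits, and on those merits it is correct. The architecture is sound: your inequality $(\star)$ is precisely joint convexity of $(t,y)\mapsto u_t^*(y)$, and it does simultaneously encode pointwise concavity of $t\mapsto u_t(x)$ (via Fenchel--Young with $y_i=\D u_{t_i}(x_0)$, using Lemma \ref{lemma_gradient} \ref{item_gradient2}) and convexity of $K$ (via the fact that each vertical line meets each leaf exactly once and $\eta\mapsto K(y,\eta)$ is strictly decreasing). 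Both reductions are non-circular, and your $(2)\Rightarrow(1)$ device --- reading strict monotonicity and the limits off the restriction of the convex function $K$ to vertical lines, using that a convex function on an interval cannot tend to $-\infty$ at a finite endpoint --- is a clean way to recover the ``package'' without assuming it. What your route buys is a self-contained convex-duality proof where the paper simply defers to earlier work.

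The steps you flag as open do all close, and since that is where a gap could hide, here is why. (i) The transfer $u_t^*\downarrow\overline{\phi}^*$ as $t\to+\infty$: the genuine danger is that a decreasing limit of convex functions need not be lower semicontinuous, so biconjugation alone gives only $(\inf_t u_t^*)^{**}=\overline{\phi}^*\leq\inf_t u_t^*$. It closes because $g:=\inf_t u_t^*$ is pinched between $\overline{\phi}^*$ and $u_0^*$, which are finite on all of $\R^2$ (boundedness of $\dom{u_t}$, Prop.\@ \ref{prop_legendre}); a finite convex function on $\R^2$ is continuous, hence closed, so $g=g^{**}=\overline{\phi}^*$. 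The reverse transfers in $(2)\Rightarrow(1)$ are easier, since $\sup_t u_t$ is automatically lsc convex, and $\inf_t u_t\equiv-\infty$ on $\Omega$ follows from $\sup_t u_t^*\equiv+\infty$ because a proper convex function always has a proper conjugate. (ii) Your unproved assertion that $u_t^*(y)>\overline{\phi}^*(y)$ strictly (needed so that the leaves lie in the \emph{open} set $D$): this holds because the concave function $x\mapsto x\cdot y-u_t(x)$ attains its maximum at the interior point where $\D u_t=y$ (surjectivity of the gradient, Lemma \ref{lemma_gradient} \ref{item_gradient3}), while $\overline{\phi}^*(y)=\sup_{x\in\pa\Omega}\big(x\cdot y-\phi(x)\big)$ is its supremum over $\pa\Omega$; equality of the two would force $u_t$ to be affine on a segment meeting $\Omega$, contradicting strict convexity. (iii) Strictness of the monotonicity of $t\mapsto u_t(x)$ in $(2)\Rightarrow(1)$ needs one more line than ``dualizing'': if $u_{t_0}\leq u_{t_1}$ touched at an interior point $x_0$, the gradients there would coincide, forcing $u_{t_0}^*=u_{t_1}^*$ at that common gradient and contradicting the strict decrease of $t\mapsto u_t^*$. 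With these details inserted, your proof is complete.
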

Since concave functions from $\R$ to $\R$ are continuous, the first bullet point implies that the graphs of the $u_t$'s themselves form a foliation of the strict lower epigraph 
$$
T^-:=\big\{(x,\xi)\in\Omega\times\R\,\big|\,\xi<\overline{\phi}(x))\big\}
$$ 
of $\overline{\phi}$. Therefore, we can define a map 
$$
F:T^-\to D
$$ 
in the following way: given $p\in T^-$, pick the leaf passing through $p$ in the foliation $(\gra{u_t})$, then let $F(p)$ be the supporting plane of this leaf at $p$, which can be understood as a point in $D$ (\cf the last paragraph of \S \ref{subsec_cregular}). See Figure \ref{figure_map}. 
\begin{figure}[h]
	\includegraphics[width=11cm]{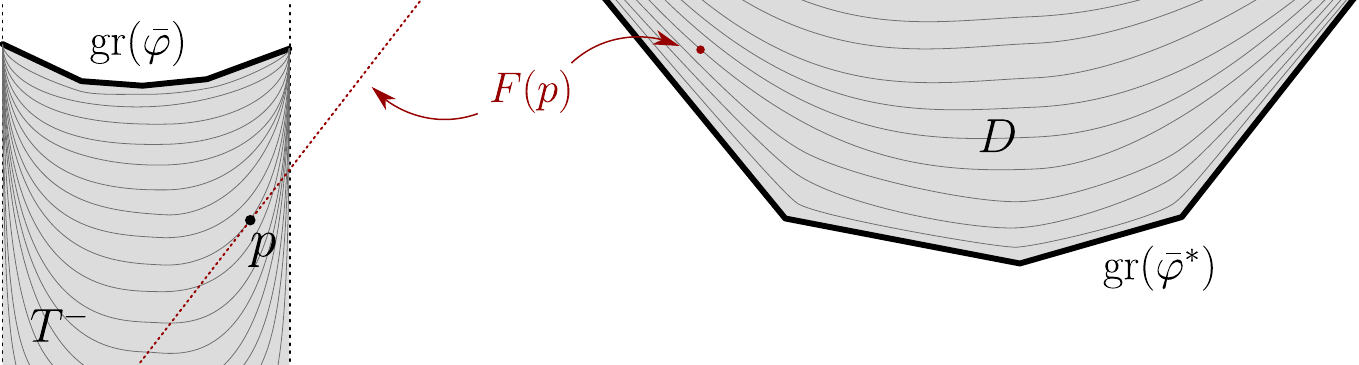}
	\caption{A section view of the foliations and the map $F$.}
	\label{figure_map}
\end{figure}

By Prop.\@ \ref{prop_legendre} and Lemma \ref{lemma_gradient}, $F$ sends each leaf $\gra{u_t}$ in $T^-$ to the leaf $\gra{u_t^*}$ in $D$, and has the expression
$$
F(x,u_t(x))=\big(\D u_t(x)\,,\, x\cdot\D u_t(x)-u_t(x)\big).
$$
As an ingredient in the proof of Corollary \ref{coro_intro}, we shall show:
\begin{proposition}\label{prop_homeo}
Suppose $(u_t)$ satisfies the conditions in Prop.\@ \ref{prop_foliation} and further assume that the common boundary value $\phi$ of the $u_t$'s is a bounded function. Then the map $F$ defined above is a homeomorphism.
\end{proposition}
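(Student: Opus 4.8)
The plan is to show that $F$ is a continuous bijection onto $D$ and then invoke Brouwer's invariance of domain. Note first that, since $\phi$ is bounded, $\env{\phi}$ is finite and continuous on $\Omega$, so $T^-=\{(x,\xi)\mid x\in\Omega,\ \xi<\env{\phi}(x)\}$ is an open subset of $\R^3$. Granting continuity and injectivity, the map $F\colon T^-\to\R^3$ is then automatically open, hence $F\colon T^-\to F(T^-)$ is a homeomorphism; since we will have checked $F(T^-)=D$, this finishes the proof. Thus the task splits into \textbf{bijectivity} and \textbf{continuity}.

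For bijectivity I would exploit that $F$ respects the two foliations. For $p\in T^-$ let $t(p)\in\R$ be the parameter of the leaf $\gra{u_{t(p)}}$ through $p$; this is well defined because, by the first bullet point of Prop.\@ \ref{prop_foliation}, for fixed $x$ the map $t\mapsto u_t(x)$ is a strictly increasing continuous bijection from $\R$ onto $(-\infty,\env{\phi}(x))$, and $\xi<\env{\phi}(x)$ for $(x,\xi)\in T^-$. By the displayed formula for $F$ together with Lemma \ref{lemma_gradient}\ref{item_gradient1}--\ref{item_gradient2}, the restriction $F|_{\gra{u_t}}$ is the map $x\mapsto(\D u_t(x),u_t^*(\D u_t(x)))$, a bijection from $\gra{u_t}$ onto $\gra{u_t^*}=K^{-1}(t)$; here boundedness of $\phi$ guarantees that $u_t$ is $\R$-valued on all of $\Omega$ and, by Lemma \ref{lemma_gradient}\ref{item_gradient3}, that $\D u_t(\Omega)=\R^2$, so this leaf map hits the entire level set $K^{-1}(t)$. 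Surjectivity of $F$ follows because the level sets $K^{-1}(t)$ exhaust $D$, and injectivity follows because two points with the same image must lie on the same leaf (distinct leaves map into disjoint level sets of $K$) and $F$ is injective on each leaf via Lemma \ref{lemma_gradient}\ref{item_gradient1}.

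The heart of the matter is \textbf{continuity of $F$}, which I would reduce to continuity of the leaf-parameter function $t\colon T^-\to\R$ and of $(t,x)\mapsto\D u_t(x)$. For the former, take $(x_n,\xi_n)\to(x_0,\xi_0)$ and set $t_n:=t(x_n,\xi_n)$. Monotonicity of $u_t$ in $t$ gives two-sided control: if $t_n\to+\infty$ then $\xi_n=u_{t_n}(x_n)\ge u_T(x_n)$ for every fixed $T$ and all large $n$, forcing $\xi_0\ge u_T(x_0)$ for all $T$ and hence $\xi_0\ge\env{\phi}(x_0)$, a contradiction; symmetrically $t_n\to-\infty$ would force $\xi_0=-\infty$. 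So $(t_n)$ is bounded, and any subsequential limit $t_*$ satisfies $u_{t_*}(x_0)=\lim u_{t_n}(x_n)=\xi_0=u_{t_0}(x_0)$, whence $t_*=t_0$ by strict monotonicity and therefore $t_n\to t_0$. Both this passage to the limit and the continuity of the gradient rest on the convex-analysis fact that a pointwise-convergent family of finite convex functions converges locally uniformly, and that the gradients converge locally uniformly whenever the limit is differentiable; this applies since $u_{t_n}\to u_{t_0}$ pointwise (continuity in $t$) and each $u_{t_0}$ is smooth. Writing $\D u_{t_n}(x_n)-\D u_{t_0}(x_0)=(\D u_{t_n}(x_n)-\D u_{t_0}(x_n))+(\D u_{t_0}(x_n)-\D u_{t_0}(x_0))$, the first term tends to $0$ by local uniform convergence on a fixed compact ball about $x_0$ and the second by smoothness of $u_{t_0}$; the second coordinate $x\cdot\D u_t(x)-u_t(x)$ is likewise continuous, since $u_t(x)=\xi\to\xi_0$ by construction.

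The main obstacle I anticipate is exactly this joint continuity of $(t,x)\mapsto\D u_t(x)$: the hypotheses supply only continuity of $u_t(x)$ in $t$ and smoothness in $x$ \emph{separately}, so the argument must route through the monotone, pointwise-convergence behaviour of convex functions and the principle that gradient convergence is automatic at points of differentiability of the limit. Once this convex-analytic input is secured, continuity of $t(\cdot)$ and of $F$ follow as above, and invariance of domain upgrades the continuous bijection $F\colon T^-\to D$ to a homeomorphism.
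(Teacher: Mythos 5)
Your proof is correct, and its skeleton is the same as the paper's: bijectivity leaf-by-leaf via Lemma \ref{lemma_gradient} and the foliation from Prop.\@ \ref{prop_foliation}, then continuity, then Brouwer invariance of domain (the paper phrases this by factoring $F=F_2\circ F_1^{-1}$ with $F_1(x,t)=(x,u_t(x))$ and $F_2(x,t)=(\D u_t(x),\,x\cdot\D u_t(x)-u_t(x))$, and applying invariance of domain to $F_1$ and $F_2$ separately, whereas you apply it once to $F$). The genuine difference lies in how the continuity core is established. The paper proves two self-contained claims: Claim 1, that $u_t\to u_{t_0}$ locally uniformly as $t\to t_0$, by a contradiction argument comparing $u_{t_n}$ with secant affine functions along chords of $\Omega$ -- an argument that uses the boundedness of $\phi$ in an essential way -- and Claim 2, the joint continuity of $(x,t)\mapsto\D u_t(x)$, deduced from Claim 1 plus an elementary lemma bounding the gradient of an affine function trapped between $\pm\epsilon$ on a disk. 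You instead outsource both steps to classical convex analysis: pointwise convergence of finite convex functions on an open convex set is automatically locally uniform, and the gradients then converge locally uniformly when the limit (and, in the textbook statement, each term) is differentiable (Rockafellar, \emph{Convex Analysis}, Thms.\@ 10.8 and 25.7, or 24.5 in subgradient form); the needed pointwise convergence $u_{t_n}\to u_{t_0}$ comes for free from concavity, hence continuity, of $t\mapsto u_t(x)$. A second, smaller difference is that you prove continuity of the leaf parameter $t(\cdot)$ (equivalently, of $F_1^{-1}$) directly by a monotonicity/subsequence argument, where the paper gets it from invariance of domain applied to $F_1$. What each buys: the paper's route is elementary and self-contained; yours is shorter and, notably, consumes the boundedness of $\phi$ only where mere finiteness (already guaranteed by the hypotheses of Prop.\@ \ref{prop_foliation}, since $\phi$ is $\R$-valued there) would suffice, so your argument in fact shows the conclusion without the extra boundedness hypothesis, which is genuinely needed only for the paper's proof of its Claim 1.
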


\begin{proof}
By the above expression of $F$, we can write $F=F_2\circ F_1^{-1}$ with the maps $F_1$ and $F_2$ defined as follows:
\begin{align*}
F_1:&\Omega\times\R\to T^-,\quad F_1(x,t):=\big(x,u_t(x)\big)\\
F_2:&\Omega\times\R\to D,\quad F_2(x,t):=\big(\D u_t(x),\,x\cdot\D u_t(x)-u_t(x)\big).
\end{align*}
We shall prove the proposition by showing that $F_1$ and $F_2$ are both homeomorphisms. 

The assumption clearly implies that $F_1$ is bijective. $F_2$ is also bijective because on one hand, by Lemma \ref{lemma_gradient}, $F_2$ sends each slice $\Omega\times\{t\}$ bijectively to the graph $\gra{u_t^*}$; on the other hand, $(\gra{u_t^*})_{t\in\R}$ is a foliation of $D$ by Prop.\@ \ref{prop_foliation}. 

We proceed to show that $F_1$ and $F_2$ are continuous via the following two claims.

\vspace{5pt} 
\textbf{Claim 1: For any $t_0\in\R$ and compact set $\Omega_0\subset\Omega$, $u_t$ converges to $u_{t_0}$ uniformly on $\Omega_0$ as $t\to t_0$.} Suppose by contradiction that it is not the case. Then there exists a sequence $(t_n)_{n=1,2,\cdots}$ in $\R$ converging to $t_0$ and a sequence $(x_n)_{n=1,2,\cdots}$ in $\Omega_0$ such that $|u_{t_n}(x_n)-u_{t_0}(x_n)|\geq\delta$ for all $n$ and some fixed $\delta>0$. By restricting to a subsequence, we may assume that $x_n$ converges to some $x_0\in\Omega_0$ as $n\to\infty$, and that either $t_n> t_0$ for all $n$ or $t_n< t_0$ for all $n$. Moreover, since $u_{t_n}(x_0)\to u_{t_0}(x_0)$ as by assumption, we may further assume that $x_n\neq x_0$ for all $n$.

We first treat the $t_n>t_0$ case, in which we have $u_{t_n}(x_n)-u_{t_0}(x_n)\geq\delta$ because $u_t(x)$ is increasing in $t$. Since $u_{t_0}(x_n)\to u_{t_0}(x_0)$ by continuity of $u_{t_0}$ and $u_{t_n}(x_0)\to u_{t_0}(x_0)$ by assumption, we have
$$
u_{t_n}(x_n)-u_{t_n}(x_0)= \big(u_{t_n}(x_n)-u_{t_0}(x_n)\big)+\big(u_{t_0}(x_n)-u_{t_0}(x_0)\big)+\big(u_{t_0}(x_0)-u_{t_n}(x_0)\big)\geq \frac{\delta}{2}
$$
when $n$ is large enough. Let $x_n'$ and $x_n''$ be the boundary points of $\Omega$ such that $x_n'$, $x_0$, $x_n$ and $x_n''$ lie on the same line in this order. We shall compare the restriction of $u_{t_n}$ to this line with the affine function $h_n$ on the line which takes the same values as $u_{t_n}$ at $x_0$ and $x_n$. Namely, $h_n$ is given by 
$$
h_n(x_0+s(x_n-x_0))=u_{t_n}(x_0)+s\big(u_{t_n}(x_n)-u_{t_n}(x_0)\big)
$$
for all $s\in\R$, and we have $u_{t_n}(x_n'')\geq h_n(x_n'')$ by convexity of $u_{t_n}$. It follows that
$$
\phi(x_n'')=u_{t_n}(x_n'')\geq h_n(x_n'')= u_{t_n}(x_0)+\frac{x_n''-x_0}{x_n-x_0}\big(u_{t_n}(x_n)-u_{t_n}(x_0)\big).
$$
This leads to a contradiction: The right-hand side tends to $+\infty$ as $n\to\infty$ because $u_{t_n}(x_0)\to u_{t_0}(x_0)$, $\frac{x_n''-x_0}{x_n-x_0}\to+\infty$ and $u_{t_n}(x_n)-u_{t_n}(x_0)\geq\frac{\delta}{2}$, whereas the left-hand side is bounded by assumption.

In the $t_n<t_0$ case, we can replace $x_n''$ by $x_n'$ in the above argument (note that $\frac{x_n'-x_0}{x_n-x_0}\to-\infty$) and arrive at a contradiction in the same way. This proves Claim 1.

\vspace{5pt}
\textbf{Claim 2: The map $(x,t)\to\D u_t(x)$ is continuous.} Pick $(x_0,t_0)\in\Omega\times\R$. Since modifying $(u_t)$ by adding the same affine function to every $u_t$ does not affect the statement, we may assume $u_{t_0}(x_0)=0$ and $\D u_{t_0}(x_0)=0$. As a consequence, there is a constant $C>0$, such that when $r>0$ is small enough, we have
$$
|u_{t_0}(x)|\leq Cr^2,\ \ \forall x\in B(x_0,r)
$$
Given such an $r$, by Claim 1, we find $\delta_r>0$ such that
\begin{equation}\label{eqn_prooffoli2}
|u_t(x)|\leq |u_t(x)-u_{t_0}(x)|+|u_{t_0}(x)|\leq 2Cr^2,\ \  \forall x\in B(x_0,r),\, t\in(t_0-\delta_r,t_0+\delta_r).
\end{equation}

We now use \eqref{eqn_prooffoli2} to show that for any $r$ as above, we have
\begin{equation}\label{eqn_prooffoli3}
|\D u_t(x_1)|\leq 8Cr,\ \ \forall (x_1,t)\in B(x_0,r/2)\times(t_0-\delta_r,t_0+\delta_r),
\end{equation}
which would imply the claim. To this end, we fix $(x_1,t)\in B(x_0,r/2)\times(t_0-\delta_r,t_0+\delta_r)$ and consider the supporting affine function $h$ of $u_t$ at $x_1$, \ie
$$
h(x):=u_t(x_1)+\D u_t(x_1)\cdot(x-x_1).
$$
We have $h\leq u_t$ by convexity of $u_t$, hence $h\leq 2Cr^2$ on $B(x_0,r)$ by \eqref{eqn_prooffoli2}. On the other hand, we have $h(x_1)=u_t(x_1)\geq-2Cr^2$ also by \eqref{eqn_prooffoli2}. Therefore, \eqref{eqn_prooffoli3} follows from the lemma below, and the proof of Claim 2 is finished.

The two claims imply that the maps $F_1$ and $F_2$ are continuous. Since $F_1$ and $F_2$ are already shown to be bijective, by Brouwer Invariance of Domain, they are indeed homeomorphisms, as required.
\end{proof}
\begin{lemma}
Let $\epsilon,r>0$ and $x_0\in\R^2$. If an affine function $h(x)=x\cdot y+\eta$ ($y\in\R^2$, $\eta\in\R$) on $\R^2$ satisfies
\begin{itemize}
	\item $h\leq \epsilon$ on the boundary of the disk $B(x_0,r)$, 
	\item $h(x_1)\geq -\epsilon$ for some $x_1\in B(x_0,r/2)$,  
\end{itemize}
then the gradient $y$ of $h$ satisfies
$$
|y|\leq \frac{4\epsilon}{r}.
$$
\end{lemma}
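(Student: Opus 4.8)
The plan is to reduce the whole estimate to the behaviour of $h$ along the line through $x_0$ in the direction of its gradient $y$ (if $y=0$ the conclusion is trivial, so assume $y\neq 0$). First I would rewrite the affine function in the centered form $h(x)=h(x_0)+y\cdot(x-x_0)$, which separates the constant $h(x_0)$ from the linear part and makes the linear growth rate $|y|$ explicit. Everything then follows from an upper bound coming from the boundary circle and a lower bound for $h(x_0)$ coming from the point $x_1$.

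For the upper bound, I would evaluate $h$ on $\pa B(x_0,r)$: writing a boundary point as $x=x_0+ru$ with $|u|=1$ gives $h(x)=h(x_0)+r\,(y\cdot u)$, and taking $u=y/|y|$ realizes the maximum value $h(x_0)+r|y|$. The first hypothesis $h\le\epsilon$ on $\pa B(x_0,r)$ then yields $h(x_0)+r|y|\le\epsilon$. For the lower bound, since $x_1\in B(x_0,r/2)$ we have $|x_1-x_0|<r/2$, so Cauchy--Schwarz applied to $h(x_1)=h(x_0)+y\cdot(x_1-x_0)\ge-\epsilon$ gives $h(x_0)\ge -\epsilon-|y|\,|x_1-x_0|>-\epsilon-|y|r/2$.

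Combining the two inequalities, I would write $-\epsilon-|y|r/2+r|y|<h(x_0)+r|y|\le\epsilon$, which simplifies to $|y|r/2<2\epsilon$, i.e. $|y|<4\epsilon/r$, as claimed. I do not expect any genuine obstacle: the proof is a one-line geometric estimate once $h$ is centered at $x_0$. The only points requiring a little care are choosing the boundary point in the \emph{direction} of $y$ (so that the gradient contributes with the correct sign, turning the hypothesis into a usable upper bound) and tracking constants so that the gain of $r|y|$ on the boundary against the loss of $|y|r/2$ at $x_1$ produces exactly the factor $4$.
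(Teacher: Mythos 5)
Your proof is correct and is essentially the same as the paper's: both evaluate $h$ at the boundary point of $\pa B(x_0,r)$ in the direction of $y$ to get the upper bound $h(x_0)+r|y|\leq\epsilon$, bound $h(x_1)$ from below using $|x_1-x_0|\leq r/2$, and subtract the two inequalities. The only cosmetic difference is that the paper first translates $x_0$ to the origin and works with the constant term $\eta$, whereas you keep $h(x_0)$ explicit; these are interchangeable.
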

\begin{proof}
Since shifting $h$ by a translation of $\R^2$ does not affect the statement, we may assume $x_0=0$. The assumptions imply 
$$
\epsilon\geq h\big(\tfrac{r}{|y|}y\big)=r|y|+\eta,
$$
$$
-\epsilon\leq h(x_1)=x_1\cdot y+\eta\leq |x_1||y|+\eta\leq \tfrac{r}{2}|y|+\eta.
$$
Subtracting the first inequality by the second, we get 
$2\epsilon\geq \tfrac{r}{2}|y|$, which implies the required inequality.
\end{proof}

\section{Affine deformations of quasi-divisible convex cones}\label{sec:affine deformation}
In this section, we first review backgrounds on parabolic automorphisms of convex cones, then we define admissible cocycles and prove Prop.\@ \ref{prop_intro1} in the introduction. 

\subsection{Quasi-divisibility and parabolic elements}\label{subsec_parabolic}
For the future light cone $C_0$ in $\R^{2,1}$, the automorphism group $\Aut(C_0)$ is the identity component $\SO_0(2,1)$ of $\SO(2,1)$ and identifies with the group $\Isom^+(\mathbb{H}^2)$ of orientation-preserving isometries of the hyperboloid $\mathbb{H}^2\subset C_0$. It is well known that non-identity elements in this group
are classified into three types, namely elliptic, hyperbolic and parabolic ones, and that the fundamental group of a complete hyperbolic surface with finite volume, viewed as a Fuchsian group in $\SO_0(2,1)$, only contains the last two types of elements, with parabolic ones corresponding to the cusps of the surface.

More generally, an element $A\neq I$ in $\SL(3,\R)$ is said to be \emph{parabolic} if it is conjugate to a parabolic element of $\SO(2,1)$, or equivalently, if $A$ has the Jordan form
$$
\matrix{1&1&\\&1&1\\&&1},
$$ 
whereas $A$ is said to be \emph{hyperbolic} if it is diagonalizable. 
Both types of elements can occur in the automorphism group $\Aut(C)$ of a proper convex cone $C\subset\R^3$, and their action can be visualized in the projectivized picture as Figure \ref{figure_parabolic}.
\begin{figure}[h]
	\includegraphics[width=5cm]{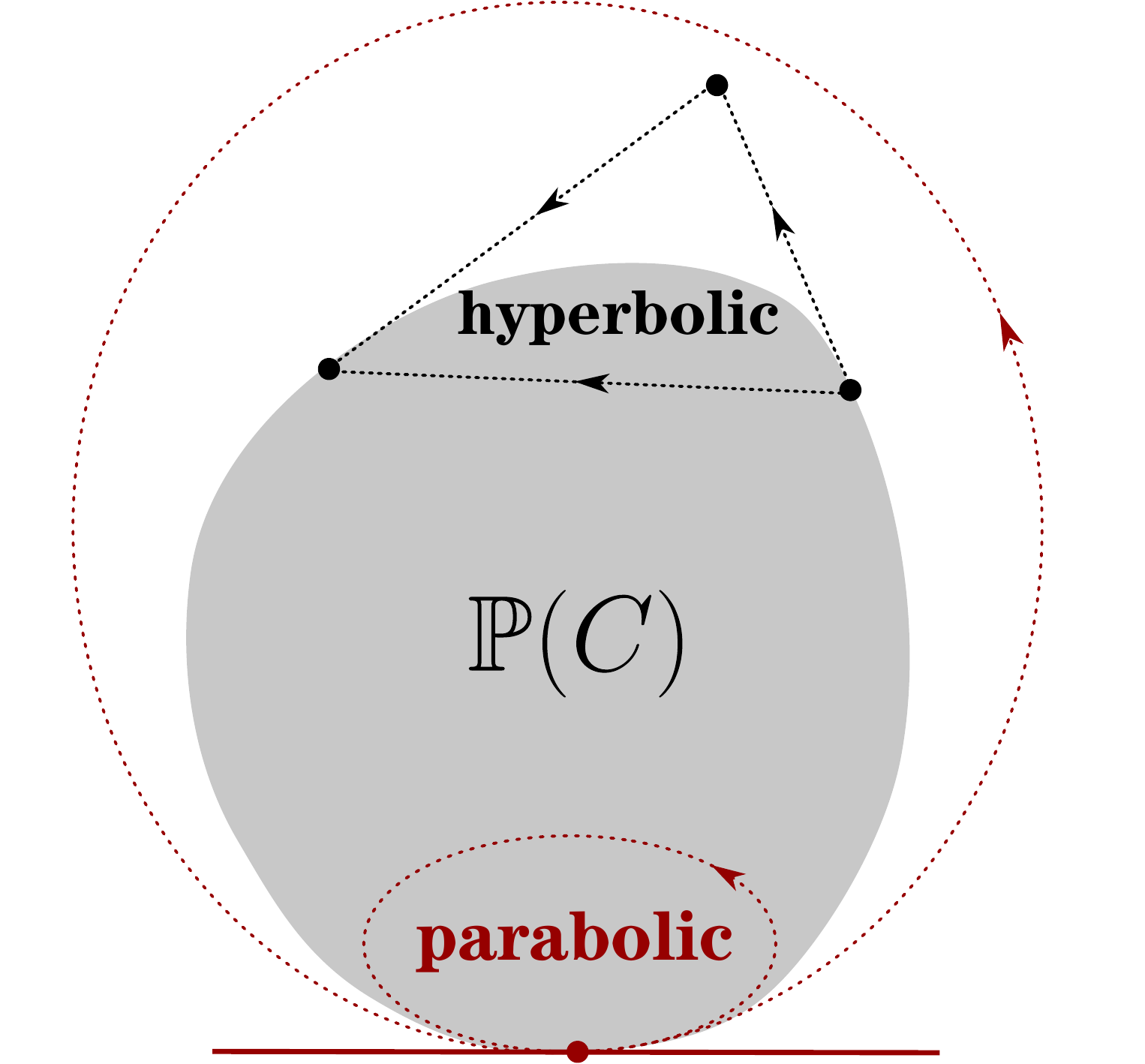}
	\caption{Parabolic and hyperbolic elements in $\Aut(C)$.}
	\label{figure_parabolic}
\end{figure}

For a parabolic element $A\in\Aut(C)$, the eigenline and the $2$-dimensional generalized eigenspace are, respectively, the unique line in $\pa C$ fixed by $A$ and the unique $C$-null subspace of $\R^3$ (\cf Definition \ref{def_cspacelike}) fixed by $A$. In the projectivized picture, they correspond to a point $p_A\in \pa\P(C)$ and the tangent line of $\pa\P(C)$ at $p_A$, respectively.
As shown by Benoist-Hulin \cite{benoist-hulin}, there is a pencil of conics in $\RP^2$ based at $p_A$, in which every conic is preserved by $A$, and $\pa\P(C)$ is pinched between two of them (see Figure \ref{figure_parabolic}). We formulated this result as:

\begin{lemma}[{\cite[Prop.\@ 3.6 (3)]{benoist-hulin}}]\label{lemma_benoisthulin}
	Suppose $C\subset\R^3$ is a proper convex cone and $A\in\Aut(C)$ is parabolic. Then there are projective disks $D_1$, $D_2$ in $\RP^2$ preserved by $A$, such that $D_1\subset \P(C)\subset D_2$, and the boundaries $\pa D_1$ and $\pa D_2$ only touch at the fixed point of $A$.
\end{lemma}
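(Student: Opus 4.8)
The plan is to make the one‑parameter pencil of $A$‑invariant conics completely explicit and then to locate $\P(C)$ inside it by means of a single invariant function on the boundary. First I would put $A$ in Jordan normal form, so that in suitable homogeneous coordinates $[x:y:z]$ on $\RP^2$ the fixed point is $p_A=[1:0:0]$ and the invariant line is $T=\{z=0\}$, the projectivization of the $2$‑dimensional generalized eigenspace $\ker(A-I)^2$ (which is the tangent line of $\pa\P(C)$ at $p_A$). A direct computation of the symmetric forms $Q$ with $\transp{\!A}QA=Q$ shows that, up to scale, the nondegenerate $A$‑invariant conics form the pencil $\{2xz-y^2+\mu z^2=0\}$, $\mu\in\R$, the only degenerate member being the double line $T$. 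Each such conic has signature $(1,2)$, hence bounds a projective disk $D_\mu:=\{2xz-y^2+\mu z^2>0\}$; since the forms differ by $(\mu'-\mu)z^2$, the disks are nested ($D_\mu\subset D_{\mu'}$ for $\mu<\mu'$), they are all tangent to $T$ at $p_A$, and any two of them meet \emph{only} at $p_A$. Thus the disks sought in the conclusion must be two members $D_{\mu_1}\subset D_{\mu_2}$ of this pencil, and the whole problem reduces to sandwiching $\P(C)$ between two of them.

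The decisive tool is the $A$‑invariant function $\mu(\cdot):=(y^2-2xz)/z^2$, which is continuous wherever $z\neq0$ and satisfies $v\in D_\mu\Leftrightarrow\mu>\mu(v)$. To use it I first record that $\P(C)$ avoids $T$ except at $p_A$: since $A$ preserves $T$ and acts on $T\cong\RP^1$ as a parabolic fixing only $p_A$, the set $\overline{\P(C)}\cap T$ is a closed, $A$‑invariant, properly convex (hence connected and $\neq\RP^1$) subarc of $\RP^1$ containing $p_A$; were it to contain a second point it would contain a whole orbit accumulating at $p_A$ from both sides and therefore be all of $\RP^1$, contradicting proper convexity. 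So $\overline{\P(C)}\cap T=\{p_A\}$, and $\mu$ is defined and continuous on $\pa\P(C)\setminus\{p_A\}$.

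Now comes the crucial step, which also makes the non‑compact cusp dynamics tractable. Because $A$ fixes no point of $\RP^2$ other than $p_A$, it acts freely on the arc $\pa\P(C)\setminus\{p_A\}\cong\R$ as a fixed‑point‑free homeomorphism, so the quotient is a \emph{compact} circle. The invariant function $\mu$ therefore descends to a continuous function on a compact $S^1$ and is consequently bounded, $\mu_1^0\le\mu\le\mu_2^0$ on $\pa\P(C)\setminus\{p_A\}$ for finite constants; this also automatically forces $\pa\P(C)$ onto the correct side of $T$ near $p_A$, since the opposite side would make $\mu$ blow up along the boundary. For the outer disk, $\mu\le\mu_2^0$ gives $\pa\P(C)\subseteq\overline{D_{\mu_2^0}}$, and as a projective disk is properly convex and contains the Jordan curve $\pa\P(C)$, it contains its convex hull $\overline{\P(C)}$; hence $\P(C)\subseteq D_{\mu_2^0}$. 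For the inner disk, $\mu\ge\mu_1^0$ gives $\pa\P(C)\cap D_{\mu_1^0}=\emptyset$, and the infimum is attained at some $q_*\in\pa\P(C)$ (compactness of the quotient), so $\pa\P(C)$ touches the conic $\pa D_{\mu_1^0}$ from its exterior at $q_*$; this contact certifies that the connected disk $D_{\mu_1^0}$ lies on the interior side of $\pa\P(C)$, i.e. $D_{\mu_1^0}\subseteq\P(C)$.

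Finally I would set $D_1:=D_{\mu_1^0-1}$ and $D_2:=D_{\mu_2^0+1}$; these are $A$‑invariant projective disks with $D_1\subset\P(C)\subset D_2$, and since $\mu_1^0-1\neq\mu_2^0+1$ their bounding conics are distinct members of the pencil and hence meet only at $p_A$, as required. I expect the boundedness argument to be the main obstacle: the insight that the natural invariant quantity $\mu$ descends to a compact quotient of the boundary arc is exactly what tames the otherwise non‑compact parabolic dynamics and simultaneously excludes the wrong side of $T$. The other delicate point is the inner containment, where one must use that the infimum of $\mu$ is achieved to rule out $D_{\mu_1^0}$ lying in the exterior (Möbius) component of $\RP^2\setminus\pa\P(C)$; alternatively, this inner disk can be obtained by applying the clean outer‑disk argument to the dual cone $C^*$ with its parabolic automorphism $\transp{\!A}^{-1}$.
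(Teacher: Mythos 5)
Note first that the paper does not prove this lemma at all: it is quoted verbatim from Benoist--Hulin \cite[Prop.\@ 3.6 (3)]{benoist-hulin}, so your argument has to stand on its own. Its architecture (the pencil of $A$-invariant conics, an invariant ``height'' function on $\pa\P(C)\setminus\{p_A\}$, and compactness of the quotient of that arc by the parabolic) is indeed the right skeleton, but as written it contains one computational error that breaks the central step, and one unjustified step.

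The computational error: with $A$ the standard Jordan block, solving $\transp{\!A}QA=Q$ gives, up to scale, the forms $2xz-y^2+yz+\mu z^2$; the cross term $yz$ cannot be dropped. For your forms $q_\mu=2xz-y^2+\mu z^2$ one computes $q_\mu(Av)=q_{\mu-1}(v)$, so your conics are \emph{permuted} by $A$ rather than preserved, and your function $\mu(v)=(y^2-2xz)/z^2$ satisfies $\mu(Av)=\mu(v)+1$: it is equivariant, not invariant, hence unbounded on every $A$-orbit. Consequently the key step --- that $\mu$ descends to the compact quotient circle and is therefore bounded with attained extrema --- fails as stated, and everything downstream with it. The slip is repairable: either use the corrected pencil and $\mu(v)=(y^2-yz-2xz)/z^2$, or change basis so that $A$ is unipotent upper triangular but not in Jordan form; with that correction the outer-disk argument (boundedness of $\mu$, then $\pa\P(C)\setminus\{p_A\}\subset D_{\mu_2}$, then convexity) does go through, provided you also take the minor care that convex-hull/separation arguments be run in a single affine chart --- the natural one being $\RP^2\setminus T$, in which both $\P(C)$ and every disk $D_\mu$ are convex.

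The unjustified step is the inner inclusion. ``This contact certifies that the connected disk $D_{\mu_1^0}$ lies on the interior side'' is not a proof: an open disk disjoint from the Jordan curve $\pa\P(C)$ and tangent to it at $q_*$ could equally well be \emph{externally} tangent, i.e.\ lie in the M\"obius component $\RP^2\setminus\overline{\P(C)}$; tangency alone does not select the side. What rescues the claim is invariance, which your sketch never invokes here: since $q_*\in\pa D_{\mu_1^0}\cap\pa\P(C)$ and both sets are $A$-invariant, the three points $A^{-1}q_*,\,q_*,\,Aq_*$ are distinct, non-collinear (they lie on a nondegenerate conic) points of $\pa D_{\mu_1^0}\cap\pa\P(C)$, and the open triangle they span lies in $D_{\mu_1^0}$ (strict convexity of the conic's interior) and in $\overline{\P(C)}$ (convexity, the triangle avoiding $p_A$ because $p_A$ is an exposed point); hence $D_{\mu_1^0}$ meets $\P(C)$, and being connected and disjoint from $\pa\P(C)$ it is contained in $\P(C)$. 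Alternatively, the fallback you mention in one line --- applying the outer-disk argument to $C^*$ with the parabolic $\transp{\!A}^{-1}$ and dualizing, the polar dual of a quadric cone being a quadric cone and duality exchanging the fixed points --- is correct and cleaner; it should be promoted from an afterthought to the actual proof of the inner inclusion.
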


The definition of \emph{divisible} and \emph{quasi-divisible} convex cones are briefly reviewed in the introduction, and we refer to \cite{benoist_survey, marquis} for details. However, instead of the definition itself, we will rather use the following characterization of quasi-divisibility due to Marquis, which generalizes the aforementioned fact for Fuchsian groups: 
\begin{theorem}[{\cite{marquis_fourier}}]\label{thm_marquis}
Let $C\subset\R^3$ be a proper convex cone and $\Gamma$ be a torsion-free discrete subgroup of $\SL(3,\R)$ contained in $\Aut(C)$. Then $C$ is quasi-divisible by $\Gamma$ if and only if $S:=\P(C)/\Gamma$ is homeomorphic to a closed surface with finitely many (possibly zero) punctures and every puncture has a neighborhood of the form $D/\langle A\rangle$, where $D\subset\P(C)$ is a projective disk and $A\in\Gamma$ is a parabolic element preserving $D$. In this case, the holonomy of every non-peripheral loop on $S$ is hyperbolic.
\end{theorem}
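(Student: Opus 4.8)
The plan is to prove the two implications separately, taking the Hilbert metric on $\P(C)$ and the pinching of $\pa\P(C)$ between two $A$-invariant conics from Lemma~\ref{lemma_benoisthulin} as the central geometric tools.

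For the implication $(\Leftarrow)$, I would decompose $S$ into a compact core together with the finitely many cusp neighborhoods $D/\langle A\rangle$. The core, being compact, has finite Hilbert volume, so it suffices to bound the volume of each cusp. Here Lemma~\ref{lemma_benoisthulin} supplies $A$-invariant disks $D_1\subset\P(C)\subset D_2$ whose boundaries are tangent at the parabolic fixed point $p_A$; since the Hilbert metric of a conic disk is, up to a constant factor, the hyperbolic metric in the Klein model, monotonicity of the Hilbert metric sandwiches $d_{\P(C)}$ between two hyperbolic metrics that agree asymptotically at $p_A$. This yields a comparison of the associated volume forms by a factor tending to $1$ towards the cusp, and since a parabolic cusp region of a conic has finite hyperbolic area, each $D/\langle A\rangle$ has finite Hilbert volume. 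Summing over the core and the finitely many cusps gives quasi-divisibility.

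The implication $(\Rightarrow)$ is where the real work lies. Assuming $\P(C)/\Gamma$ has finite Hilbert volume, I would first establish topological finiteness through a thick-thin decomposition: a Margulis-type lemma for two-dimensional Hilbert geometry produces a constant $\eps>0$ splitting $S$ into a thick part $S_{\ge\eps}$ and a thin part, and finiteness of the total volume forces $S_{\ge\eps}$ to be compact (a fixed lower bound on volume per unit injectivity radius bounds its diameter). Thus $S$ is a compact surface with finitely many ends, each lying in a thin component. I would then analyze each non-compact thin component: its fundamental group is an elementary, torsion-free, hence cyclic subgroup of $\Gamma$, generated by some $A$. A hyperbolic $A$ produces a collar around a closed geodesic and hence a compact end, so a non-compact end forces $A$ to be parabolic; invoking Lemma~\ref{lemma_benoisthulin} once more, the region of $\P(C)$ between $\pa\P(C)$ and a suitable $A$-invariant conic through $p_A$ serves as the disk $D$ realizing the end as $D/\langle A\rangle$.

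The final assertion follows by contraposition: if a loop had parabolic holonomy $A$, it would fix $p_A\in\pa\P(C)$ and, by the cusp description just obtained, be freely homotopic into the cusp at $p_A$, hence peripheral. Thus every non-peripheral loop has non-parabolic and, by torsion-freeness, non-elliptic holonomy; to upgrade this to hyperbolicity I would invoke the classification of elements of $\Aut(C)$ preserving a properly convex domain and rule out quasi-hyperbolic and other degenerate Jordan types, which are incompatible with discreteness together with finite volume. I expect the main obstacle to be the $(\Rightarrow)$ direction --- the Margulis lemma and thick-thin decomposition for Hilbert geometry and the precise identification of the non-compact thin parts as parabolic cusps of the form $D/\langle A\rangle$. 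The crucial analytic input throughout is the uniform, asymptotically sharp comparison near a parabolic fixed point between the Hilbert geometry of $\P(C)$ and the hyperbolic geometry of the pinching conics of Lemma~\ref{lemma_benoisthulin}, which is what simultaneously detects the cusps and pins down their shape.
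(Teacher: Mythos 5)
You should first be aware that the paper contains no proof of Theorem~\ref{thm_marquis} to compare against: it is imported verbatim from Marquis \cite{marquis_fourier} and used as a black box, so your sketch can only be judged on its own merits. There it has a genuine gap in the $(\Leftarrow)$ direction. The claimed ``comparison of the associated volume forms by a factor tending to $1$'' is false. Monotonicity of the Hilbert metric gives only one-sided bounds, and in opposite directions: from $D_1\subset\P(C)\subset D_2$ one gets that the Busemann density of $\P(C)$ is bounded \emph{above} by that of $D_1$, but only at points of $D_1$, and bounded \emph{below} by that of $D_2$ everywhere. There is no pointwise upper bound in terms of $D_2$: if $q\in\pa\P(C)$ lies in the interior of $D_2$ (such $q$ exist arbitrarily close to $p_A$ unless $\pa\P(C)$ coincides with $\pa D_2$ there), then as $x\to q$ the density of $\P(C)$ blows up while that of $D_2$ stays bounded, so the ratio is unbounded on every neighborhood of the cusp. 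Since the given neighborhood $D/\langle A\rangle$ need not lie inside $D_1$, your argument does not bound its volume. The repair is to shrink rather than compare pointwise: choose an $A$-invariant conic disk $D_0$ of the pencil with $\overline{D_0}\subset D_1\cup\{p_A\}$; the pencil conics inside $D_1$ are precisely the horocycles of the Klein (hyperbolic) metric of $D_1$ based at $p_A$, so $\mathrm{vol}_{D_1}\big(D_0/\langle A\rangle\big)$ is the area of a hyperbolic cusp, hence finite, and one-sided monotonicity applied at points of $D_0\subset D_1$ gives $\mathrm{vol}_{\P(C)}\big(D_0/\langle A\rangle\big)\leq \mathrm{vol}_{D_1}\big(D_0/\langle A\rangle\big)<\infty$. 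The leftover annulus $(D\setminus D_0)/\langle A\rangle$ has compact closure in $S$ (a fundamental strip for $A$ on $\overline{D}\setminus D_0$ avoiding $p_A$ is compact in $\P(C)$), so it can be absorbed into the compact core. With this correction your $(\Leftarrow)$ argument is sound.

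The $(\Rightarrow)$ direction is a plan rather than a proof, in two respects. First, the ``Margulis-type lemma for two-dimensional Hilbert geometry,'' together with the uniform lower bound on volumes of embedded $\varepsilon$-balls needed to conclude that the thick part of a finite-volume quotient is compact, are themselves substantial theorems (established in the work of Crampon--Marquis and Cooper--Long--Tillmann); invoking them is legitimate but reduces the statement to results of comparable depth, and your thin-part analysis must also allow elementary subgroups isomorphic to $\mathbb{Z}^2$, which is the torus/triangle case of Remark~\ref{remark_torus}, not only cyclic ones. Second, and more seriously, both in identifying non-compact thin components as cusps and in the final assertion you must exclude \emph{quasi-hyperbolic} elements (a $2\times2$ Jordan block with eigenvalues $\neq 1$: neither diagonalizable nor parabolic in the paper's terminology). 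Your justification --- that such elements are ``incompatible with discreteness together with finite volume'' --- is precisely one of the nontrivial conclusions of Marquis's analysis, essentially equivalent in content to the strict convexity and $\C^1$-regularity of $\pa\P(C)$ recorded as Prop.~\ref{prop_qd}~\ref{item_qd1}; discreteness alone does not exclude them, since discrete subgroups of $\Aut(C)$ containing quasi-hyperbolic elements do exist for suitable non-strictly-convex cones. As written, the hardest point of the theorem is assumed rather than proved. A smaller slip: the ``region of $\P(C)$ between $\pa\P(C)$ and a suitable $A$-invariant conic'' is not a projective disk; the disk $D$ of the statement must be taken to be an $A$-invariant conic disk contained in $\P(C)$.
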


We collect some other well known results about quasi-divisible convex cones that will be used later on:
\begin{proposition}\label{prop_qd}
Let $C\subset\R^3$ be a proper convex cone quasi-divisible by a torsion-free group $\Gamma<\SL(3,\R)$. Then the following statements hold. 
\begin{enumerate}[label=(\arabic*)]
\item\label{item_qd1}
$\pa\P(C)$ is strictly convex and $\C^1$.
\item\label{item_qd3}
In $\pa\P(C)$, every $\Gamma$-orbit is dense. 
\item\label{item_qd2} 
Suppose $\gamma_1,\gamma_2\in\Gamma\setminus\{I\}$ are primitive (\ie $\gamma_i$ is not a positive power of any other element of $\Gamma$). Then $\gamma_1$ and $\gamma_2$ share a fixed point if and only if $\gamma_1=\gamma_2^{\pm1}$.
\item\label{item_qd4} 
Let $\Gamma^*<\SL(3,\R)$ be the image of $\Gamma$ under the isomorphism $\Aut(C)\cong\Aut(C^*)$ (see \S \ref{subsec_omega}). Then the dual cone $C^*$ of $C$ is quasi-divisible by $\Gamma^*$.
\end{enumerate}
\end{proposition}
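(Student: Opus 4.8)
The four assertions are standard consequences of the works of Benoist, Benoist--Hulin and Marquis, and the plan is to assemble them, proving the duality statement \ref{item_qd4} first so that it can streamline \ref{item_qd1}. Throughout I would use Marquis's characterization (Theorem \ref{thm_marquis}), the Benoist--Hulin pinching Lemma \ref{lemma_benoisthulin}, and the fact that torsion-freeness together with the peripheral/non-peripheral dichotomy of Theorem \ref{thm_marquis} forces every nontrivial element of $\Gamma$ to be hyperbolic or parabolic (no elliptics). For \ref{item_qd4}, I would first observe that the inverse-transpose automorphism $A\mapsto\transp{A}^{-1}$ of $\SL(3,\R)$ is a Lie group isomorphism, so it sends the discrete torsion-free group $\Gamma$ to a discrete torsion-free $\Gamma^*<\Aut(C^*)$ and preserves Jordan type, carrying hyperbolic (resp.\@ parabolic) elements to hyperbolic (resp.\@ parabolic) ones. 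Next I would produce a $\Gamma$-equivariant diffeomorphism $\P(C)\cong\P(C^*)$, for instance the projectivized affine conormal map of the Cheng--Yau affine sphere $\Sigma_C$ (Theorem \ref{thm_chengyau}), which intertwines the $\Gamma$- and $\Gamma^*$-actions and hence descends to a diffeomorphism $\P(C)/\Gamma\cong\P(C^*)/\Gamma^*$; in particular the two surfaces have the same topological type. It then remains to verify the cusp condition of Theorem \ref{thm_marquis} for $\P(C^*)/\Gamma^*$: dualizing the two $A$-invariant conics $\pa D_1,\pa D_2$ tangent at $p_A$ from Lemma \ref{lemma_benoisthulin} produces two $\transp{A}^{-1}$-invariant conics tangent at the corresponding point of $\pa\P(C^*)$, pinching $\P(C^*)$ and supplying the required invariant disk neighbourhood. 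Marquis's theorem then gives that $C^*$ is quasi-divisible by $\Gamma^*$.

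For \ref{item_qd1}, I would reduce the $\C^1$ claim to strict convexity by projective duality of convex bodies: a corner (non-$\C^1$ point) of $\pa\P(C)$ corresponds to a boundary segment of $\pa\P(C^*)$ and conversely, so $\pa\P(C)$ is $\C^1$ if and only if $\pa\P(C^*)$ is strictly convex. Since $C^*$ is quasi-divisible by \ref{item_qd4}, it therefore suffices to prove strict convexity for every quasi-divisible cone, and then apply this to $C^*$. For strict convexity, a nondegenerate segment in $\pa\P(C)$ cannot pass through a parabolic fixed point, since by Lemma \ref{lemma_benoisthulin} the boundary is pinched there between two conics tangent at $p_A$, which forces both a unique tangent line and the absence of any boundary segment at $p_A$; away from the cusps one invokes Benoist's equivalence between strict convexity of a (quasi-)divided convex set and (relative) hyperbolicity of the dividing group (\cite{Benoist_I}, in the finite-volume regime \cite{marquis}), valid because $\Gamma\cong\pi_1(S_{g,n})$ is a finite-volume surface group. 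This yields strict convexity, and duality then yields $\C^1$-ness.

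For \ref{item_qd3}, I would let $\Lambda\subset\pa\P(C)$ be the closure of the set of attracting fixed points of hyperbolic elements; it is closed and $\Gamma$-invariant. Finite volume (geometric finiteness) forces the limit set to be all of $\pa\P(C)$ with hyperbolic fixed points dense in it, so $\Lambda=\pa\P(C)$; minimality then follows from North--South dynamics, since for any $x$ and nonempty open $U$ one can pick a hyperbolic $\gamma$ with attracting fixed point in $U$ and repelling fixed point distinct from $x$, whence $\gamma^n x$ enters $U$ and $\overline{\Gamma x}=\pa\P(C)$. For \ref{item_qd2}, the ``if'' direction is immediate, as $\gamma$ and $\gamma^{-1}$ share their fixed points. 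For ``only if,'' the crux is that the stabilizer $\Gamma_p$ of any $p\in\pa\P(C)$ is infinite cyclic (or trivial): at a parabolic fixed point this is the cyclic cusp group of Theorem \ref{thm_marquis}, while at an endpoint of a hyperbolic axis it follows from the classical discreteness argument that two elements of a discrete group sharing one boundary fixed point but no common axis generate a non-discrete subgroup. Writing $\Gamma_p=\langle g\rangle$, primitivity of $\gamma_1,\gamma_2\in\Gamma_p$ forces $\gamma_i=g^{\pm1}$, hence $\gamma_1=\gamma_2^{\pm1}$.

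The main obstacle is \ref{item_qd1}: strict convexity and $\C^1$-regularity of $\pa\P(C)$ in the genuinely non-compact, quasi-divisible regime is the one place where no elementary argument suffices and one must lean on Benoist's structural theory and its finite-volume extension. The cyclicity of hyperbolic point-stabilizers needed for \ref{item_qd2} is the secondary delicate point, requiring a dynamical non-discreteness argument rather than a direct computation.
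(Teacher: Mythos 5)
Your handling of \ref{item_qd3} and \ref{item_qd2} matches the paper's, which disposes of them exactly as you do: it declares them to be generalizations of well-known Fuchsian facts, provable by the same arguments (limit-set density plus North--South dynamics, and cyclicity of boundary-point stabilizers via the classical discreteness argument), citing \cite[\S 5]{Beardon}. For \ref{item_qd1} and \ref{item_qd4}, however, the paper gives no argument at all: it quotes them as Theorems 0.5 and 0.4 of Marquis \cite{marquis_fourier}. Your proof of \ref{item_qd1} is, despite the duality reduction, also ultimately a citation: the ``strict convexity of a quasi-divided convex set $\Leftrightarrow$ (relative) hyperbolicity'' statement you invoke away from the cusps \emph{is} Marquis's Theorem 0.5 in the finite-volume regime (it lives in \cite{marquis_fourier}, not in \cite{marquis}, which is the Fenchel--Nielsen paper), so there you are at parity with the paper.

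The genuine gap is in your derivation of \ref{item_qd4}. To apply Theorem \ref{thm_marquis} to $(\P(C^*),\Gamma^*)$ you must exhibit, for each puncture, a projective disk $D'\subset\P(C^*)$ preserved by the parabolic $\transp{A}^{-1}$ such that $D'/\langle\transp{A}^{-1}\rangle$ \emph{embeds} in $\P(C^*)/\Gamma^*$ as a neighborhood of that puncture; in particular $D'$ must be precisely invariant, \ie $\gamma D'\cap D'=\emptyset$ for every $\gamma\in\Gamma^*\setminus\langle\transp{A}^{-1}\rangle$. Dualizing Lemma \ref{lemma_benoisthulin} only produces $\langle\transp{A}^{-1}\rangle$-invariant disks inside $\P(C^*)$; it says nothing about the remaining elements of $\Gamma^*$. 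Your equivariant affine-sphere diffeomorphism $F:\P(C)\to\P(C^*)$ does give a precisely invariant end-neighborhood, namely the image $F(D)$ of a cusp neighborhood upstairs, but $F(D)$ is not a projective disk, and to combine the two ingredients you would need a sufficiently deep disk of the invariant pencil at the fixed point $p^*$ to lie inside $F(D)$. That containment is not automatic: the invariant conics do shrink to $p^*$, but $\Gamma^*$-orbits of compact sets (\eg the sets $(\transp{A}^{-1})^n\gamma_0 K$ with $\gamma_0\notin\langle\transp{A}^{-1}\rangle$) and translates of other cusp regions also accumulate at $p^*$, so ``shrinking to $p^*$'' does not force eventual containment in $F(D)$; one needs to know that this accumulation is tangential relative to the pencil, which is exactly the ``precisely embedded cusp / bounded parabolic point'' statement. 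Since you prove \ref{item_qd4} \emph{before} \ref{item_qd1}, you have no boundary regularity of $\pa\P(C^*)$ and no boundary extension of $F$ with which to control this. Closing the gap amounts to redoing the cusp analysis that constitutes Marquis's proof of his duality theorem --- \ie the very result the paper cites --- and, as written, your $\C^1$ claim in \ref{item_qd1} inherits the same gap because it rests on \ref{item_qd4}. If instead you simply cite \cite{marquis_fourier} for \ref{item_qd1} and \ref{item_qd4}, as the paper does, everything else in your outline (inverse-transpose preserving discreteness, torsion-freeness and Jordan type; the equivariant diffeomorphism; the pinching argument excluding segments at parabolic points; the stabilizer arguments) is sound.
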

Parts \ref{item_qd1} and \ref{item_qd4} are Thm.\@ 0.5 and Thm.\@ 0.4, respectively, of the aforementioned work of Marquis \cite{marquis_fourier}, whereas \ref{item_qd3} and  \ref{item_qd2}  are generalizations of well known facts for Fuchsian groups and can be shown with the same argument as in the Fuchsian case (see \eg \cite[\S 5]{Beardon})

\subsection{Affine transformation with parabolic linear part}
In order to study affine deformations of quasi-divisible convex cones, let us consider affine transformations $(A,X)\in\Aut(C)\ltimes\R$ such that $A$ is parabolic. The following fundamental property of such $(A,X)$'s is the origin of our definition of admissible cocycles:
\begin{proposition}\label{prop_parabolic}
	Let $C\subset\R^3$ a proper convex cone and let $(A,X)\in\Aut(C)\ltimes\R^3$ be such that $A$ is parabolic. Let $L_0\subset\R^3$ be the eigenline of $A$ and $P_0\subset\R^3$ be the $C$-null subspace preserved by $A$. Then the following conditions are equivalent to each other:
	\begin{enumerate}[label=(\alph*)]
		\item\label{item_parabolic1} the vector $X$ lies in $P_0$;
		\item\label{item_parabolic2} $(A,X)$ is conjugate to $A$ through a translation;
		\item\label{item_parabolic3} $(A,X)$ preserves an affine plane in $\A^3$.
	\end{enumerate}
	When these conditions are satisfied, the affine planes preserved by $(A,X)$ are exactly those parallel to $P_0$, and there is a distinguished one $P_1$ among these planes, such that the set of fixed points of $(A,X)$ in $\A^3$ is an affine line  lying on $P_1$ parallel to $L_0$.
\end{proposition}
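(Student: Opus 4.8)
The plan is to reduce everything to the structure of the nilpotent operator $N:=A-I$. Since $A$ is parabolic, $N$ is nilpotent with a single Jordan block of size three: in a suitable basis $(e_1,e_2,e_3)$ one has $Ne_1=0$, $Ne_2=e_1$, $Ne_3=e_2$. First I would record the three facts I will use repeatedly: $\ker N=\R e_1$ is the eigenline $L_0$; the image $N(\R^3)=\R e_1+\R e_2$ equals $\ker N^2$, the $2$-dimensional generalized eigenspace, which by the description of parabolic elements recalled in \S\ref{subsec_parabolic} is exactly the $C$-null subspace $P_0$; and $P_0$ is the \emph{unique} $2$-dimensional $N$-invariant subspace. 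The last point is the one small lemma I need to justify: if $W$ is $N$-invariant of dimension $2$, then $N|_W$ is a nonzero nilpotent endomorphism of a plane (nonzero since $\dim W=2>1=\dim\ker N$), so its kernel and image both equal the line $W\cap\ker N$; this forces $\ker N\subset W$ and $N^2|_W=0$, whence $W\subset\ker N^2=P_0$ and therefore $W=P_0$.

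With these identifications the equivalences become immediate. For \ref{item_parabolic1}$\Leftrightarrow$\ref{item_parabolic2}, I would compute the effect of conjugating by a translation $(I,v)$, obtaining
$$(I,v)(A,X)(I,v)^{-1}=(A,\,X-Nv).$$
Hence $(A,X)$ is translation-conjugate to $A=(A,0)$ if and only if $X=Nv$ for some $v$, i.e.\ $X\in N(\R^3)=P_0$, which is precisely \ref{item_parabolic1}. For \ref{item_parabolic1}$\Leftrightarrow$\ref{item_parabolic3}, I would write an affine plane as $p_0+W$ with $W$ a $2$-dimensional linear subspace; it is preserved by $(A,X)$ exactly when $AW=W$ and $Np_0+X\in W$. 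The first condition says $W$ is $N$-invariant, so by the uniqueness above $W=P_0$; the second then reads $X\in P_0$, because $Np_0\in N(\R^3)=P_0$ automatically. Thus a preserved plane exists iff $X\in P_0$, and in that case every plane parallel to $P_0$ is preserved while no other plane can be, giving the asserted description of the preserved planes.

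Finally, assuming \ref{item_parabolic1}, I would locate the fixed points by solving $(A,X)p=p$, i.e.\ $Np=-X$. Since $X\in P_0=N(\R^3)$ this is solvable, and the solution set is $p_0+\ker N=p_0+L_0$, an affine line parallel to the eigenline $L_0$. As $L_0\subset P_0$, this line lies in the plane $P_1:=p_0+P_0$, which is one of the preserved planes; and since distinct planes parallel to $P_0$ are disjoint, $P_1$ is the unique preserved plane containing the fixed line, which is the claimed distinguished plane.

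The argument is essentially routine once the structural facts about $N$ are in place, so I do not expect a serious obstacle. The only point requiring a genuine (if short) argument is the uniqueness of the $2$-dimensional $N$-invariant subspace, and the only point needing care with the paper's conventions is the sign and form of the translation-conjugation formula, which must be matched to the convention that $(A,X)$ acts by $Y\mapsto AY+X$.
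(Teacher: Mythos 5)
Your proof is correct and follows essentially the same route as the paper's: both reduce to the Jordan/nilpotent structure of $A$ (your $N=A-I$ versus the paper's explicit Jordan form $A_0$ in Lemma \ref{lemma_jordan}), both establish (a)$\Leftrightarrow$(b) via the same translation-conjugation computation using that the image of $I-A$ is $P_0$, and both settle (c) by the observation that $P_0$ is the unique $A$-invariant $2$-plane. The only cosmetic differences are that you supply a short proof of that uniqueness (which the paper asserts without detail) and you obtain the fixed-point line by solving $Np=-X$ directly rather than transporting the fixed set of $A$ by the conjugating translation.
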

Here, in Condition \ref{item_parabolic2}, $\Aut(C)$ and $\R^3$ are both viewed as subgroups of $\Aut(C)\ltimes\R^3$, and the normal subgroup $\R^3$ is referred to as the group of \emph{translations}.

The core of Prop.\@ \ref{prop_parabolic} is the following basic property of the Jordan form of $A$:
\begin{lemma}\label{lemma_jordan}
	The following statements hold for the linear transformation
	$$
	A_0:=\matrix{1&1&\\&1&1\\&&1}.
	$$ 
	\begin{enumerate}[label=(\arabic*)]
		\item\label{item_jordan1} The affine planes in $\R^3$ preserved by $A_0$ are exactly the horizontal planes $\R^2\times\{\xi\}$.
		\item\label{item_jordan2} Let $X\in\R^3$ be a vector not in the generalized eigenspace $\R^2\times\{0\}$ of $A_0$. Then the affine transformation $(A_0,X)\in\SL(3,\R)\ltimes\R^3$ does not preserve any affine plane.
	\end{enumerate}
\end{lemma}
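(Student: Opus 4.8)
The plan is to represent affine planes by their defining affine equations and to see how such an equation is moved by the affine map; this turns both parts into one short eigenvector computation and avoids any case analysis on the Jordan structure. I would write a plane as the zero set of an affine function $v\mapsto av-c$, where $a$ is a nonzero linear functional on $\R^3$ (a row vector) and $c\in\R$, and set $N:=A_0-I$ (so $N^2\neq0$, $N^3=0$). Tracking this zero set through $(A_0,X)\colon v\mapsto A_0v+X$ and rewriting the image as a level set shows that $(A_0,X)$ preserves the plane precisely when there is a scalar $\mu$ with
$$
aA_0^{-1}=\mu a,\qquad c+aA_0^{-1}X=\mu c.
$$
The first equation carries the real content; the second is only a bookkeeping constraint on the constant term once $a$ and $\mu$ are known.

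Next I would analyze the first equation. Rewriting $aA_0^{-1}=\mu a$ as $aA_0=\mu^{-1}a$ exhibits $a$ as a left eigenvector of the unipotent matrix $A_0$, which forces $\mu=1$ and hence $aN=0$. Solving $aN=0$ by hand (the columns of $N$ span $\R e_1+\R e_2$) shows that $a$ is a multiple of the functional $v\mapsto v_3$, so the only candidate invariant planes are the horizontal ones $\R^2\times\{\xi\}$. This single rigidity statement — that the unipotent $A_0$ has a one-dimensional space of left eigenvectors, spanned by $(0,0,1)$ — is the one point I would verify carefully; it is the exact reflection of the fact that $A_0$ has a unique two-dimensional invariant subspace, namely $\ker N^2=\R^2\times\{0\}$, and I would keep that description as a cross-check. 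For Part \ref{item_jordan1}, where $X=0$, the constant-term equation is vacuous, and a one-line computation confirms that $A_0$ genuinely fixes each horizontal plane; combined with the rigidity above, this gives that the invariant affine planes are exactly the $\R^2\times\{\xi\}$.

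For Part \ref{item_jordan2} I would substitute the now-known data $a=(0,0,1)$ and $\mu=1$ into the second equation, which collapses to $aX=0$, i.e. $X_3=0$, i.e. $X\in\R^2\times\{0\}$. Thus when $X$ lies outside the generalized eigenspace $\R^2\times\{0\}$, no pair $(a,c)$ can satisfy both equations, so $(A_0,X)$ preserves no affine plane whatsoever, which is the claim. I expect no genuine obstacle beyond the left-eigenvector rigidity; an equivalent and slightly more geometric alternative would run the whole argument on the level of the plane's direction $V$: any invariant plane must have $A_0$-invariant direction, the unique two-dimensional $A_0$-invariant subspace is $V=\R^2\times\{0\}$, and the residual translational condition $(A_0-I)p_0+X\in V$ then reduces to $X\in\R^2\times\{0\}$ because $(A_0-I)p_0=Np_0$ automatically lies in $\R^2\times\{0\}$.
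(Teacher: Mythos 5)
Your proposal is correct, and both parts check out: the transport of the defining equation $av=c$ under $v\mapsto A_0v+X$ does yield the proportionality conditions $aA_0^{-1}=\mu a$ and $c+aA_0^{-1}X=\mu c$; unipotence forces $\mu=1$; $aN=0$ forces $a$ proportional to $(0,0,1)$; and then the constant-term equation collapses to $X_3=0$, which is exactly the dichotomy claimed in the lemma. The paper argues dually: it first records the elementary fact that if $(A,X)$ preserves an affine plane $P$ then the linear part $A$ preserves the direction subspace of $P$, then observes that $\R^2\times\{0\}$ is the \emph{only} $2$-dimensional $A_0$-invariant subspace (the subspace-level form of your left-eigenvector rigidity), so any invariant plane is horizontal, and finally notes that a non-horizontal $X$ translates every horizontal plane off itself. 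So the mathematical core — uniqueness of the invariant $2$-plane of $A_0$, equivalently uniqueness of the left eigenline — is identical, but the packaging differs: your covector computation merges the direction constraint and the translation constraint into a single pair of equations solved once for both parts, which is tidier and fully self-contained, whereas the paper's route is shorter on the page because it quotes the direction-subspace fact without proof and leaves the two verifications ("horizontal planes are preserved", "non-horizontal $X$ moves them") as one-line checks. Your closing "geometric alternative" is precisely the paper's proof, so you have in effect produced both arguments and correctly identified them as reflections of one another.
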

\begin{proof}
	It is an elementary fact that if an affine transformation $(A,X)\in\SL(3,\R)\ltimes\R^3$ preserves an affine plane $P$, then the linear part $A$ preserves the $2$-dimensional subspace of $\R^3$ parallel to $P$. 
	
	One readily checks that every horizontal plane $\R^2\times\{\xi\}$ is preserved by $A_0$. Conversely, by the above fact, these are the only affine planes preserved because the generalized eigenspace $\R^2\times\{0\}$ is the only $2$-dimensional subspace preserved. This proves Part \ref{item_jordan1}. 
	
	The above fact also implies that an affine plane preserved by $(A_0,X)$ can only has the form $\R^2\times\{\xi\}$ as well. So we obtain Part \ref{item_jordan2} by noting that if $X$ is not horizontal then $\R^2\times\{\xi\}$ is not preserved.
\end{proof}

\begin{proof}[Proof of Prop.\@ \ref{prop_parabolic}]
	Since $A$ is conjugate to the $A_0$ in Lemma \ref{lemma_jordan}, by the lemma, the affine planes preserved by $A$ are exactly those parallel to $P_0$. Also, the fixed points of $A$ are exactly the points of the eigenline $L_0$. Therefore, when Condition \ref{item_parabolic2} is satisfied, the last two statements of the proposition holds. This proves the ``Moreover'' part. It remain to show the equivalence between \ref{item_parabolic1}, \ref{item_parabolic2} and \ref{item_parabolic3}.
	
	The conjugate of $A$ in $\Aut(C)\ltimes\R^3$ by a translation $X_0\in\R^3$ is $(A,(I-A)X_0)$, because it sends $Y\in\R^3$ to $A(Y-X_0)+X_0=AY+(I-A)X_0$. Therefore, the implication ``\ref{item_parabolic1}$\Rightarrow$\ref{item_parabolic2}'' follows from the fact that
	$$
	\big\{(I-A)X_0\,\mid\,X_0\in\R^3\big\}=P_0,
	$$
	which can be easily checked using the Jordan form $A_0$. The implication ``\ref{item_parabolic2}$\Rightarrow$\ref{item_parabolic3}'' is obvious. Finally, 
	``\ref{item_parabolic3}$\Rightarrow$\ref{item_parabolic1}'' follows immediately from Lemma \ref{lemma_jordan}.
\end{proof}

\begin{remark}
	In terms of fixed planes, affine transformations with hyperbolic linear part behave differently from those with parabolic linear part. In fact, given a hyperbolic $A\in\Aut(C)$, one readily checks that 
	\begin{itemize}
		\item
		if the middle eigenvalue of $A$ is $1$, then every $(A,X)\in\Aut(C)\ltimes\R^3$ is conjugate through a translation to some $(A,X_0)$ with $X_0$ in the middle eigenspace;
		\item 
		otherwise, $(A,X)$ must be conjugate through a translate to $A$ itself. 
	\end{itemize}
	In any case, $(A,X)$ has two fixed $C$-null planes, parallel to the two $C$-null subspaces fixed by $A$.
\end{remark}

Now let $C$ and $\Omega$ be as in \S \ref{sec:correspondence}, so that we have isomorphisms $\Aut(C)\cong\Aut(C^*)=\Aut(\Omega)$ and $\Aut(C)\ltimes\R^3\cong\Aut(\Omega\times\R)$ (see Prop.\@ \ref{prop_iso}). The former isomorphism sends a parabolic $A\in\Aut(C)$ to the inverse transpose $\transp{\!A}^{-1}\in\Aut(\Omega)$, which is again parabolic. Therefore, by the commutative diagram in Prop.\@ \ref{prop_iso}, an element $(A,X)$ of $\Aut(C)\ltimes\R^3$ with $A$ parabolic corresponds, under the latter isomorphism, to some $\Phi\in\Aut(\Omega\times\R)$ whose projection $\pi(\Phi)\in\Aut(\Omega)$ is parabolic. Thus, the correspondence between the two geometries explained in the introduction and the previous section translates Prop.\@ \ref{prop_parabolic} into the following dual result about $\Phi$:
\begin{corollary}\label{coro_fixedaffinefunction}
	Let $\Phi$ be an automorphism of the convex tube domain $\Omega\times\R\subset\R^3$ whose projection $\pi(\Phi)\in\Aut(\Omega)$ is parabolic with fixed point $p\in\pa\Omega$.
	Then we have the following dichotomy: either
	\begin{itemize}
		\item $\Phi$ does not have any fixed point in $\R^3$, or
		\item the set of fixed points of $\Phi$ is the vertical line $\{p\}\times\R$.
	\end{itemize}
	In the latter case, there is a distinguished fixed point $(p,\xi_0)$ and a non-vertical line $L\subset\R^3$ tangent to $\Omega\times\R$ at $(p,\xi_0)$ such that the affine planes in $\R^3$ preserved by $\Phi$ are exactly those containing $L$ (see Figure \ref{figure_tube}).
\end{corollary}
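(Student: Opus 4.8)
The plan is to transport Proposition \ref{prop_parabolic} across the correspondence of \S\ref{sec:correspondence}. Let $(A,X)\in\Aut(C)\ltimes\R^3$ be the element corresponding to $\Phi$ under Prop.\@ \ref{prop_iso}; since $\pi(\Phi)=\transp{\!A}^{-1}$ is parabolic, so is $A$, and the fixed point $p\in\pa\Omega$ of $\pi(\Phi)$ is the common horizontal coordinate of the planes parallel to the $C$-null subspace $P_0$ preserved by $A$, which therefore correspond under \eqref{eqn_identification_r3} to the vertical line $\{p\}\times\R$. By the very definition of the action (Prop.\@ \ref{prop_hspace} and \ref{prop_action}), the action of $\Phi$ on $\A^{3*}=\R^3$ is the action of $(A,X)$ on non-vertical affine planes of $\A^3$; hence a point of $\R^3$ is fixed by $\Phi$ if and only if the corresponding affine plane is preserved by $(A,X)$.

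For the dichotomy I would feed this into Prop.\@ \ref{prop_parabolic} and Lemma \ref{lemma_jordan}\ref{item_jordan2}. If $X\notin P_0$ then $(A,X)$ preserves no affine plane whatsoever, so $\Phi$ has no fixed point in $\R^3$. If $X\in P_0$ then the affine planes preserved by $(A,X)$ are precisely those parallel to $P_0$; these are all non-vertical and correspond to $\{p\}\times\R$, so $\mathrm{Fix}(\Phi)\cap\R^3=\{p\}\times\R$. The distinguished fixed point $(p,\xi_0)$ is the one corresponding to the distinguished preserved plane $P_1$ furnished by Prop.\@ \ref{prop_parabolic}.

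To describe the planes preserved by $\Phi$ in the second case, I would dualize once more using the equivariance of $\A^3\cong\{$non-vertical affine planes in $\A^{3*}\}$ (Lemma \ref{lemma_plane}): an affine plane of $\A^{3*}$ is preserved by $\Phi$ iff the point of $\A^3$ dual to it is fixed by $(A,X)$. By Prop.\@ \ref{prop_parabolic} the fixed points of $(A,X)$ in $\A^3$ form an affine line $\ell\subset P_1$ parallel to $L_0$; together with the unique fixed point $[L_0]$ of the parabolic $A$ at infinity, the fixed locus of $(A,X)$ in $\RP^3$ is the projective line $\overline\ell=\ell\cup\{[L_0]\}$. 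Passing to the dual pencil, the affine planes preserved by $\Phi$ are exactly those containing the affine line
$$L:=\big\{\text{non-vertical affine planes of $\A^3$ through $\ell$}\big\}\subset\A^{3*}.$$
As distinct planes through $\ell$ have distinct $2$-dimensional directions, the horizontal coordinate is non-constant along $L$, so $L$ is a non-vertical affine line.

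Finally, to see that $L$ is tangent to $\Omega\times\R$ at $(p,\xi_0)$, I would show that $L$ meets $\overline\Omega\times\R$ only there. No plane through $\ell$ is $C$-spacelike, since its direction contains $L_0\subset\pa C$; thus $L$ misses the open tube $\Omega\times\R=\hsp{C}$. A plane through $\ell$ lies on $\pa\Omega\times\R=\hnull{C}$ iff its direction is a $C$-null $2$-plane containing $L_0$, \ie a supporting plane of $C$ along $L_0$. The crux is that there is exactly one such plane: this is where I would invoke the Benoist--Hulin pinching Lemma \ref{lemma_benoisthulin}, which forces $\pa\P(C)$ to be $C^1$ at $[L_0]$ and hence to admit a unique supporting line there. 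That unique $C$-null plane through $\ell$ is $P_1$, corresponding to $(p,\xi_0)$, whence $L\cap(\overline\Omega\times\R)=\{(p,\xi_0)\}$ and $L$ is tangent to the tube at this point. The main obstacle is precisely this uniqueness: absent the $C^1$-regularity of Lemma \ref{lemma_benoisthulin}, a corner of $\pa\P(C)$ at $[L_0]$ would produce a whole arc of supporting planes, hence several $C$-null planes through $\ell$, and $L$ would be a secant of the tube rather than a tangent.
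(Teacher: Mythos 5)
Your proposal is correct and takes essentially the same route as the paper: the corollary is presented there as the direct dual translation of Proposition \ref{prop_parabolic} through the correspondence of \S\ref{sec:correspondence} (fixed points of $\Phi$ in $\R^3$ correspond to non-vertical preserved planes of $(A,X)$, and preserved planes of $\Phi$ correspond to the pencil dual to the fixed line $\ell$), which is exactly what you carry out. Your treatment of the tangency of $L$ is also right, and you correctly identify the one point the paper leaves implicit, namely that tangency needs the uniqueness of the supporting line of $\P(C)$ at the parabolic fixed point, which indeed follows from Lemma \ref{lemma_benoisthulin} (alternatively, from the fact that the $A$-invariant closed interval of supporting lines in the pencil at $[L_0]$ must be degenerate, since $A$ acts on that pencil as a parabolic of $\RP^1$ with a single fixed point).
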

\begin{figure}[h]
	\includegraphics[width=8.8cm]{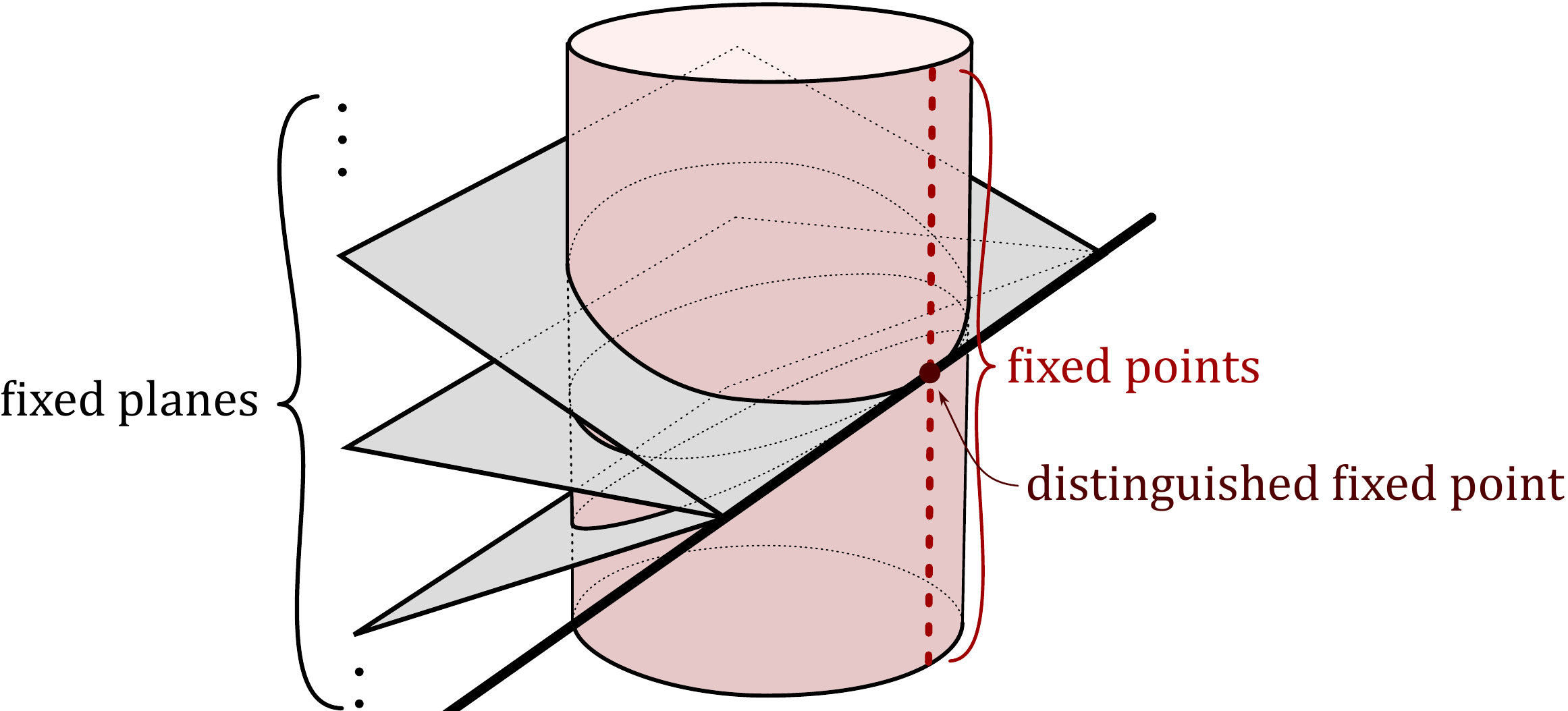}
	\caption{All fixed points and affine planes of $\Phi\in\Aut(\Omega\times\R)$ with $\pi(\Phi)$ parabolic, when $\Phi$ does have fixed points.}
	\label{figure_tube}
\end{figure}

In particular, if we view a parabolic element $A$ of $\Aut(\Omega)$ with fixed point $p\in\pa\Omega$ as an automorphisms of $\Omega\times\R$ preserving the slice $\Omega\times\{0\}$ (see \S \ref{subsec_subgroup}), then the distinguished fixed point of $A$ is just $(p,0)$. We will need the following result about the existence of certain smooth convex functions with $A$-invariant graph:
\begin{lemma}\label{lemma_parabolicfunction}
	Let $A$ be a parabolic element in $\Aut(\Omega)$ fixing $p\in\pa\Omega$. Then for any $\mu\geq0$, there is a convex $f\in\C^\infty(\Omega)$ such that the boundary value of $f$ at $p$ (in the sense of \S \ref{subsec_convexfunction}) is $-\mu$ and the graph of $f$ is preserved by $A$ (as an automorphisms of $\Omega\times\R$).
\end{lemma}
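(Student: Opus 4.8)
The plan is to exploit the fact that the condition ``$\gra{f}$ is preserved by $A$'' is \emph{linear} in $f$. Indeed, writing the zero-slice action \eqref{eqn_zeroslice} as $A(x,\xi)=\big(A.x,\,j_A(x)\,\xi\big)$ with $j_A(x)=\big[-A\matrix{x\\-1}\big]_3^{-1}$, invariance of $\gra{f}$ amounts to the cocycle identity $f(A.x)=j_A(x)\,f(x)$, which is linear in $f$. So I would first reduce the statement to producing a \emph{single} convex $g\in\C^\infty(\Omega)$ with $A$-invariant graph whose boundary value at $p$ is a finite, strictly negative constant $-c$: the desired function is then $f:=(\mu/c)\,g$ (with $f\equiv0$ covering $\mu=0$), which is convex, smooth, has $A$-invariant graph, and has boundary value $-\mu$ at $p$.

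To build $g$ I would combine two functions with $A$-invariant graph. First, let $\ell$ be the linear form fixed by $A$ (so $\ell\circ A=\ell$), unique up to scale, whose kernel is the $2$-dimensional invariant subspace of the parabolic $A$; set $a(x):=\ell\big(\matrix{x\\-1}\big)$. This affine function vanishes exactly on the tangent line to $\Omega$ at $p$, and I normalize it so that $a<0$ on $\Omega$ and $a(p)=0$; using $\matrix{A.x\\-1}=j_A(x)\,A\matrix{x\\-1}$ together with $\ell\circ A=\ell$ one gets $a(A.x)=j_A(x)\,a(x)$, i.e.\ $\gra{a}$ is $A$-invariant. Second, I would apply Lemma \ref{lemma_benoisthulin} to the dual cone $C^*$ (for which $\P(C^*)=\Omega$ and $A\in\Aut(C^*)$ is parabolic) to obtain an $A$-invariant projective disk $E\supseteq\Omega$ with $p\in\pa E$, and denote by $w_E$ the Cheng–Yau function of $E$ (Thm.\ \ref{thm_chengyau}), whose graph is invariant under the zero-slice copy of $\Aut(E)\ni A$. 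I then define
$$
g:=\frac{w_E^{\,2}}{a}\qquad\text{on }\interior E\supseteq\Omega.
$$
Because $\gra{w_E}$ and $\gra{a}$ are $A$-invariant while the vertical coordinate scales by $j_A(x)$, the quotient transforms by $j_A(x)^2/j_A(x)=j_A(x)$, so $\gra{g}$ is again $A$-invariant; and $g$ is smooth on $\interior E$ (there $w_E$ is smooth and $a<0$), hence $g|_\Omega\in\C^\infty(\Omega)$.

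The crux is the convexity of $g$, and the reason for passing to the disk $E$ is that it reduces this to the round model. I would pick a projective transformation $\Psi$ with $\Psi(E)=\mathbb{D}$ and $\Psi(p)=(1,0)$, conjugating $A$ to a parabolic of $\mathbb{D}$. The induced isomorphism of tube domains preserves graph-convexity (by the same reasoning as in \S\ref{subsec_auto}: being projective and vertical-preserving, it carries affine graphs to affine graphs) and, by the naturality of the Cheng–Yau construction and the covariance of the invariant affine function, sends $g$ to a positive multiple of $w_{\mathbb{D}}^{\,2}/a_{\mathbb{D}}$ with $a_{\mathbb{D}}=x_1-1$. Since $w_{\mathbb{D}}(x)=\sqrt{1-|x|^2}$, a direct computation gives
$$
\frac{w_{\mathbb{D}}^{\,2}}{a_{\mathbb{D}}}=-(1+x_1)+\frac{x_2^2}{1-x_1},
$$
which is convex on $\mathbb{D}$, being an affine function plus the perspective function $x_2^2/(1-x_1)$ (note $1-x_1>0$ there). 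Pulling back, $g$ is convex, so its boundary value at $p$ exists (\S\ref{subsec_convexfunction}); as the displayed function tends to $-2$ along every segment into $(1,0)$, covariance of boundary values shows that the boundary value of $g$ at $p$ is a finite, strictly negative constant. Rescaling as above then yields $f$.

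I expect the convexity of $g$ to be the main obstacle. For a general quasi-divisible $\Omega$ there is no closed form for $w_\Omega$, and the naive candidate $w_\Omega^2/a$ on $\Omega$ itself need not be convex, so a direct argument on $\Omega$ seems hopeless; the point of the construction is to borrow a convex function from the ambient invariant disk $E$, where it is the explicit perspective function above, and to restrict it. The subsidiary technical points I would verify carefully are that the tube isomorphism induced by $\Psi$ genuinely intertwines the Cheng–Yau functions and the invariant affine functions of $E$ and $\mathbb{D}$ — so that the pullback of $w_{\mathbb{D}}^2/a_{\mathbb{D}}$ is a positive multiple of $g$ and the transported boundary value stays finite and negative — which should follow from the naturality built into Thm.\ \ref{thm_chengyau} and the covariance of boundary values recorded in \S\ref{subsec_convexfunction}.
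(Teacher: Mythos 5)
Your construction is correct, and in fact it produces \emph{the same function} as the paper: the paper's explicit formula $\mu\big(\tfrac{(x_1+1)^2+x_2^2}{2(x_1+1)}-1\big)$ (for $p=(-1,0)$) is precisely $\tfrac{\mu}{2}\,w_{\mathbb{D}}^2/a_{\mathbb{D}}$ with $a_{\mathbb{D}}=-(1+x_1)$, i.e.\ your perspective function up to a reflection and a scale. The skeleton is also shared: both proofs invoke Lemma \ref{lemma_benoisthulin} to get an $A$-invariant disk containing $\Omega$ touching $\pa\Omega$ at $p$, normalize it projectively to the round disk, exhibit an explicit convex function there, and restrict to $\Omega$. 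What genuinely differs are the supporting arguments. The paper realizes the function as the convex envelope $\env{\psi}$ of the boundary data ($0$ off $p$, value $-\mu$ at $p$): invariance of its graph then comes for free from invariance of $\gra{\psi}$ and covariance of convex envelopes, convexity is a direct Hessian computation, and Cheng--Yau is never used. You instead realize it as $w^2/a$, getting invariance from a weight calculus (graphs of $w$ and $a$ are invariant and the fiber action is linear, so a ratio of weights $2-1=1$ is again graph-invariant) and convexity by recognizing the quadratic-over-linear function; you also insert a reduction the paper does not need --- since the invariance condition $f(A.x)=j_A(x)f(x)$ is linear in $f$, a single invariant convex function with finite negative boundary value at $p$ suffices, all $\mu\geq 0$ following by scaling. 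Your route explains conceptually where the magic formula comes from; its cost is reliance on a cross-domain naturality of the Cheng--Yau function under projective maps. That naturality is true, and provable exactly as the paper proves the $\Aut(\Omega)$-invariance asserted in Theorem \ref{thm_chengyau} (uniqueness plus equivariance of affine spheres), but it is an extra input the envelope argument avoids.

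One repair of ordering is needed. As written, you define $g:=w_E^2/a$ on $\interior E$ \emph{before} conjugating, but Theorem \ref{thm_chengyau} defines $w_E$ only for a bounded convex domain in $\R^2$, and the Benoist--Hulin disk $E$ need not be bounded in the given affine chart even though $\Omega$ is. The fix is already contained in your own argument: perform the projective normalization $\Psi$ with $\Psi(E)=\mathbb{D}$ first, note that the induced tube isomorphism makes the entire statement covariant (it preserves graph-convexity, conjugates the invariance condition, and rescales boundary values at $p$ by a fixed positive constant, which is harmless since the lemma is quantified over all $\mu\geq0$), and then define your function as the pullback of $w_{\mathbb{D}}^2/a_{\mathbb{D}}$ restricted to $\Omega$. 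The same covariance is what the paper tacitly uses when it says ``by choosing the coordinates of $\R^2$ appropriately, we may assume the projective disk is just $\mathbb{D}$,'' so with this reordering your proof is complete. (Also note, for your claim that $a<0$ on all of $\interior E$: the zero line of $a$ is the $A$-invariant line, and an $A$-invariant line cannot cross the interior of the $A$-invariant convex set $E$, since its two boundary intersection points would be fixed or swapped by the parabolic $A$, which has a unique fixed point in $\RP^2$.)
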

Note that the $\mu=0$ case of the lemma follows immediately from Cor.\@ \ref{coro_fixedaffinefunction}, which implies that we can actually take $f$ to be an affine function in this case.
\begin{proof}
By Lemma \ref{lemma_benoisthulin}, $\Omega$ is contained in an $A$-invariant projective disk whose boundary passes through $p$. By choosing the coordinates of $\R^2$ appropriately, we may assume that $p=(-1,0)$ and the projective disk is just $\mathbb{D}:=\{x\in\R^2\mid |x|<1\}$. Let $\psi$ be the lower semicontinuous function on $\pa\mathbb{D}$ such that $\psi=0$ on $\pa\mathbb{D}\setminus\{p\}$ and $\psi(p)=-\mu$. 

We view $A$ as a projective transformation of the round tube domain $\mathbb{D}\times\R$ preserving both the subdomain $\Omega\times\R$ and the slice $\mathbb{D}\times\{0\}$. Its action on $\pa\mathbb{D}\times\R$ preserves the punctured circle $(\pa\mathbb{D}\!\setminus\{p\})\times\{0\}$ and pointwise fixes the vertical line $\{p\}\times\R$. It follows that the graph of $\psi$ is preserved by $A$, hence so is the graph of the convex envelop $\env{\psi}$. 

Let $u$ denote the function on $\overline{\mathbb{D}}$ whose restriction to any line segment $I$ joining $p$ and another point $x\in\pa\mathbb{D}$ is the affine function on $I$ interpolating the values $-\mu$ and $0$ at $p$ and $x$. By elementary calculations, we find the expression of $u$ in $\mathbb{D}$ to be
$$
u(x)=\mu\left(\frac{(x_1+1)^2+x_2^2}{2(x_1+1)}-1\right) \ \ \text{ for any }x=(x_1,x_2)\in\mathbb{D},
$$
and a further calculation shows that the Hessian of $u$ is positive semi-definite, hence $u$ is convex. Also, the boundary value of $u$ is exactly $\psi$. 

We claim that $\env{\psi}=u$ in $\mathbb{D}$. In fact, on one hand, we have $\env{\psi}\leq u$ because by convexity of $\env{\psi}$ and the fact that $\psi=u$ on $\pa\mathbb{D}$, the restriction of $\env{\psi}$ to the aforementioned segment $I$ is less than or equal to the affine function described; on the other hand, $\env{\psi}\geq u$ because $\env{\psi}$ is pointwise no less than any convex function with boundary value $\psi$. 

As a consequence of the claim,  $\env{\psi}=u$ is smooth in $\mathbb{D}$ by the above expression. Also, $\env{\psi}$ has $A$-invariant graph and has boundary value $-\mu$ at $p$. Thus, the restriction of $\env{\psi}$ to $\Omega$ gives the required function $f$.  
\end{proof}

\begin{remark}
In contrast to Lemma \ref{lemma_parabolicfunction}, there is no convex function $f:\Omega\to\R$ with $A$-invariant graph whose boundary value at $p$ is strictly positive. This can be proved by reducing again to the case $\Omega=\mathbb{D}$ and showing that for any iteration sequence $(A^n(x))_{n=1,2,\cdots}$ in $\pa\mathbb{D}\setminus\{p\}$ converges to $p$, we have $\lim_{n\to+\infty}f(A^n(x_0))=0$ by invariance of the graph. More generally, given any $\Phi\in\Aut(\Omega\times\R)$ such that $\pi(\Phi)$ is parabolic and $\Phi$ has fixed points, we deduces from Prop.\@ \ref{prop_parabolic} that $\Phi$ is conjugate to $\pi(\Phi)$. So the above results imply that if $(p,\xi_0)$ is the distinguished fixed point of $\Phi$ in the sense of Cor.\@ \ref{coro_fixedaffinefunction}, then there exists a convex function with $\Phi$-invariant graph and with boundary value $\xi$ at $p$ if and only if $\xi\leq\xi_0$.
\end{remark}

\subsection{Admissible cocycles}
Given a group $\Gamma<\SL(3,\R)$ and a map $\tau:\Gamma\to\R^3$, denote
$$
\Gamma_\tau:=\big\{(A,\tau(A))\in\SL(3,\R)\ltimes\R^3\mid A\in\Gamma\big\}.
$$ 
The following facts are well known and easy to verify:
\begin{itemize}
	\item
$\Gamma_\tau$ is a subgroup of $\SL(3,\R)\ltimes\R^3$ if and only if $\tau$ is a \emph{$1$-cocycle} on $\Gamma$ with values in the $\Gamma$-module $\R^3$, which means more precisely that $\tau$ belongs to the vector space
$$
Z^1(\Gamma,\R^3):=\big\{\tau:\Gamma\to\R^3\ \big|\ \tau(AB)=\tau(A)+A\tau(B),\,\forall A,B\in\Gamma\big\}.
$$
In this case, we call $\Gamma_\tau$ an \emph{affine deformation} of $\Gamma$.
\item
Two affine deformations $\Gamma_{\tau_1}$ and $\Gamma_{\tau_2}$ are said to be \emph{equivalent} if there is $X_0\in\R^3$ such that $(A,\tau_1(A))$ is conjugate to $(A,\tau_2(A))$ through the translation $X_0$ for any $A\in\Gamma$.
It is the case if and only if $\tau_1-\tau_2$ lies in the vector space of \emph{$1$-coboundaries}
$$
B^1(\Gamma,\R^3):=\big\{\tau_X\ \big|\ X\in\R^3\big\}, \text{ where }\tau_X(A):=(I-A)X.
$$
\end{itemize}
Therefore, the first cohomology of $\Gamma$ with values in $\R^3$, \ie the vector space 
$$
H^1(\Gamma,\R^3):=Z^1(\Gamma,\R^3)\big/B^1(\Gamma,\R^3),
$$ 
is the space of equivalence classes of affine deformations. In view of Prop.\@ \ref{prop_parabolic}, we further define:
\begin{definition}
Let $C\subset\R^3$ be a proper convex cone quasi-divisible by a torsion-free group $\Gamma<\SL(3,\R)$. Then a cocycle $\tau\in Z^1(\Gamma,\R^3)$ is said to be \emph{admissible} if for every parabolic element $A$ in $\Gamma$, the vector $\tau(A)$ is in the $C$-null subspace preserved by $A$. 
\end{definition}
In particular, since  $(I-A)X$ is contained in the $C$-null subspace preserved by $A$ for any parabolic $A\in\Aut(C)$ and $X\in\R^3$, every $1$-coboundary $\tau_X$ is admissible.

Although the above definitions are made for a subgroup $\Gamma<\SL(3,\R)$, one can readily adapt them to representations $\rho\in\Hom(\Pi,\SL(3,\R))$, where $\Pi$ is an abstract group. Precisely, the space of $1$-cocycles and $1$-coboundaries for $\rho$ are defined as  
$$
Z^1_\rho(\Pi,\R^3):=\big\{\tau:\Pi\to\R^3\ \big|\ \tau(\alpha\beta)=\tau(\alpha)+\rho(\alpha)\tau(\beta),\,\forall \alpha,\beta\in\Pi\big\},
$$
$$
B^1_\rho(\Pi,\R^3):=\big\{\tau_X\ \big|\ X\in\R^3\big\}, \text{ where } \tau_X(\gamma):=(I-\rho(\gamma))X,
$$
so that every representation of $\Pi$ in $\SL(3,\R)\ltimes\R^3$ can be written as 
$$
\rho_\tau(\gamma):=(\rho(\gamma),\tau(\gamma))
$$
for some $\rho\in\Hom(\Pi,\SL(3,\R))$ and $\tau\in Z^1_\rho(\Pi,\R^3)$, and two such representations are conjugate to each other through a translation if and only if they have the form $\rho_{\tau_1}$ and $\rho_{\tau_2}$, with $\tau_1-\tau_2\in B^1_\rho(\Pi,\R^3)$. Moreover, when there is a proper convex cone $C\subset\R^3$ quasi-divisible by the image $\rho(\Pi)$, we call $\tau\in Z^1_\rho(\Pi,\R^3)$ \emph{admissible} if $\tau(\gamma)$ is in the $C$-null subspace preserved by $\rho(\gamma)$ whenever $\rho(\gamma)$ is parabolic.

\subsection{Moduli spaces}
Let $C\subset\R^3$ be a proper convex cone quasi-divisible by a torsion-free group $\Gamma<\SL(3,\R)$. Since $\P(C)$ is topologically a disk and $\Gamma$ acts on it properly discontinuously by orientation preserving homeomorphism, Theorem \ref{thm_marquis} implies that the surface $S:=\P(C)/\Gamma$ is homeomorphic to the orientable surface $S_{g,n}$ with genus $g$ and $n$ punctures, where the nonnegative integers $g$ and $n$ satisfy:
\begin{itemize}
	\item $(g,n)\neq (0,0)$ or $(0,1)$, as $S$ cannot be homeomorphic to the sphere or the disk;
	\item $(g,n)\neq(0,2)$, since otherwise $\Gamma$ is the cyclic group generated by a single element in $\Aut(C)$, and it would be impossible that both punctures of $S\approx S_{0,2}$ have the property in the conclusion of Theorem \ref{thm_marquis}.
\end{itemize}
Therefore, we have either $(g,n)=(1,0)$ or $2-2g-n<0$. Namely, $S$ is homeomorphic to either the torus or some $S_{g,n}$ with negative Euler characteristic.

For each allowed $(g,n)$, the \emph{moduli space of convex projective structures with finite volume} on $S_{g,n}$ is defined as the topological quotient
$$
\mathcal{P}_{g,n}:=\Hom_0(\pi_1(S_{g,n}),\SL(3,\R))\big/\SL(3,\R),
$$
where $\SL(3,\R)$ acts by conjugation on the space of representations
\begin{align*}
&\Hom_0(\pi_1(S_{g,n}),\SL(3,\R))\\
&:=\left\{\rho\in \Hom(\pi_1(S_{g,n}),\SL(3,\R))\ \Bigg|\ \parbox{6.4cm}{$\rho$ is faithful and there is a proper convex cone $C\subset\R^3$ quasi-divisible by $\rho(\pi_1(S_{g,n}))$}\right\}
\end{align*}

Similarly, we define the moduli space of \emph{admissible affine deformations of convex projective structures with finite volume} on $S_{g,n}$ as the quotient
$$
\widehat{\mathcal{P}}_{g,n}:=\Hom_0(\pi_1(S_{g,n}),\SL(3,\R)\ltimes\R^3)\big/\SL(3,\R)\ltimes\R^3,
$$
where $\SL(3,\R)\ltimes\R^3$ acts by conjugation on 
\begin{align*}
&\Hom_0(\pi_1(S_{g,n}),\SL(3,\R)\ltimes\R^3)\\
&:=\left\{\rho_\tau\in \Hom(\pi_1(S_{g,n}),\SL(3,\R)\ltimes\R^3)\ \Bigg|\ \parbox{5cm}{$\rho$ is in $\Hom_0(\pi_1(S_{g,n}),\SL(3,\R))$, $\tau\in Z^1_\rho(\pi_1(S_{g,n}),\R^3)$ is admissible}\right\}~.
\end{align*}

The natural projection  
\begin{equation}\label{eqn_projection}
\Hom_0(\pi_1(S_{g,n}),\SL(3,\R)\ltimes\R^3)\to\Hom_0(\pi_1(S_{g,n}),\SL(3,\R)),\quad \rho_\tau\mapsto \rho
\end{equation}
induces a projection $\widehat{\mathcal{P}}_{g,n}\to\mathcal{P}_{g,n}$, which is a surjective continuous map. We now prove Proposition \ref{prop_intro1} in the introduction by showing that when $2-2g-n<0$, the latter projection is a vector bundle of rank $6g-6+2n$. 
\begin{proof}[Proof of Prop.\@ \ref{prop_intro1}]
We view $\widehat{\mathcal{P}}_{g,n}$ a two-step quotient of $\Hom_0(\pi_1(S_{g,n}),\SL(3,\R)\ltimes\R^3)$:
\begin{equation}\label{eqn_quotient}
\widehat{\mathcal{P}}_{g,n}=\left(\Hom_0(\pi_1(S_{g,n}),\SL(3,\R)\ltimes\R^3)\big/\R^3\right)\big/ \SL(3,\R).
\end{equation}
Namely, first quotienting by the normal subgroup of translations $\R^3\lhd \SL(3,\R)\ltimes\R^3$, then by the quotient group $\SL(3,\R)=(\SL(3,\R)\ltimes\R^3)/\R^3$. We claim that 
\begin{itemize}
\item
the projection \eqref{eqn_projection} is a vector bundle of rank $6g-3+2n$;
\item the first quotient in \eqref{eqn_quotient} is the quotient of this vector bundle by a sub-bundle of rank $3$.
\end{itemize}

To prove the claim, we pick a standard set $\big\{\alpha_1,\cdots,\alpha_g,\beta_1,\cdots,\beta_g,\gamma_1,\cdots,\gamma_n\big\}$ of generators of $\pi_1(S_{g,n})$, with generating relation
$$
[\alpha_1,\beta_1]\cdots[\alpha_g,\beta_g]\gamma_1\cdots\gamma_n=\id,
$$
and consider the map
\begin{align}
\Hom_0(\pi_1(S_{g,n}),\SL(3,\R)\ltimes\R^3)&\to \Hom_0(\pi_1(S_{g,n}),\SL(3,\R))\times(\R^3)^{2g+n}\label{eqn_map}\\
\rho_\tau&\mapsto\big(\rho, \tau(\alpha_1),\cdots,\tau(\gamma_n)\big),\nonumber
\end{align}
which assigns to each $ \rho_\tau$ its projection $\rho$ and the values of the admissible cocycle $\tau$ at the generators. Viewing the target of the map \eqref{eqn_map} as the trivial vector bundle of rank $6g+3n$ over $\Hom_0(\pi_1(S_{g,n}),\SL(3,\R))$, we shall prove the first part of the claim by showing that the map identifies the source with a sub-bundle of rank $6g-3+2n$ in the target.

To this end, fix a representation $\rho\in\Hom_0(\pi_1(S_{g,n}),\SL(3,\R))$ and let $C\subset\R^3$ be the cone divisible by the image of $\rho$. Any admissible cocycle  $\tau\in Z^1_\rho(\pi_1(S_{g,n}),\R^3)$ is completely determined by its values at the generators because of the cocycle condition
$$
\tau(\alpha\beta)=\tau(\alpha)+\rho(\alpha)\tau(\beta);
$$ 
whereas the only constraints on these values are 
\begin{itemize}
\item[-]
$\tau(\gamma_j)$ belongs to the $C$-null subspace $V_j(\rho)\subset\R^3$ preserved by $\rho(\gamma_j)$ for $j=1,\cdots,n$;
\item[-]
if we use the cocycle condition to expand
$\tau([\alpha_1,\beta_1]\cdots[\alpha_g,\beta_g]\gamma_1\cdots\gamma_n)$ into an expression only involving the values of $\rho$ and $\tau$ on the generators, then the expression gives $0$. 
\end{itemize}
By a calculation, we find the expansion to be
\begin{align*}
&\tau([\alpha_1,\beta_1]\cdots[\alpha_g,\beta_g]\gamma_1\cdots\gamma_n)=\big(I-\rho(\alpha_1)\rho(\beta_1)\rho(\alpha_1)^{-1}\big)\tau(\alpha_1)\\
&\quad+\rho(\alpha_1)\big(I-\rho(\beta_1)\rho(\alpha_1)^{-1}\rho(\beta_1)^{-1}\big)\tau(\beta_1)+\bullet\tau(\alpha_2)+\cdots+\bullet\tau(\gamma_n),
\end{align*}
where the bullet in front of each $\tau(\alpha_i)$, $\tau(\beta_i)$ with $i\geq2$ and each $\tau(\gamma_j)$ is a specific linear transformation of $\R^3$ given by the $\rho(\alpha_i)$'s, $\rho(\beta_i)$'s and $\rho(\gamma_j)$'s. 
So the two constraints together require  $(\tau(\alpha_1),\cdots,\tau(\gamma_n))$ to be in the kernel of the linear map
\begin{align*}
\mathcal{L}_\rho:(\R^3)^{2g}\oplus V_1(\rho)\oplus\cdots\oplus V_n(\rho)&\to \R^3\\
\mathcal{L}_\rho(X_1,\cdots X_g, Y_1,\cdots, Y_g, Z_1,\cdots, Z_n)&:=\big(I-\rho(\alpha_1)\rho(\beta_1)\rho(\alpha_1)^{-1}\big)X_1\\
&\hspace{-1cm}+\rho(\alpha_1)\big(I-\rho(\beta_1)\rho(\alpha_1)^{-1}\rho(\beta_1)^{-1}\big)Y_1+\bullet X_2+\cdots+\bullet Z_n.
\end{align*} 

In order to show that $\mathcal{L}_\rho$ is surjective, we define the \emph{axis} of any hyperbolic element $A$ in $\Aut(C)$ to be the projective line $\Axis(A)\subset\RP^2$ which is the projectivization of the $2$-subspace of $\R^3$ spanned by the eigenlines of  the largest and smallest eigenvalues of $A$. In particular, if $C$ is the future light cone $C_0$, then $\P(C_0)$ is the Klein model of the hyperbolic plane and the geodesic $\Axis(A)\cap \P(C_0)$ is the axis of $A$ in the sense of hyperbolic geometry. In general, if the middle eigenvalue of $A$ is $1$, then the $2$-subspace projecting to $\Axis(A)$ is exactly the image of the linear map $I-A$, hence
$$
\Axis(A)=\P\big\{(I-A)X\ \big|\ X\in\R^3\big\}.
$$

By Thm.\@ \ref{thm_marquis} and Prop.\@ \ref{prop_qd} \ref{item_qd2}, $\rho(\alpha_1)$ and $\rho(\beta_1)$ are hyperbolic elements with different axes. So we can show the surjectivity of  $\mathcal{L}_\rho$ in the following cases separately:
\begin{itemize}
	\item If the middle eigenvalue of $\rho(\beta_1)$ is not $1$, then we have
	$$
	\big\{\big(I-\rho(\alpha_1)\rho(\beta_1)\rho(\alpha_1)^{-1}\big)X_1\ \big|\  X_1\in\R^3\big\}=\R^3,
	$$
	hence $\mathcal{L}_\rho$ is surjective.
	\item If the middle eigenvalue of $\rho(\alpha_1)$ is not $1$, $\mathcal{L}_\rho$ is also surjective because
	$$
	\big\{\rho(\alpha_1)\big(I-\rho(\beta_1)\rho(\alpha_1)^{-1}\rho(\beta_1)^{-1}\big)Y_1\ \big|\  Y_1\in\R^3\big\}=\R^3.
	$$
	\item If both $\rho(\alpha_1)$ and $\rho(\beta_1)$ have middle eigenvalue $1$, the two projective lines
	\begin{align*}
	&\P\big\{\big(I-\rho(\alpha_1)\rho(\beta_1)\rho(\alpha_1)^{-1}\big)X_1\ \big|\  X_1\in\R^3\big\}=\rho(\alpha_1)\Axis(\rho(\beta_1)),\\
	&\P\big\{\rho(\alpha_1)\big(I-\rho(\beta_1)\rho(\alpha_1)^{-1}\rho(\beta_1)^{-1}\big)Y_1\ \big|\  Y_1\in\R^3\big\}
	=\rho(\alpha_1)\rho(\beta_1)\Axis(\rho(\alpha_1))
	\end{align*}
  are different because $\Axis(\rho(\alpha_1))\neq\Axis(\rho(\beta_1))$. As a result, we have
  $$
  \big\{\big(I-\rho(\alpha_1)\rho(\beta_1)\rho(\alpha_1)^{-1}\big)X_1+\rho(\alpha_1)\big(I-\rho(\beta_1)\rho(\alpha_1)^{-1}\rho(\beta_1)^{-1}\big)Y_1\ \big|\ X_1,Y_1\in\R^3\big\}=\R^3.
  $$
  Therefore, $\mathcal{L}_\rho$ is surjective in this case as well.
\end{itemize}

The surjectivity of $\mathcal{L}_\rho$ and the obvious fact that $V_j(\rho)$ and $\mathcal{L}_\rho$ depend continuously on $\rho$ imply that the image of the map \eqref{eqn_map}, namely the set
$$
\mathcal{V}:=\big\{(\rho,\ker\mathcal{L}_\rho)\ \big|\ \rho\in\Hom_0(\pi_1(S_{g,n}),\SL(3,\R))\big\},
$$
is the total space of a vector bundle over $\Hom_0(\pi_1(S_{g,n}),\SL(3,\R))$ of rank $6g-3+2n$, whose fiber at $\rho$ is $\ker\mathcal{L}_\rho$. 
Since the map is a homeomorphism from $\Hom_0(\pi_1(S_{g,n}),\SL(3,\R)\ltimes\R^3)$ to $\mathcal{V}$, this implies the first part of the claim.

For the second part of the claim, note that by the discussion of coboundaries in the previous subsection, the action of $X\in\R^3$ on $\Hom_0(\pi_1(S_{g,n}),\SL(3,\R)\ltimes\R^3)$ sends $\rho_\tau$ to $\rho_{\tau+\tau_{X}}$, where $\tau_{X}(\gamma):=(I-\rho(\gamma))X$ ($\forall \gamma\in\pi_1(S_{g,n})$). Therefore, if we identify $\Hom_0(\pi_1(S_{g,n}),\SL(3,\R)\ltimes\R^3)$ with $\mathcal{V}$ via the map \eqref{eqn_map} as above, then its quotient by $\R^3$ is given by quotienting every fiber $\ker\mathcal{L}_\rho$ of $\mathcal{V}$ by the image of the linear map
\begin{align*}
\R^3&\to\ker\mathcal{L}_\rho\\
X&\mapsto \big((I-\rho(\alpha_1))X,\cdots, (I-\rho(\alpha_g))X,\\
&\hspace{0.6cm}(I-\rho(\beta_1)X,\cdots, (I-\rho(\beta_g))X,\,(I-\rho(\gamma_1))X,\cdots, (I-\rho(\gamma_n))X\big). 
\end{align*}
With an argument similar to the above proof for surjectivity of $\mathcal{L}_\rho$, one can show that this map is injective by looking at the components $(I-\rho(\alpha_1))X$ and $(I-\rho(\beta_1))X$ and considering the three cases as above according to middle eigenvalues of $\rho(\alpha_1)$ and $\rho(\beta_1)$. The map also depends continuously on $\rho$, hence the images for all $\rho$ together form a vector 
sub-bundle of $\mathcal{V}$ of rank $3$. This completes the proof of the claim.

The claim implies that the first quotient $\mathcal{E}:=\Hom_0(\pi_1(S_{g,n}),\SL(3,\R)\ltimes\R^3)\big/\R^3$ in \eqref{eqn_quotient} is the total space of a vector bundle of rank $6g-6+2n$ over the manifold $M:=\Hom_0(\pi_1(S_{g,n}),\SL(3,\R))$. One readily checks that the action of $G:=\SL(3,\R)$ on $M$ lifts to an action on $\mathcal{E}$  which sends fibers to fibers by linear isomorphisms. Therefore, by the two lemmas given in the next subsection, $\widehat{\mathcal{P}}_{g,n}=\mathcal{E}/G$ is a vector bundle of rank $6g-6+2n$ over $\mathcal{P}_{g,n}=M/G$, as required.
\end{proof}

\begin{remark}\label{remark_torus}
The above argument also works in the case $(g,n)=(1,0)$ and shows that if $C\subset\R^3$ is a proper convex cone quasi-divisible by a group $\Gamma$ isomorphic to $\mathbb{Z}^2$, then we have $Z^1(\Gamma,\R^3)=B^1(\Gamma,\R^3)$, which means that every affine deformation of $\Gamma$ is conjugate to $\Gamma$ itself by a translation. In this case, it is well known that $C$ is a triangular cone.
\end{remark}

\subsection{Appendix: two technical lemmas}
%
\begin{lemma}\label{lemma_proper}
The conjugation action of $\SL(3,\R)$ on $\Hom_0(\pi_1(S_{g,n}),\SL(3,\R))$ is free and proper.
\end{lemma}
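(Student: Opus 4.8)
The plan is to verify the two properties separately, freeness being essentially formal and properness being the real content. Throughout I would first record the one structural input I need: for $2-2g-n<0$ the surface $S_{g,n}$ has negative Euler characteristic, so $\P(C)$ is not a triangle and the holonomy $\rho$ is strongly irreducible, i.e.\ $\Gamma:=\rho(\pi_1(S_{g,n}))$ acts irreducibly on $\R^3$. (This is standard; it also follows from the material in the excerpt, since a $\Gamma$-invariant line or plane would force a $\Gamma$-fixed point in $\RP^2$, contradicting the density of every $\Gamma$-orbit in $\pa\P(C)$ recorded in Proposition \ref{prop_qd}.)

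For freeness, the stabilizer of $\rho$ is the centralizer $Z(\rho)=\{g\in\SL(3,\R):g\rho(\gamma)g^{-1}=\rho(\gamma)\ \forall\gamma\}$, which is exactly the commutant of the irreducible $\Gamma$-module $\R^3$. By Schur's lemma this commutant is a finite-dimensional real division algebra, hence by Frobenius one of $\R$, $\mathbb{C}$, or the quaternions, of real dimensions $1,2,4$. Since $\dim_\R\R^3=3$ is divisible by none of $2$ or $4$, the commutant must be $\R$, so every $g\in Z(\rho)$ is a real scalar $\lambda I$; the condition $\det g=1$ forces $\lambda^3=1$ with $\lambda\in\R$, hence $\lambda=1$ and $g=I$. (A purely geometric variant: $g$ would fix the attracting fixed point of every hyperbolic $\rho(\gamma)$, and these are dense in the strictly convex $\C^1$ curve $\pa\P(C)$ by Proposition \ref{prop_qd}, so $g$ fixes a projective frame and is the identity in $\PGL(3,\R)$, whence $g=I$.)

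For properness I would use the sequential criterion: given $\rho_n\to\rho$ and $g_n\rho_n g_n^{-1}\to\rho'$ in $\Hom_0$, I must show $(g_n)$ subconverges in $\SL(3,\R)$. Write the Cartan decomposition $g_n=k_n a_n l_n$ with $k_n,l_n\in\SO(3)$ and $a_n=\mathrm{diag}(a_n^1,a_n^2,a_n^3)$, $a_n^1\ge a_n^2\ge a_n^3>0$, product $1$; by compactness of $\SO(3)$ pass to a subsequence with $k_n\to k$, $l_n\to l$. If $(g_n)$ failed to subconverge, then $\|g_n\|+\|g_n^{-1}\|\to\infty$, which together with $\det g_n=1$ gives $a_n^1/a_n^3\to\infty$, and hence, after a further subsequence, at least one of $a_n^1/a_n^2$, $a_n^2/a_n^3$ tends to $\infty$. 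For a fixed generator $\gamma$ set $N_n:=l_n\rho_n(\gamma)l_n^{-1}\to l\rho(\gamma)l^{-1}=:N$. The hypothesis forces $a_n N_n a_n^{-1}=k_n^{-1}\big(g_n\rho_n(\gamma)g_n^{-1}\big)k_n$ to converge, hence to stay bounded; but its $(i,j)$ entry is $(a_n^i/a_n^j)(N_n)_{ij}$, so for every pair $i<j$ with $a_n^i/a_n^j\to\infty$ boundedness forces $N_{ij}=0$. Going through the possible blow-up patterns of the singular-value ratios, the resulting vanishing entries always pin down one fixed proper nonzero subspace $W\subset\R^3$ invariant under $N=l\rho(\gamma)l^{-1}$ for every $\gamma$; equivalently $\rho$ preserves $l^{-1}W$, contradicting irreducibility. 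Therefore $(g_n)$ subconverges and the action is proper.

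The hard part will be the properness argument, and within it the step where escape of the conjugators to infinity is converted into a contradiction: one must organize the case analysis on which of the ratios $a_n^1/a_n^2$, $a_n^2/a_n^3$ blow up and check that in each case the forced vanishing of entries of $N=l\rho(\gamma)l^{-1}$ produces a single, $\gamma$-independent invariant subspace $W$. Irreducibility of $\rho$ is exactly the ingredient that makes this a genuine contradiction, so the logical weight of the lemma rests on recording irreducibility at the outset and then exploiting it twice — via Schur for freeness and via the Cartan-decomposition degeneration for properness.
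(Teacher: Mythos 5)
Your proof is correct, but it takes a genuinely different route from the paper's. The paper never invokes irreducibility: it first records a transfer principle (an equivariant continuous map into a free and proper $G$-space pulls freeness and properness back), and then maps $\Hom_0(\pi_1(S_{g,n}),\SL(3,\R))$ equivariantly into the configuration space $X$ of pairs $(A,B)$ of hyperbolic elements of $\SL(3,\R)$ whose six fixed points in $\RP^2$ are in general position, via $\rho\mapsto(\rho(\alpha_1),\rho(\beta_1))$; the general-position property is extracted from Thm.\@ \ref{thm_marquis}, Prop.\@ \ref{prop_qd} and the strict convexity of $\pa\P(C)$. Properness of the action on $X$ is then proved by the same Cartan-decomposition degeneration you use, except that the contradiction is read off from projective dynamics rather than matrix entries: a diverging diagonal sequence pushes every point not limiting into the line $\overline{e_2e_3}$ onto the line $\overline{e_1e_2}$, so at least four of the six limiting fixed points become collinear, violating general position. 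You instead stay on the representation variety and replace ``general position'' by ``irreducibility'': degenerating singular-value ratios force vanishing entries, hence a $\rho$-invariant subspace. Your approach buys a cleaner, more general statement -- conjugation is free and proper on irreducible representations of any finitely generated group, with Schur's lemma disposing of freeness in one line (the paper omits its freeness check as ``easy'') -- and it does not hinge on the particular pair $\alpha_1,\beta_1$, so it applies verbatim when $g=0$, where the paper's map would have to be replaced by a pair of non-peripheral hyperbolic elements. What the paper's approach buys is that it never has to prove irreducibility of elements of $\Hom_0$, and that is exactly where your write-up is thin: your parenthetical derivation is incomplete, because a $\Gamma$-fixed point of $\RP^2$ lying \emph{outside} $\overline{\P(C)}$ does not directly contradict density of orbits in $\pa\P(C)$ (one must argue further, e.g.\ that such a point would be the common intersection of the tangent lines at the two boundary fixed points of every hyperbolic element, forcing distinct primitive hyperbolics to share boundary fixed points, against part (3) of Prop.\@ \ref{prop_qd}), and an invariant \emph{plane} gives a fixed point in the dual projective plane, requiring dualization via part (4) of Prop.\@ \ref{prop_qd}. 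Since irreducibility of quasi-dividing groups of strictly convex domains is indeed standard (Benoist), this is a fixable expository gap rather than a flaw in the method.
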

\begin{proof}
A continuous action of a topological group $G$ on a Hausdorff space $M$ is proper if and only if for any compact set $K\subset M$, $\{g\in G\mid gK\cap K\neq\emptyset\}$ is relatively compact in $G$. With this criterion, it is easy to check that if $M$ and $N$ are Hausdorff $G$-spaces such that the action on $N$ is free and proper, and there is an equivariant continuous map $M\to N$, then the action on $M$ is free and proper as well. 

Now set $G=\SL(3,\R)$ and pick a standard set of generators $\alpha_1$, $\cdots$, $ \alpha_g$, $\beta_1$, $\cdots$, $\beta_g$, $\gamma_1$, $\cdots$, $\gamma_n$ of $\pi_1(S_{g,n})$. Given $\rho\in\Hom_0(\pi_1(S_{g,n}),G)$, we let $C$ be the corresponding quasi-divisible convex cone and consider $\rho(\alpha_1)$ and $\rho(\beta_1)$, which are hyperbolic elements in $\Aut(C)$ without common fixed points by Thm.\@ \ref{thm_marquis} and Prop.\@ \ref{prop_qd} \ref{item_qd2}. For any hyperbolic $A\in G$, let $p_1(A)$, $p_2(A)$ and $p_3(A)$ denote the fixed points of $A$ in $\RP^2$ corresponding to the largest, middle and smallest eigenvalues of $A$, respectively.
Note that if $A\in\Aut(C)$, then two of the three points, namely $p_1(A)$ and $p_3(A)$, are on $\pa\P(C)$, and the projective lines $\overline{p_1(A)p_2(A)}$ and $\overline{p_2(A)p_3(A)}$ are tangent to $\pa\P(C)$ at the two points, respectively (see Figure \ref{figure_parabolic}). Therefore, the strict convexity of $\pa\P(C)$ (see Prop.\@ \ref{prop_qd} \ref{item_qd1}) implies that the six points $p_i(\rho(\alpha_1))$, $p_i(\rho(\beta_1))$ ($i=1,2,3$) are in general position (\ie there is no line passing through any three of them).

As a consequence, the assignment $\rho\mapsto(\rho(\alpha_1),\rho(\beta_1))$ gives an equivariant map from $\Hom_0(\pi_1(S_{g,n}),G)$ to the $G$-space
$$
X:=\left\{(A,B)\in G\times G\ \Big|\ \parbox[l]{7.5cm}{$A$ and $B$ are hyperbolic and the six points $p_i(A)$, $p_i(B)$ ($i=1,2,3$) in $\RP^2$ are in general position}\right\}
$$
(with the $G$-action by conjugation). In light of the fact mentioned at the beginning, we now only need to show that the $G$-action on $X$ is free and proper.

The freeness is easy to check and we omit the details. To show the properness, suppose by contradiction that the $G$-action on $X$ is not proper, then there exist a convergent sequence $(A_n,B_n)$ in $X$ and an unbounded sequence $(C_n)$ in $G$ such that $(A_n',B_n'):=(C_nA_nC_n^{-1}, C_nB_nC_n^{-1})$ also converges in $X$. 
Using the Cartan decomposition of $\SL(3,\R)$, we write $C_n=P_n T_nQ_n$ for each $n$, where $P_n,Q_n\in\SO(3)$ and $(T_n)$ is an unbounded sequence in the space
$$
\left\{
\begin{pmatrix}
\lambda_1&&\\&\lambda_2&\\&&\lambda_3
\end{pmatrix}
\ \Bigg|\ \lambda_1\geq\lambda_2\geq\lambda_3>0, \ \lambda_1\lambda_2\lambda_3=1
\right\}\subset \SL(3,\R).
$$
By restricting to a subsequence, we may assume that both sequences $(P_n)$ and $(Q_n)$ converge in $\SO(3)$. 

Suppose $e_1,e_2,e_3\in\RP^2$ correspond to the coordinate axes in $\R^3$. We claim that for any sequence $(q_n)$ in $\RP^2$ converging to a point not on the projective line $\overline{e_2e_3}$, all the limit points of the sequence $(T_nq_n)$ must be on the line $\overline{e_1e_2}$. To show this, we may assume by contradiction that $q_n\to q_\infty\notin\overline{e_2e_3}$ and $T_nq_n\to q_\infty'\notin \overline{e_1e_2}$ as $n\to\infty$. Write $T_n=\mathsf{diag}(T_n^{(1)},T_n^{(2)},T_n^{(3)})$ and $q_n=[q_n^{(1)}:q_n^{(2)}:q_n^{(3)}]$. The condition $q_\infty'\notin \overline{e_1e_2}$ means $q_\infty'=[x_1:x_2:1]$ for some $x_1,x_2\in\R$, hence the convergence $T_nq_n\to q_\infty'$ implies that $q_n^{(3)}\neq0$ for $n$ large enough and that
$$
\frac{T_n^{(1)}}{T_n^{(3)}}\frac{q_n^{(1)}}{q_n^{(3)}}\to x_1, \  \ \frac{T_n^{(2)}}{T_n^{(3)}}\frac{q_n^{(2)}}{q_n^{(3)}}\to x_2,
$$
Since $(T_n)$ is unbounded and $T_n^{(1)}\geq T_n^{(2)}\geq T_n^{(3)}$ by assumption, we have $T_n^{(1)}/T_n^{(3)}\to+\infty$ and $T_n^{(2)}/T_n^{(3)}\geq1$. It follows that $q_n^{(1)}/q_n^{(3)}\to0$ and $q_n^{(2)}/q_n^{(3)}$ is bounded. As a result, we have $q_\infty\in\overline{e_2e_3}$, a contradiction. This proves the claim.

Now suppose $A_n'\to A_\infty'$ as $n\to\infty$. Then we have
$$
p_i(A'_n)=P_nT_nQ_np_i(A_n)\to p_i(A_\infty')
$$
for $i=1,2,3$. Also, assuming $A_n\to A_\infty$, we have $Q_np_i(A_n)\to Q_\infty p_i(A_\infty)$. Therefore, the above claim results in the implication
\begin{equation}\label{eqn_proper1}
p_i(A_\infty)\notin Q_\infty^{-1}\overline{e_2e_3}\ \Longrightarrow\  p_i(A'_\infty)\in P_\infty\overline{e_1e_2}~. 
\end{equation}
The same argument applies to $(B_n')$ and yields the implication
\begin{equation}\label{eqn_proper2}
p_i(B_\infty)\notin Q_\infty^{-1}\overline{e_2e_3}\ \Longrightarrow\  p_i(B'_\infty)\in P_\infty\overline{e_1e_2}~. 
\end{equation}
Since $(A_\infty,B_\infty)\in X$, at most two of the six points $p_i(A_\infty)$, $p_i(B_\infty)$ ($i=1,2,3$) can be on the line $Q_\infty^{-1}\overline{e_2e_3}$. This means that at least four of the six points satisfy the condition on the left-hand side of \eqref{eqn_proper1} or \eqref{eqn_proper2}. It follows that as least four of the six points $p_i(A'_\infty)$, $p_i(B'_\infty)$ ($i=1,2,3$) are on the line $P_\infty\overline{e_1e_2}$, contradicting the fact that  $(A'_\infty,B'_\infty)\in X$. This completes the proof.
\end{proof}

\begin{lemma}\label{lemma_bundle}
Let $G$ be a locally compact topological group, $M$ be a Hausdorff $G$-space and $E\to M$ be a topological vector bundle of rank $r$ such that the $G$-action on $M$ is free and proper, and lifts to an action on $E$ which sends fibers to fibers by linear isomorphisms. Then $E/G\to M/G$ is also a vector bundle of rank $r$.
\end{lemma}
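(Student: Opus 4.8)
The plan is to exhibit $E/G\to M/G$ as a locally trivial rank-$r$ vector bundle by combining the principal bundle structure of $M\to M/G$ with the fiberwise linearity of the lifted action. First I would record the consequences of freeness and properness that are needed. Since $G$ is locally compact and acts freely and properly on the Hausdorff space $M$, the quotient $M/G$ is Hausdorff and the projection $\pi\colon M\to M/G$ is a principal $G$-bundle; in particular it admits continuous local sections. (In the application $G=\SL(3,\R)$ is a Lie group and $M$ is a manifold, so this is the slice theorem.) I would also check that the lifted action on $E$ is again free and proper: freeness is immediate because $g\cdot v=v$ forces $g\cdot p(v)=p(v)$ and hence $g=e$, while properness follows from properness on $M$ together with continuity of the bundle projection $p\colon E\to M$ and of the action, by a routine subsequence argument. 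Consequently $E/G$ is Hausdorff and the induced map $\bar p\colon E/G\to M/G$ is continuous.

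Next I would identify the fibers of $\bar p$. For $m\in M$ with orbit $Gm$, the preimage $\bar p^{-1}([m])$ is $(E|_{Gm})/G$. Because the action on $M$ is free, each $G$-orbit in $E|_{Gm}$ meets the single fiber $E_m$ in exactly one point (if $v\in E_{g_0 m}$ then $g\cdot v\in E_m$ iff $g=g_0^{-1}$), so the composite $E_m\hookrightarrow E|_{Gm}\to (E|_{Gm})/G$ is a bijection. Transporting the vector space structure of $E_m\cong\R^r$ through this bijection equips $\bar p^{-1}([m])$ with a well-defined $\R^r$-structure, which is independent of the choice of representative $m$ precisely because the action sends fibers to fibers by linear isomorphisms.

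For local triviality, given $[m_0]\in M/G$ I would choose a continuous local section $s\colon U\to M$ of $\pi$ over an open neighborhood $U$ of $[m_0]$, shrinking $U$ so that $s(U)$ lies inside an open set of $M$ over which $E$ is trivial. Pulling $E$ back along $s$ yields a rank-$r$ bundle $s^{*}E\cong U\times\R^r$, and I would define a map $s^{*}E\to (E/G)|_U$ sending $([m],v)$ with $v\in E_{s([m])}$ to its class $[v]$. Its inverse takes a class over $[m]\in U$ to its unique representative lying in $E_{s([m])}$, which exists and is unique by freeness. Using continuity of $s$, of the action, and of the local trivializations of $E$, both maps are continuous and linear on each fiber, so $(E/G)|_U\cong U\times\R^r$ as vector bundles. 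This produces the required local trivializations and shows that $E/G\to M/G$ is a vector bundle of rank $r$.

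The main obstacle I anticipate is the first step, namely the passage from ``free and proper'' to the existence of continuous local sections of $\pi\colon M\to M/G$: this is the only point where the topological hypotheses (local compactness of $G$, properness of the action) are genuinely used, and it is what upgrades the set-theoretic quotient to a locally trivial principal bundle. Everything downstream is then formal. I would either invoke the standard principal-bundle theorem for free and proper actions of locally compact groups, or, since the application only requires $G=\SL(3,\R)$ acting on a manifold, appeal directly to the slice theorem via Lemma \ref{lemma_proper}.
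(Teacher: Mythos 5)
Your proof is correct and follows essentially the same route as the paper's: both reduce to the fact that a free, proper action of a locally compact group makes $M\to M/G$ a locally trivial principal $G$-bundle (the paper cites Cartan's axiom (FP) and Palais for this), and then construct charts for $E/G$ by restricting a local trivialization of $E$ to the image of a local section --- the paper's slice $U\times\{g_U\}\subset\pi^{-1}(U)\cong U\times G$ is precisely such a section. Your observation that each $G$-orbit in $E|_{\pi^{-1}(U)}$ meets the fiber over the section exactly once is the same point the paper uses to identify $(E/G)|_U$ with $E|_{U\times\{g_U\}}$.
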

\begin{proof}
H.\@ Cartan's axiom (FP) for principal bundles \cite{Cartan} is equivalent to the statement that $M\to M/G$ is a principal $G$-bundle (where $G$ is a locally compact group and $M$ a Hausdorff $G$-space) if and only if the action is free and every point of $M$ has a neighborhood $U$ such that $\{g\in G\mid g U\cap U\neq\emptyset\}$ is relatively compact in $G$ (this condition is weaker than properness). See also \cite{Palais}.

Therefore, under the assumption of the lemma, we can pick an open cover $\mathcal{U}$ of $M/G$ such that the preimage $\pi^{-1}(U)\subset M$ of each $U\in\mathcal{U}$ identifies with the product $U\times G$, on which $G$ acts by multiplication on the $G$-factor.

On the other hand, there is an open cover $\mathcal{W}$ of $M$ such that for each $W\in \mathcal{W}$, we have a bundle chart $f_W:E|_{W}\overset\sim\to W\times\R^r$. After refining the open cover $\mathcal{U}$ if necessary, we may find $g_U\in G$ for each $U\in\mathcal{U}$, such that the slice  $U\times\{g_U\}$ in $U\times G\cong\pi^{-1}(U)$ is contained in some $W\in\mathcal{W}$ (see Figure \ref{figure_bundle}). 
\begin{figure}[h]
	\includegraphics[width=9.5cm]{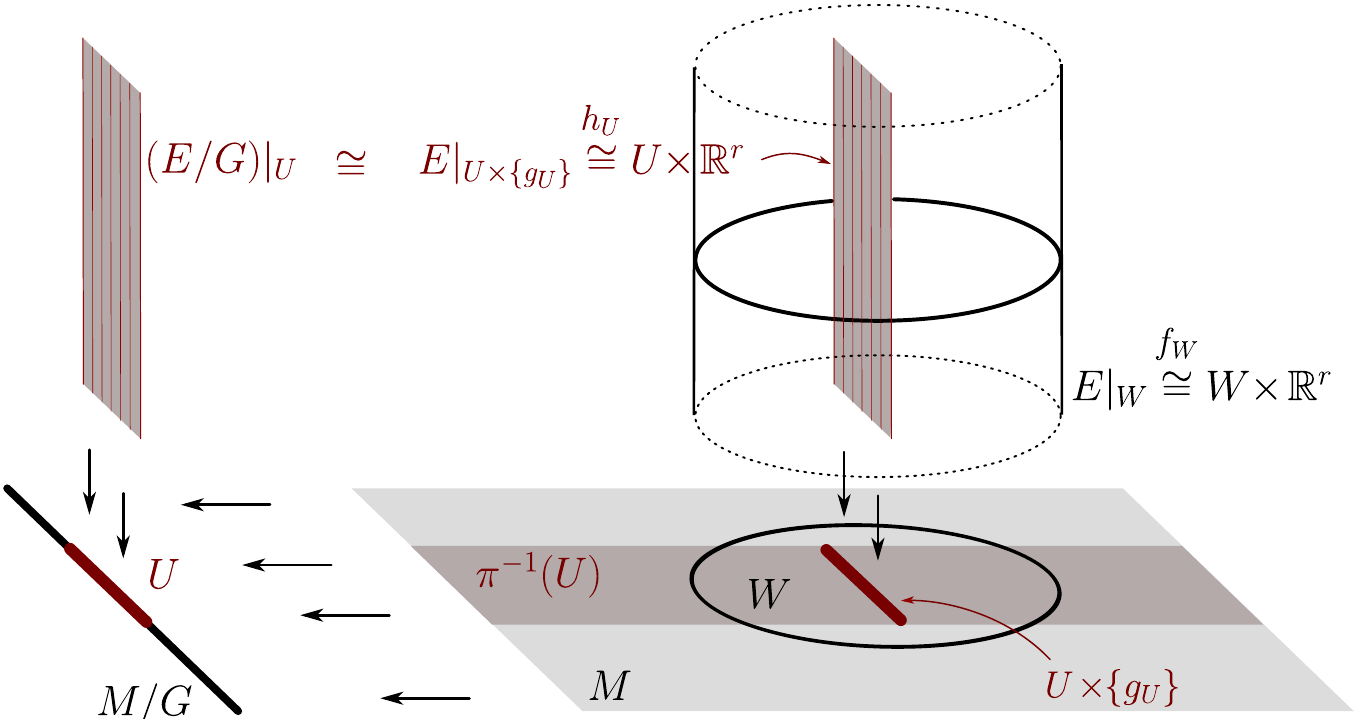}
	\caption{Schematic picture of the proof of Lemma \ref{lemma_bundle}.}
	\label{figure_bundle}
\end{figure}
By restricting the bundle chart $f_W$ to this slice, we get an identification $h_U:E|_{U\times\{g_U\}}\overset\sim\to U\times\R^r$. Meanwhile,  $E|_{U\times\{g_U\}}$ can be identified with $(E/G)|_{U}$ (the preimage of $U$ by the map $E/G\to M/G$) because every $G$-orbit in $E|_{\pi^{-1}(U)}$ passes through $E|_{U\times\{g_U\}}$ exactly once. It is routine to check that the family of maps $h_U:(E/G)|_U\cong E|_{U\times\{g_U\}}\overset\sim\to U\times\R^r$, $U\in\mathcal{U}$ form a bundle atlas for $E/G$, which completes the proof.
\end{proof}

\section{Proof of main results}\label{sec:last proof}
In this section, we first give a proof of Parts \ref{item_d1} and \ref{item_d1.5} of Theorem \ref{thm_intro3} using the framework set up in the previous sections and a lemma proved in \S\ref{subsec_continuous} below, then we deduce Theorem \ref{thm_intro3} \ref{item_d2} from our earlier work \cite{nie-seppi}, and explain how Theorems \ref{thm_intro1}, \ref{thm_intro2} and Corollary \ref{coro_intro} follow from Theorem \ref{thm_intro3}.

\subsection{Continuous boundary function with $\Lambda'$-invariant graph}\label{subsec_continuous}
We first show that Condition \ref{item_d11} in Theorem \ref{thm_intro3} \ref{item_d1} implies the existence of certain smooth functions on $\Omega$ with $\Lambda'$-invariant graph:
\begin{lemma}\label{lemma_smoothinvariant}
	Let $\Omega\subset\R^2$ be a bounded convex domain quasi-divisible by a torsion-free group $\Lambda<\Aut(\Omega)$. Suppose $\Lambda'<\Aut(\Omega\times\R)$ projects to $\Lambda$ bijectively, and every element with parabolic projection has a fixed point in $\pa\Omega\times\R$. For each puncture of the surface $S:=\Omega/\Lambda$, we  take a neighborhood homeomorphic to a punctured disk, assume these neighborhoods have disjoint closures, and let $U$ be their union. Then there exists a function $v\in\C^\infty(\Omega)$ with $\Lambda'$-invariant graph, such that the restriction of $v$ to each connected component of the lift $\widetilde{U}\subset\Omega$ of $U$ is an affine function.
\end{lemma}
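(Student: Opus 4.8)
The plan is to reformulate the statement as the existence of a section of an affine line bundle over $S$, and to build that section by a partition-of-unity argument in which the cusps are used to prescribe the section near the ends. First I would record that $\Lambda'$ acts freely and properly discontinuously on $\Omega\times\R$: freeness holds because $\Lambda$ acts freely on $\Omega$ and $\pi|_{\Lambda'}$ is injective (so a $\Phi\in\Lambda'$ fixing a point of $\Omega\times\R$ projects to $\id$, hence is $\id$), while proper discontinuity follows from that of $\Lambda$ on $\Omega$ through the equivariant projection $\Omega\times\R\to\Omega$. By Lemma \ref{lemma_covariance}\,\ref{item_covariance1} each $\Phi\in\Lambda'$ acts \emph{affinely} on the vertical fibers $\{x\}\times\R$, so the quotient $\mathcal{A}:=(\Omega\times\R)/\Lambda'$ is a smooth affine line bundle over $S=\Omega/\Lambda$, and functions $v$ on $\Omega$ with $\Lambda'$-invariant graph correspond exactly to smooth sections of $\mathcal{A}$. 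This affineness of the fiber action is the structural fact on which everything rests: it guarantees that affine combinations $\sum_i\rho_i v_i$ (with coefficients $\rho_i$ pulled back from $S$ and $\sum_i\rho_i=1$) of functions with $\Lambda'$-invariant graph again have $\Lambda'$-invariant graph, equivalently that sections of $\mathcal{A}$ form a soft sheaf admitting partitions of unity.

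Next, for each puncture $j$ I would choose a primitive parabolic $A_j\in\Lambda$ stabilizing the corresponding cusp, whose stabilizer is the infinite cyclic group $\langle A_j\rangle$ by Thm.\@ \ref{thm_marquis} and Prop.\@ \ref{prop_qd}, with fixed point $p_j\in\pa\Omega$, and let $\Phi_j\in\Lambda'$ be its lift. By hypothesis $\Phi_j$ has a fixed point in $\pa\Omega\times\R$, so Corollary \ref{coro_fixedaffinefunction} applies: the affine planes preserved by $\Phi_j$ are exactly those containing the non-vertical tangent line $L_j$ at the distinguished fixed point $(p_j,\xi_j)$. Since the pencil of planes through $L_j$ contains exactly one vertical plane, any other member is non-vertical and is therefore the graph of an affine function $a_j$ on $\Omega$ whose graph is $\Phi_j$-invariant, hence $\langle\Phi_j\rangle$-invariant. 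Thus $a_j$ descends to a smooth section $\sigma_j$ of $\mathcal{A}$ over the cusp neighborhood $U_j$.

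Finally I would extend and translate back. Enlarging each $U_j$ slightly to pairwise disjoint $\widehat{U}_j\supset\overline{U_j}$, the sections $\sigma_j$ are defined over $\widehat{U}_j$; using a partition of unity on $S$ equal to $1$ on each $\overline{U_j}$ and subordinate to $\{\widehat{U}_j\}$ together with finitely many trivializing charts over the compact thick part, and forming the affine combination of the corresponding local sections, I obtain a global smooth section $\sigma$ of $\mathcal{A}$ with $\sigma=\sigma_j$ on $U_j$. Its associated $v\in\C^\infty(\Omega)$ then has $\Lambda'$-invariant graph, and on the component $H_j$ of $\widetilde{U}$ lying over $U_j$ it coincides with $a_j$ (both are the unique $\langle\Phi_j\rangle$-equivariant lift of $\sigma_j$), hence is affine there. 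On any other component $\gamma H_j$, equivariance forces $v$ to be the image of $a_j|_{H_j}$ under the lift of $\gamma$, which is again affine because automorphisms of $\Omega\times\R$ carry non-vertical affine planes to non-vertical affine planes; this yields the required $v$.

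The step I expect to be the main obstacle, and which I would take care to present cleanly, is the compatibility of patching with $\Lambda'$-invariance: a naive gluing of the locally defined equivariant functions would in general destroy invariance of the graph, and it is precisely the \emph{affine} (not merely projective) nature of the $\Lambda'$-action on vertical lines, Lemma \ref{lemma_covariance}\,\ref{item_covariance1}, that makes $\mathcal{A}$ an affine bundle and lets affine combinations remain equivariant. The only other point requiring genuine input is the construction of the cusp model $a_j$, where the fixed-point hypothesis is used through Corollary \ref{coro_fixedaffinefunction}.
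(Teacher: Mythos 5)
Your proof is correct and follows essentially the same route as the paper's: affine cusp models supplied by Corollary \ref{coro_fixedaffinefunction}, patched together by a partition of unity that equals $1$ near the cusps, with Lemma \ref{lemma_covariance}\,\ref{item_covariance1} guaranteeing that affine combinations of functions with $\Lambda'$-invariant graphs again have $\Lambda'$-invariant graphs. Recasting the construction as producing a section of the affine line bundle $(\Omega\times\R)/\Lambda'\to S$ is only a change of language; the paper carries out the identical construction upstairs with equivariant functions.
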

If $\Omega$ is divisible by $\Lambda$, then $S$ is a closed surface and the lemma just asserts the existence of a smooth function with $\Lambda'$-invariant graph in this case.
\begin{proof}
	Since the connected components of $U$ have disjoint closures, we can enlarge $U$ to a bigger open set $U_0$ containing the closure $\overline{U}$, such that $U_0$ is still a disjoint union of neighborhoods of the punctures homeomorphic to a punctured disk. We then take simply connected open sets $U_1,\cdots, U_N$ in $S$ disjoint from $\overline{U}$ to form an open cover of $S$ together with $U_0$.
	
	Let $(\iota_i)$ be a $\C^\infty$-partition of unity subordinate to this open cover. Namely, each $\iota_i$ is a $\C^\infty$-function on $S$ taking values in $[0,1]$, with support contained in $U_i$, such that $\sum_{i=0}^N\iota_i=1$ on $S$. Note that $\iota_0=1$ on $\overline{U}$ because $\overline{U}$ is disjoint from any $U_i$ with $i\geq1$.
	
	Let $\widetilde{U}_i$ denote the lift of $U_i$, \ie the pre-image of $U_i$ by the covering map $\pi:\Omega\to S$, and $\widetilde{\iota}_i:=\iota_i\circ\pi\in\C^\infty(\Omega)$ denote the lift of $\iota_i$ to $\Omega$.
	In order to construct the required function $v$, we shall construct $v_i\in\C^\infty(\widetilde{U}_i)$ with $\Lambda'$-invariant graph for each $i=0,\cdots, N$ and sum up them using the partition of unity $(\widetilde{\iota}_i)$.  To this end, we treat $i=0$ and $i\geq1$ separately.
	
	Given $i\ge1$, since $U_i$ is simply connected, we can write $\widetilde{U}_i$ as a disjoint union
	$$
	\widetilde{U}_i=\bigcup_{A\in\Lambda}A(W)
	$$ 
	where $W$ is a connected component of $\widetilde{U}_i$. We can then take an arbitrary $w\in\C^\infty(W)$ and obtain the required $v_i\in \C^\infty(\widetilde{U}_i)$ using the $\Lambda'$-action on the graph of $w$.
	More precisely, $v_i$ is given by
	$$
	\gra{v_i}=\bigcup_{\Phi\in\Lambda'}\Phi\big(\gra{w}\big).
	$$
	For instance, if $w\equiv0$, then $u_i$ is an affine function on each component on $\widetilde{U}_i$. 
	
	For $i=0$, we may assume $U_0=\bigcup_{j=1}^nV_j$, where we label the punctures of $S$ by $1,\cdots, n$, and $V_j$ is a neighborhood of the $j^\text{th}$ puncture. To construct $v_0$, we only need to construct $v_0^{(j)}\in\C^\infty(\widetilde{V}_j)$ with $\Lambda'$-invariant graph on $\widetilde{V}_j:=\pi^{-1}(V_j)$ for each $j$ and put $v_0:=\sum_{j=1}^nv_0^{(j)}$. 
	So we fix $j$ and a connected component $Z$ of $\widetilde{V}_j$. The subgroup of $\Lambda$ preserving $Z$ is generated by some parabolic element $A\in\Lambda$. Since the element $\Phi_A$ of $\Lambda'$ projecting to $A$ has a fixed point by assumption, $\Phi_A$ preserves some non-vertical affine plane $P$ by Cor.\@ \ref{coro_fixedaffinefunction}. Therefore, we can define $v_0^{(j)}$ by first letting $v_0^{(j)}\big|_Z$ be the affine function whose graph is $P$, then using the $\Lambda'$-action to define $v_0^{(j)}$ on the other components of $\widetilde{V}_j$. Namely, $v_0^{(j)}$ is given by
	$$
	\gra{v_0^{(j)}}=\bigcup_{\Phi\in\Lambda'/\langle \Phi_A\rangle}\Phi\big(P\cap(Z\times\R)\big).
	$$
	This finishes the construction of the $v_i$'s.
	
	We can now construct the required $v$ as
	$$
	v:=\sum_{i=0}^N\widetilde{\iota}_i\,v_i\in\C^\infty(\Omega).
	$$
	In order to check that the graph $\gra{v}$ is preserved by any $\Phi\in\Lambda'$, we pick $x\in\Omega$ and let $x'\in\Omega$ be its image by $\pi(\Phi)\in\Aut(\Omega)$.  Since $\iota_i$ is the lift of a function on $S$ and $\gra{v_i}$ is preserved by $\Lambda'$, we have $\widetilde{\iota}_i(x)=\widetilde{\iota}_i(x')$ and $\Phi(x,v_i(x))=(x',v_i(x'))$. Therefore, by Lemma \ref{lemma_covariance} \ref{item_covariance1} (whose statement is only about two points but can be generalized to $N$ points by applying repeatedly), we have
	$$
	\Phi(x,v(x))=\Phi\Big(x,\,\sum_i\widetilde{\iota}_i(x)v_i(x)\Big)=\Big(x',\,\sum_i\widetilde{\iota}_i(x')v_i(x')\Big)=(x',v(x')).
	$$
	This shows that $\gra{v}$ is preserved by $\Phi$. 
	Finally, since $\widetilde{\iota}_0=1$ on $\widetilde{U}\subset\widetilde{U}_0$, we have $v=v_0$ in $\widetilde{U}$, which restricts to an affine function on each component of $\widetilde{U}$ by construction. Therefore, $v$ satisfies the requirements and the proof is complete.
\end{proof}

This lemma enables us to show the implication ``\ref{item_d11}$\Rightarrow$\ref{item_d12}'' in Thm.\@ \ref{thm_intro3} \ref{item_d1}, namely:
\begin{proposition}\label{prop_ab}
Let $\Omega$, $\Lambda$ and $\Lambda'$ be as in Lemma \ref{lemma_smoothinvariant}. Then there exists $\phi\in\C^0(\pa\Omega)$ with graph preserved by $\Lambda'$.
\end{proposition}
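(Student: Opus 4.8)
The plan is to take the smooth function $v\in\C^\infty(\Omega)$ with $\Lambda'$-invariant graph produced by Lemma \ref{lemma_smoothinvariant}, prove that it extends continuously to $\overline\Omega$, and set $\phi:=v|_{\pa\Omega}$. Once the continuous extension is known, the invariance of $\gra\phi$ comes for free: each $\Phi\in\Lambda'$ is a projective transformation of $\RP^3$ preserving $\overline{\Omega\times\R}$ and its boundary $\pa\Omega\times\R$, so letting $x\to\pa\Omega$ in the relation $\Phi(x,v(x))=(\pi(\Phi)x,\,v(\pi(\Phi)x))$ yields $\Phi(\gra\phi)=\gra\phi$. Thus the whole content of the proposition is the boundary regularity of $v$. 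It will also be convenient to remember that, by Lemma \ref{lemma_covariance}\ref{item_covariance2}, the difference of any two functions with $\Lambda'$-invariant graph has $\Lambda$-invariant graph, which is exactly the normalization realized by the Cheng-Yau support function $w_\Omega$ of Theorem \ref{thm_chengyau}.

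Concretely, I would first record the functional equation satisfied by $v$. Writing $A:=\pi(\Phi)\in\Lambda$ and letting $\transp{Y}$ be the lower-left block of the matrix representing $\Phi$ (see \eqref{eqn_prop_iso_proof2} and \eqref{eqn_zeroslice}), invariance of $\gra v$ reads
\[
v(Ax)\left[-A\matrix{x\\-1}\right]_3=v(x)+\transp{Y}\matrix{x\\-1},\qquad x\in\Omega,
\]
and the same identity holds for $w_\Omega$ with $Y=0$, since $\gra{w_\Omega}$ is $\Lambda$-invariant. I would then treat $\pa\Omega$ in two regimes. Near a cusp, that is a parabolic fixed point $p$ of some $A\in\Lambda$, Lemma \ref{lemma_smoothinvariant} makes $v$ affine on each component of the lift $\widetilde U$, with graph the $\Phi_A$-invariant plane $P$ tangent to $\Omega\times\R$ at the distinguished fixed point $(p,\xi_0)$ of Corollary \ref{coro_fixedaffinefunction}; hence $v$ extends continuously there with $v(p)=\xi_0$. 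At a non-cusp (conical) boundary point $x$, I would write an approaching sequence as $x_n=A_ny_n$ with $y_n$ lying in a fixed compact fundamental domain $K_0$ of the thick part, so that $(x_n,v(x_n))=\Phi_n(y_n,v(y_n))$ is the $\Lambda'$-image of a point of the fixed compact set $\gra{v|_{K_0}}$. Comparing with $w_\Omega$ shows the automorphy factor $\left[-A_n\matrix{y_n\\-1}\right]_3\to+\infty$ (because $w_\Omega(x_n)\to0$ while $w_\Omega(y_n)$ stays bounded below on $K_0$), which forces the vertical coordinate of $\Phi_n(y_n,v(y_n))$ to converge; the strict convexity and $\C^1$-regularity of $\pa\Omega$ from Proposition \ref{prop_qd} together with the proximal dynamics of the hyperbolic elements then pin down a sequence-independent, finite value $\phi(x)$, with continuity following from density of orbits (Proposition \ref{prop_qd}) and equicontinuity of the group iterates on the compact set $\gra{v|_{K_0}}$.

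The main obstacle is exactly this last step at conical points: proving that the vertical coordinate of $\Phi_n(y_n,v(y_n))$ genuinely converges, and uniformly enough in $x$ to produce a \emph{continuous} $\phi$, rather than oscillating between a $\liminf$ and a $\limsup$. This reduces to controlling the growth of the translation parts $Y_{\Phi_n}$ (equivalently of the cocycle) against the automorphy factor coming from $A_n$. The admissibility hypothesis (Condition \ref{item_d11}), already exploited to fix the affine pieces of $v$ at the cusps, is what prevents escape to infinity along cusp directions, while the comparison function $w_\Omega$ and the proximality of the hyperbolic elements of $\Lambda$ on $\pa\Omega$ handle the conical directions. Once convergence and continuity are in hand, the invariance of $\gra\phi$ is immediate by the continuity argument above, which completes the proof.
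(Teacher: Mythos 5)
There is a genuine gap, and you have named it yourself: the entire content of your argument at conical boundary points --- that the vertical coordinate of $\Phi_n(y_n,v(y_n))$ converges to a limit independent of the approaching sequence, and uniformly enough in the base point to yield a \emph{continuous} $\phi$ --- is asserted, not proved. The tools you invoke do not close it. ``Equicontinuity of the group iterates on the compact set $\gra{v|_{K_0}}$'' is false: elements of $\Lambda'$ act on $\overline{\Omega\times\R}$ as projective transformations with north--south type dynamics, so the family of iterates is very far from equicontinuous, and proximality plus density of orbits by themselves only give you information about $\limsup$ and $\liminf$ along orbits, which is exactly the oscillation problem you acknowledge. Your cusp analysis is also incomplete: a sequence can converge to the parabolic fixed point $p$ while staying in the thick part $\Omega\setminus\widetilde{U}$ (e.g.\ skirting the boundary of the horoball-like component $Z$), so affineness of $v$ on $Z$ controls only one mode of approach to $p$. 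Comparing automorphy factors with those of $w_\Omega$ is a reasonable instinct, but turning it into sequence-independent convergence is precisely the hard analytic/dynamical work that your proposal leaves undone.

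The paper's proof sidesteps all of this with a convexity sandwich that you came close to but did not exploit. Take $v$ from Lemma \ref{lemma_smoothinvariant} and a compact set $K\subset\Omega\setminus\widetilde{U}$ whose $\Lambda$-translates cover $\Omega\setminus\widetilde{U}$. Since $w_\Omega$ is strongly convex, for $M$ large the functions $u_-:=v+Mw_\Omega$ and $u_+:=v-Mw_\Omega$ are strongly convex, resp.\ strongly concave, on $K$; by Lemma \ref{lemma_covariance} \ref{item_covariance2} their graphs are $\Lambda'$-invariant, and since automorphisms of $\Omega\times\R$ carry convex graphs to convex graphs, $u_\pm$ is convex/concave on all of $\Omega\setminus\widetilde{U}$, hence on all of $\Omega$ because $v$ is affine on each component of $\widetilde{U}$. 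As $w_\Omega$ is continuous on $\overline{\Omega}$ and vanishes on $\pa\Omega$, the boundary values of $u_+$ and $u_-$ (in the sense of \S \ref{subsec_convexfunction}) coincide; this common function $\phi$ is simultaneously lower semicontinuous (boundary value of a convex function) and upper semicontinuous (boundary value of a concave function), hence continuous, and its graph is $\Lambda'$-invariant by covariance of boundary values. Note this also proves, as a byproduct, the continuous extension of $v$ you were aiming for, since $u_-\leq v\leq u_+$; the point is that continuity must be extracted from convexity and the barrier $w_\Omega$, not from the dynamics of $\Lambda'$ on the boundary.
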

\begin{proof}
Let $U$ and $\widetilde{U}$ be as in Lemma \ref{lemma_smoothinvariant}, $K\subset\Omega\setminus\widetilde{U}$ be a compact set with $\bigcup_{A\in\Lambda}A(K)=\Omega\setminus\widetilde{U}$, and  $v\in\mathsf{C}^\infty(\Omega)$ be a function produced by the lemma. 
Since the Cheng-Yau support function $w_\Omega$ from Thm.\@ \ref{thm_chengyau} is smooth and strongly convex (\ie has positive definite Hessian) in $\Omega$, by compactness of $K$, we can pick a sufficiently large constant $M>0$ such that the smooth functions
$$
u_-:=v+M w_\Omega,\quad u_+:=v-Mw_\Omega
$$ 
are strongly convex and strongly concave in $K$, respectively. By Lemma \ref{lemma_covariance} \ref{item_covariance2}, the graph of $u_\pm$ is preserved by $\Lambda'$, hence $u_\pm$ is strongly concave/convex on $\Omega\setminus\widetilde{U}$. Since $v$ restricts to an affine functions on each component of $\widetilde{U}$ by construction, it follows that $u_\pm$ is strongly concave/convex throughout $\Omega$.
Moreover, the boundary values of $u_+$ and $u_-$ (in the sense of \S \ref{subsec_convexfunction}) are the same function $\phi$ on $\pa\Omega$ because $w_\Omega$ is continuous on $\overline{\Omega}$ with vanishing boundary value. Since the boundary value of a convex (resp. concave) function is lower (resp. upper) semicontinuous by construction (see \S \ref{subsec_convexfunction}), we conclude that $\phi$ is continuous with $\Lambda'$-invariant graph, as required. 
\end{proof}

\subsection{Lower semicontinuous boundary functions with $\Lambda$-invariant graph}\label{subsec_lower} 
Recall from \S \ref{subsec_subgroup} that the group $\Aut(\Omega)$ of orientation-preserving projective automorphisms of $\Omega$ can be identified with the group of those automorphisms of the convex tube domain $\Omega\times\R$ which preserve the slice $\Omega\times\{0\}$. As an ingredient in the proof of Thm.\@ \ref{thm_intro3} \ref{item_d1.5}, we now identify all the lower semicontinuous functions on $\pa\Omega$ with graph preserved by a subgroup $\Lambda<\Aut(\Omega)$ which quasi-divides $\Omega$:
\begin{proposition}\label{prop_psimu}
Let $\Omega\subset\R^2$ be a bounded convex domain quasi-divisible by a torsion-free group $\Lambda<\Aut(\Omega)$, $\mathcal{F}\subset\pa\Omega$ be the set of fixed points of parabolic elements in $\Lambda$, and pick $p_1,\cdots,p_n\in\mathcal{F}$ such that $\mathcal{F}$ is the disjoint union of the orbits $\Lambda.p_j$, $j=1,\cdots,n$. Given $\mu=(\mu_1,\cdots,\mu_n)\in\R_{\geq0}^n$, let $\psi_\mu$ be the function on $\pa\Omega$, with graph preserved by $\Lambda$ (as a group of automorphisms of $\Omega\times\R$, which also acts on $\pa\Omega\times\R$), such that
	$$\psi_\mu(p_j)=-\mu_j\ \text{ for all $j$;}\ \ \psi_\mu=0\ \text{ on $\pa\Omega\setminus\mathcal{F}$}.
$$ 
Then $\psi_\mu$ is lower semicontinuous. Moreover, these are the only lower semicontinuous functions $\pa\Omega\to\R\cup\{+\infty\}$ with graphs preserved by $\Lambda$ that are not constantly $+\infty$.
\end{proposition}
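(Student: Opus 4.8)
The plan is to work throughout with the action formula \eqref{eqn_zeroslice}: for $A\in\Lambda<\Aut(\Omega)$, viewed as an automorphism of $\Omega\times\R$ preserving the slice $\Omega\times\{0\}$, one has $A(x,\xi)=(A.x,\ \xi/d_x(A))$ where $d_x(A):=[-A\matrix{x\\-1}]_3>0$. Thus $\psi$ has $\Lambda$-invariant graph precisely when $\psi(A.x)=\psi(x)/d_x(A)$ for all $A$. I would first record two elementary consequences of this relation. If $A\in\Lambda$ is parabolic fixing $p\in\pa\Omega$, then $\matrix{p\\-1}$ spans the fixed boundary ray and is an eigenvector of eigenvalue $1$, so $d_p(A)=1$; since the stabilizer of $p_j$ in $\Lambda$ is generated by such an element, the prescription $\psi_\mu(p_j)=-\mu_j$, $\psi_\mu=0$ on $\pa\Omega\setminus\mathcal F$, together with the invariance relation, determines a well-defined function $\psi_\mu$. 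Dually, if $h\in\Lambda$ is hyperbolic with fixed point $p\in\pa\Omega$, then $d_p(h)$ equals the corresponding (largest or smallest) eigenvalue of $h$, which is $\neq 1$; for any $\Lambda$-invariant-graph $\psi$ the relation $\psi(p)=\psi(h.p)=\psi(p)/d_p(h)$ forces $\psi(p)=0$ at every fixed point of every hyperbolic element.

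For the lower semicontinuity of $\psi_\mu$ I would exhibit $\psi_\mu$ as the boundary value of a convex function, which is l.s.c.\ by the discussion in \S\ref{subsec_convexfunction}. At each cusp $p_j$, take the convex $f_j\in\C^\infty(\Omega)$ supplied by Lemma \ref{lemma_parabolicfunction}, with $\langle A_j\rangle$-invariant graph and boundary value $-\mu_j$ at $p_j$. Following the localization in the proof of Lemma \ref{lemma_smoothinvariant}, restrict $f_j$ to a fundamental component of the cusp lift and spread it by $\Lambda$, using a partition of unity adapted to the cusps (and the zero function away from the cusps) to assemble a smooth $v\in\C^\infty(\Omega)$ with $\Lambda$-invariant graph whose boundary value is exactly $\psi_\mu$. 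As in the proof of Proposition \ref{prop_ab}, the function $u:=v+Mw_\Omega$, with $w_\Omega$ the Cheng--Yau support function (Theorem \ref{thm_chengyau}) and $M$ large, is convex: it is a sum of convex functions deep in each cusp, while on the (cocompact) transition region the strong convexity of $w_\Omega$ dominates the bounded Hessian of $v$, and $\Lambda$-invariance of $\gra u$ propagates convexity over all of $\Omega$. Since $w_\Omega|_{\pa\Omega}=0$ and $\gra u$ is invariant, the boundary value of $u$ is $\psi_\mu$, which is therefore l.s.c.

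It remains to show that every l.s.c.\ $\psi:\pa\Omega\to\R\cup\{+\infty\}$ with $\Lambda$-invariant graph, not constantly $+\infty$, equals some $\psi_\mu$. First I would prove $\psi\le 0$: being l.s.c.\ on the compact set $\pa\Omega$ and not $\equiv+\infty$, $\psi$ attains a finite minimum $m$ at some $x_*$; choosing a hyperbolic $h$ so that $\matrix{x_*\\-1}$ has nonzero top-eigendirection component gives $d_{x_*}(h^k)\to+\infty$, so $\psi(h^k.x_*)=m/d_{x_*}(h^k)$ can remain $\ge m$ only if $m\le 0$. The orbit $\Lambda.x_*$ then carries values in $[m,0]$ and is dense (Proposition \ref{prop_qd}\ref{item_qd3}), so lower semicontinuity yields $\psi\le 0$ everywhere; in particular $\psi$ is real-valued. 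Next, for $q\in\pa\Omega\setminus\mathcal F$ I claim $\psi(q)=0$. Suppose $\psi(q)<0$. Since $\Omega$ is quasi-divisible, the non-parabolic boundary points are conical limit points, so there is a sequence $g_m\in\Lambda$ with $g_m.q\to\xi$ for some $\xi\in\pa\Omega$ while $d_q(g_m)\to 0$ (the $g_m$ have $q$ as repelling direction, with smallest eigenvalue tending to $0$). Then $\psi(g_m.q)=\psi(q)/d_q(g_m)\to-\infty$ although $g_m.q\to\xi$, contradicting lower semicontinuity at $\xi$. Hence $\psi=0$ on $\pa\Omega\setminus\mathcal F$; lower semicontinuity at each $p_j$ (approached by points where $\psi=0$) gives $\psi(p_j)\le 0$, and setting $\mu_j:=-\psi(p_j)\in[0,+\infty)$, the invariance relation shows $\psi$ and $\psi_\mu$ agree at every $p_j$ and off $\mathcal F$, hence everywhere.

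The main obstacle is the conical-point input in the last step: producing, for each non-parabolic $q$, a sequence $g_m\in\Lambda$ contracting the ray $\matrix{q\\-1}$ (i.e.\ $d_q(g_m)\to 0$) with $g_m.q$ staying bounded. This is exactly the dynamical dichotomy between conical limit points (recurrent geodesics, $\inf_{A}d_q(A)=0$) and bounded parabolic points (cusp excursions, $\inf_{A}d_q(A)>0$), and I would establish it from the finite-volume / geometric-finiteness description of quasi-divisible cones (Theorem \ref{thm_marquis}), using the sandwiching disks of Lemma \ref{lemma_benoisthulin} and the strict convexity of $\pa\Omega$ (Proposition \ref{prop_qd}\ref{item_qd1}) to control the relevant eigenvalue behaviour near a cusp.
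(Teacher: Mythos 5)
Your first half (existence and lower semicontinuity of $\psi_\mu$) is essentially the paper's own argument: Lemma \ref{lemma_parabolicfunction} at each cusp, the partition-of-unity assembly of Lemma \ref{lemma_smoothinvariant}, convexification by adding $Mw_\Omega$ as in Prop.\@ \ref{prop_ab}, and the fact that boundary values of convex functions are lower semicontinuous. The one step you assert rather than prove is that the boundary value of $u=v+Mw_\Omega$ actually \emph{vanishes} on $\pa\Omega\setminus\mathcal F$; the paper isolates this as a separate claim, proved by trapping any invariant convex function between $\pm M'w_\Omega$ on $\Omega\setminus\widetilde U$ (invariance propagates the inequality from a compact fundamental piece) and then using that every radial segment toward a point of $\pa\Omega\setminus\mathcal F$ meets $\Omega\setminus\widetilde U$ arbitrarily close to the boundary. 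Your ingredients suffice for this, so I regard it as a gloss rather than a gap.

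The genuine gap is in the uniqueness half, exactly where you flag it. Your reduction ($\psi\le 0$ via a minimum point, a hyperbolic element expanding the ray over it, and orbit density) is fine, but the key step ``$\psi(q)=0$ for $q\in\pa\Omega\setminus\mathcal F$'' rests entirely on the claim that for every such $q$ there exist $g_m\in\Lambda$ with $d_q(g_m)\to 0$. This requires two nontrivial inputs, neither proved in your proposal and neither available in the paper: (i) every non-parabolic boundary point of a quasi-divisible domain is a conical limit point (a geometric-finiteness statement, in the Hilbert-geometry setting due to Crampon--Marquis; it does not follow formally from Theorem \ref{thm_marquis} without an argument that cusp horoball lifts touch $\pa\Omega$ only at their parabolic point); and (ii) conical approach of orbit points to $q$ implies \emph{linear contraction of the boundary ray over $q$}, i.e.\ $d_q(g_m)\to 0$. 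Point (ii) is a Busemann-cocycle computation ($\log d_q(g)$ equals, up to bounded error, minus a Busemann function based at $q$), valid for strictly convex $\C^1$ boundaries but a real piece of work; note that it cannot be shortcut by continuity, since $x\mapsto d_x(g_m)$ is affine and positive on $\overline\Omega$ but its zero line may pass arbitrarily close to $q$, so smallness of $d_{x_m}(g_m)$ at interior points $x_m\to q$ does not transfer to $q$ itself. The paper sidesteps all of this dynamics: given an l.s.c.\ invariant $\psi\not\equiv+\infty$, it passes to the convex envelope $\env{\psi}$, uses density of a single orbit (Prop.\@ \ref{prop_qd} \ref{item_qd3}) to see that $\env{\psi}$ is finite on $\Omega$, and then applies the same soft comparison-with-$w_\Omega$ claim to the invariant convex function $\env{\psi}$ to get $\psi=0$ off $\mathcal F$, with $\psi\le 0$ on $\mathcal F$ following from density and lower semicontinuity. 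Replacing your dynamical step by this envelope-plus-comparison argument would close the gap using only tools you already have.
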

\begin{proof}
We claim that for any convex function $u:\Omega\to\R$ with $\Lambda$-invariant graph, the boundary value $u|_{\pa\Omega}$ (in the sense of \S \ref{subsec_convexfunction}) vanishes on $\pa\Omega\setminus\mathcal{F}$.

In order to show this, let us compare $u$ with the Cheng-Yau support function $w_\Omega$, which is strictly negative on $\Omega$ and has $\Lambda$-invariant graph.  Letting $\widetilde{U}\subset\Omega$ be the open set lifted from some neighborhoods of punctures of $\Omega/\Lambda$ as in Lemma \ref{lemma_smoothinvariant}, we can take a compact subset $K\subset \Omega$ such that $\bigcup_{A\in\Lambda}A(K)=\Omega\setminus\widetilde{U}$ and take a sufficiently large constant $M>0$ such that $-Mw_\Omega\geq u\geq Mw_\Omega$ in $K$. Then the $\Lambda$-invariance of the graphs of both $u$ and $w_\Omega$ imply the same inequalities on $\Omega\setminus\widetilde{U}$. It follows that $u|_{\pa \Omega}$ vanishes on $\pa\Omega\setminus\mathcal{F}$ because for any $x_0\in\pa\Omega\setminus\mathcal{F}$, $u(x_0)$ is the limit of $u(x)$ as $x$ tends to $x_0$ along a line segment joining $x_0$ with some $x_1\in\Omega$, and this segment contains a sequence of points in $\Omega\setminus\widetilde{U}$ tending to $x_0$. We have thus proven the claim.

To show that the specific function $\psi_\mu$ is lower semicontinuous, we first pick a generator $A_j$ for the stabilizer of $p_j$ in $\Lambda$ (which is an infinite cyclic group generated by a parabolic element), and apply Lemma \ref{lemma_parabolicfunction} to find an $A_j$-invariant smooth convex function $f_j:\Omega\to\R$ whose boundary value at $p_j$ is $-\mu_j$. Then we can construct a smooth function $v_\mu\in\C^\infty(\Omega)$ with $\Lambda$-invariant graph satisfying
$$
\lim_{s\to0}v_\mu((1-s)p_j+sx)=-\mu_j \ \text{ for any }x\in\Omega,\,j=1,\cdots,n
$$
using a partition of unity similarly as in the proof of Lemma  \ref{lemma_smoothinvariant}, only replacing the $A$-invariant affine plane $P$ used in that proof by the graph of $f_j$. Next, by taking a large enough $M>0$ similarly as in the proof of Prop.\@ \ref{prop_ab}, we obtain a convex function $u_\mu=v_\mu+Mw_\Omega\in\C^\infty(\Omega)$ with $\Lambda$-invariant graph, which has the same limit property as $v_\mu$ above because $w_\Omega$ is continuous on $\overline{\Omega}$ and has vanishing boundary value. This means that the boundary value $u_\mu|_{\pa\Omega}$ of $u_\mu$, which is a lower semicontinuous function on $\pa\Omega$ with $\Lambda$-invariant graph, coincides with $\psi_\mu$ at every $p_j$. The $\Lambda$-invariance then implies that $u_\mu|_{\pa\Omega}=\psi_\mu$ holds on $\mathcal{F}$. It follows that the equality actually holds on the whole boundary $\pa\Omega$, because $\psi_\mu$ vanishes on $\pa\Omega\setminus\mathcal{F}$ by definition and so does $u_\mu|_{\pa\Omega}$ by the above claim. As a consequence, $\psi_\mu$ is lower semicontinuous, as required.

To show the ``Moreover'' statement, let $\psi:\pa\Omega\to\R\cup\{+\infty\}$ be an arbitrary lower semicontinuous function with graph preserved by $\Lambda$, such that $\psi(x_0)<+\infty$ for some $x_0\in\pa\Omega$.
As explained in \S \ref{subsec_convexfunction}, $\psi$ is the boundary value of its convex envelope $\env{\psi}\in\LC(\R^2)$. The effective domain $\dom{\env{\psi}}:=\{x\in\R^2\mid \env{\psi}(x)<+\infty\}$ is a convex set containing the $\Lambda$-orbit of $x_0$, which is dense in $\pa\Omega$ by Prop.\@ \ref{prop_qd} \ref{item_qd3}, hence it contains $\Omega$. This means $\env{\psi}$ only takes finite values in $\Omega$, so we can invoke the above claim and conclude that $\psi=0$ on $\pa\Omega\setminus\mathcal{F}$. The density of $\pa\Omega\setminus\mathcal{F}$ and the lower semicontinuity of $\psi$ then imply  $\psi\leq0$ on $\mathcal{F}$. Therefore, $\psi$ must equal some $\psi_\mu$, as required.
\end{proof}

We can now prove the first two parts of Theorem \ref{thm_intro3}.
\begin{proof}[Proof of Thm.\@ \ref{thm_intro3} \ref{item_d1} and  \ref{item_d1.5}]
Suppose $\phi\in\C^0(\pa\Omega)$ has graph preserved by $\Lambda'$. Let $\widehat{\phi}:\pa\Omega\to\R\cup\{+\infty\}$ be a lower semicontinuous function also with graph preserved by $\Lambda'$ such that $\widehat{\phi}\not\equiv+\infty$. By Lemma \ref{lemma_covariance}, we
can write
$\widehat{\phi}=\phi+\psi$ for a lower semicontinuous $\psi:\pa\Omega\to\R\cup\{+\infty\}$ with graph preserved by $\Lambda$. By Prop.\@ \ref{prop_psimu}, $\psi$ must equal some $\psi_\mu$ described in that proposition. It follows that $\widehat{\phi}$ equals some $\phi_\mu$ described in the required statement \ref{item_d1.5}. Also, since every $\psi_\mu$ vanishes on the dense subset $\pa\Omega\setminus\mathcal{F}$ of $\pa\Omega$ by definition, the only continuous one among the $\psi_\mu$'s is the zero function $\psi_{0}$. It follows that the only continuous one among the $\phi_\mu$'s is $\phi$ itself. This proves Part \ref{item_d1.5}.

As for Part \ref{item_d1}, the implication ``\ref{item_d12}$\Rightarrow$\ref{item_d13}'' is trivial, and we have already shown ``\ref{item_d11}$\Rightarrow$\ref{item_d12}'' in Prop.\@ \ref{prop_ab}. To show the implication ``\ref{item_d13}$\Rightarrow$\ref{item_d11}'', let $\widehat{\phi}$ be as in \ref{item_d13}. We just proved that $\widehat{\phi}$ equals some $\phi_\mu$, hence in particular only takes values in $\R$. Then, for each $\Phi\in\Lambda'$ whose projection $\pi(\Phi)\in\Lambda$ is parabolic, if we let $p\in\pa\Omega$ be the fixed point of $\pi(\Phi)$, then $\Phi$ fixes $(p,\widehat{\phi}(p))\in\pa\Omega\times\R$ because of the $\Lambda'$-invariance of $\gra{\phi}$, hence \ref{item_d11} holds. This completes the proof of Part \ref{item_d1}.
\end{proof}

\subsection{Solving the Monge-Amp\`ere problem}\label{subsec_solving}
We shall deduce Part \ref{item_d2} of Theorem \ref{thm_intro3} from the following result in \cite{nie-seppi}. 
\begin{theorem}[{\cite[Thm.\@ A$'$ in \S 8.1]{nie-seppi}}]\label{thm_NS}
	Let $\Omega\subset\R^2$ be a bounded convex domain and $\phi:\pa\Omega\to\R$ be a lower semicontinuous function. Then
	\begin{enumerate}[label=(\arabic*)]
\item\label{item_thmmain1} For each $t\in\R$, there exists a unique convex solution $u_t\in\C^\infty(\Omega)$ to the Dirichlet problem of Monge-Amp\`ere equation
$$
	\begin{cases}
	\det\D^2 u=e^{-t}\,w_\Omega^{-4}\ \text{ in }\Omega,\\
	u|_{\pa\Omega}=\phi.
	\end{cases} 
$$
Here $w_\Omega$ is the Cheng-Yau support function of $\Omega$ (see Thm.\@ \ref{thm_chengyau}).
\item\label{item_thmmain2}
	For any $x_0\in\pa\Omega$, if $\Omega$ satisfies the exterior circle condition at $x_0$, then $u_t$ has infinite inner derivatives at $x_0$ (see \S \ref{subsec_legendre} for the definition).
		\item\label{item_thmmain3} For any fixed $x\in\Omega$, $t\mapsto u_t(x)$ is a strictly increasing concave function tending to $-\infty$ and $\env{\phi}(x)$ as $t$ tends to $-\infty$ and $+\infty$, respectively.
	\end{enumerate}
\end{theorem}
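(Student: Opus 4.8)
The plan is to read Theorem~\ref{thm_NS} as a singular Dirichlet problem for the Monge--Amp\`ere operator with fixed density $g_t:=e^{-t}\,w_\Omega^{-4}$, and to exploit two structural facts. First, $g_t$ is smooth and strictly positive in $\Omega$ but blows up like $\dist(\cdot,\pa\Omega)^{-2}$, so its total Monge--Amp\`ere mass is infinite; this forces every solution to be ``complete'', with gradient blowing up at $\pa\Omega$, and is the source both of the difficulty and of Part~\ref{item_thmmain2}. Second, the rescaled Cheng--Yau support function $v_t:=e^{-t/2}\,w_\Omega$ is an \emph{explicit} convex solution of $\det\D^2 v=g_t$ with vanishing boundary value, since $\det\D^2(e^{-t/2}w_\Omega)=e^{-t}\det\D^2 w_\Omega=e^{-t}w_\Omega^{-4}$ by Thm.~\ref{thm_chengyau}. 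Its vertical translates $v_t+c$ serve as the basic barriers throughout.

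For existence (Part~\ref{item_thmmain1}) I would exhaust $\Omega$ by smooth uniformly convex $\Omega_j\Subset\Omega$, on each of which $g_t$ is smooth, positive and bounded, and solve the classical Dirichlet problem (Caffarelli--Nirenberg--Spruck, or Cheng--Yau) with boundary data $\env{\phi}|_{\pa\Omega_j}$, obtaining smooth convex approximants $u_j$. Trapping the $u_j$ between $v_t+\min_{\pa\Omega}\phi$ from below and $\env{\phi}$ from above, by the comparison principle on each $\Omega_j$, yields uniform local bounds; since $g_t$ lies between positive constants on any fixed compact $K\Subset\Omega$, the interior $C^{2,\alpha}$ estimates of Pogorelov/Cheng--Yau type are uniform in $j$ on $K$ and do \emph{not} degenerate as $g_t\to\infty$ near $\pa\Omega$. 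A diagonal limit then produces a convex $u_t\in\C^\infty(\Omega)$ solving $\det\D^2 u_t=g_t$. Uniqueness follows from the Monge--Amp\`ere comparison principle applied to the canonical extensions of two solutions to elements of $\LC(\R^2)$ (\cf \S\ref{subsec_convexfunction}): equal density and equal boundary trace $\phi$ force the extensions to coincide.

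The remaining point of Part~\ref{item_thmmain1}, that $u_t$ attains $\phi$ in the liminf sense of \S\ref{subsec_convexfunction}, and Part~\ref{item_thmmain2} are both boundary statements. The bound $u_t\le\env{\phi}$ already gives $\liminf u_t\le\phi$; the reverse inequality $\liminf_{x\to x_0}u_t(x)\ge\phi(x_0)$ I would obtain from \emph{local} lower barriers built over small neighborhoods of $x_0$ with trace close to $\phi(x_0)$ (using lower semicontinuity of $\phi$), the non-integrable singularity of $g_t$ making the solution hug this data. For Part~\ref{item_thmmain2}, at a point $x_0$ admitting an exterior tangent circle I would slide in a barrier modeled on the explicit round-disk solution adapted to the circle's complement; the $\dist(\cdot,\pa\Omega)^{-2}$ singularity then forces $\|\D u_t\|\to\infty$ at $x_0$, which by Lemma~\ref{lemma_gradient}~\ref{item_gradient3} is exactly the infinite-inner-derivative property. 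Part~\ref{item_thmmain3} is then soft. Monotonicity: for $t_1<t_2$ one has $g_{t_1}\ge g_{t_2}$, so comparison (larger density, equal trace) gives $u_{t_1}\le u_{t_2}$, strictly by strict inequality of densities. Concavity: for $s\in[0,1]$ and $t_s:=(1-s)t_0+st_1$, the Minkowski determinant inequality and convexity of $\theta\mapsto e^{-\theta/2}$ give $\det\D^2\!\big((1-s)u_{t_0}+s u_{t_1}\big)\ge\big((1-s)\sqrt{g_{t_0}}+s\sqrt{g_{t_1}}\big)^2\ge g_{t_s}$, whence comparison yields $(1-s)u_{t_0}+s u_{t_1}\le u_{t_s}$. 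For the limits, as $t\to+\infty$ one has $g_t\to0$ locally uniformly while $u_t\le\env{\phi}$ increases, and stability of the operator identifies the limit as the unique convex function of vanishing Monge--Amp\`ere measure with boundary value $\phi$, i.e.\ $\env{\phi}$; as $t\to-\infty$ the mass $e^{-t}\int_B w_\Omega^{-4}$ over any fixed ball $B\Subset\Omega$ blows up, incompatible with a locally bounded decreasing limit, forcing $u_t\to-\infty$.

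The main obstacle I anticipate is the clash between completeness and finite prescribed data: because $g_t$ is non-integrable the gradient must blow up along $\pa\Omega$, yet the solution must simultaneously realize the finite, merely lower-semicontinuous trace $\phi$, including at its jump points. Reconciling these --- carrying uniform interior estimates all the way up to the boundary blowup, together with boundary barriers fine enough to pin a discontinuous $\phi$ from below --- is the crux; once it is in place, every assertion of Part~\ref{item_thmmain3} reduces to the comparison principle and the Minkowski inequality.
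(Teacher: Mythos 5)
First, a point of comparison: this paper does not prove Theorem \ref{thm_NS} at all --- it is imported wholesale from \cite{nie-seppi} (Thm.\@ A$'$ in \S 8.1) --- so the benchmark is the original proof, whose key ingredients this paper does quote. Your architecture (exhaustion of $\Omega$, classical solvability on approximating domains, interior estimates --- legitimate in dimension $2$, where Pogorelov-type degeneracy is excluded --- comparison against the rescaled Cheng--Yau barriers $e^{-t/2}w_\Omega$, and the Minkowski-inequality trick for concavity in $t$) matches that strategy, and your treatment of existence and of Part \ref{item_thmmain3} is sound. One refinement is needed even there: your lower barrier $v_t+\min_{\pa\Omega}\phi$ is too crude to pin the trace or to run comparison with merely lower semicontinuous data. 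What is required --- and what your ``local barriers near $x_0$'' amount to once carried out --- is the supremum over affine minorants $a\le\phi$ of the subsolutions $a+e^{-t/2}w_\Omega$, giving $\env{\phi}+e^{-t/2}w_\Omega\le u_t\le\env{\phi}$; this is exactly the inequality \eqref{eqn_barrier} that the paper cites as \cite[Lemma 8.5]{nie-seppi}. With that squeeze, $u_t-\widetilde u\to0$ uniformly at $\pa\Omega$ for any two candidate solutions, and Alexandrov comparison runs on compactly contained sublevel sets; your phrase ``equal density and equal boundary trace force the extensions to coincide'' is not the textbook comparison principle when the data is discontinuous. (Note also that $u|_{\pa\Omega}=\phi$ is only possible for $\phi\in\LC(\pa\Omega)$, i.e.\ lsc \emph{and} convex on boundary segments, since boundary values of convex functions lie in that class; your affine minorants with $a(x_0)$ close to $\phi(x_0)$ exist precisely when $\env{\phi}(x_0)=\phi(x_0)$.)

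The genuine gap is in Part \ref{item_thmmain2}. Your plan --- ``slide in a barrier modeled on the round-disk solution'' --- needs an upper barrier $h=a+e^{-t/2}w_B$ ($B$ the exterior disk, $a$ affine with $a\ge\phi+e^{-t/2}|w_B|$ on $\pa\Omega$) that \emph{touches} $u_t$ at $(x_0,\phi(x_0))$: only then does the infinite inner derivative of $h$ transfer to $u_t$; if $h(x_0)>u_t(x_0)$, the difference quotients give nothing. But every affine majorant $a$ of $\phi$ satisfies $a(x_0)\ge\liminf_{\pa\Omega\ni x\to x_0}\phi(x)$, which strictly exceeds $\phi(x_0)$ at any downward jump of the lsc data --- and downward jumps at exterior-circle points are exactly the case this paper needs, since the functions $\phi_\mu$ of Theorem \ref{thm_intro3} jump down at the parabolic fixed points where Lemma \ref{lemma_benoisthulin} supplies the exterior circle (moreover an lsc $\phi$ may be unbounded above, so affine majorants need not exist at all). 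Relatedly, your motivating claim that non-integrability of $g_t$ ``forces every solution to be complete, with gradient blowing up at $\pa\Omega$'' is false as a blanket statement: it would make Part \ref{item_thmmain2} unconditional, whereas by Lemma \ref{lemma_gradient} \ref{item_gradient3} a finite inner derivative at $x_0$ only requires \emph{one} sequence with bounded gradients, which infinite local mass does not preclude --- and the paper's own proof of Theorem \ref{thm_intro3} \ref{item_d2} works hard to rule this out rather than taking it for granted. A mechanism that does work at jump points: suppose $\gra{u_t}$ had a non-vertical supporting plane at $(x_0,u_t(x_0))$; then the sections $S_h$ of $u_t$ above that plane have finite subgradient mass $|\pa u_t(S_{h/2})|\lesssim h^2/|S_h|$, while domain monotonicity $|w_\Omega|\le|w_B|\lesssim|x-x_0|^{1/2}$ makes $\int_{S_{h/2}}g_t$ infinite over a cone with apex $x_0$ --- a contradiction. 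As written, your sketch of Part \ref{item_thmmain2} would fail precisely in the situation it is invoked for.
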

\begin{remark}
The original statement of \cite[Thm.\@ A$'$]{nie-seppi} is more general in that $\phi$ is only assumed to take values in $\R\cup\{+\infty\}$. It asserts the unique-existence of a lower semicontinuous $u:\overline{\Omega}\to\R\cup\{+\infty\}$ with $u|_{\pa\Omega}=\phi$, satisfying the same equation $\det\D^2 u=e^{-t}\,w_\Omega^{-4}$ in the convex domain $U:=\interior\dom{u}$ (ses \S \ref{subsec_convexfunction} for the notation), under the extra constraint that $u$ has infinite inner derivatives at every point of $\pa U\cap\Omega=\pa U\setminus\pa\Omega$.
\end{remark}

\begin{proof}[Proof of Thm.\@ \ref{thm_intro3} \ref{item_d2}]
The unique-existence of the required $u_t$ and the last bullet point in the statement of Thm.\@ \ref{thm_intro3} \ref{item_d2} are given immediately by  Thm.\@ \ref{thm_NS}.


We henceforth fix $t\in\R$ and denote $u:=u_t$ for simplicity. 
To see that the graph $\gra{u}\subset\Omega\times\R$ is preserved by any $\Phi\in\Lambda'$, we let $\widetilde{u}$ denote the convex function on $\Omega$ such that $\gra{\widetilde{u}}=\Phi(\gra{u})$, which has the same boundary value $\phi$ as $u$ because $\gra{\phi}\subset\pa\Omega\times\R$ is preserved by $\Lambda'$. By Prop.\@ \ref{prop_mongeampere}, the graph $\Sigma$ of the Legendre transform $u^*$ over $\D u(\Omega)$ is an affine $(C,e^\frac{3t}{2})$-surface (at this stage, we do not know whether the graph is entire, \ie whether $\D u(\Omega)=\R^2$), whereas it follows from Lemma \ref{lemma_actiongraph} that the graph $\widetilde{\Sigma}$ of $\widetilde{u}^*$ over $\D \widetilde{u}(\Omega)$ is the image of $\Sigma$ by the affine transformation in $\Aut(C)\ltimes\R^3$ corresponding to $\Phi$. Since the property of being an affine $(C,k)$-surface is preserved by $\Aut(C)\ltimes\R^3$, $\widetilde{\Sigma}$
is an affine $(C,e^\frac{3t}{2})$-surface as well, which implies, again by Prop.\@ \ref{prop_mongeampere}, that $\widetilde{u}$ is also a solution to the same Dirichlet problem. Thus, we have $u=\widetilde{u}$ by the uniqueness. This shows that $\gra{u}$ is preserved by $\Lambda'$.

Now it only remains to be shown that $\|\D u\|$ tends to $+\infty$ on $\pa\Omega$, which is equivalent, by Lemma \ref{lemma_gradient} \ref{item_gradient3}, to the condition that $u$ has infinite inner derivatives at every point of $\pa\Omega$.  

To this end, we first claim that $u$ is strictly smaller than the convex envelope $\overline{\phi}$ in $\Omega$ (we already have $u\leq\overline{\phi}$ by construction, see \S \ref{subsec_convexfunction}). Suppose by contradiction that $u(x_0)=\varphi(x_0)$ for some $x_0\in\Omega$ and let $a(x):=u(x_0)+(x-x_0)\cdot\D u(x_0)$ be the supporting affine function of $u$ at $x_0$. Then we have 
$a\leq u\leq \overline{\phi}$
in $\Omega$, with both equalities achieved at $x_0$. By \cite[Lemma 4.9]{nie-seppi}, the set $\big\{x\in\overline{\Omega}|\env{\phi}(x)=a(x)\big\}$ is the convex hull of some subset of $\pa\Omega$. Since this convex hull contains $x_0$, it also contains some line segment $I$ passing through $x_0$. It follows that $u=a$ on $I$, contradicting the strict convexity of $u$, hence the claim is proved.

If $\Omega$ is divisible by $\Lambda$, we take a compact set $K\subset \Omega$ with $\bigcup_{A\in\Lambda}A(K)=\Omega$. Since $\env{\phi}-u$ is strictly positive by the claim, we can pick
sufficiently small $\epsilon>0$ such that 
\begin{equation}\label{eqn_proof f2}
u\leq \env{\phi}+\epsilon w_\Omega
\end{equation}
in $K$. By Lemma \ref{lemma_covariance} \ref{item_covariance2}, the graphs of the functions on both sides are preserved by $\Lambda'$, so this inequality actually holds on the whole $\Omega$. Also, the equality is achieved on $\pa\Omega$. Therefore, for any $x_0\in\pa\Omega$, $x_1\in\Omega$ and $s\in(0,1]$, we have 
\begin{align}
&\frac{u(x_0+s(x_1-x_0))-u(x_0)}{s}\label{eqn_proof f2 2}\\
&\leq\frac{\env{\phi}(x_0+s(x_1-x_0))-\env{\phi}(x_0)}{s}+\epsilon\frac{w_\Omega(x_0+s(x_1-x_0))-w_\Omega(x_0)}{s}.\nonumber
\end{align}
Each of the three fractions in \eqref{eqn_proof f2 2} is increasing in $s$ by convexity, hence has a limit in $[-\infty,+\infty)$ as $s\to0$, and the limit of the last fraction is $-\infty$ because $w_\Omega$ has infinite inner derivatives at $x_0$ by Theorem \ref{thm_chengyau} and Lemma \ref{lemma_gradient} \ref{item_gradient3}. As a result, the left-hand side tends to $-\infty$, hence $u$ has infinite inner derivatives at $x_0$, as required.

If $\Omega$ is quasi-divisible but not divisible by $\Lambda$, we adapt the argument as follows. Let $U$ be an open subset of the punctured surface $\Omega/\Lambda$ as in Lemma \ref{lemma_smoothinvariant}, consisting of neighborhoods of punctures, $\widetilde{U}\subset\Omega$ be the lift of $U$, and $\mathcal{F}$ be the subset of $\pa\Omega$ consisting of fixed points of parabolic elements in $\Lambda$. Then the same reasoning as above shows that \eqref{eqn_proof f2} holds on $\Omega\setminus\widetilde{U}$. It follows that \eqref{eqn_proof f2 2} holds for those $s\in(0,1]$ such that $x_0+s(x_1-x_0)\in\Omega\setminus\widetilde{U}$. Now, if $x_0\in\pa\Omega$ is not in $\mathcal{F}$, then there exists a sequence of such $s$'s converging to $0$, and it follows that $u$ has infinite inner derivatives at $x_0$ in the same way as before. Otherwise, $x_0\in\mathcal{F}$ is the fixed point of some parabolic element in $\Lambda$, hence $\Omega$ satisfies the exterior circle condition at $x_0$ by Lemma \ref{lemma_benoisthulin}. In this case, we can apply Thm.\@ \ref{thm_NS} \ref{item_thmmain1} and conclude that $u$ has infinite inner derivatives at $x_0$ as well. This completes the proof.
\end{proof}

\subsection{Proofs of Thm.\@ \ref{thm_intro1}, \ref{thm_intro2} and Cor.\@ \ref{coro_intro}}\label{subsec_proofs}
To prove the results in the introduction about affine deformations, we 
now take a proper convex cone $C\subset\R^3$ quasi-divisible by a torsion-free group $\Gamma<\Aut(C)$. By Prop.\@ \ref{prop_qd} \ref{item_qd4}, the dual cone $C^*\subset \R^{3*}$ is quasi-divisible by the image $\Gamma^*:=\big\{\transp{\!A}^{-1} \mid A\in\Gamma\big\}$ of $\Gamma$ in $\Aut(C^*)$ (\cf \S \ref{subsec_omega}).

The dictionary between the two geometries explained in the introduction and \S \ref{sec:correspondence} then translates each affine deformation $\Gamma_\tau$ of $\Gamma$ to a group $\Gamma^*_\tau<\Aut(\Omega\times\R)$ of automorphisms of the convex tube domain $\Omega\times\R$ (where $\Omega\cong\P(C^*)$, see \S \ref{subsec_omega}) which projects bijectively to $\Gamma^*$ through the projection $\pi:\Aut(\Omega\times\R)\to\Aut(\Omega)=\Aut(C^*)$ (\cf \S \ref{subsec_auto}). Furthermore, by Prop.\@ \ref{prop_parabolic} and the identification $\hnull{C}\cong\pa\Omega\times\R$ (\cf Prop.\@ \ref{prop_hspace}), the cocycle $\tau$ is admissible if and only if every $\Phi\in\Gamma^*_\tau$, whose projection $\pi(\Phi)\in\Gamma^*$ is parabolic, has a fixed point in $\pa\Omega\times\R$.

The above discussion and the framework set up in the previous sections allow us to deduce most of the statements in Theorems \ref{thm_intro1}, \ref{thm_intro2} and Corollary \ref{coro_intro} of the introduction immediately from Theorem \ref{thm_intro3}. We proceed to give a formal proof. Let us reformulate the statements as follows:
\begin{theorem}[Thm.\@ \ref{thm_intro1}, \ref{thm_intro2} and Cor.\@ \ref{coro_intro}]\label{thm_main}
In the above setting, the followings hold.
\begin{enumerate}[label=(\arabic*)]
	\item\label{item_main1} 
	There exists a $C$-regular domain in $\A^3$ preserved by $\Gamma_\tau$ if and only if $\tau$ is admissible.
	In this case, there is a unique equivariant continuous map $f$ from $\pa\P(C)$ to the space of $C$-null planes in $\A^3$. The complement of $\bigcup_{x\in\pa\P(C)}f(x)$ in $\A^3$ has two connected components $D^+$ and $D^-$, which are $C$-regular and $(-C)$-regular domains preserved by $\Gamma_\tau$, respectively. 
	
	\item\label{item_main2} If $C$ is divisible by $\Gamma$, then $D^+$ is the unique $C$-regular domain preserved by $\Gamma_\tau$. Otherwise, assume the surface $S:=\P(C)/\Gamma$ has $n\geq1$ punctures and $\tau$ is admissible, then all the $C$-regular domains preserved by $\Gamma_\tau$ form a family $(D_\mu)$ parameterized by $\mu\in\R_{\geq0}^n$, such that $D_{(0,\cdots,0)}=D^+$ and we have $D_\mu\subset D_{\mu'}$ if and only if $\mu$ is coordinate-wise larger than or equal to $\mu'$. 
	
	\item\label{item_main3} For any $C$-regular domain $D$ preserved by $\Gamma_\tau$, the $\Gamma_\tau$-action on $D$ is free and properly discontinuous, and there exists a homeomorphism between the quotient $D/\Gamma_\tau$ and $S\times\R$ satisfying the following conditions:
		\begin{itemize}
		\item	
Let $K:D\to\R$ be the function given by composing the quotient map $D\to D/\Gamma_\tau$ with the projection $D/\Gamma_\tau\cong S\times\R\to \R$ to the $\R$-factor. Then $K$ is convex.
		\item
			For each $t\in\R$, the level surface $K^{-1}(t)$ of this function is a complete affine $(C,e^t)$-surface in $\A^3$ generating $D$, which is unique.
		\end{itemize} 
Moreover, the surface $K^{-1}(t)$ is asymptotic to the boundary of $D$ (\cf \S \ref{subsec_cregular}).
	\end{enumerate}
\end{theorem}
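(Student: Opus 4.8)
The plan is to transport every assertion to the convex tube domain $\Omega\times\R$ and read it off Theorem~\ref{thm_intro3}. Concretely, I set $\Lambda=\Gamma^*$ and $\Lambda'=\Gamma^*_\tau$, where $\Gamma^*\subset\Aut(C^*)=\Aut(\Omega)$ quasi-divides $\Omega$ by Proposition~\ref{prop_qd}\,\ref{item_qd4}, and $\Gamma^*_\tau<\Aut(\Omega\times\R)$ is the image of $\Gamma_\tau$ under the isomorphism of Proposition~\ref{prop_iso}, projecting bijectively to $\Gamma^*$. As recorded at the start of this subsection, Proposition~\ref{prop_parabolic} together with $\hnull{C}\cong\pa\Omega\times\R$ (Proposition~\ref{prop_hspace}) shows that $\tau$ is admissible precisely when every $\Phi\in\Gamma^*_\tau$ with parabolic projection fixes a point of $\pa\Omega\times\R$, i.e.\ exactly condition~\ref{item_d11} of Theorem~\ref{thm_intro3}\,\ref{item_d1}. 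Since the bijections of Theorem~\ref{thm_graph} are equivariant for the two identified group actions, a $C$-regular domain is preserved by $\Gamma_\tau$ iff the corresponding boundary datum has $\Gamma^*_\tau$-invariant graph, and an affine $(C,k)$-surface is preserved by $\Gamma_\tau$ iff the corresponding potential has $\Gamma^*_\tau$-invariant graph (Lemma~\ref{lemma_actiongraph}). All three parts will follow from this dictionary.

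For Part~\ref{item_main1}, by Theorem~\ref{thm_graph}\,\ref{item_thmgraph1} the proper $C$-regular domains preserved by $\Gamma_\tau$ correspond bijectively and equivariantly to functions in $\LC(\pa\Omega)$, not identically $+\infty$, with $\Gamma^*_\tau$-invariant graph; the existence of such a function is condition~\ref{item_d13} of Theorem~\ref{thm_intro3}\,\ref{item_d1}, equivalent there to admissibility. Admissibility then also yields, via Theorem~\ref{thm_intro3}\,\ref{item_d1} and the uniqueness in Part~\ref{item_d1.5}, a unique continuous $\phi\in\C^0(\pa\Omega)$ with $\Gamma^*_\tau$-invariant graph; composing its graph, a section of $\pa\Omega\times\R\cong\hnull{C}$, with the tangent-line homeomorphism $\pa\P(C)\cong\pa\Omega$ (a homeomorphism by strict convexity and $\C^1$-regularity of $\pa\P(C)$, Proposition~\ref{prop_qd}\,\ref{item_qd1}) produces the unique equivariant map $f$. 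Because a point of $\A^3$ is a non-vertical affine plane in $\A^{3*}$ lying on some $f(x)$ exactly when it meets $\gra{\phi}$, and $\pa\Omega$ is connected, the complement of $\bigcup_x f(x)$ splits into $D^+=\sepi{\env{\phi}^*}$ (planes below $\gra{\phi}$) and $D^-$ (planes above), each convex hence connected; $D^+$ is $C$-regular and $D^-$ is $(-C)$-regular through the involution $(x,\xi)\mapsto(x,-\xi)$ conjugating the two geometries. For Part~\ref{item_main2}, Theorem~\ref{thm_intro3}\,\ref{item_d1.5} lists all invariant data: in the divisible case $\mathcal{F}=\emptyset$ and $D^+$ is unique, while otherwise they are the $\phi_\mu$ ($\mu\in\R_{\geq0}^n$), giving $D_\mu=\sepi{\env{\phi_\mu}^*}$ with $D_0=D^+$; monotonicity follows by chaining $\phi_\mu\le\phi_{\mu'}\Leftrightarrow\mu\ge\mu'$, monotonicity of the convex envelope with $\env{\phi_\mu}|_{\pa\Omega}=\phi_\mu$, order-reversal of the Legendre transform, and $\sepi{g_1}\subset\sepi{g_2}\Leftrightarrow g_1\ge g_2$.

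For Part~\ref{item_main3}, fix $D=D_\mu$ with its bounded datum $\phi_\mu$, and take the family $(u_t)_{t\in\R}\subset\SC_0(\Omega)$ of Theorem~\ref{thm_intro3}\,\ref{item_d2} (positive-definiteness of $\D^2u_t$ follows from convexity and $\det\D^2u_t>0$, and gradient blow-up is asserted there, so indeed $u_t\in\SC_0(\Omega)$). By Proposition~\ref{prop_mongeampere} the surface $\Sigma_t=\gra{u_t^*}$ is, after an affine reparameterization of $t$ so that $\log k=t$, a complete affine $(C,e^t)$-surface; it generates $D$ by Theorem~\ref{thm_graph}\,\ref{item_thmgraph3} and is $\Gamma_\tau$-invariant by Lemma~\ref{lemma_actiongraph}, and uniqueness of such a surface generating $D$ comes from uniqueness of the solution $u_t$ with boundary value $\phi_\mu$. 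The concavity and monotonicity in the last bullet of Theorem~\ref{thm_intro3}\,\ref{item_d2} are exactly the hypothesis of Proposition~\ref{prop_foliation}, yielding a convex $K\colon D\to\R$ with $K^{-1}(t)=\Sigma_t$, which the reparameterization makes equal to $\log k$. In the notation of Proposition~\ref{prop_homeo}, the homeomorphism $F_2\colon\Omega\times\R\to D$, $F_2(x,t)=(\D u_t(x),\,x\cdot\D u_t(x)-u_t(x))$, intertwines by Lemma~\ref{lemma_actiongraph} the $\Gamma_\tau$-action on $D$ with the action $(x,t)\mapsto(\pi(\Phi)x,t)$ on $\Omega\times\R$; since $\Gamma^*$ acts freely and properly discontinuously on $\Omega$ (Theorem~\ref{thm_marquis}), so does $\Gamma_\tau$ on $D$, and $D/\Gamma_\tau\cong(\Omega/\Gamma^*)\times\R=S\times\R$ with $K$ descending to the projection to $\R$.

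It remains to prove the final asymptotic statement, which I expect to be the \emph{main obstacle}. By Theorem~\ref{thm_graph}\,\ref{item_thmgraph4} it suffices to show $\env{\phi_\mu}-u_t\to0$ as $x\to\pa\Omega$, the domain condition there being trivial since both effective domains equal $\overline{\Omega}$. Using that $\gra{u_t}$ and the graph of $\env{\phi_\mu}+Cw_\Omega$ are $\Gamma^*_\tau$-invariant (Lemma~\ref{lemma_covariance} and Theorem~\ref{thm_chengyau}), together with the strict positivity of $\env{\phi_\mu}-u_t$ on $\Omega$ established in the proof of Theorem~\ref{thm_intro3}\,\ref{item_d2}, one obtains a two-sided barrier $\env{\phi_\mu}+Cw_\Omega\le u_t\le\env{\phi_\mu}$ on the cocompact set $\Omega\setminus\widetilde{U}$; as $w_\Omega\to0$ on $\pa\Omega$, this forces $\env{\phi_\mu}-u_t\to0$ along approaches to $\pa\Omega\setminus\mathcal{F}$. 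The difficulty is concentrated at the parabolic fixed points $\mathcal{F}$, where $\phi_\mu$ is discontinuous and the cocompact barrier is unavailable; there I would instead build a cusp-adapted barrier from the inner and outer tangent disks of Lemma~\ref{lemma_benoisthulin}, exploiting the exterior circle condition (as in Theorem~\ref{thm_NS}\,\ref{item_thmmain2}) to control $u_t$ near each $p_j$. Securing this estimate at the cusps is the crux of the argument.
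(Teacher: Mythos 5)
Your treatment of Parts \ref{item_main1} and \ref{item_main2}, and of the foliation/homeomorphism portion of Part \ref{item_main3}, follows the paper's argument essentially step for step (same dictionary, same appeals to Theorem \ref{thm_intro3}, Theorem \ref{thm_graph}, Propositions \ref{prop_mongeampere}, \ref{prop_foliation}, \ref{prop_homeo} and Lemma \ref{lemma_actiongraph}), and is correct there. One small omission: for the \emph{uniqueness} of $f$ in Part \ref{item_main1} you only invoke uniqueness of $\phi$; you should also note that any equivariant continuous $f$ has the form $x\mapsto(f_1(x),\phi(x))$ with $f_1:\pa\Omega\to\pa\Omega$ a $\Gamma^*$-equivariant continuous map, which is forced to be the identity by density of fixed points of elements of $\Gamma^*$ in $\pa\Omega$ (Prop.\@ \ref{prop_qd}). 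This is a one-line repair.

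The genuine gap is exactly where you flag it: the asymptotic statement at the end of Part \ref{item_main3} is left as a sketch ("I would instead build a cusp-adapted barrier\dots"), so as written the proof is incomplete. Moreover, the difficulty you anticipate at the parabolic fixed points $\mathcal{F}$ is an artifact of your choice of barrier. The paper does not use an invariance/cocompactness argument here at all; it uses the \emph{global} two-sided barrier
\begin{equation*}
\env{\phi}_\mu+e^{-t/3}\,w_\Omega\ \leq\ u_t\ \leq\ \env{\phi}_\mu \qquad\text{on all of }\Omega,
\end{equation*}
where the upper bound is immediate from $u_t|_{\pa\Omega}=\phi_\mu$, and the lower bound is \cite[Lemma 8.5]{nie-seppi}: it follows from the Comparison Principle for Monge--Amp\`ere equations, since superadditivity of $(\det\D^2\,\cdot\,)^{1/2}$ on convex functions shows that $\env{\phi}_\mu+e^{-t/3}w_\Omega$ is a subsolution of $\det\D^2u=e^{-t}w_\Omega^{-4}$ with the same boundary value $\phi_\mu$. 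Because $w_\Omega\in\C^0(\overline{\Omega})$ vanishes on $\pa\Omega$, this gives $\env{\phi}_\mu(x)-u_t(x)\to0$ as $x\to\pa\Omega$ \emph{at every boundary point, cusps included}, and Theorem \ref{thm_graph} \ref{item_thmgraph4} then yields the asymptotic statement directly. So the "crux" you identify dissolves once the barrier is taken from the PDE comparison principle rather than from equivariance: your cocompact barrier on $\Omega\setminus\widetilde{U}$ plus a hypothetical cusp-adapted construction via Lemma \ref{lemma_benoisthulin} is an unnecessary detour, and in your proposal it is never carried out.
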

Parts \ref{item_main1} and \ref{item_main2} here are exactly the first two parts of Thm.\@ \ref{thm_intro1}, and it is easy to see that Part \ref{item_main3} contains Thm.\@ \ref{thm_intro1} \ref{item_intro13}, Thm.\@ \ref{thm_intro2} and Cor.\@ \ref{coro_intro} at the same time.
\begin{proof}
	\ref{item_main1}
	 We apply Thm.\@ \ref{thm_intro3} \ref{item_d1} to the convex tube domain $\Omega\times\R$ in the aforementioned setting, taking $\Lambda'$ and $\Lambda$ in the assumption of the theorem to be $\Gamma^*_\tau$ and $\Gamma^*$, respectively. As explained, Condition \ref{item_d11} in Thm.\@ \ref{thm_intro3} \ref{item_d1} is  equivalent to the admissibility of $\tau$, whereas Condition \ref{item_d13} is equivalent to the existence of a $\Gamma_\tau$-invariant $C$-regular domain by the correspondence between $C$-regular domains and lower semicontinuous functions on $\pa\Omega$ (see Thm.\@ \ref{thm_graph} \ref{item_thmgraph1}). Therefore, the required ``if and only if'' statement is a consequence of Thm.\@ \ref{thm_intro3} \ref{item_d1}. 
	
	 To show the statements about the map $f$, we first note that the domain $\pa\P(C)$ of the map can be replaced by $\pa\Omega=\pa\P(C^*)$. In fact, since $\pa\P(C^*)$ can be identified with the set of projective lines in $\RP^2$ tangent to $\pa\P(C)$, by the strict convexity and $\C^1$ property of $\pa \P(C)$ (see Prop.\@ \ref{prop_qd} \ref{item_qd1}), we have a equivariant homeomorphism $\pa\P(C)\cong\pa\P(C^*)$ given by assigning to each $x\in\pa\P(C)$ the line tangent to $\pa\P(C)$ at $x$.
	  
	 In view of the identification between the space $\hnull{C}$ of $C$-null planes and $\pa\Omega\times\R$ (see \S \ref{subsec_cspacelike}), one can check that a map $f:\pa\Omega\to\hnull{C}$ is equivariant if and only if it has the form
	$$
	f(x)=(f_1(x),\phi(x))\in\pa\Omega\times\R\cong\hnull{C},\ \forall x\in\pa\Omega.
	$$
	for some $\Gamma^*$-equivariant map $f_1:\pa\Omega\to\pa\Omega$ and some function $\phi:\pa\Omega\to\R$ whose graph is preserved by $\Gamma_\tau^*$. If $f$ is further assumed to be continuous, then $f_1$ can only be the identity because the fixed points of elements in $\Gamma^*$ form a dense subset of $\pa\Omega$ (see Prop.\@ \ref{prop_qd} \ref{item_qd1}), and $\phi$ can only be the one provided by Thm.\@ \ref{thm_intro3} \ref{item_d1}. This shows the unique-existence of $f$.
	
	Now let $D^+:=\sepi{\env{\phi}^*}$ be the $C$-regular domain corresponding to this $\phi$ (see Thm.\@ \ref{thm_graph} \ref{item_thmgraph1}), and $D^-\subset\A^3$ be the $(-C)$-regular domain obtained in a symmetric way.  
    In other words,	as explained in the last paragraph of \S \ref{subsec_cregular}, a point in $D^+$ (resp.\@ $D^-$) corresponds to an affine plane in $\A^{3*}$ which cross the convex tube domain $\Omega\times\R\subset\A^{3*}$ from below (resp.\@ above) of $\gra{\phi}$. So $D^+$ and $D^-$ are disjoint convex domains.
    Moreover, we have
	$$
	\A^3\setminus(D^+\cup D^-)=\bigcup_{x\in\pa\P(C)}f(x)
	$$
	because on one hand, a point in the set on the left-hand side corresponds to a non-vertical affine plane in $\A^{3*}$ which intersects $\gra{\phi}$; on the other hand, for each $x\in\pa\P(C)\cong\pa\Omega$, points in the $C$-null plane $f(x)$ correspond to non-vertical plans in $\A^{3*}$ passing through the point $(x,\phi(x))\in\gra{\phi}$. Therefore, $D^\pm$ are exactly the two connected components of $\A^3\setminus \bigcup_{x\in\pa\P(C)}f(x)$. Also, the $\Gamma_\tau^*$-invariance of $\gra{\phi}$ implies that $D^\pm$ are preserved by $\Gamma_\tau$. This completes the proof of Part \ref{item_main1}.
	
	\ref{item_main2} The $C$-regular domains in $\A^3$ preserved by $\Gamma_\tau$ are exactly the domains of the form $\sepi{\env{\widehat{\phi}}^*}$ for some $\widehat{\phi}$ described by Condition \ref{item_d13} in Thm.\@ \ref{thm_intro3} \ref{item_d1}. Therefore, by Thm.\@ \ref{thm_intro3} \ref{item_d1.5}, $D^+$ is the unique such domain in the divisible case, whereas in the quasi-divisible but not divisible case, 
	$$
	D_\mu:=\sepi{\env{\phi}_\mu^*}, \ \ \mu\in\R_{\geq0}^n
	$$
	are exactly all such domains. Moreover, the condition $D_\mu\subset D_{\mu'}$ is equivalent to $\env{\phi}_\mu^*\geq \env{\phi}_{\mu'}^*$, which is in turn equivalent to  $\env{\phi}_\mu\leq \env{\phi}_{\mu'}$ and hence $\phi_\mu\leq \phi_{\mu'}$ by basic properties of Legendre transforms and convex envelopes. By construction of $\phi_\mu$, the last inequality means $\mu$ is coordinate-wise larger than or equal to $\mu'$, as required.
	
	\ref{item_main3} Fix a $C$-regular domain $D=D_\mu$ as above, with $\mu\in\R_{\geq0}^n$, and let $u_t\in\C^\infty(\Omega)$ denote the unique convex solution to the Dirichlet problem 
$$
\begin{cases}
\det\D^2u=e^{-\frac{2t}{3}}w_\Omega^{-4}\\
u|_{\pa\Omega}=\phi_\mu
\end{cases}
$$
produced by Thm.\@ \ref{thm_intro3} \ref{item_d2} for each $t\in\R$ (although the equation here differs from the one in Thm.\@ \ref{thm_intro3} \ref{item_d2} in that the parameter $t$ is multiplied by $\frac{2}{3}$, the conclusions are clearly not affected). Then $u_t$ has $\Gamma_\tau^*$-invariant graph and the one-parameter family $(u_t)$ fulfills the first condition in Prop.\@ \ref{prop_foliation}. Moreover, $u_t$ satisfies the inequalities
\begin{equation}\label{eqn_barrier}
\env{\phi}_\mu+e^{-\frac{t}{3}}w_\Omega\leq u_t\leq \env{\phi}_\mu~.
\end{equation}
The second inequality is just because $u_t|_{\pa\Omega}=\phi_\mu$ (see \S \ref{subsec_convexfunction}), while the first one is given by \cite[Lemma 8.5]{nie-seppi} and follows easily from the Comparison Principle for Monge-Amp\`ere equations and basic properties of Monge-Amp\`ere measures. 

We shall deduce the required homeomorphism $S\times\R\overset\sim\to D/\Gamma_\tau$ from the map
\begin{align*}
F:T^-:=\big\{(x,\xi)\in\Omega\times\R\,\big|\,\xi<\overline{\phi}(x))\big\}&\longrightarrow D=\sepi{\overline{\phi}^*_\mu}\\
(x,u_t(x))&\mapsto\big(\D u_t(x)\,,\, x\cdot\D u_t(x)-u_t(x)\big)
\end{align*}
studied in \S \ref{subsec_foliation}, which is a homeomorphism by Prop.\@ \ref{prop_homeo}. 

While the group $\Gamma_\tau$ acts on the target $D$ of $F$ by affine transformation, it also acts on the domain $T^-$ by projective transformations via $\Gamma_\tau^*$ (\ie via the isomorphism $\Aut(C)\ltimes\R^3\cong\Aut(\Omega\times\R)$), and the geometric  definition of $F$ in \S \ref{subsec_foliation} implies that $F$ is equivariant with respect to the two actions.
But $T^-$ is foliated by the graphs $(\gra{u_t})$ (\cf Figure \ref{figure_map}), and the quotient $T^-/\Gamma_\tau^*$ is homeomorphic to $S\times\R=(\Omega/\Gamma^*)\times\R$ in such a way that each leaf $\gra{u_t}$ corresponds to the slice $S\times\{t\}$ (more precisely, this homeomorphism is given by the bijection $\Omega\times\R\overset\sim\to T^-$, $(x,t)\mapsto (x,u_t(x))$, which is shown to be a homeomorphism in the proof of Prop.\@ \ref{prop_homeo}). Therefore, both actions are free and properly discontinuous, and $F$ induces a homeomorphism between the quotients
$$
S\times\R\cong T^-/\Gamma_\tau^*\overset\sim\longrightarrow D/\Gamma_\tau,
$$
sending each slice $S\times\{t\}$ to the quotient of the surface $\gra{u_t^*}$.

This homeomorphism has the required properties in the two bullet points because $K$ is exactly the function studied in Prop.\@ \ref{prop_foliation} and proved to be convex therein, whereas $K^{-1}(t)=\gra{u_t^*}$ is the unique affine $(C,e^t)$-surface generating $D$ by Thm.\@ \ref{thm_graph}, Prop.\@ \ref{prop_mongeampere} and  Thm.\@ \ref{thm_intro3} \ref{item_d2}. 

Finally, since $w_\Omega$ is continuous on $\overline{\Omega}$ and vanishes on $\pa\Omega$, Inequality \eqref{eqn_barrier} implies that
$\env{\phi}_\mu(x)-u_t(x)\to 0$ as $x\in\Omega$ tends to $\pa\Omega$. By Thm.\@ \ref{thm_graph} \ref{item_thmgraph4}, this means $K^{-1}(t)$ is asymptotic to $\pa D$, as required. The proof is complete.

\end{proof}
\bibliographystyle{amsalpha} 
\bibliography{deformation}
\end{document}